\documentclass{amsbook} 

\usepackage[utf8]{inputenc}

\usepackage[all]{xy}
\usepackage{epsfig}
\usepackage{wrapfig}
\usepackage{xcolor}
\usepackage{amssymb}
\usepackage[colorlinks,linkcolor=blue,anchorcolor=green,citecolor=blue]{hyperref}
\usepackage{hyperref}

\newtheorem{maintheorem}{Theorem}

\newtheorem{maincorollary}[maintheorem]{Corollary}
\newtheorem{theorem}{Theorem}[section]

\newtheorem{problem}[theorem]{Problem}

\newtheorem{example}[theorem]{Example}
\newtheorem{proposition}[theorem]{Proposition}
\newtheorem{lemma}[theorem]{Lemma}
\newtheorem{corollary}[theorem]{Corollary}
\newtheorem{fact}[theorem]{Fait}

\newtheorem{definition}[theorem]{Definition}
\newtheorem{remark}[theorem]{Remark}
\newtheorem{remarks}[theorem]{Remarks}

\newtheorem{question}[theorem]{Question}
\newtheorem{exercise}[theorem]{Exercise}

\newtheorem*{keylemma}{Key Lemma}

\numberwithin{equation}{chapter}

\newcommand\cA{{\mathcal A}}
\newcommand\cG{{\mathcal G}}

\newcommand\cV{{\mathcal V}}

\newcommand\Diff{{\operatorname{Diff}}}
\newcommand\diam{{\operatorname{diam}}}
\newcommand\diffsym{{\operatorname{\Delta}}}
\newcommand\eps{\epsilon}

\newcommand\GL{{\operatorname{GL}}}
\newcommand\Id{{\operatorname{Id}}}

\newcommand\Lip{{\operatorname{Lip}}}

\newcommand\Prob{\mathbb P}
\newcommand\Proberg{\mathbb P_{{\operatorname{erg}}}}

\newcommand\supp{{\operatorname{supp}}}
\renewcommand\top{{\operatorname{top}}}

\newcommand\vf{\varphi}

\newcommand\NN{\mathbb N}
\newcommand\CC{\mathbb C}

\newcommand\RR{\mathbb R}

\newcommand\ZZ{\mathbb Z}

\newcommand\heps{\widehat{\eps}}
\newcommand\hf{\widehat{f}}
\newcommand\hM{\widehat{M}}
\newcommand\hmu{\widehat{\mu}}
\newcommand\hnu{\widehat{\nu}}
\newcommand\hpi{\widehat{\pi}}
\newcommand\hvf{\widehat{\vf}}
\newcommand\hx{\widehat{x}}
\newcommand\hy{\widehat{y}}

\newcommand\wto{\stackrel*\longrightarrow}
\newcommand\hX{\widehat{X}}
\newcommand\loc{{\operatorname{loc}}}
\newcommand\Len{{\operatorname{Length}}}
\newcommand\cM{\mathcal M}

\newcommand\PP{\mathbb P}
\newcommand\hp{\widehat p}
\newcommand\hV{\widehat{V}}

\newcommand\SProb{\operatorname{S\Prob}}
\newcommand\tm{\widetilde{m}}
\newcommand\cU{\mathcal U}
\newcommand\HD{\operatorname{HD}}

\newcommand\tf{\widetilde{f}}

\newcommand\tmu{\widetilde{\mu}}
\newcommand\tnu{\widetilde{\nu}}
\newcommand\wsto{\rightharpoonup}

\newcommand\proj{\operatorname{proj}}
\newcommand\Orth{\operatorname{O}}
\newcommand\Aut{\operatorname{Aut}}
\newcommand\cF{\mathcal F}
\newcommand\Cocycle{\operatorname{Cocycle}}
\newcommand\Endo{\operatorname{End}}

\newcommand\reg{{\rm reg}}
\newcommand\hP{\widehat{P}}
\newcommand\hv{\widehat{v}}

\newcommand\ii[1]{\;[\!\!\![\; #1\;[\!\!\![\;}
\newcommand\Neutral{\mathfrak N}
\newcommand\hZ{\widehat{Z}}
\newcommand\cR{\mathcal R}
\newcommand\hU{\widehat{U}}
\newcommand\hK{\widehat{K}}

\newcommand\hGamma{\widehat{\Gamma}}
\newcommand\hyp{{\rm hyp}}
\newcommand\dens{\operatorname{dens}}
\newcommand\hsigma{\widehat{\sigma}}
\newcommand\hW{\widehat{W}}

\newcommand\mat[1]{\left(\begin{matrix} #1 \end{matrix}\right)}
\newcommand\alter[1]{\left\{\begin{array}{ll} #1 \end{array}\right.}
\newcommand\new[1]{\emph{#1}\index{#1}}
\newcommand\NEW[1]{{\bf #1}\index{#1}}
\newcommand\deletedframe[1]{}
\newcommand\pause{ }

\makeindex

\begin{document}

\title[Discontinuity of Exponents vs.  Entropy]{
Discontinuity of Lyapunov exponents vs Entropy\\ for smooth surface diffeomorphisms\footnote{Final version published in Banach Center Publications, volume 131 (2026) --- supported partially by the Simons Foundation Award No. 663281 granted to the Institute of Mathematics of the Polish Academy of Sciences for the years 2021-2023..}}
\author[J. Buzzi]{J\'er\^ome BUZZI (CNRS Orsay)}

\maketitle

\tableofcontents

\chapter*{Introduction}

Lyapunov exponents are fundamental invariants in smooth ergodic theory describing the asymptotic infinitesimal behavior along typical orbits.
This text aims to explain how and why to control Lyapunov exponents using entropy for smooth surface diffeomorphisms. 
It fits  into the framework of our recent joint works with Sylvain CROVISIER and Omri SARIG \cite{BCS-1,BCS-2,BCS-3,BCS-4}. We will focus especially on the continuity property of exponents for measures near the maximal entropy measure, by presenting a simplified version of \cite{BCS-2}.

Our exposition is geared towards advanced students and researchers in dynamics that are not necessarily familiar with smooth ergodic theory. We follow a mini-course that we gave at IMPAN in Warsaw in April 2023 as a series of 5 double lectures during a Simons Semester on Dynamics at IMPAN. The application to the SPR property of surface diffeomorphisms was then presented at the IMPAN conference center in Bedlewo during the Beyond Uniform Hyperbolicity conference.\footnote{The videos can be found at the following URLs: {\tt https://youtu.be/lvkMl$\underline{\;\;}$SiKP0} (minicourse at IMPAN) and {\tt https://youtu.be/D-gB-bbWCaI} (conference at Bedlewo).} Some notations are listed on page~\pageref{chap-notations}.

\medbreak

{\bf Acknowledgments.} The author would like to thank the organizers of the Simons Semester
program "Topological, smooth and holomorphic dynamics, ergodic theory, fractals" 
for the financial support and hospitality.\footnote{This work is partially supported by the Simons Foundation Award No.~663281 granted to the Institute of Mathematics of the Polish Academy of Sciences
for the years 2021-2023.}

\subsection*{Semicontinuity of Lyapunov exponents and entropy}
We consider a $C^\infty$ smooth diffeomorphism $f$ of a compact surface (more precisely, a boundaryless and compact two-dimensional $C^\infty$ Riemannian manifold). Given an invariant Borel probability measure  $\mu\in\Prob(f)$, its \emph{top Lyapunov exponent} describes the asymptotics of the norm of the iteration of the differential:
 $$
    \lambda^+(\mu) := \int \lambda^+(x) \, d\mu(x) \text{ where }
    \lambda^+(x):=\lim_{n\to\infty}\frac1n\log\|D_xf^n\|.
 $$
The set $\Prob(f)$ of $f$-invariant Borel probability measures is equipped with the weak (star) topology which turns it into a compact metrizable space. One can ask: \emph{what are the continuity properties of the map $\mu\mapsto\lambda^+(\mu)$ over $\Prob(f)$?}

It is an important and popular fact that this map is  upper semicontinuous but may fail to be lower semicontinuous. The first, positive statement follows from Kingman's ergodic theorem for subadditive processes that we recall below. The second, negative statement is a consequence of classical perturbative techniques in so-called Newhouse domains (see Proposition~\ref{propNewhouseDomain}).

\smallbreak

Another equally important functional defined over $\Prob(f)$ is the \emph{Kolmogorov-Sinaï entropy} $h(\mu)$. For now, we will just mention the word ``complexity'' and note that the entropy vanishes if the measure is carried by a periodic orbit. Entropy and exponents are closely linked. For instance, the Ruelle-Margulis inequality\footnote{In fact, the Ledrappier-Young formula \cite{LY2} expresses exactly the entropy as a sum of the exponents  weighted by some dimensions.} (Theorem~\ref{thm-Ruelle-Margulis}) states:
 $$
    \forall \mu\in\Prob(f)\quad h(\mu) \leq \int \max\left(\lambda^+(x),0\right)\, d\mu(x).
 $$

Since we are assuming $f$ to be $C^\infty$ smooth, the function $\mu\in\Prob(f)\longmapsto h(\mu)$ is upper semicontinuous by a theorem of Newhouse (Theorem~\ref{thm-Newhouse}, based on Yomdin's analysis of the iteration of $C^\infty$ smooth maps, Theorem~\ref{thm-Yomdin}). Unless the entropy function is identically zero over $\Prob(f)$, it is not lower semicontinuous by a closing lemma on surfaces due to Katok \cite{Katok-periodic}.

\section*{The results}

\subsection*{Lower semicontinuity ratios of exponent and entropy}
Let us observe that, in the setting of $C^\infty$ diffeomorphisms, the Kolmogorov-Sinaï entropy and the Lyapunov exponent turn out to satisfy \emph{the same semicontinuity properties}, though for different reasons.
Our results will show that the two continuity defects are nevertheless quantitatively related by a kind of ``relative Ruelle-Margulis inequality".

\medbreak

Our statements deal with sequences of ergodic measures and their weak star limit. Our results take the best and simplest form when this limit happens to be ergodic as in the following statement:

\begin{maintheorem}\label{maintheoremErgodic}
Let $f$ be a $C^\infty$ diffeomorphism of a closed surface.\footnote{A closed manifold is a compact and boundaryless manifold.}
Let $(\nu_k)_{k\ge1}$ be a sequence of ergodic invariant probability measures for $f$. Assume that their top exponents and their entropies converge to some positive limits. If $\mu$ is an ergodic accumulation point of $(\nu_k)_{k\ge1}$ in the weak star topology, then
  \begin{equation}\label{ineq1}
    0<^{(1)} \lim_k\frac{ h(\nu_k)}{h(\mu)} \le^{(2)} \lim_k\frac{\lambda^+(\nu_k)}{\lambda^+(\mu)}\le^{(3)}  1.
  \end{equation}
In particular, continuity of the entropy: $h(\mu)=\lim_k h(\nu_k)>0$, implies continuity of the exponent: $\lambda^+(\mu)=\lim_k \lambda^+(\nu_k)>0$.
\end{maintheorem}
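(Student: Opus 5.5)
Inequality (3) is the easy part: it is upper semicontinuity of $\mu\mapsto\lambda^+(\mu)$. I would write $\lambda^+(\nu)=\inf_n\frac1n\int\log\|D_xf^n\|\,d\nu$ using Kingman's subadditive theorem. This is an infimum of continuous functions of $\nu$, so $\limsup_k\lambda^+(\nu_k)\le\lambda^+(\mu)$.

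Inequality (1) needs $h(\mu)>0$ and $\lim h(\nu_k)>0$. The second holds by assumption. For the first, I would show that $\mu$ cannot have zero exponent or zero entropy, and this is where (2) enters. Rather than proving (1) separately, I would prove (2) in the form
$$\lim_k h(\nu_k)\,\lambda^+(\mu)\le h(\mu)\,\lim_k\lambda^+(\nu_k).$$
Since the left side is positive, this forces $h(\mu)>0$; Ruelle--Margulis then gives $\lambda^+(\mu)>0$, so the ratios make sense.

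To prove (2), I would decompose each $\nu_k$ into a part of its orbits that shadows $\mu$ and a part that does not. Fix a large time $n$ and a neighborhood $U$ of $\mu$ in the space of measures. By weak-star convergence and ergodicity of $\mu$, for large $k$ a $\nu_k$-typical orbit spends a fraction $\beta_k$ of its time in orbit segments of length $n$ whose empirical measures lie in $U$. Ergodicity of $\mu$ is what makes $\beta_k\to1$: the limit measure is then not a combination of other measures. On these good segments the entropy production is at most about $h(\mu)$ per unit time, by the local entropy estimate at the ergodic measure $\mu$ counted with covering numbers. The exponent gain is at least about $\lambda^+(\mu)$ per unit time, because $\frac1n\log\|Df^n\|$ is continuous along segments near $\mu$. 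On the remaining bad times, the entropy must be bounded by the exponent gained there. This needs a Ruelle-type bound for orbit pieces, $h\lesssim\lambda^+$ on the bad set, and that requires $C^\infty$ control via Yomdin's reparametrization lemma (Theorem~\ref{thm-Yomdin}). With it, the entropy created along a curve tangent to the unstable direction during bad times is at most $\lambda^+$ there plus a Yomdin error $\frac{\log\Lip f}{r}$ that vanishes as $r\to\infty$.

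Combining the two parts over each $\nu_k$-typical orbit gives two estimates:
$$h(\nu_k)\le\beta h(\mu)+(1-\beta)\bar\lambda_{\rm bad}+o(1),\qquad \lambda^+(\nu_k)\approx\beta\lambda^+(\mu)+(1-\beta)\bar\lambda_{\rm bad}.$$
Here $\bar\lambda_{\rm bad}$ is the average exponent over the bad part, and it can be negative or small. Since $h(\mu)\le\lambda^+(\mu)$, eliminating $\bar\lambda_{\rm bad}$ gives
$$h(\nu_k)\lambda^+(\mu)\le h(\mu)\lambda^+(\nu_k)+o(1),$$
which is (2) after letting $k\to\infty$, then $n,r\to\infty$.

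The main obstacle is the bad-part estimate. It needs the exponent along the actual unstable direction of $\nu_k$, and that direction could be nearly neutral near $\mu$. I would handle this by working in the projectivized tangent bundle with lifted measures $\hat\nu_k$, whose limit $\hat\mu$ may charge non-Oseledets directions. I would then apply the Yomdin estimates to curves whose tangents follow $\hat\nu_k$.
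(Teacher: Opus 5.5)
\textbf{There is a genuine gap: your argument proves too much.}

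With ``good'' defined on the surface (length-$n$ segments whose empirical measures lie near $\mu$), ergodicity of $\mu$ does force $\beta_k\to1$, as you say. Combine this with your claim that good segments produce exponent at least about $\lambda^+(\mu)$ per unit time, and with $|\hvf|\le\|\hvf\|_{\sup}$ on the remaining times. You would then get $\lim_k\lambda^+(\nu_k)=\lambda^+(\mu)$ for \emph{every} ergodic limit, and the elimination step would be superfluous. That contradicts Proposition~\ref{propNewhouseDomain}, which produces ergodic limits with $h(\mu)>0$, $\lim_k h(\nu_k)>0$ and $\lim_k\lambda^+(\nu_k)<\lambda^+(\mu)$.

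The faulty step is the lower bound on good segments. Even if $\|D_xf^n\|\approx e^{n\lambda^+(\mu)}$ at the start of each good segment, this norm measures the most expanded direction at $x$. By contrast, $\lambda^+(\nu_k)$ is the growth of the particular direction $E^u_{\nu_k}(x)$. Over concatenated segments, submultiplicativity only bounds that growth from above; this is the mechanism behind (3), not a lower bound. Along long stretches that shadow $\mu$ perfectly on $M$, the $\nu_k$-unstable direction can lie near the stable direction $E^-$ of $\mu$. It is then contracted, rotates towards $E^+$, and is re-expanded, with no net growth. In $\hM$ this is the fact that $\hmu=\lim_k\hnu_k^+$ may equal $\alpha\hmu^++(1-\alpha)\hmu^-$ with $\alpha<1$ even though $\mu$ is ergodic. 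You bring in $\hM$ only for the bad part, but the loss happens inside your good segments. So the split itself must be made in $\hM$, according to the dilation $\hvf$, not according to proximity to $\mu$ on $M$.

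This is what the paper does; it deduces Theorem~\ref{maintheoremErgodic} from Theorem~\ref{theoremWithoutErgodicity}. The neutral decomposition $\hmu=(1-\beta)\hmu_0+\beta\hmu_1$ separates long maximal $\alpha$-neutral blocks from the rest. For ergodic $\mu$ one gets $\hmu_1=\hmu^+$, yet $1-\beta=(1+|\lambda^-(\mu)|/\lambda^+(\mu))(1-\alpha)$ can be positive.

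Your elimination step shows why nothing weaker works. Your two estimates give $h(\nu_k)\lambda^+(\mu)-h(\mu)\lambda^+(\nu_k)\le(1-\beta)\bar\lambda_{\mathrm{bad}}(\lambda^+(\mu)-h(\mu))+o(1)$. But entropy cannot be negative, so an honest Ruelle-type bound on the bad times is $(1-\beta)\max(\bar\lambda_{\mathrm{bad}},0)$, not $(1-\beta)\bar\lambda_{\mathrm{bad}}$. With that bound the right-hand side is positive unless $\bar\lambda_{\mathrm{bad}}\approx0$ (when $h(\mu)<\lambda^+(\mu)$). What you really need is that the bad part carries zero exponent \emph{and} zero entropy.

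Neutral blocks have the first property by construction: averages of $\hvf$ over maximal $(\alpha,L)$-neutral blocks lie in $[\alpha-(\alpha+\|\hvf\|_{\sup})/L,\alpha]$, whence $\hmu_0(\hvf)=0$. The second property is the core estimate, Theorem~\ref{thm-reparam-neutral}, and Yomdin's lemma does not give it for free. Non-growth of one tangent vector controls the length of the curve only while the lifted curve stays $\heps$-small in $\hM$ (Lemma~\ref{lem-ctrl-len}). Keeping it small requires the double scale $\eps\ll\heps$, which comes from Lusin continuity of the Oseledets directions of the limit measure (Lemma~\ref{lem-W-neigh}), together with the zero-entropy control of its non-hyperbolic part (Lemma~\ref{lem-zero-entropy}).

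On the non-neutral times, your covering bound by $h(\mu)$ is the right idea; it corresponds to Katok's formula for $\mu_1=\mu$ (Chapter~\ref{chap-beyond}).
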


Note that the assumption $\lim_k h(\nu_k)>0$ together with upper semicontinuity and the Ruelle inequality ensure that  all terms in the above inequality are well-defined.

\medbreak

 Our new result is the middle inequality (2) which can be compared to the Ruelle inequality. The other inequalities are obvious or classical: (1) is the positivity assumption, and  (3) is the upper semicontinuity of the Lyapunov exponent (recalled below, see Proposition~\ref{prop-average-exp-usc}).

\medbreak

The above statement is sharp in the following sense: 

\begin{proposition}\label{propNewhouseDomain}
On any closed surface, there is a $C^\infty$ diffeomorphism $f$ such that for any pair $0\le\alpha\le\beta\le 1$, one can find a sequence of ergodic $\nu_k\in\Prob(f)$ converging weakly to an ergodic measure $\mu$, and such that
 $$
   \lim_k h(\nu_k)=\alpha\cdot h(\mu), \text{ and }\lim_k \lambda^+(\nu_k)=\beta\cdot \lambda^+(\mu)>0.
 $$
One can also arrange that, additionally, $h(\mu)=0$ or that $h(\mu)>0$.
\end{proposition}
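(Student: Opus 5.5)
I would build $f$ so that it contains two separate ingredients: a horseshoe $\Lambda$ carrying the target measure $\mu$, and a region where homoclinic tangencies can be exploited. Through the tangencies one manufactures horseshoes $\Lambda_k$ whose exponents are weakened while their entropy is controlled. Concretely, I take $f$ in a Newhouse domain, obtained from a $C^\infty$ surface diffeomorphism with a persistent homoclinic tangency associated with a basic set. Since every closed surface admits a diffeomorphism with a horseshoe, and a horseshoe can be inserted in a disk, this works on any closed surface. I then fix $f$ among the residual set of $C^\infty$ diffeomorphisms having the generic properties below. Note that this requires $f$ to be a single diffeomorphism which works for all pairs $(\alpha,\beta)$ at once, so the constructions must be made inside $f$ rather than by perturbing it. The genericity argument therefore has to produce, for every rational pair $(\alpha,\beta)$ and every $n$, the corresponding invariant sets; a closure argument then handles all real pairs.

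The basic mechanism combines two models.

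\textbf{The $\beta$ knob.} Near a tangency of a periodic saddle $p$ of exponent $\lambda^+(p)$, the orbits that spend a long time $m$ near $p$, then make one excursion through the quadratic tangency region, have exponent approximately a time-weighted average. During the excursion the expansion is nearly cancelled, because the folding makes the unstable direction almost aligned with the stable one. Generic unfolding (Newhouse/Gonchenko–Shilnikov–Turaev style) yields periodic orbits, or small horseshoes, near the tangency whose orbits shadow $\mu$-typical pieces for a proportion $\beta$ of the time. For the remaining proportion $1-\beta$ of the time they sit in a ``neutralizing'' regime, such as near a sink/saddle–node–like orbit or a tangency sequence, which produces zero net exponent. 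That regime contributes negligibly to the empirical measure, since it consists of few long visits close to $p$ and $p$ can be placed near $\supp\mu$.

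\textbf{The $\alpha$ knob.} Let the shadowing part use only a fraction of the combinatorics of $\Lambda$, i.e.\ a subshift of entropy $\alpha h(\mu)/\beta$ per unit of shadowing time. This is allowed because $\alpha\le\beta$. Taking $\nu_k$ to be the measure of maximal entropy of such a horseshoe $\Lambda_k$ gives
 $$
 h(\nu_k)\to\alpha h(\mu),\qquad \lambda^+(\nu_k)\to\beta\lambda^+(\mu).
 $$
Weak convergence to $\mu$ requires the excursion and neutral time to be spent near $\supp\mu$, or else to carry small measure. I would choose $\mu$ to be the measure of maximal entropy of $\Lambda$ when $h(\mu)>0$ is wanted, and a hyperbolic periodic orbit measure when $h(\mu)=0$ is wanted. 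In the latter case only $\alpha=0$ is possible, which is consistent with the statement since $\lim_k h(\nu_k)=\alpha h(\mu)=0$. To keep the empirical measures of the $\nu_k$ close to $\mu$, the neutral segments must sit at a homoclinic point of $\mu$'s support with vanishing proportion of time away from it. The way to achieve this is to make the weak-expansion region a cycle of tangencies between $\Lambda$ and itself, accumulated by the orbit of $\supp\mu$.

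\textbf{Main obstacle.} The hardest step is realizing an exact exponent fraction $\beta<1$ while the $\nu_k$ still converge to $\mu$. Weak convergence tends to force exponents to converge unless the non-shadowing time carries $\nu_k$-mass tending to zero while still cancelling a definite share of the expansion. This requires sharp control of derivatives near tangencies, with contraction of size $e^{-(1-\beta)n\lambda^+(\mu)}$ realized in a number of iterates that is $o(n)$. I would obtain it by placing the orbits through a sequence of increasingly tight tangencies, where one passage near a tangency at distance $\delta$ contracts the unstable direction by about $\delta$ in bounded time. The needed tangencies are produced by Newhouse's persistence theorem together with a Baire argument. Finally, the cases $\beta=0$ and $\alpha=\beta$ are obtained as limits by a diagonal argument.
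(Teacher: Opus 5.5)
The paper does not reprove this statement. It refers to \cite[Prop 8.1]{BCS-2} and only notes that the cases $h(\mu)=0$ and $\alpha=0$ need no change. Your outer frame is reasonable: a generic $f$ in a Newhouse domain, open and dense conditions indexed by rational $(\alpha,\beta)$ and a precision, then a closure argument. The mechanism you give for the drop of the exponent, however, is the heart of the matter, and it does not work.

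You correctly say that folding aligns the unstable direction with the stable one. But you then locate the cancellation either in a separate region of phase space or in a short stretch of time, and both fail.

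\emph{Separate region.} In your $\beta$-knob the $\nu_k$-orbits spend a proportion $1-\beta$ of their time in a neutralizing regime near some point $p$. Every weak limit then gives mass about $1-\beta$ to the orbit of $p$, or to whatever that regime accumulates on. So the limit cannot be an ergodic $\mu$ with $h(\mu)>0$, and being close to $\supp\mu$ does not help.

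\emph{Short time.} In the ``main obstacle'' paragraph you ask for a contraction by $e^{-(1-\beta)\lambda^+(\mu)n}$ within $o(n)$ iterates, each passage at distance $\delta$ from a tangency contracting by $\delta$ in bounded time. This is impossible for a diffeomorphism: $\|D_xf^m v\|\ge\|Df^{-1}\|_{\sup}^{-m}\|v\|$, so a contraction by $e^{-cn}$ takes at least $cn/\log\|Df^{-1}\|_{\sup}$ iterates. A passage near a tangency shrinks nothing; it only rotates the tangent direction.

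The missing idea is that the discontinuity is visible only in $\hM$, not in $M$. Compare the paper's discussion of an ergodic limit, where $\hmu$ is a convex combination of $\hmu^+$ and $\hmu^-$ (``contraction plus cancellation''). After a passage at distance $\approx\delta$ from a tangency, the unstable direction of the $\nu_k$-orbit makes an angle $\approx\delta$ with the stable direction of the $\mu$-typical orbit it then shadows. That direction is first contracted, then re-expanded, and the loss $\log(1/\delta)$ is spread over a neutral block of length $\approx\log(1/\delta)/\lambda^+(\mu)$. With $\delta\approx e^{-(1-\beta)\lambda^+(\mu)n}$ this block is a fraction $1-\beta$ of the period, while the orbit stays $\mu$-typical in position. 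This is how $\nu_k\to\mu$ is compatible with $\lambda^+(\nu_k)\to\beta\lambda^+(\mu)$.

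This reshapes the rest of your plan in three ways.
\begin{itemize}
\item Exponentially small angles pin the orbit near the tangency configuration for a time proportional to $n$. So for $h(\mu)>0$ you need tangencies between $W^u(x)$ and $W^s(y)$ with $x,y$ both $\mu$-generic. A homoclinic tangency of a single saddle $p$ only serves the periodic measure on the orbit of $p$, i.e.\ the case $h(\mu)=0$; there $\alpha$ is irrelevant rather than forced to be $0$.
\item The $\alpha$-knob is not free. Restricting combinatorics only lowers the entropy, and the maximal entropy is dictated by how long the itinerary must be pinned to produce the angle $\delta$. For the quadratic folds you invoke, the crossing angle is $\approx\sqrt{\Delta}$, where $\Delta$ is the transverse offset between the two leaves, so you must impose $\Delta\lesssim\delta^2$. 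This pins the itinerary about twice as long as the neutral block. In an affine horseshoe with its maximal entropy measure you get at best $\alpha\approx\max(2\beta-1,0)$. Reaching the whole range $\alpha\le\beta$ needs tangencies whose geometry makes the pinning time match the neutral block, for instance tangencies of high order.
\item The case $\beta=0$ is excluded by $\beta\lambda^+(\mu)>0$, so your diagonal argument for it is not needed.
\end{itemize}
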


\begin{proof}
This is \cite[Prop 8.1]{BCS-2} extended to the cases $h(\mu)=0$ or $\alpha=0$. These extensions do not require any change in the proof given in \cite{BCS-2}.
\end{proof}

\medbreak

In particular, this construction shows that when $\lim_k h(\nu_k)=h(\mu)=0$, the discontinuity of the exponent is arbitrary ($\beta$ can take any value in $(0,1]$). Thus, the assumption that $\lim_k h(\mu_k)>0$ cannot be dropped from Theorem~\ref{maintheoremErgodic}.

\medbreak

The assumption that the limit is ergodic is a strong restriction. It can be removed at the price of a somewhat more abstract conclusion as follows.

\begin{maintheorem}\label{theoremWithoutErgodicity}
Let $f$ be a $C^\infty$ diffeomorphism of a closed surface.
Let $(\nu_k)_{k\ge1}$ be a sequence of ergodic invariant probability measures for $f$. Assume that their top exponents and their entropy converge to some positive limits. If $\mu$ is an arbitrary accumulation point of $(\nu_k)_{k\ge1}$ in the weak star topology, then there is a decomposition
 $$
    \mu = (1-\beta)\mu_0+\beta\mu_1 \text{ with }
    0<\beta\le 1 \text{ and } \mu_0,\mu_1\in\Prob(f)
 $$
satisfying $h(\mu_1)>0$ and
 \begin{equation}\label{ineq2}
    0<\lim_k \frac{h(\nu_k)}{h(\mu_1)} \le \beta = \lim_k \frac{\lambda^+(\nu_k)}{\lambda^+(\mu_1)}.
 \end{equation}
\end{maintheorem}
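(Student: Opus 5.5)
The argument follows the strategy of \cite{BCS-2}: a geometric reparametrization argument in the spirit of Yomdin (Theorem~\ref{thm-Yomdin}), applied to unstable curves, which turns a loss of exponent into a loss of entropy. The decomposition of $\mu$ will come from the behaviour of typical $\nu_k$-orbits with respect to a ``hyperbolicity time'' condition.

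\textbf{Step 1: the splitting.} Fix a small scale $\eps>0$ and a large time $q$. Work in the projective bundle (or unit tangent bundle) $\widehat M$, and lift each $\nu_k$ to $\hnu_k$ carried by the unstable direction $E^u$. For each point, call an iterate a \emph{good time} if the future expansion along $E^u$ over a window of length $q$ is at least $\lambda^+(\nu_k)-\delta$ per step and the Yomdin reparametrization of a local unstable curve has bounded distortion at scale $\eps$. Accordingly, split each $\hnu_k$ into a part $\beta_k\hnu_{k,1}$ carried on good times and a remainder $(1-\beta_k)\hnu_{k,0}$. This should be done through the empirical measures of a Pliss-type lemma: the density of hyperbolic times is controlled by the exponent. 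Passing to subsequences gives $\beta_k\to\beta$ and $\hnu_k\to (1-\beta)\hmu_0+\beta\hmu_1$; projecting gives $\mu=(1-\beta)\mu_0+\beta\mu_1$.

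\textbf{Step 2: exponents.} On the good part, the continuous function $\hx\mapsto\log\|Df|_{\hx}\|$ integrates, in the limit, to the exponent of $\mu_1$ along the limiting direction. Since the limit direction is invariant and expanded, that integral equals $\lambda^+(\mu_1)$. On the bad part, the contribution is $o(1)$ as $\eps\to0$ and $q\to\infty$, essentially because orbits there do not expand at scale $\eps$. This gives
$$\lim_k\lambda^+(\nu_k)=\beta\lambda^+(\mu_1).$$
Positivity $\beta>0$ follows because $\lim_k\lambda^+(\nu_k)>0$. Making this precise requires a diagonal argument in $\eps$ and $q$.

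\textbf{Step 3: entropy, the main obstacle.} The goal is the bound $\lim_k h(\nu_k)\le \beta h(\mu_1)+o(1)$. I would obtain it by counting $(n,\eps)$-separated unstable curves. During bad times, Yomdin's theorem gives at most $e^{o(n)}$ reparametrizing pieces, because there is no expansion and $f$ is $C^\infty$. During good times, the dynamics near the limit measure is shadowed by $\mu_1$, so upper semicontinuity of entropy (Theorem~\ref{thm-Newhouse}) bounds the growth rate by $h(\mu_1)$ per good time. A local entropy estimate (tail entropy $\to0$) turns this into
$$h(\nu_k)\le\beta_k\,(h(\mu_1)+o(1))+o(1).$$
Combined with Step 2, this yields inequality~\eqref{ineq2} and $h(\mu_1)>0$.

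The delicate point is making the entropy of $\nu_k$ "localize" on the good part. This needs an entropy formula for induced maps (Abramov) or a careful covering argument, because the good and bad segments are interleaved along orbits. Here I would first try the coding of unstable curves by Yomdin reparametrizations from \cite{BCS-2}, together with a Misiurewicz-type argument relating separated sets to the limit $\mu_1$.
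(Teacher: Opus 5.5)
\textbf{There is a genuine gap, in Steps 1--2: a split by pointwise ``good times'' cannot produce a remainder with zero exponent.}

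Take the limits in the order your diagonal argument uses, $k\to\infty$ first and then $q\to\infty$. A window condition of the form $\frac1q S_q\hvf\ge c$ then just becomes the restriction of the limit $\hmu$ to the invariant set $\{\overline{\hvf}\ge c\}$, where $\overline{\hvf}$ is the Birkhoff average. No such restriction works, as two examples show.

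\emph{Your threshold $c\approx\lim_k\lambda^+(\nu_k)$.} In a horseshoe, take ergodic $\mu',\mu''$ with positive entropies and unstable exponents $\chi'<\chi''$. Take ergodic $\nu_k\to\mu=\frac12(\mu'+\mu'')$ with converging entropies; this is possible by specification, and the exponents converge automatically. Your bad part is then the $\mu'$-stretches. These expand at rate $\chi'>0$ and carry entropy $h(\mu')$. So $\beta\lambda^+(\mu_1)=\frac12\chi''\ne\frac12(\chi'+\chi'')$, and the claim in Step 3 that bad times cost only $e^{o(n)}$ pieces is false.

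\emph{A threshold near $0$.} Suppose $\mu$ is ergodic and $\hnu_k^+\to\hmu=a\hmu^{+}+(1-a)\hmu^{-}$ with $a<1$, which is exactly the discontinuity situation. Your pieces tend to $a\hmu^{+}$ and $(1-a)\hmu^{-}$. The latter has $\hvf$-integral $(1-a)\lambda^-(\mu)<0$, since $h(\mu)\ge\lim_k h(\nu_k)>0$. Again $\lim_k\lambda^+(\nu_k)\ne\beta\lambda^+(\mu_1)$.

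\emph{The missing idea.} Expansion that merely undoes an earlier contraction creates no entropy, so it must be counted as neutral. The paper does this with $(\alpha,L)$-neutral blocks $\ii{a,b}$: these require $S_a^i\hvf\le\alpha(i-a)$ for \emph{all} $a<i\le b$, and $b-a\ge L$. Such blocks contain contraction \emph{plus} cancellation. In the ergodic case the resulting $\hmu_0$ mixes $\hmu^-$ and $\hmu^+$; it is not the restriction of $\hmu$ to an invariant set. The structure of these blocks then delivers everything you need:
\begin{itemize}
\item[--] Overlapping blocks merge (Lemma~\ref{lem-neutral-nested}), so maximal blocks are disjoint and long. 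This makes $\Neutral(\alpha,L)$ almost invariant (Lemma~\ref{lem-almost-inv}), so the limits are invariant.
\item[--] Maximality puts the $\hvf$-average over each maximal block within $(\alpha+\|\hvf\|_{\sup})/L$ of $\alpha$, which gives $\hmu_0(\hvf)=0$.
\item[--] A Pliss argument against maximality gives $\liminf_n\frac1nS_n\hvf>0$ for $\hmu_1$-a.e.\ point, hence $\hmu_1(\hvf)=\lambda^+(\mu_1)$.
\item[--] Block length at least $L$ is what makes the positions of the blocks cost only $e^{o(n)}$ (Lemma~\ref{lem-combi}). Scattered good times of density $\beta$ would cost about $e^{H(\beta)n}$.
\end{itemize}

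\textbf{Even with the correct decomposition, Step 3 skips the two substantive estimates.}

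First, ``no expansion, so Yomdin gives $e^{o(n)}$ pieces'' is precisely the technical core, Theorem~\ref{thm-reparam-neutral}. Yomdin only bounds the number of reparametrizations by the number of doubly scaled dynamical balls. Non-expansion of $Df$ on a single tangent vector controls the length of the iterated curve only while its lift stays small in $\hM$, because short curves can oscillate. The paper keeps the lift small with a double scale $\eps\ll\heps$, using Lusin continuity of $E^\pm$ on the part of $\hmu$ with distinct exponents (Lemma~\ref{lem-W-neigh}). It covers the zero-exponent part, which has zero entropy, by few doubly scaled dynamical balls.

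Second, on non-neutral segments, upper semicontinuity of entropy gives no covering numbers. You need Katok-type covers by $e^{n(h+\eta)}$ open dynamical balls, at a fixed $n$ chosen before $k$, transferred to $(1-1_{\Neutral(\alpha,L)})\cdot\hnu_k^+$ by weak convergence. Since $\mu_1$ need not be ergodic, such covers only yield the essential supremum of the entropies of its ergodic components. To get $h(\mu_1)$ itself, you must first split the ergodic decomposition by entropy levels and follow each piece along long orbit intervals.
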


\begin{proof}[Deduction of Theorem~\ref{maintheoremErgodic} from Theorem~\ref{theoremWithoutErgodicity}]
Let $f,\nu_1,\nu_2,\dots,\mu$ be as in the hypothesis of Theorem~\ref{maintheoremErgodic}. We apply Theorem~\ref{theoremWithoutErgodicity} and get the decomposition $\mu=(1-\beta)\mu_0+\beta\mu_1$. The ergodicity of $\mu$ implies that this decomposition is trivial: $\mu_1=\mu$ since $\beta>0$. The inequalities in \eqref{ineq2} reduce to those in \eqref{ineq1}.
\end{proof}

In the remainder of this introduction, we deduce four direct applications of these results before stating the special case we will prove in these lectures.

\section*{Four applications}
We obtain the semicontinuity of some dimensions of invariant probability measures and use it for the construction of natural measures.

\medbreak

\subsubsection*{Semicontinuity of the Hausdorff dimension of ergodic measures}
We deduce from Theorem~\ref{maintheoremErgodic} the semicontinuity of the Hausdorff dimension over ergodic invariant measures with entropy bounded away from zero.

\medbreak

We first recall some basic facts.
We denote by $\HD(X)$ the classical Hausdorff dimension of any subset $X$ of a metric space \cite{falconer-book}. This notion is extended to any probability Borel measure $\nu$ by setting
 $$
    \HD(\nu):= \inf\{\HD(X):\nu(X)=1\}.
 $$
Given a non necessarily ergodic measure $\nu\in\Prob(f)$ whose top exponent $\lambda^+(\nu)$ and bottom exponent\footnote{The bottom exponent $\lambda^-(\nu)$ is the top exponent with respect to the inverse $f^{-1}$.} $\lambda^-(\nu)$ are both nonzero, the ``thermodynamical'' unstable and stable dimensions of $\mu$ are defined as
 $$
   \delta^+(\nu):=\frac{h(\nu)}{\lambda^+(\nu)} \text{ and }
   \delta^-(\nu):=-\frac{h(\nu)}{\lambda^-(\nu)}.
 $$
When $\nu\in\Prob(f)$ is ergodic and \emph{hyperbolic of saddle type} (i.e. with both positive and negative exponents, none vanishing),  these numbers are indeed (nonnegative) Hausdorff dimensions and we have L.-S. Young formula \cite{LSY-2d}
 \begin{equation}\label{eqYoungFormula}
    \HD(\nu) = \delta^+(\nu) + \delta^-(\nu). 
 \end{equation}

\medbreak

Let $\Proberg(f)\subset\Prob(f)$ be the subset of ergodic measures and, given $h>0$,  let  $\Proberg^h(f)\subset\Proberg(f)$ be the further subset of those ergodic invariant probability measures of $f$ with entropy at least $h$. 

\begin{corollary}\label{coroHDusc}
Let $f$ be a $C^\infty$ diffeomorphism of a closed surface. For each $0<h\le h_\top(f)$, the Hausdorff dimension $\nu\mapsto\HD(\nu)$ is upper semicontinuous over $\Proberg^h(f)$: Given any sequence of measures $\nu_k\in\Proberg^h(f)$ admitting a weak star limit $\mu$, if that limit is ergodic, then
 $$
    \HD(\mu) \ge \limsup_k \HD(\nu_k).
 $$
\end{corollary}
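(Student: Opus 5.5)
We plan to reduce Corollary~\ref{coroHDusc} to Young's formula \eqref{eqYoungFormula} together with Theorem~\ref{maintheoremErgodic}, applied once to $f$ and once to $f^{-1}$. Take $\nu_k\in\Proberg^h(f)$ converging to an ergodic $\mu$. After passing to a subsequence, we may assume that $\HD(\nu_k)$ converges to $\limsup_k\HD(\nu_k)$. By compactness we may also assume that $h(\nu_k)$, $\lambda^+(\nu_k)$ and $\lambda^-(\nu_k)$ all converge.

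First we check hyperbolicity. Since $h(\nu_k)\ge h>0$, the Ruelle–Margulis inequality (Theorem~\ref{thm-Ruelle-Margulis}) gives $\lambda^+(\nu_k)\ge h(\nu_k)\ge h$. Applying it to $f^{-1}$, which has the same entropy, gives $\lambda^-(\nu_k)\le -h$. So every $\nu_k$ is ergodic and hyperbolic of saddle type, and $\HD(\nu_k)=\delta^+(\nu_k)+\delta^-(\nu_k)$. For $\mu$, upper semicontinuity of entropy (Theorem~\ref{thm-Newhouse}) gives $h(\mu)\ge\limsup h(\nu_k)\ge h>0$. Hence Ruelle–Margulis for $f$ and $f^{-1}$ shows $\mu$ is also ergodic and hyperbolic of saddle type, and $\HD(\mu)=\delta^+(\mu)+\delta^-(\mu)$. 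The limits $\lim h(\nu_k)$ and $\lim\lambda^+(\nu_k)$ are positive (at least $h$), so the hypotheses of Theorem~\ref{maintheoremErgodic} hold.

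Next we compare dimensions. Inequality (2) of \eqref{ineq1} rearranges to
$$
\lim_k \delta^+(\nu_k)=\frac{\lim_k h(\nu_k)}{\lim_k\lambda^+(\nu_k)}\le \frac{h(\mu)}{\lambda^+(\mu)}=\delta^+(\mu).
$$
Now apply Theorem~\ref{maintheoremErgodic} to $f^{-1}$, which is again a $C^\infty$ surface diffeomorphism. The measures are the same, entropies are unchanged, and the top exponent for $f^{-1}$ is $-\lambda^-$. Ergodicity and weak star convergence are preserved, so we get $\lim_k\delta^-(\nu_k)\le\delta^-(\mu)$. Adding the two inequalities gives $\lim_k\HD(\nu_k)\le\HD(\mu)$, which is the claim.

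The only real input is inequality (2) of Theorem~\ref{maintheoremErgodic}; everything else is bookkeeping. The step needing care is confirming that $\mu$ is hyperbolic of saddle type, so that Young's formula applies to it. This is where the uniform entropy lower bound $h>0$ is essential: it passes to $\mu$ through upper semicontinuity of entropy and then forces both exponents away from zero. Without that bound, Proposition~\ref{propNewhouseDomain} shows the dimension could jump in the wrong direction.
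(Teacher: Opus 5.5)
Your proof is correct and follows the paper's argument essentially step for step. Ruelle--Margulis for $f$ and $f^{-1}$, together with Newhouse's upper semicontinuity, makes the $\nu_k$ and $\mu$ hyperbolic of saddle type so that Young's formula applies. Inequality (2) of Theorem~\ref{maintheoremErgodic} then gives $\lim_k\delta^+(\nu_k)\le\delta^+(\mu)$, and the same theorem applied to $f^{-1}$ gives the $\delta^-$ bound.

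Only your closing aside is inaccurate. Proposition~\ref{propNewhouseDomain} always has $\alpha\le\beta$, so its examples satisfy $\lim_k\delta^+(\nu_k)\le\delta^+(\mu)$ and show no jump in the wrong direction. What the bound $h>0$ actually secures is that Theorem~\ref{maintheoremErgodic} and Young's formula can be applied.
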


\begin{remark}
By Newhouse theorem \ref{thm-Newhouse}, the limit of any weakly converging sequence in $\Proberg^h(f)$ has entropy at least $h$. However to apply Theorem~\ref{maintheoremErgodic}, we need ergodicity which we have to assume, since it is automatic in our setting. Now, in contrast to $\{\nu\in\Prob(f):h(f,\nu)\ge h\}$, 
$\Proberg^h(f)$ is not compact, hence the above does not yield the existence of an ergodic measure maximizing the Hausdorff dimension, even in restriction to $\Proberg^h(f)$ for $h>0$.

We are able to remove the ergodicity assumption only when dealing with the more technical, thermodynamical dimensions   $\delta^+,\delta^-$.
\end{remark}

\begin{proof}[Proof of the Corollary]
Let $f$ and $h$ as above. From the variational principle and the existence of MMEs for $C^\infty$ diffeomorphisms, we have $\Proberg^h(f)\ne\emptyset$. Let $(\nu_k)$ be a sequence of measures in $\Proberg^h(f)$. Assume that it converges in the weak star topology to some limit $\mu$. Since $f$ is $C^\infty$, Newhouse theorem gives $h(\mu)\ge h$. By assumption, $\mu$ is ergodic so $\mu\in\Proberg^h(f)$. 

Given $H:=\limsup_k \HD(\nu_k)=\limsup_k \delta^+(\nu_k)+\delta^-(\nu_k)$, we can find a subsequence such that the following limits exist and satisfy the following conditions:
 $$
   \lim_k \delta^+(\nu_k)+\delta^-(\nu_k) = H,\; 
   \lim_k h(\nu_k),\;
   \lim_k \lambda^+(\nu_k),\;
   \lim_k \lambda^-(\nu_k). 
 $$
By Ruelle inequality, $\lim_k \lambda^+(\nu_k)\ge h>0$, hence we can apply Theorem~\ref{maintheoremErgodic}. The inequality from this theorem is equivalent to
 $$
    \lim_k \delta^+(\nu_k) = \frac{\lim_k h(\nu_k)}{\lim_k\lambda^+(\nu_k)} \le \frac{h(\mu)}{\lambda^+(\mu)}=\delta^+(\nu).
 $$
Note that this argument shows the upper semicontinuity of $\delta^+$ in $\Proberg^h(f)$. 

Noting that $\lambda^-(f,\cdot)=-\lambda^+(f^{-1},\cdot)$ and $h(f^{-1},\cdot)=h(f,\cdot)$, we see that  $\delta^-(f,\cdot)=\delta^+(f^{-1},\cdot)$, hence the same applies to $\delta^-$. We obtain
 $$
    \limsup_k \delta^+(\nu_k)+\delta^-(\nu_k) \leq \delta^+(\mu)+\delta^-(\mu).
 $$
Finally, the Ruelle-Margulis inequality implies that every $\mu\in\Proberg^h(f)$, for any $h>0$, is hyperbolic of saddle type, hence L.S. Young formula \eqref{eqYoungFormula} applies. It yields the upper semicontinuity of the Hausdorff dimension on $\Proberg^h(f)$.
\end{proof}

We noted that the above Corollary does not ensure the existence of an invariant ergodic measure with maximal Hausdorff dimension. We can ask whether this is just a technical problem.

\begin{question}
Given a smooth diffeomorphism $f$ of a closed surface, must $\sup\{\HD(\mu):\mu\in\Proberg(f)\}$ be achieved? What about $\sup\{\HD(\mu):\mu\in\Proberg(f)$ with $h(\mu)>0\}$? Or $\sup\{\HD(\mu):\mu\in\Proberg^h(f)\}$  for any fixed $0<h<h_\top(f)$?
\end{question}

\begin{remark}
In our approach, the difficulty comes from the fact that, for a given sequence $(\nu_k)$ as above, the decomposition $\mu=(1-\beta)\mu_0+\beta\mu_1$ provided by Theorem~\ref{theoremWithoutErgodicity} may be different when considering $f^{-1}$ instead of $f$. See Chapter~\ref{chapter-neutral}.
\end{remark}

\medbreak

\subsubsection*{Maximizing the unstable dimension}
In this case, we are able to deal with nonergodic limits to  find a measure maximizing the unstable dimension. Indeed, the non-necessarily ergodic version of our result, Theorem~\ref{theoremWithoutErgodicity}, gives the following existence result:

\begin{corollary}\label{coroUSCdelta}
Let $f$ be a $C^\infty$ diffeomorphism of a closed surface. For each $h>0$ such that $\Proberg^h(f)\ne\emptyset$, the function $\mu\mapsto\delta^+(\mu)$ achieves its supremum over $\Proberg^h(f)$.
\end{corollary}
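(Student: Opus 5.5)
Let $D:=\sup\{\delta^+(\nu):\nu\in\Proberg^h(f)\}$. I first check that $D$ is finite and well defined. By the Ruelle--Margulis inequality, every $\nu\in\Proberg^h(f)$ has $\lambda^+(\nu)\ge h(\nu)\ge h>0$, so $\delta^+(\nu)\le 1$ and $\delta^+$ is defined on all of $\Proberg^h(f)$. Since $\Proberg^h(f)\neq\emptyset$, we get $0<D\le 1$.

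Take a maximizing sequence $\nu_k\in\Proberg^h(f)$ with $\delta^+(\nu_k)\to D$. Using compactness of $\Prob(f)$ and boundedness of $h(\nu_k)\in[h,h_\top(f)]$ and $\lambda^+(\nu_k)\in[h,\log\Lip(f)]$, I pass to a subsequence so that $\nu_k\to\mu$ weakly and $h(\nu_k)\to H\ge h$, $\lambda^+(\nu_k)\to\Lambda\ge h$. The limits are positive, so Theorem~\ref{theoremWithoutErgodicity} applies. It gives $\mu=(1-\beta)\mu_0+\beta\mu_1$ with $0<\beta\le1$, $h(\mu_1)>0$, $H\le\beta h(\mu_1)$ and $\Lambda=\beta\lambda^+(\mu_1)$. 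Dividing, I obtain
$$D=\frac{H}{\Lambda}\le\frac{\beta h(\mu_1)}{\beta\lambda^+(\mu_1)}=\delta^+(\mu_1).$$

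The limit $\mu_1$ need not be ergodic, so I pass to ergodic components. Write the ergodic decomposition $\mu_1=\int\mu_{1,x}\,d\mu_1(x)$. Both entropy and $\lambda^+$ are affine, in fact integrals over the decomposition. Since $\lambda^+(\mu_{1,x})\ge0$ always, the inequality $\int h(\mu_{1,x})\ge D\int\lambda^+(\mu_{1,x})$ forces some component $\nu$ of positive weight with $h(\nu)\ge D\lambda^+(\nu)$ and $h(\nu)>0$. This gives $\delta^+(\nu)\ge D$, hence $\delta^+(\nu)=D$ provided $\nu\in\Proberg^h(f)$. One caveat: components with $\lambda^+=0$ have $h=0$ by Ruelle--Margulis, so they are harmless in this averaging.

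The main obstacle is ensuring the chosen component has entropy at least $h$. Theorem~\ref{theoremWithoutErgodicity} only gives $h(\mu_1)\ge H/\beta\ge h$ on average, and some components may have small entropy. To handle this, I split $\mu_1$ into its components with $h\ge h$ (call this part $A$) and those with $h<h$ (part $B$). On $B$, Ruelle--Margulis alone only gives $\delta^+\le1$, so I cannot conclude directly that the ratio there is below $D$. I would instead aim to show that, if every component with $h(\cdot)\ge h$ had $\delta^+<D$, then the low-entropy components would have to carry ratio above $D$. I would then try to exploit the construction of $\mu_1$ in the paper's proof, which arises as the hyperbolic part of the limit, together with Newhouse upper semicontinuity, to approximate such a low-entropy component by ergodic hyperbolic measures (Katok horseshoes) of entropy at least $h$ and $\delta^+$ close to or above $D$. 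Horseshoes alone cannot raise entropy above that of the component, however. If that fails, my fallback is to check whether the paper's proof yields the decomposition with $\mu_1$ supported on components with entropy $\ge h$. Failing that, I would show the maximum is attained at the level $\Proberg^{h'}$ with the monotonicity of $D(h)$ in $h$. I expect this step to be where the real work lies.
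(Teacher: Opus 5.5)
Up to the choice of the ergodic component, your argument is the paper's. You take a maximizing sequence and a subsequence with converging entropies and exponents, apply Theorem~\ref{theoremWithoutErgodicity} to get $D\le\delta^+(\mu_1)$, and then average $h-D\lambda^+$ over the ergodic decomposition of $\mu_1$, which is exactly Lemma~\ref{lem-dim-ergdec}.

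One point in that averaging is asserted without proof. The claim ``$\lambda^+(\mu_{1,x})\ge 0$ always'' is false in general, because of periodic sinks. A sink component would ruin the averaging: it makes $\int(h-D\lambda^+)\,d\mu_1\ge0$ compatible with every other component having $h<D\lambda^+$. The paper handles this explicitly. Since $\beta\mu_1\le\mu$, a sink charged by $\mu_1$ is charged by $\mu$. Lemma~\ref{lem-no-sink-source} excludes this, because each $\nu_k$ is ergodic with positive entropy and so gives no mass to periodic orbits.

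The genuine gap is the one you flag yourself, and your proposal does not close it. The component $m$ you obtain has $h(m)>0$ and $\delta^+(m)\ge D$, but nothing shows $h(m)\ge h$, i.e.\ that $m\in\Proberg^h(f)$.

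Affine bookkeeping alone cannot settle this. Take weights $\frac12,\frac12$ on two components whose (entropy, top exponent) are $(2h,\,2h/0.49)$ and $(h/2,\,h/2)$. Then $\mu_1$ has entropy $\frac54 h\ge h$ and $\delta^+(\mu_1)>0.54$, while the only component of entropy $\ge h$ has $\delta^+=0.49$.

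None of your fallbacks helps:
\begin{itemize}
\item Horseshoes approximating $m$ have entropy close to $h(m)<h$.
\item Mixing $m$ with high-entropy components inside one horseshoe would need homoclinic relations between distinct ergodic components of $\mu_1$, which nothing provides.
\item Theorem~\ref{theoremWithoutErgodicity} and the neutral decomposition control only the average $h(\mu_1)\ge H/\beta$, not individual components.
\item Monotonicity of $h'\mapsto D(h')$ only gives $D(h(m))\ge\delta^+(m)\ge D(h)$, which says nothing about attainment at level $h$.
\end{itemize}

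You should know that the paper's proof stops at exactly the same place. After excluding sinks it concludes ``by Lemma~\ref{lem-dim-ergdec}'', which yields a hyperbolic component $m^+$ with $\delta^+(m^+)\ge\delta^+(\mu_1)\ge D$, and it never checks that $h(m^+)\ge h$. What both arguments actually prove is weaker: some $m\in\Proberg(f)$ with $h(m)>0$ satisfies $\delta^+(m)\ge\sup\{\delta^+(\nu):\nu\in\Proberg^h(f)\}$. That suffices for the SRB characterization stated next, since a supremum equal to $1$ together with Ruelle--Margulis forces $\delta^+(m)=1$. It is not attainment over $\Proberg^h(f)$, and the missing idea is absent from the paper as well.
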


Note that this is not an immediate consequence of the upper semicontinuity of $\delta^+$ over $\Proberg^h(f)$, since the latter is not necessarily compact.

\medbreak

We first make the following two observations:

\begin{lemma}\label{lem-no-sink-source}
If measures $\nu_k\in\Proberg(f)$ weakly converge to $\mu$ in the weak topology with $\nu_k(\{x:$ sink or source$\})=0$ for all large $k$, then the same holds for $\mu$, i.e., $\mu(\{x:$ sink or source$\})=0$.
\end{lemma}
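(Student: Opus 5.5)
The plan is to exploit two facts: the set $S$ of sinks and sources is countable, and each of its points is isolated in the support of any invariant measure not charging $S$. A sink (or source) here means a periodic point $p$ whose eigenvalues of $D_pf^{\text{per}(p)}$ all have modulus $<1$ (resp.\ $>1$). The first task is to record that $S$ consists of hyperbolic periodic points that are attracting or repelling, and hence is countable. Indeed, a hyperbolic sink has a basin containing an open neighborhood, and distinct sinks have disjoint basins, so by second countability there are at most countably many sinks. The same holds for sources, using $f^{-1}$. Consequently $\mu(S)=0$ amounts to $\mu(\mathcal O)=0$ for every sink or source orbit $\mathcal O$, and it suffices to handle one periodic orbit $\mathcal O$ of a sink (the source case follows by replacing $f$ with $f^{-1}$).

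Next, fix a sink orbit $\mathcal O$ of period $n$ and its local basin. By hyperbolicity there is an open neighborhood $U\supset\mathcal O$ such that $\overline{f^n(U)}\subset U$, and such that every point of $U$ has its $f$-orbit converging to $\mathcal O$. Then consider an ergodic $\nu_k$ with $\nu_k(S)=0$. The key claim is that $\nu_k(U)=0$. Since $\nu_k$ is invariant, Poincaré recurrence says $\nu_k$-a.e.\ point is recurrent. A point $x\in U$ has $f^{jn}(x)\to\mathcal O$; if $x\notin\mathcal O$, take a small neighborhood $V$ of $x$ disjoint from a neighborhood $W$ of $\mathcal O$ that the iterates eventually enter and stay in (we may shrink so that $f^{jn}(U')\subset W$ for large $j$ uniformly near $x$). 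This shows $x$ is not recurrent. Hence $\nu_k(U\setminus\mathcal O)=0$, and also $\nu_k(\mathcal O)\le\nu_k(S)=0$. So $\nu_k(U)=0$.

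Finally, pass to the limit. Since $U$ is open, the portmanteau theorem gives $\mu(U)\le\liminf_k\nu_k(U)=0$. Therefore $\mu(\mathcal O)\le\mu(U)=0$. Summing over the countably many sink and source orbits yields $\mu(S)=0$. Ergodicity of the $\nu_k$ is not really needed, only invariance.

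The main obstacle is making the non-recurrence argument clean. I would phrase it by choosing $U$ with $f^n(\overline U)\subset U$, so that the sets $U_j:=f^{jn}(U)$ decrease to $\mathcal O$. Invariance of $\nu_k$ gives $\nu_k(U\setminus f^n(U))=\nu_k(U)-\nu_k(f^n U)=0$. The sets $U_j\setminus U_{j+1}$ are disjoint, all have measure $0$, and their union is $U\setminus\bigcap_j U_j = U\setminus\mathcal O$. This gives $\nu_k(U\setminus\mathcal O)=0$ directly, with no recurrence needed.
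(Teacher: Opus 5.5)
Your proof is correct, but it takes a different route from the paper's.

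\textbf{The paper's argument.} It argues by contradiction. If $\mu(\{p\})>0$ for some sink or source $p$, then the open basin $B$ of $p$ satisfies $\nu_k(B)>0$ for large $k$. By ergodicity, $\nu_k$-a.e.\ orbit enters $B$ and therefore converges to the orbit of $p$. This forces $\nu_k$ to be the periodic measure on that orbit, contradicting $\nu_k(S)=0$.

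\textbf{Your argument.} You take a trapping neighbourhood $U$ of the orbit $\mathcal O$ and use a wandering-set argument with invariance only:
\[
\nu_k\bigl(f^{jn}U\setminus f^{(j+1)n}U\bigr)=\nu_k(U\setminus f^nU)=0 .
\]
This shows that every invariant measure not charging $\mathcal O$ gives zero mass to $U$, and then you pass to the limit by portmanteau.

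\textbf{What each route buys.} Yours needs no ergodicity of the $\nu_k$ at all. It also makes explicit the countability of sink and source orbits. The paper uses that countability silently when it reduces $\mu(S)>0$ to $\mu(\{p\})>0$ for a single $p$. The paper's route is shorter: it only needs that basins are open. You additionally need a neighbourhood with $\bigcap_j f^{jn}(U)=\mathcal O$. You should justify this, for instance by taking $U$ to be a union of small balls, in an adapted metric, on which $f^n$ is a uniform contraction.

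Your Poincar\'e-recurrence paragraph is superseded by the final wandering-set argument and can be dropped.
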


\begin{proof}[Proof of the Lemma]
Assume by contradiction that $\mu(\{p\})>0$ for some source or sink $p$ in the setting of the lemma. Let $B$ be the forward or backward basin of $p$. The convergence $\nu_k\to\mu$ forces $\nu_k(B)>0$ for large $k$. By ergodicity, a.e. $\nu_k$-typical orbit  enters $B$. Hence $\nu_k$ is the periodic measure defined by $p$, a contradiction.
\end{proof}

\begin{lemma}\label{lem-dim-ergdec}
Let $f$ be a $C^1$ diffeomorphism of a closed surface and let $\mu\in\Prob(f)$ be such that its ergodic decomposition does not contain any periodic sink. If  $\delta^+(\mu)>0$, then  hyperbolic ergodic components $m^+$ with $\delta^+(m^+)\ge\delta^+(\mu)$ appear with positive measure in the ergodic decomposition of $\mu$.
\end{lemma}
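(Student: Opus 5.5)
The plan is to pass to the ergodic decomposition $\mu=\int m\,d\hat\mu(m)$ and use that entropy and the top exponent are both affine, i.e. $h(\mu)=\int h(m)\,d\hat\mu(m)$ and $\lambda^+(\mu)=\int\lambda^+(m)\,d\hat\mu(m)$. We argue by contradiction. Suppose the set
$$E:=\{m: m \text{ hyperbolic with } \delta^+(m)\ge\delta^+(\mu)\}$$
has $\hat\mu$-measure zero. Then we show that $h(\mu)<\delta^+(\mu)\lambda^+(\mu)$, which contradicts the definition $h(\mu)=\delta^+(\mu)\lambda^+(\mu)$. Note that $\delta^+(\mu)>0$ forces $h(\mu)>0$ and $\lambda^+(\mu)>0$, the latter being needed for $\delta^+(\mu)$ to be defined.

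The key step is a pointwise bound $h(m)\le \delta^+(\mu)\,\lambda^+(m)$ for $\hat\mu$-a.e.\ component $m\notin E$, with strict inequality on a set of positive measure. We split the components into three classes.
\begin{itemize}
\item If $h(m)>0$, then by the Ruelle--Margulis inequality applied to $f$ and to $f^{-1}$, the component $m$ is hyperbolic of saddle type with $\lambda^+(m)>0$. Since $m\notin E$, we get $h(m)=\delta^+(m)\lambda^+(m)<\delta^+(\mu)\lambda^+(m)$, which is strict.
\item If $h(m)=0$ and $\lambda^+(m)\ge0$, the bound $0\le\delta^+(\mu)\lambda^+(m)$ holds trivially.
\item If $h(m)=0$ and $\lambda^+(m)<0$, then both exponents of $m$ are negative. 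On a surface this makes $m$ a periodic sink: Pesin theory, or even just a uniform contraction argument, shows that an ergodic measure with all exponents negative is carried by an attracting periodic orbit. The hypothesis excludes this case.
\end{itemize}

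Integrating the pointwise bound gives $h(\mu)\le\delta^+(\mu)\lambda^+(\mu)$, and the inequality is strict as soon as the first class has positive $\hat\mu$-measure. That class does have positive measure, because $h(\mu)=\int h(m)\,d\hat\mu>0$. This yields the contradiction, so $\hat\mu(E)>0$, as claimed.

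The main obstacle is the sink-exclusion step, that is, justifying that an ergodic measure with two negative exponents for a $C^1$ surface diffeomorphism is a periodic sink. For this we would cite the standard fact that such a measure is supported on a periodic attracting orbit; it follows from Pesin stable manifolds being open sets, or from Oseledets' theorem combined with a Lyapunov-chart argument. A second point needs care: the affinity of $\lambda^+$ under ergodic decomposition. This holds because $\lambda^+(x)$ is defined pointwise and is $\mu$-integrable, being bounded by $\log\|Df\|_\infty$, and the affinity of entropy under ergodic decomposition is standard (Jacobs' theorem).
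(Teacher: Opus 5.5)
Your proof is correct and follows essentially the paper's argument. In both, Ruelle--Margulis for $f$ and $f^{-1}$ puts all of $h(\mu)$ on saddle-type components, and sink exclusion gives $\lambda^+\ge0$ almost everywhere; your contradiction via the pointwise bound $h(m)\le\delta^+(\mu)\lambda^+(m)$ is the paper's observation that $\delta^+(\mu)$ is at most a $\lambda^+\,d\xi$-weighted average of $\delta^+(\mu_\xi)$ over the hyperbolic components, written out explicitly. For the sink step in the $C^1$ setting, rely on the uniform-contraction/Pliss argument of Lemma~\ref{lem-all-negative-exp} rather than on Pesin stable manifolds, which require $C^{1+\alpha}$.
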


\begin{proof}
Let $\mu = \int \mu_\xi\, d\xi$ be the ergodic decomposition of $\mu$. Note that $\lambda^+(\mu_\xi)\ge0$ a.e. (otherwise $\mu_\xi$ would be a sink). Let 
 $$
   H:=\{\xi:\lambda^+(\mu_\xi)>0>\lambda^-(\mu_\xi)\}.
 $$
We have
 $$
   \lambda^+(\mu) \stackrel{\rm def}{=} \int \lambda^+(\mu_\xi)\, d\xi \stackrel{(1)}{\ge} \int_H \lambda^+(\mu_\xi)\, d\xi \text{ and }
   h(\mu) = \int h(\mu_\xi)\, d\xi \stackrel{(2)}{=} \int_H h(\mu_\xi)\, d\xi,
 $$
where (1) follows from $\lambda^+(\mu_\xi)\ge0$ a.e. and (2) follows from Ruelle-Margulis inequality applied to $f$ and $f^{-1}$.
Therefore
 $$
   \delta^+(\mu) = \frac{\int h(\mu_\xi)\, d\xi}{\int \lambda^+(\mu_\xi)\, d\xi} \leq \frac
      {\int_H \delta^+(\mu_\xi) \cdot \lambda^+(\mu_\xi)\, d\xi}
      {\int_H \lambda^+(\mu_\xi)\, d\xi}.
 $$
Understanding the right hand side as an average, this inequality implies that the set $\{\xi : \delta^+(\mu_\xi)\ge \delta^+(\mu)\}$ has positive measure for $\lambda^+\cdot d\xi$ hence for $d\xi$.
\end{proof}

\begin{proof}[Proof of the Corollary]
Let $\nu_k\to\mu$ with $\nu_k\in\Proberg^h(f)$ for some $h>0$.
Since $\lambda^+(\nu_k)\ge h(\nu_k)\ge h>0$, Theorem~\ref{theoremWithoutErgodicity} applies to a subsequence with converging exponents and entropies and yields a decomposition $\mu=(1-\beta)\mu_0+\beta\mu_1$ satisfying $\beta>0$ and
 $$
    \limsup_k \delta^+(\nu_k)  = \limsup_k \frac{h(\nu_k)}{\lambda^+(\nu_k)} \le \frac{h(\mu_1)}{\lambda^+(\mu_1)} = \delta^+(\mu_1).
  $$
Note that $\mu_1$ is not necessarily ergodic or equal to $\mu$.

We apply this to a sequence of measures $\nu_k\in\Proberg^h(f)$ such that 
 $$
   \lim_k \delta^+(\nu_k)=\sup\{\delta^+(\nu):\nu\in\Proberg^h(f)\}>0.
 $$
To conclude, we first note that if $\mu_1$ had some periodic sink in its ergodic decomposition, this would be the case for $\mu$, but this is excluded by Lemma~\ref{lem-no-sink-source}. Hence we can conclude the proof of the corollary by lemma~\ref{lem-dim-ergdec}.
\end{proof}

\subsection*{Existence of S.R.B. measures}
Our results give some  natural characterizations of the existence of  hyperbolic \emph{Sinai-Ruelle-Bowen measure} (SRB for short), a fundamental class of measures in smooth ergodic theory \cite{LY1,LY2}. These measures $\mu$ are a subject of large interest \cite{Y-SRB} in part because, according to  \cite{Pugh-Shub}, they are \new{physical measures}: their ergodic basin, i.e., the set of points $x$ such that the weak limit $\lim_n (1/n)\sum_{k=0}^{n-1} \delta_{f^kx}$ exists and is $\mu$, has  positive volume. 

\begin{definition}
For surface diffeomorphisms, the hyperbolic \new{SRB measures} are the ergodic measures $\mu\in\Proberg(f)$ such that
 $$
    h(\mu)=\lambda^+(\mu)>0.
 $$
\end{definition}

The above is equivalent to $\delta^+(\mu)$ being well-defined and equal to $1$.
Therefore Corollary~\ref{coroUSCdelta} yields:

\begin{corollary}
Let $f$ be a $C^\infty$ diffeomorphism of a closed surface. There exists a hyperbolic SRB measure if and only if there is some $h>0$ satisfying
 $$
   \sup\{\delta^+(\mu):\mu\in\Proberg^h(f)\}=1.
 $$
\end{corollary}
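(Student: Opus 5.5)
The equivalence is proved one direction at a time, and both rest on the definition $h(\mu)=\lambda^+(\mu)>0$, i.e. $\delta^+(\mu)=1$, and on Corollary~\ref{coroUSCdelta}.

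\emph{Necessity.} Suppose $\mu$ is a hyperbolic SRB measure and set $h:=h(\mu)>0$. Then $\mu\in\Proberg^h(f)$ and $\delta^+(\mu)=h(\mu)/\lambda^+(\mu)=1$, so the supremum is at least $1$. For the reverse inequality, take any $\nu\in\Proberg^h(f)$. The Ruelle--Margulis inequality (Theorem~\ref{thm-Ruelle-Margulis}) gives $\lambda^+(\nu)\ge h(\nu)\ge h>0$. Hence $\delta^+(\nu)$ is well defined and at most $1$. (For an ergodic $\nu$ we have $\lambda^+(x)=\lambda^+(\nu)$ almost everywhere, so the positive part inside the integral is harmless.) The supremum therefore equals $1$.

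\emph{Sufficiency.} Suppose that for some $h>0$ the supremum of $\delta^+$ over $\Proberg^h(f)$ equals $1$. In particular $\Proberg^h(f)\ne\emptyset$, so Corollary~\ref{coroUSCdelta} gives some $\mu\in\Proberg^h(f)$ with $\delta^+(\mu)=1$. Unwinding the definition, $h(\mu)=\lambda^+(\mu)$ with $h(\mu)\ge h>0$. Also, $\mu$ is ergodic and has positive entropy, so the Ruelle--Margulis inequality applied to $f^{-1}$ gives $\lambda^-(\mu)<0$, and $\mu$ is hyperbolic of saddle type. Thus $\mu$ is a hyperbolic SRB measure.

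The substantive input is the attainment of the supremum, which comes from Corollary~\ref{coroUSCdelta} and ultimately from Theorem~\ref{theoremWithoutErgodicity}. This matters because $\Proberg^h(f)$ is not compact, so upper semicontinuity of $\delta^+$ alone would not produce a maximizer. Once that corollary is assumed, no step is a real obstacle. The only care needed is to check that $\delta^+$ is well defined on $\Proberg^h(f)$, that is, $\lambda^+>0$ there, and this is exactly what the Ruelle inequality provides.
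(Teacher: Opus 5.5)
Your proof is correct and follows the paper's own route. The paper observes that being a hyperbolic SRB measure is equivalent to $\delta^+(\mu)$ being well defined and equal to $1$, with $\delta^+\le 1$ on $\Proberg^h(f)$ by Ruelle--Margulis, and then invokes Corollary~\ref{coroUSCdelta} to obtain a maximizer, exactly as you do; you simply make the bound $\sup\le 1$ and the choice $h:=h(\mu)$ explicit.
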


One can translate the above in terms of \emph{geometric pressure} and the corresponding variational problem. If $\mu\in\Prob(f)$ has no sinks or sources in its ergodic decomposition, its unstable space is a.e. one-dimensional and therefore its geometric pressure is
 $$
    P_{\rm geo}(\mu) := h(\mu) - \lambda^+(\mu) \le 0.
 $$
This quantity is always nonnegative by Ruelle-Margulis inequality. Observe that among hyperbolic measures, the pressure is zero if and only if the thermodynamical unstable dimension satisfies $\delta^+(\mu)=1$. Thus:

\begin{corollary}
Let $f$ be a $C^\infty$ diffeomorphism of a closed surface. There exists a hyperbolic SRB measure if and only if there is some $h>0$ satisfying
 $$
   \sup\{P_{\rm geo}(\mu):\mu\in\Proberg^h(f)\}=0.
 $$
\end{corollary}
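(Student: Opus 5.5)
The plan is to prove both implications through the equivalence $P_{\rm geo}(\mu)=0\iff\delta^+(\mu)=1$ for hyperbolic measures, and then reduce to the previous corollary (which rests on Corollary~\ref{coroUSCdelta}).

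First, fix $h>0$ and take any $\mu\in\Proberg^h(f)$. By the Ruelle--Margulis inequality (Theorem~\ref{thm-Ruelle-Margulis}) applied to $f$ and to $f^{-1}$, we have $\lambda^+(\mu)\ge h(\mu)\ge h>0$ and $\lambda^-(\mu)\le -h<0$. So $\mu$ is hyperbolic of saddle type. In particular it is not a sink or a source, and its unstable direction is one-dimensional. Hence
$$P_{\rm geo}(\mu)=h(\mu)-\lambda^+(\mu)=\lambda^+(\mu)\,(\delta^+(\mu)-1)\le 0.$$

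For the direct implication, let $\mu$ be a hyperbolic SRB measure and set $h:=h(\mu)>0$. Then $\mu\in\Proberg^h(f)$ and $P_{\rm geo}(\mu)=0$. Since $P_{\rm geo}\le0$ on $\Proberg^h(f)$, the supremum equals $0$ and is attained at $\mu$.

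For the converse, assume $\sup\{P_{\rm geo}(\mu):\mu\in\Proberg^h(f)\}=0$ for some $h>0$. Choose $\mu_n\in\Proberg^h(f)$ with $P_{\rm geo}(\mu_n)\to 0$. The key quantitative input is that exponents are uniformly bounded: $\lambda^+(\mu_n)\le L:=\log\max\|Df\|<\infty$ by compactness. Therefore
$$1-\delta^+(\mu_n)=\frac{-P_{\rm geo}(\mu_n)}{\lambda^+(\mu_n)}\le \frac{-P_{\rm geo}(\mu_n)}{h}\longrightarrow 0,$$
using $\lambda^+(\mu_n)\ge h$. Together with $\delta^+\le1$, this gives $\sup\{\delta^+(\mu):\mu\in\Proberg^h(f)\}=1$. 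The previous corollary then yields a hyperbolic SRB measure. Concretely, Corollary~\ref{coroUSCdelta} says the supremum is achieved by some $\mu\in\Proberg^h(f)$ with $\delta^+(\mu)=1$, that is $h(\mu)=\lambda^+(\mu)>0$.

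The main point to watch is the conversion between the two suprema, since $P_{\rm geo}(\mu_n)\to0$ does not by itself force $\delta^+(\mu_n)\to1$. It does so here because $\lambda^+\ge h$ bounds the denominator from below, which is exactly where the restriction to $\Proberg^h(f)$ is used. Everything else is a direct reduction to Corollary~\ref{coroUSCdelta} via the previous corollary.
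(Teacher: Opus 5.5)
Your proof is correct and follows the paper's route. On $\Proberg^h(f)$ one has $P_{\rm geo}(\mu)=\lambda^+(\mu)\bigl(\delta^+(\mu)-1\bigr)\le 0$, and the converse reduces to the preceding $\delta^+$-characterization, i.e.\ to Corollary~\ref{coroUSCdelta}, with your lower bound $\lambda^+\ge h$ making explicit the passage from $\sup P_{\rm geo}=0$ to $\sup\delta^+=1$ that the paper only gestures at (the upper bound $\lambda^+\le\log\max\|Df\|$ you call the key input is never actually used).
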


The above characterizations using Hausdorff dimension and geometric pressure are rather abstract and seem difficult to check. A further work \cite{BCS-3} using, not the results of \cite{BCS-2}, but the underlying techniques and ideas explained in these lectures established a celebrated conjecture of Viana \cite{Viana-ICM} in the case of  $C^\infty$ smooth surface diffeomorphisms: \emph{positive Lyapunov exponents on a set of positive Lebesgue measure imply the existence of an SRB measure with positive entropy.}
Burguet \cite{Burguet-SRB} independently established this result (together with a control on the ergodic basins).

\subsection*{Entropy-continuity of the exponent}
The original motivation of \cite{BCS-2} was to establish some uniform hyperbolicity estimates for ergodic measures with large entropy. These estimates lead to a spectral gap and many consequences \cite{BCS-4} along the lines of the \emph{strong positive recurrence} of symbolic dynamics \cite{Gurevich-Savchenko,Cyr-Sarig}. 

\medbreak

These estimates can be deduced from the following consequence of Theorem~\ref{theoremWithoutErgodicity}:

\begin{maincorollary}\label{maincorollary-ECLE}
Let $f$ be a $C^\infty$ smooth diffeomorphism of a closed surface with $h_\top(f)>0$.
Let $\mu_1,\mu_2,\dots$ be ergodic invariant Borel probability measures with weak star limit~$\mu$. 

If $\lim_{k\to\infty} h(\mu_k)=h_\top(f)$, then the top Lyapunov exponents also converge, as follows
 $$\lim_{k\to\infty} \lambda^+(\mu_k)=\lambda^+(\mu).$$
\end{maincorollary}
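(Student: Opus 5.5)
We argue by contradiction and reduce to Theorem~\ref{theoremWithoutErgodicity}. Since $\Prob(f)$ is compact metrizable, it suffices to show that every subsequence of $(\mu_k)$ has a further subsequence along which $\lambda^+(\mu_k)\to\lambda^+(\mu)$. Along a subsequence we may assume that $\lambda^+(\mu_k)$ converges to some limit $L$ (the exponents are bounded by $\log\max\|Df\|$). Since $h(\mu_k)\to h_\top(f)>0$, the Ruelle--Margulis inequality (Theorem~\ref{thm-Ruelle-Margulis}) gives $L\ge h_\top(f)>0$. Thus both entropies and top exponents converge to positive limits, and Theorem~\ref{theoremWithoutErgodicity} applies to this subsequence with limit $\mu$.

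Theorem~\ref{theoremWithoutErgodicity} gives a decomposition $\mu=(1-\beta)\mu_0+\beta\mu_1$ with $0<\beta\le1$ and $h(\mu_1)>0$. It also gives
$$
   0<\frac{h_\top(f)}{h(\mu_1)}\le \beta=\frac{L}{\lambda^+(\mu_1)}.
$$
By the variational principle, $h(\mu_1)\le h_\top(f)$, so the left-hand ratio is at least $1$. Hence $\beta\ge1$, which forces $\beta=1$ and $\mu_1=\mu$. The right-hand equality then reads $L=\lambda^+(\mu_1)=\lambda^+(\mu)$. Since this holds for every convergent subsequence of exponents, the whole sequence satisfies $\lambda^+(\mu_k)\to\lambda^+(\mu)$.

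The argument is essentially bookkeeping once Theorem~\ref{theoremWithoutErgodicity} is available. The only point needing care is verifying its hypotheses: the existence of the limits, which we obtain by passing to subsequences, and their positivity, which comes from Ruelle--Margulis. The real difficulty lies entirely inside Theorem~\ref{theoremWithoutErgodicity}. As a sanity check, upper semicontinuity of the entropy (Theorem~\ref{thm-Newhouse}) gives $h(\mu)=h_\top(f)$, so $\mu$ is a measure of maximal entropy. This is consistent with $\mu_1=\mu$ having positive entropy; it is not needed for the argument, but it explains why no mass can escape into $\mu_0$.
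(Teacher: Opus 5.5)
Your proof is correct and follows the paper's own argument: apply Theorem~\ref{theoremWithoutErgodicity}, use $h(\mu_1)\le h_\top(f)$ to get $h_\top(f)\le \beta\, h_\top(f)$, conclude $\beta=1$ and $\mu_1=\mu$, and read off $\lim_k\lambda^+(\mu_k)=\lambda^+(\mu)$. Your subsequence extraction, together with the Ruelle--Margulis check that the limiting exponent is positive, supplies the hypotheses of that theorem (the paper leaves this implicit), and the same computation already works with the weaker Theorem~\ref{maintheorem-htop}, which is the one actually proved in these notes.
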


By Newhouse theorem \ref{thm-Newhouse}, $h(f,\mu)=h_\top(f)$, i.e., $\mu$ is a measure maximizing the entropy (or MME).

\begin{proof}
Let $f$, $\nu_1,\nu_2,\dots$, and $\mu$ be as above. Theorem~\ref{theoremWithoutErgodicity} yields some decomposition $\mu=(1-\beta)\mu_0+\beta\mu_1$ with $\mu_0,\mu_1\in\Prob(f)$ and $0<\beta\le1$ and such that
 $$
    h_\top(f) = \lim_k h(\nu_k) \le \beta \cdot h(\mu_1) 
    \text{ and }
    \beta = \lim_k \frac{\lambda^+(\nu_k)}{\lambda^+(\mu_1)}.
 $$
Since $h(\mu_1)\le h_\top(f)$, we have $0\ne h_\top(f)\le\beta\cdot h_\top(f)$ so $\beta=1$. Thus $\mu=\mu_1$ and $\lim_k \lambda^+(\nu_k)=\lambda^+(\mu)$.
\end{proof}

\begin{exercise}
Deduce from Theorem~\ref{theoremWithoutErgodicity} that any $C^\infty$ diffeomorphism of a closed surface has an MME \emph{without using Newhouse theorem} \ref{thm-Newhouse}.
\end{exercise}

In these notes, we will deduce
Corollary~\ref{maincorollary-ECLE} from the following  entropy bound.

\begin{maintheorem}\label{maintheorem-htop}
Let $f$ be a $C^\infty$ smooth diffeomorphism of a closed surface with $h_\top(f)>0$.
Let $\nu_1,\nu_2,\dots$ be ergodic invariant Borel probability measures of $f$ with weak star limit~$\mu$, possibly nonergodic.
Assume that $\lim_k \lambda^+(\nu_k)$ exists.

Then there is a decomposition
 $$
    \mu = (1-\beta)\mu_0+\beta\mu_1 \text{ with }
    0<\beta\le 1 \text{ and } \mu_0,\mu_1\in\Prob(f)
 $$
such that  
 $$
   \lim_{k\to\infty} \lambda^+(\nu_k)=\beta \cdot \lambda^+(\mu_1) \text{ and }
 \limsup_{k\to\infty} h(\nu_k) \le\beta\cdot h_\top(f).
 $$
\end{maintheorem}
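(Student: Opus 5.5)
The plan follows the strategy of \cite{BCS-2}: split each $\nu_k$-typical orbit into a ``good'' part, where the unstable direction is expanded uniformly at some fixed scale, and a ``neutral'' part, where it is not. The good part will produce $\mu_1$; the neutral part will produce $\mu_0$ and will carry asymptotically no entropy.

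\emph{Step 1: the neutral/good dichotomy.} Fix a small $\gamma>0$ and a large integer $N$. Call a time $n$ along the orbit of $x$ \emph{$\gamma$-hyperbolic} if the unstable vector $v\in E^u(f^nx)$ satisfies
 $$
   \|D f^{-m}v\|\le e^{-m\gamma}\|v\| \quad\text{for all } 1\le m\le n,
 $$
that is, a Pliss-type condition. For each $\nu_k$, define $\beta_k$ to be the $\nu_k$-measure of the set $G_\gamma$ of points for which time $0$ is $\gamma$-hyperbolic. After passing to the lift to the projective bundle $\PP TM$, take a subsequence so that the restrictions $\nu_k|_{G_\gamma}$ and $\nu_k|_{G_\gamma^c}$ converge. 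The good part is then spread along orbits by averaging over the excursions between hyperbolic times. The limit of this averaged good part, normalised, gives $\beta\mu_1$, and what remains gives $(1-\beta)\mu_0$. The identity $\lim_k\lambda^+(\nu_k)=\beta\lambda^+(\mu_1)$ should follow as in Proposition~\ref{prop-average-exp-usc}. We write $\lambda^+$ as the integral of the continuous function $\log\|Df|_{v}\|$ on $\PP TM$. On the good part, the limiting measure on $\PP TM$ still sits on the unstable direction, because hyperbolic times give uniform backward contraction and hence a closed condition in the limit; so the integral converges to $\lambda^+(\mu_1)$. On the neutral part, the Pliss lemma shows the exponent contribution is at most of order $\gamma$. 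Letting $\gamma\to0$ along a diagonal sequence yields equality. To get $\beta>0$, I would use $\lim\lambda^+(\nu_k)>0$: since $h_\top>0$ we may assume the limit is positive, and the Pliss lemma then gives a density of hyperbolic times bounded below.

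\emph{Step 2: entropy bound.} We need $h(\nu_k)\le (\beta+o(1))h_\top(f)+o(1)$. Count $(n,\eps)$-separated orbit segments typical for $\nu_k$. On neutral stretches of length $\ell$, the unstable curve is not expanded by more than $e^{\gamma\ell}$. By Yomdin's reparametrization theorem (Theorem~\ref{thm-Yomdin}) for $C^\infty$ maps, an unstable curve of bounded size can be covered along such a stretch by $e^{(\gamma+o(1))\ell}$ reparametrizations. So neutral times contribute only $O(\gamma)$ exponential growth; the $o(1)$ comes from $C^r$ smoothness with $r$ large, giving a loss of order $\log\|Df\|/r$. On good stretches we bound the number of curve pieces crudely by $e^{(h_\top+\eps)\ell}$, using the variational principle and Newhouse/Yomdin. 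The total count is therefore $\exp\big((\beta_k h_\top+C\gamma+o(1))n\big)$, multiplied by a combinatorial factor for the choice of good and bad intervals. That factor is subexponential, because the interval endpoints at scale $N$ number at most $\binom{n/N}{\cdot}$. Finally, a Katok entropy formula, using unstable curves through $\nu_k$-typical points, turns this curve count into the bound on $h(\nu_k)$.

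\emph{Main obstacle.} The hard step is Step 2: making Yomdin's bound work along neutral stretches. This requires showing that the curve stays tangent to the unstable cone and that the reparametrizations are compatible with the hyperbolic-time decomposition of the orbit. The same compatibility is needed in Step 1, to ensure that the limiting good measure lives on $E^u$ so that $\lambda^+(\mu_1)$ is really captured. I would first try to handle this as in \cite{BCS-2}: work with a ``tangent space $\PP TM$ lift plus bounded geometry'' argument, using $C^r$ reparametrization lemmas with derivative control at hyperbolic times.
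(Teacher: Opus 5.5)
\textbf{The genuine gap is in Step~2}, exactly where you locate the main obstacle, and this step is the heart of the theorem. Your treatment of the non-neutral stretches (Bowen's formula plus Yomdin, as in Proposition~\ref{prop-reparam-htop}) and the final unstable-curve entropy formula are fine.

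Neutrality, however, is a property of the single vector $E^u$ along one typical orbit. It says nothing about the other tangent directions of the curve. So ``the unstable curve is not expanded by more than $e^{\gamma\ell}$ and can be followed by $e^{(\gamma+o(1))\ell}$ reparametrizations'' does not follow.

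\begin{itemize}
\item Yomdin's results (Theorem~\ref{thm-Yomdin}, Corollary~\ref{cor-Yomdin}) only convert a count of \emph{doubly scaled} dynamical balls in $\hM$ (position and direction) into a count of reparametrizations.
\item Lemma~\ref{lem-ctrl-len} lets you read the length of $f^n\circ\sigma\circ\psi$ off $\|Df^n_x|E\|$ only as long as the lifted curve $\hsigma$ stays $\heps$-small in $\hM$ at every admissible time.
\end{itemize}

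Nothing in your sketch keeps the direction of the curve under control across a neutral stretch. There is no invariant unstable cone field in this nonuniform setting, and a short curve can have wildly oscillating tangents. Inside the stretch the vector may be strongly contracted before re-expanding. Finally, ``derivative control at hyperbolic times'' is unavailable precisely there, since a long neutral stretch contains no hyperbolic times.

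The missing idea is to extract this control from the limit measure on $\hM$:
\begin{itemize}
\item Split it into its part over points with two distinct exponents and its zero-exponent part.
\item On a Lusin compact set where $x\mapsto E^\pm_{f^kx}$ ($0\le k\le N$) are continuous, choose double scales $\eps\ll\heps$ so that $\eps$-closeness on $M$ near $\hK_\pm$ forces $\heps$-closeness in $\hM$ for $N$ iterates (Lemma~\ref{lem-W-neigh}).
\item Cover most of the zero-exponent part, which has zero entropy, by $e^{\eta n_*/10}$ doubly scaled balls (Lemma~\ref{lem-zero-entropy}).
\item Use weak convergence to restrict to \emph{clean} neutral blocks of $\hnu_k^+$-typical orbits, which spend all but a small fraction of their time in the open set $\hU_0=\hW_\#\cup\hW_0$. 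The combinatorics of types costs little (Lemmas~\ref{lem-type} and~\ref{lem-fewtypes}).
\end{itemize}
This is Theorem~\ref{thm-reparam-neutral}. It is also what makes $\eps,\heps$ independent of $k$, a uniformity your sketch does not address. Without it, or a substitute, the entropy bound is unproved.

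\textbf{Your Step~1 is a workable variant} of the paper's neutral decomposition, closer in spirit to Burguet's hyperbolic segments. Take two consecutive backward Pliss times $t_i<t_{i+1}$ at your rate $\gamma$. One checks that $S_{t_i}^n\hvf<\gamma(n-t_i)$ for all $t_i<n<t_{i+1}$, so long excursions are essentially $\gamma$-neutral blocks. Also $S_{t_i}^{t_{i+1}}\hvf\ge\gamma(t_{i+1}-t_i)$ because $t_{i+1}$ is Pliss. This two-sided bound, not just the upper bound you state, is what makes the neutral part's exponent vanish in the limit.

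It needs the following repairs:
\begin{itemize}
\item As written, $G_\gamma$ is vacuous (the condition for $1\le m\le 0$ is empty), so contraction must be required for all $m\ge1$.
\item $\beta$ must be the mass of the union of \emph{short} excursions (length $\le L$), not the density of hyperbolic times.
\item The closed condition degenerates in the double limit $(\gamma,L)\to(0,\infty)$. What keeps the good measure carried by $E^u$ is that the good sets increase as $\gamma\downarrow0$ and $L\uparrow\infty$. The limit is therefore an increasing limit of measures carried by the graph of $E^u$, hence is carried by it.
\item $h_\top(f)>0$ does not give $\lim_k\lambda^+(\nu_k)>0$. Use $\limsup_k h(\nu_k)>0$ together with Ruelle's inequality instead.
\end{itemize}
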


This only implies a weaker version of Theorem \ref{theoremWithoutErgodicity}, where $h(\mu_1)$ in \eqref{ineq2} is replaced by $h_\top(f)$. However, this is enough to establish Corollary~\ref{maincorollary-ECLE} while avoiding a number of technical complications (see Chapter~\ref{chap-beyond}).

\section*{Proof of the results and organization of the lectures} 
Let us summarize our argument for Theorem~\ref{maintheorem-htop} and how it will be explained in these notes.

\medbreak
\subsection*{Projective dynamics and linear theory}
In chapter \ref{chap-oseledets}, we review the basic theory of Lyapunov exponents of linear cocycles and explain the classical formulas expressing the Lyapunov exponents as averages of the continuous dilation function with respect to well-chosen measures. These expressions involve the projective dynamics $\hf$ on $\hM:=\{(x,E):x\in M,\; E$ one-dimensional linear subspace of $T_xM\}$ defined by $(x,E)\mapsto (fx,Df_xE)$ and the dilation function
  $$
     \vf:\hM\longrightarrow\RR,\quad (x,E)\longmapsto \log  |D_xf_{|E}|_{x,fx}\;,
  $$
where $|A|_{x,fx}$ denotes the Jacobian of a linear map $A:T_xM\to T_{fx}M$ with respect to the Euclidean norms on $T_xM$ and $T_{fx}M$ defined by some given Riemannian structure on $M$. Since $E$ is one-dimensional this is the same as the operator norm $\|D_xf_{|E}\|$.

Given an ergodic measure $\nu$ on the surface $M$, the relevant measure is the $\hf$-invariant lift $\hnu^+$ to the graph defined by the Oseledets unstable sub-bundle
 $$
   \hGamma^+:=\left\{(x,E)\in\hM : \exists\lim_{n\to\pm\infty}\frac1n\log\|Df^{n}_x|E\|>0\right\}.
  $$
\medbreak

In light of these remarks, the starting point of the proof of Theorem~\ref{maintheorem-htop} is to a consider a sequence $\nu_k\wsto \mu$ (weak star convergence) with $h(\nu_k) \to h_\top(f)$. We write $\hnu_k^+$ for the corresponding $\hf$-invariant lifts. Up to passing to a subsequence, these measures weakly converge\footnote{We call \new{weak convergence}, resp. \new{weak topology}, what analysts call the weak star convergence, resp. weak star topology. That is the toplogy defined by continuous functions with compact support.} to some $\hmu$. The first key remark is that $\hpi_*(\hmu)=\mu$ (by continuity), but that $\hmu$ may differ from the unstable lift $\hmu^+$ of $\mu$.

\medbreak

\subsection*{Part 1: Discontinuity and neutral blocks}
Indeed, we first observe that any discontinuity of the Lyapunov exponent 
 $$
    \lambda^+(\mu)-\lim_k\lambda^+(\nu_k)= \hmu^+(\vf)-\hmu(\vf)
 $$
corresponds to a loss of mass of $\lim_k\hnu_k^+$ out of the unstable bundle $\hGamma^+$, i.e., $\hmu(\hGamma^+)<\hnu_k^+(\hGamma^+)=1$. This can happen, since the Oseledets unstable section $\hGamma^+$ is not necessarily closed, only measurable. By invariance, the mass can only escape to invariant bundles defined by nonpositive exponents.

In Chapter~\ref{chapter-neutral}, we relate this loss of mass to so-called \emph{neutral blocks} in the orbits of generic points $(x,E)$ for $\tnu^+_k$ for $k$ large: the neutral blocks in the orbit of some $(x,E)$ are long orbit segments of integers $k$ for which the unstable direction $D_xf^k(E)$  does not grow significantly (typically because it is too close to the stable or neutral direction as implied by the loss of mass). More formally, for $\alpha>0$ small and $L\ge1$ large, $[a,b[$ is an $(\alpha,L)$-neutral block if
 $$
   \forall 0\le i<b-a\quad \| Df^{i}|E^+_{f^a(x)} \| \le e^{\alpha i}
 $$
(with respect to a very small number $\alpha>0$). We will see that neutral blocks are contained in maximal ones and that one can define the union $\operatorname{Neutral}(\alpha,L)$ of all neutral blocks in an equivariant way.

We describe the neutral dynamics by considering  measures restricted to the union of neutral blocks in the appropriate limit (possibly for some further subsequence):
 $$
   \lim_{\tiny\begin{array}{c}\alpha\to0\\ L\to\infty\end{array}}\lim_k 1_{\operatorname{Neutral}(\alpha,L)}\cdot\hnu_k^+=(1-\beta)\hmu_0 \text{ where }\hmu_0\in\Prob(\tf) \text{ and }\beta\in[0,1].
 $$
Setting $\beta\cdot\hmu_1:=\hmu-(1-\beta)\hmu_0$, we have obtained a decomposition
 $$
   \hmu=(1-\beta)\hmu_0+\beta\cdot\hmu_1.
 $$
The decomposition announced in Theorem~\ref{theoremWithoutErgodicity} is the projection of the above decomposition to the surface.

The two parts $\hmu_0$ and $\hmu_1$ describe respectively the behavior inside and outside of the neutral blocks. When the limit $\mu$ happens to be ergodic, both $\hmu_0$ and $\hmu_1$ must project to $\mu$. In the general case, things can be more complicated. But, in any case the neutral part satisfies $\hmu_0(\vf)=0$ and thus gives no contribution to the limit of the exponent, so
 $$
   \lim_k \lambda^+(\nu_k)=\int \phi\, d\tmu=\beta\int \phi\, d\tmu_1=\beta\cdot \lambda^+(\mu_1).
 $$

\bigbreak

The second and third parts of the proof deduce the consequences of the above in terms of entropy. In chapter~\ref{chap-entropy}, we review some basic facts about the entropy of maps.  We could deduce the following entropy bound from the above decomposition:
 $$
     \limsup_k h(\nu_k)\le\beta\cdot h(f,\mu_1).
 $$
To avoid extra complications, we will show the weaker bound: $\limsup_k h(\nu_k)\le\beta\cdot h_\top(f)$ as stated in Theorem~\ref{maintheorem-htop}.

The proof of this entropy bound (parts 2 and 3 below) will require a more complicated non-linear argument in order to extend the information given by the differential to unstable curves around typical orbits. Indeed, we will show that ``there is no entropy contribution from the neutral blocks''.

\medbreak
\subsection*{Part 2: Entropy from parametrizations of curves}
To exploit the action of the differential $Df$ on the unstable direction we   bound the entropy of a measure  using the behavior of its generic unstable curves.  In fact (see \S\ref{cor-entropy-from-unstable}), there is a \emph{formula of Ledrappier and Young} expressing the entropy of an ergodic measure as the complexity of its unstable disintegrations along unstable Pesin manifolds. These Pesin manifolds are, in our case, smooth curves tangent to the unstable bundle at typical points and the dynamics along this bundle is precisely what is controlled by the neutral blocks.

To implement this control, we will use ``good parametrizations'', i.e., parametrizations at small scales (both in position and direction) with bounds on their derivatives. This is provided by \emph{Yomdin's theory} applied to the projective dynamics which we will explain in Chapter~\ref{chap-yomdin}.

\medbreak
\subsection*{Part 3: Nonlinear estimates}
The last part of the proof of Theorem~\ref{maintheorem-htop} is more delicate. We need to estimate the growth of the number of good reparametrizations of typical unstable curves. The information at our disposal is two-fold:
 \begin{enumerate}
  \item the long neutral blocks;
  \item the topological entropy.
\end{enumerate}
How to use (1) is the technical core of the proof. It is presented in Chapter~\ref{chap-reparam-neutral}. Of course, the differential controls  the expansion along the unstable curve on the surface. But to use the differential at \emph{one point} requires the tangent to the curve to be almost constant, while, in general, even curves whose lengths stay small may have exponentially growing oscillations. 
To solve this difficulty, we use that the unstable direction is a measurable section. 
Now,  a measurable function has some modulus of continuity over a set of measure close to $1$ by Lusin theorem.
Thus a tight control in position will yield some control in direction. We will propagate these two joint controls using what we call below a double scale: one on the surface and one on the projective extension. This will show that neutral blocks do not contribute to entropy even in the projective extension.

\medbreak

Outside of these neutral blocks, we will use (2), yielding a control in terms of dynamical balls by the topological entropy. A standard application of Yomdin's theory will translate the number of dynamical balls into an estimate on the number of reparametrizations so that we will be able to use alternately (1) and (2), as explained in Chapter~\ref{chap-proof-main}.

\medbreak

In the final chapters \ref{chap-beyond}~and~\ref{chap-further}, we explain the additional ideas needed for the more powerful Theorems \ref{maintheoremErgodic} and \ref{theoremWithoutErgodicity} and then point out some further developments and a few open problems.

\medskip

{\bf Thanks.} The author would like to thank Hengyi LI for his careful reading which helped me remove a number of mistakes.
The author is also indebted to the anonymous referee for a very careful reading of a first version of these notes which improved the writing significantly.

\numberwithin{section}{chapter}

\chapter*{Some notations}
\label{chap-notations}
In these lectures, we consider the following objects and quantities:
 \begin{itemize}
  \item $M$ is a closed surface (compact, boundaryless manifold of dimension $2$);
  \item $f\in\Diff^r(M)$ with $r\ge1$, i.e., $f$ is a $C^r$-smooth diffeomorphism of $M$, often $r=\infty$;
  \item $\|\sigma\|_{C^r}$ is the $C^r$ size of some parametrized disk $\sigma:[0,1]\to M$, see Definition~\ref{def-Cr-size};
    \item $\hf:\hM\to\hM$ is the projective extension of $f:M\to M$ through the factor map $\hpi:\hM\to M$;
  \item $B_f(x,\eps,n)$ is the dynamical ball of order $n$ with center $x$ and radius $\eps$, see~\eqref{eqDefBn}; $B^u_f(x,\eps,n)$ is the unstable dynamical ball, see~\eqref{eqDefBun}; $B^u_{\hf}(x,\eps,\heps,n)$ is the doubly scaled dynamical ball in $\hM$, see~\eqref{eqDefBdouble};
  \item $h_\top(f)$ is the topological entropy (see Definition~\ref{def-htop-Bowen});
  \item If $\mu$ is a positive Borel measure and $g\in L^1(\mu)$, then we denote by $\mu(g)$ the integral $\int g\, d\mu$.
  \item $\Prob(M)$ denotes the compact, metrizable set of all Borel probability measures. It is equipped with the weak topology provided by the identification of finite measures with linear functionals over continuous functions\footnote{Since $M$ is compact, those functions are bounded and therefore weak star topology coincides with the weak topology from probability theory.};
  \item $\Prob(f)\subset\Prob(M)$ denotes the subset of all invariant measures;
  \item $\Proberg(f)\subset\Prob(f)$ denotes the subset of all ergodic measures. Geometrically these are the extremal points of $\Prob(f)$. In many cases, it is a dense proper subset of $\Prob(f)$;
  \item If $h:M\to N$ is a measurable map, its pushforward is $h_*:\Prob(M)\to\Prob(N)$ is defined by $f_*\mu = \mu\circ f^{-1}$.
\end{itemize}  
Given $\mu\in\Prob(f)$, we consider:
 \begin{itemize}
  \item the (average)  top Lyapunov exponent: $\lambda^+(f,\mu)$ (see Definition~\ref{def-top-average-LE}); 
  \item the (average)  bottom Lyapunov exponent: $\lambda^-(f,\mu):=\lambda^+(f^{-1},\mu)$;
  \item the Lyapunov spectrum at $x\in M$ is $\lambda^1(f,x)>\lambda^2(f,x)>\dots>\lambda^{r(x)}(f,x)$;
  \item the Kolmogorov-Sinai entropy: $h(f,\mu)$ (see Definition~\ref{def-hKS-Katok});
  \item $W^u(f,x),W^s(f,x)$, the unstable and stable manifolds (see Definition~\ref{def-WuWs});
  \item $\{\mu^u_x\}_{x\in M},$ some unstable disintegration of $\mu$ (see Definition~\ref{def-u-disint});
 \end{itemize}
For background on dynamical systems, entropy, and exponents, we refer the reader to introductory texts such as \cite{KH-book} or \cite{Mane-book}.
Some other useful notations that are perhaps not universal are:
 \begin{itemize}
  \item $\NN:=\{0,1,2,\dots\}$, $\ZZ:=\{\dots,-1,0,1,\dots\}$, $\RR$ is the set of real numbers and $\CC$ is the set of complex numbers;
  \item  $\RR.v$ is the one-dimensional subspace generated by some nonzero $v$ in a vector subspace;
  \item $C^{k+\alpha}$ with $k\in\NN$ and $0\le\alpha<1$ denotes the maps that  can be differentiated $k$ times with a $k$th differential that is $C^{\alpha}$ ($C^0$ means continuous and $C^\alpha$ for $0<\alpha<1$ means H\"older with exponent $\alpha$);
  \item $D_xf$ or, sometimes, $f'(x)$ denotes the differential of a map $f$ at a point $x$ (the second notation is often used when $x\in\RR$);
  \item $D^s_xf$ denotes the differential of $f$ of order $s\in\NN$ at the point $x$ (considered as an $s$-multilinear map).
 \end{itemize}

\part{Linear Theory}

\begin{center}
{\Large Introduction}
\end{center}
\bigbreak

This part is devoted to the linear part of our argument. We start by reviewing basic notions, mostly without proof (but with references), in the general setting of vector bundles:
\begin{enumerate}
 \item the definition of \emph{Lyapunov exponents} via Kingman's theorem (Theorem~\ref{thm-Kingman});
 \item the \emph{Oseledets sub-bundles} given by Oseledets' theorem (Theorem~\ref{thm-Oseledets});
 \item the projective dynamics (Definition~\ref{def-projective}) and the Lyapunov exponents as averages of the  continuous \emph{dilation function} there (Proposition~\ref{prop-lift-projective}).
\end{enumerate}

Then we perform the linear part of our argument:
 \begin{enumerate}
  \item we explain how to read the Lyapunov exponents in terms of ergodic projective lifts (see Proposition~\ref{prop-lift-simple}) and relating exponent discontinuities to loss of mass (see Proposition~\ref{prop-disc-LE-loss-mass}).
  \item we build the neutral blocks and the corresponding decomposition of limiting measure on the projective dynamics (Theorem~\ref{thm-neutral-decomposition-projective}).
\end{enumerate}  

\chapter{Lyapunov exponents and Oseledets spaces}

\section{Vector bundles and linear cocycles}\label{chap-oseledets}

It is convenient to formulate the results of this chapter in the setting of topological vector bundles. The reader who is familiar with the basic notions of a bundle (mainly vector or projectivized bundles) can skip this section.

\subsection{General bundles}
A smooth $G$-\new{bundle} over $X$ with fiber $F$ is a manifold which ``locally looks like'' a product of two manifolds $X\times F$ and such that the changes of coordinates along $F$ are restricted to some group $G$. More formally:

\begin{definition}
A \emph{smooth $G$-bundle over $X$ with fiber $F$} is given by manifolds $V,X,F$,  and a group $G$ smoothly acting on $F$ 
such that
there are a covering of $V$ by open sets $V_i$ and diffeomorphisms $h_i:V_i\to U_i\times F$ with $U_i$ open sets of $X$ satisfying:
 \begin{itemize}
  \item[--] there is a submersion\footnote{A \new{submersion} is a smooth map $p:V\to W$ such that, for every $x\in V$, $D_xp:T_xV\to T_{p(x)}W$ is onto.} $p:V\to X$ satisfying $p(x)=\proj_X(h_i(x))$ for any $i\in I$ such that $x\in U_i$. We set $V_x:=p^{-1}(x)$.
 \item[--]  if $x\in U_i\cap U_j$, then, 
 writing $\proj_F:(x,v)\mapsto v)$, the map
 $g_{x,ij}:F\to F$ given by 
 $$
    g_{x,ij}:v\mapsto \proj_F\circ h_j\circ h_i^{-1}(x,v) 
 $$
belongs to $G$.
 \end{itemize}
When it creates no confusion, we will often denote the bundle $(G,F,V,X,h)$ just by the submersion $p:V\to X$.
The collection $\{h_i:i\in I\}$ is a called a \emph{bundle atlas}.  The group $G$ is the \emph{structure group} of $F$, the \emph{fiber model} (or just the fiber). The spaces $V$ and $X$ are called the \emph{total} and \emph{base} spaces. 
\end{definition}

The notion of bundle generalizes that of product:
 
\begin{example}
Given a pair of manifolds $X,F$ and some group $G$ acting smoothly on $F$, $V:=X\times F$ can be seen as a $G$-bundle over $X$ with space $F$ by setting $V_*:=V$, $U_*=X$, $h_*=\Id:V_*\to U_*\times F$. This is the \emph{trivial bundle}.
\end{example}

\subsection{Section of a bundle}

\begin{definition}
A \new{section} of a bundle $p:V\to X$ is a map $\Gamma:X\to V$ with $p\circ\Gamma=\Id_X$. A \emph{local section} at $x\in X$ is a map $\Gamma:U\to V$ such that $U$ is a neighborhood of $x$ and $p\circ\Gamma=\Id_U$.
\end{definition}

A vector field over some smooth manifold is exactly a section of its tangent bundle.

\begin{exercise}
For every $x\in X$, build a continuous local section at $x$. Do continuous sections always exist?
\end{exercise}

We note that \emph{measurable} sections always exist.

\subsection{Vector and projective bundles}
These are the most relevant classes of bundles for us. We will denote by $\GL(W)$  the linear group of a vector space $W$ and by $\Orth(E)$ is the group of isometries of the Euclidean space $E$.

\begin{definition}
A \new{vector bundle} of dimenson $d$ is a bundle whose fiber is $\RR^d$ and group is $G\subset\GL(\RR^d)$.
\end{definition}

The tangent bundle is the most important type of vector bundles for us:

\begin{example}
Let $M$ be a smooth manifold of dimension $d$. Then its \new{tangent bundle} $TM=\bigsqcup_{x\in M} T_xM$ has the natural structure of a smooth vector bundle of dimension $d$.
\end{example}

\begin{definition}\label{def-Riemann-bundle}
A \new{Riemannian vector bundle} is a vector bundle $p:V\to M$ together with a continuous family $\|\cdot\|_{x\in M}$ of Euclidean norms on the fibers $V_x$.
\end{definition}

\begin{example}
For the tangent bundle of a smooth Riemannian manifold, a Riemannian structure is just the usual notion.
\end{example}

\begin{exercise}
Check that a vector bundle of dimension $d$, $p:V\to M$, with  a Riemannian structure has a natural structure of a smooth $\Orth(\RR^d)$-bundle over $M$ with fiber $\RR^d$.
\end{exercise}

\begin{exercise}
Let $M$ be a smooth Riemannian manifold and let $N$ be a sub-manifold. The \new{normal bundle} at $N$ is $N^\perp:=\bigcup_{x\in N} (T_xN)^\perp$. More precisely, it is the vector bundle $p:N^\perp\to N$ with fibers $N^\perp_x:=\{v\in T_xM:\forall w\in T_xN\; v\cdot w=0\}$. 

Check that $N^\perp$ is indeed a smooth vector bundle with a Riemannian structure.
\end{exercise}

\begin{definition}
Given a Riemannian vector bundle $p:V\to M$, the \new{unit (vector) bundle} is the restriction $V^1:=\bigsqcup_{x\in M} \{ v\in V_x:\|v\|_x=1\}$.
\end{definition}

\begin{exercise}
Given a vector bundle $p:V\to M$, show that its unit vector bundle $p:V^1\to M$ is a smooth bundle with model fiber $\mathbb S^{d-1}$ and group $\Orth(\RR^d)$. 
\end{exercise}

We will also consider the projectivization of vector bundles:

\begin{definition}\label{def-projectivization}
If $p:V\to X$ is a  vector bundle with fiber model $\RR^d$ and group $G\subset \GL(\RR^d)$, then its \new{projectivization} is the $\PP(G)$-bundle $\hp:\hV\to X$ with:
 \begin{itemize}
  \item[--] structure group: $\PP(G):=\{\RR.v\mapsto \RR.(A.v):A\in G\}$;
  \item[--] fiber model: the projective space: $\PP(F):=\{E\subset F:E$ one-dimensional vector subspace$\}/\sim$ with the natural topology of the  quotient of $F\setminus 0$ by the map $v\mapsto\RR.v$;
  \item[--] fibers: $\hV_x:=\PP(V_x)$.
 \end{itemize}
\end{definition}

An obvious but nice feature of the projectivization of a vector bundle is that the fibers are compact.

\begin{exercise}
Show that the projectivization of a vector bundle of dimension $d$ is indeed a $\PP(\GL(\RR^d))$-bundle.
\end{exercise}

\begin{definition}
The \emph{sum} $V^1\oplus\dots \oplus V^k$ of a finite family of vector bundles $p_i:V^i\to X$, $i=1,\dots,k$, with fiber models $\RR^{d_i}$ over the same space $X$, is the vector bundle $p:V\to X$ with fiber model $\RR^{d_1+\dots+d_k}$ over the same space $X$ defined by taking the fibers $V_x$ to be the abstract direct sum $V^1_x\times\dots \times V^k_x$.
\end{definition}

When, moreover, all the bundles $V^1,\dots,V^k$ are contained in some common vector bundle $W$ (i.e., each $V^i_x$ is a linear subspace of $W_x$ for all $x\in X$), one can define $V_x:=V^1_x\oplus\dots\oplus V^k_x\subset W_x$ for all $x\in X$.

\begin{exercise}
In the case where all the bundles $V^1,\dots,V^k$ are contained in some common vector bundle $W$, formalize the above definition by defining precisely the space and some atlas. 
\end{exercise}

\begin{remark}
We leave to the diligent reader the adaptation of these topological definitions to more diverse situations, e.g., measured bundles where the base is a standard probability space (i.e., equipped with a $\sigma$-field which is the Borel $\sigma$-field of some Polish topology). Such bundles are a natural setting for the Oseledets theorem below.
\end{remark}

\subsection{Bundle morphisms}

A morphism between bundles is a surjection that maps fiber to fiber respecting the $G$-structure. We first define:

\begin{definition}
Given a bundle $p:V\to X$ with fiber $F$ and group $G$, a \new{fiber map} is a map $L:V_x\to V_y$ for some $x,y\in X$ satisfying the following. If $x\in U_i$, $y\in U_j$, then, defining $h_{i,x}:=(\proj_F\circ h_i:V_x\to F)$ and similarly $h_{j,y}$, we have
 \begin{equation}\label{eq-fiber-G}
     h_{j,y}\circ L \circ h_{i,x}^{-1} \in G.
 \end{equation}
\end{definition}

For instance, if $V$ is a vector bundle, then fiber maps are linear maps between fibers.

\begin{definition}
Let $V,V'$ be two $G$-bundles with bases $X$ and $X'$ and the same fiber  model $F$ and group $G$. Let $(h_i:V_i\to U_i\times F)_{i\in I}$ and $(h_j:V'_j\to U'_j)_{j\in J}$ be bundle atlases for $V$ and $V'$.

A \emph{bundle morphism} from  $V$ to $V'$ is a continuous map $\cF:V\to V'$  such that
 \begin{itemize}
  \item[--] there is a continuous map $f:X\to X'$ such that $p\circ\cF=f\circ p$;
  \item[--] for all $x\in V$, $\cF_x:V_x\to V_{fx}$ is a fiber map.
\end{itemize}

A \new{bundle endomorphism} is a bundle morphism between a bundle and itself.

A \new{bundle isomorphism} is a bundle morphism $F:V\to V'$ such that $F^{-1}:V'\to V$ is a well-defined map and also a bundle morphism. A \new{bundle automorphism} is an isomorphism of a bundle to itself. 
\end{definition}

Note that the bundle morphism $\cF:V\to V'$ uniquely determines the base map $f:M\to M'$ as $f(x)=p\circ \cF(v)$ for any $v\in V$. 

\begin{exercise}
Show that the base map is well-defined and automatically smooth. Check that if, for $i=0,\dots,r-1$, $\cF_i:V_i\to V_{i+1}$ is a morphism of bundles over some map $f_i:M_i\to M_{i+1}$, then $\cF_{r-1}\circ\cF_{r-2}\circ\dots\cF_0:V_0\to V_{r}$ is a morphism  over $f_r\circ\dots\circ f_0:M_0\to M_{r}$.
\end{exercise}

Bundle morphisms can be seen as cocycles that we define now.
Let $p:V\to X$ be a bundle. For $x,y\in X$, $\operatorname{Mor}(V_x,V_y)$ is the set of maps $L:V_x\to V_y$ satisfying \eqref{eq-fiber-G}.

\begin{definition}
Given a self-map $f:X\to X$, a \new{cocycle} in $V$ over $X$ is a section $A:X\to \Endo_f(V)$, where $\Endo_f(V)$  is the bundle over $X$ defined by $\bigsqcup_{x\in X} \operatorname{Mor}(V_x,V_{fx})$. 

The associated bundle endomorphism is
 $$
    F_A:V\longrightarrow V,\; v\longmapsto A_{\proj_X(v)}(v)
 $$
\end{definition}

\begin{exercise}
Check that bundle endomorphisms and cocycles can be canonically identified.  
\end{exercise}

\begin{definition}
Let $p:V\to X$ be a bundle and $f:X\to X$ be a smooth map.

The \new{composition of two cocycles} $A:X\to\Endo_f(V)$ and $B:X\to\Endo_g(V)$ is the cocycle $A\circ B:X\to\Endo_{f\circ g}(V)$ defined by
 $$
     (A\circ B)_x = A_x\circ B_x.
 $$
If $f:X\to X$ is a diffeomorphism, the \new{inverse cocycle} of a cocycle $A$ is a cocycle $A^{-1}:X\to\Endo_{f^{-1}}(V)$ such that $A^{-1}\circ A=A\circ A^{-1}=\Id_V$, the \new{identity cocycle} equal to the identity on each fiber of $V$. 

For $n\in\NN$, the \new{cocycle iterated $n$ times} is:
 $$
    A^n_x(v) := A_{f^{n-1}x} \circ A_{f^{n-2}x}\circ \dots \circ A_x.
 $$
If $A$ is invertible, then $A^{-n}:=(A^{-1})^n$.
\end{definition}

\begin{exercise}
Show that $F_{A\circ B} = F_A\circ F_B$ and check the \new{cocycle identity} $A^{n+m}_x=A^n_{f^mx}\circ A^m_x$.
\end{exercise}

Diffeomorphisms are the key examples of vector bundle isomorphisms for our purposes:

\begin{example}
If $M,M'$ are smooth manifolds and $f:M\to M'$ is a diffeomorphism, then $f$ defines an isomorphism of the tangent bundles: $Df:TM\to TM'$ according to the natural formula: $Df(x,v)=(f(x),D_xf.v)$ for any $(x,v)\in TM$.
\end{example}

Given a measure on the base, one can define various ``norms'' of bundle morphisms (we skip specifying the spaces over which these functionals are indeed norms):

\begin{definition}\label{def-operator-norms}
A Riemannian structure on a vector bundle $V$ induces a family of operator norms $(\|\cdot\|_{x,y})_{x,y\in M}$ as follows. If $A:V_x\to V_y$ is a fiber morphism then
 $$
   \|A\|_{x,y} := \sup_{v\in V_x,\; \|v\|_x\le 1} \|A(v)\|_{y}.
 $$
\end{definition}

\begin{exercise}
Given a Riemannian structure $(\|\cdot\|_x)_{x\in M}$ on $V$, show that for every bundle morphism $\cF:V\to V$ the following hold:
 \begin{itemize}
  \item[--] $x\mapsto \| \cF_x \|_{x,f(x)}$ is well-defined and measurable;
  \item[--] $\|\cF_x^n\|_{x,f^n(x)}\le \prod_{k=0}^{n-1}\|\cF_{f^kx}\|_{f^kx,f^{k+1}x}$.
 \end{itemize}
\end{exercise}

\begin{definition}
If $V\to M$ is a Riemannian vector bundle and $\cF$ a bundle automorphism, then the \new{$L^p(\mu)$-norm of a bundle morphism} $\cF$  with respect to $\mu\in\Prob(M)$ is
 $$\begin{aligned}
    &\|\cF\|_{L^p(\mu)} := \left( \int_M \|\cF_x\|_{V_x,V_{fx}}^p \,d\mu(x)\right)^{1/p} \text { if }p\in[1,\infty),\\
    &\|\cF\|_{L^\infty(\mu)} := \sup\left\{t>0:\mu(\{x\in M:\|\cF_x\|_{V_x,V_{fx}}>t\})>0\right\} .
 \end{aligned}$$
\end{definition}

\subsection{Projectivization}

The projective bundle is usually not the image of the vector bundle by a morphism. Instead we have:

\begin{exercise}
If $p:V\to X$ is a vector bundle, $p:V^1\to M$ the unit bundle and $\hp:\hV\to X$ is its projectivization, then
 $
    (x,v) \mapsto (x,\RR.v)
 $
is a bundle morphism from $V^1$ tp $\hV$.

Show by an example that $\hV$ is not necessarily the image of $V$ by a morphism.
\end{exercise}

The projectivization is functorial as described in the following.

\begin{exercise}
If $L:V\to V'$ is a vector bundle morphism then its projectivization $\PP(L):\PP(V)\to\PP(V')$ is a (projectivised) bundle morphism. If $L$ is an isomorphism, then so is $\PP(L)$.
\end{exercise}

\section{Ergodic theorems for Lyapunov exponents}

We now define the Lyapunov exponents of a bundle automorphism (recall that it is the same as a $\ZZ$-cocycle). Our setting is as follows. We are given:
 \begin{itemize}
  \item  a diffeomorphism $f:X\to X$ preserving a Borel probability measure $\mu$;
  \item a smooth, Riemannian vector bundle $p:V\to X$ over the previous compact manifold $X$;
   \item a bundle automorphism $F:V\to V$.
\end{itemize}   
The Riemannian structure on $V$ is a family of norms $(\|\cdot\|_{x})_{x\in M}$ according to Def. \ref{def-Riemann-bundle}. It determines operator norms $(\|\cdot\|_{x,y})_{x,y\in X}$ as in Definition~\ref{def-operator-norms}.

\subsection{Top Lyapunov exponent}

\begin{definition}\label{def-top-average-LE}
The \new{top Lyapunov exponent (average)} of a cocycle $F$ in $V$ over the basis $(f,\mu)$ is the following limit (if it exists):
 \begin{equation}\label{eq-def-top-exp}
     \lambda^+(F,\mu):=\lim_n \frac1n\int_X \log\|F^n_x\|_{x,f^n(x)} \, d\mu(x).
  \end{equation}
The \new{top Lyapunov exponent (pointwise)} at $x\in X$ is the following limit,  if it exists:
 $$
    \lambda^+(F,x):=\lim_n  \frac1n \log\|F^n_x\|_{x,f^n(x)}.
 $$
\end{definition}

The key remark is that:
$\vf(x,n)= \log\|F_x^n\|_{x,f^n(x)}$ is a \new{subadditive process} on $(f,\mu)$, i.e., for all $x\in M$, $n,m\in\NN$, it satisfies
 \begin{equation}\label{eq-subadd}
 \vf(x,n+m)\le \vf(x,m)+\vf(f^mx,n).
 \end{equation}
Such a process is said to be \emph{integrable} if $\vf(x,n)\in L^1(\mu)$ for all $n\ge1$; it is said to be \emph{bounded} if $|\vf(x,n)|\le C\cdot n$ for all $n\ge1$.

The \new{constant of a sub-additive process} is defined as
 \begin{equation}\label{eq-const-subadd}
   \gamma(\vf) := \inf_{n\ge1} \frac1n\int \vf(x,n)\, \mu(dx).
  \end{equation}
It is easily checked that $\gamma(\vf)\in [-\infty,\int_X \vf(x,1)\,d\mu]$ if the process is integrable and that $|\gamma(\vf)|\le C$ if it is bounded.

\begin{remark}
The natural setting here (and to apply Kingman's ergodic theorem below) is to consider a measurable bundle $p:V\to X$ over a probability endomorphism $(X,\mu,f)$ together with a measurable family of pseudo-norms $(\|\cdot\|_{x,y})_{x,y\in X}$ satisfying:
 \begin{itemize}
  \item[--] $(x,y)\mapsto \|\cF_x\|_{x,f(x)}$ is measurable and finite $\mu$-a.e.;
  \item[--] for any $x\in X$ and any bundle endomorphisms $\cF,\cG:V\to V$
 $$
    \| \cF_{f(x)}\circ\cF_x \|_{x,f^2(x)} \le \|\cF_{f(x)}\|_{f(x),f^2(x)} \cdot \|\cF_x \|_{x,f(x)}.
  $$
 \end{itemize}
\end{remark}

\subsection{Existence of the average top exponent}
The average exponent has very nice properties as a direct consequence of the subadditivity and Fekete lemma.

\begin{proposition}\label{prop-average-exp}
Let $F:V\to V$ be a bundle endomorphism of a bundle with a Riemannian structure.

If $\log^+\|F_x\|_{x\in X}\in L^1(\mu)$, then the average top Lyapunov exponent \eqref{eq-def-top-exp} exists in $\RR\cup\{-\infty\}$. It does not depend on the choice of the Riemannian structure on $V$.
\end{proposition}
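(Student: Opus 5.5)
The plan is to apply Fekete's lemma to the sequence $a_n:=\int_X \log\|F^n_x\|_{x,f^n(x)}\,d\mu(x)$, which takes values in $[-\infty,\infty)$. The first step is to check that each $a_n$ is well defined in $[-\infty,\infty)$. Iterating the submultiplicativity of operator norms gives $\|F^n_x\|\le\prod_{k<n}\|F_{f^kx}\|$, hence
$$\log^+\|F^n_x\|\le\sum_{k<n}\log^+\|F_{f^kx}\|.$$
By $f$-invariance of $\mu$, the right-hand side is integrable with integral $n\int\log^+\|F_x\|\,d\mu<\infty$. So the positive part of $\log\|F^n_x\|$ is integrable and $a_n<+\infty$. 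The negative part may fail to be integrable, in which case $a_n=-\infty$.

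Next I verify subadditivity, $a_{n+m}\le a_n+a_m$. Integrate the pointwise inequality \eqref{eq-subadd}, $\log\|F^{n+m}_x\|\le\log\|F^m_x\|+\log\|F^n_{f^mx}\|$, and use invariance to get $\int\log\|F^n_{f^mx}\|\,d\mu=a_n$. Adding integrals is legitimate because all positive parts are integrable, with the convention that $-\infty$ absorbs. Fekete's lemma then gives $\lim_n a_n/n=\inf_n a_n/n=\gamma\in\RR\cup\{-\infty\}$. I will include Fekete's argument briefly, since it allows the value $-\infty$. If some $a_N=-\infty$, then $a_n=-\infty$ for all $n\ge N$, because $a_n\le a_N+a_{n-N}$ and $a_{n-N}<\infty$; so the limit is $-\infty$. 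Otherwise, the standard division $n=qN+r$ gives $\limsup a_n/n\le a_N/N$ for every $N$.

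For independence of the Riemannian structure, I use compactness of $X$. Two continuous families of norms $\|\cdot\|_x$ and $\|\cdot\|'_x$ are uniformly equivalent: there is $C\ge1$ with $C^{-1}\|v\|_x\le\|v\|'_x\le C\|v\|_x$ for all $x$. This follows from continuity of the ratio on the compact unit bundle. The operator norms then satisfy $C^{-2}\|F^n_x\|\le\|F^n_x\|'\le C^2\|F^n_x\|$. The two sequences $a_n$ therefore differ by at most $2\log C$, which disappears after dividing by $n$. The integrability hypothesis is also unchanged, since $\log^+$ changes by a bounded amount.

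The only delicate point is the bookkeeping with the value $-\infty$, both when integrating and in Fekete's lemma. The remaining steps are routine.
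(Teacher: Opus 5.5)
Your proof is correct and is the paper's argument: integrate \eqref{eq-subadd} against the invariant measure to get subadditivity of $a_n=\int\log\|F^n_x\|\,d\mu$, then apply Fekete's lemma. Your extra details fill in points the paper leaves implicit: the integrability of the positive parts, the case $a_N=-\infty$, and the uniform-equivalence argument for independence of the Riemannian structure, which relies on the compactness of $X$ assumed in the paper's setting.
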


We note that the average exponent $\lambda^+(F,\mu)$ is the constant of the associated subadditive process.

\medbreak
The proof the above proposition relies on the following elementary fact:

\begin{lemma}[Fekete]
If $a_1,a_2,\dots$  is a sequence of real numbers which is subadditive $(\forall n,m\; a_{m+n}\le a_m+a_n)$ then the following limit exists:
 $$
    \lim_n \frac1n a_n = \inf_{n\ge1} \frac1n a_n \in [-\infty,a_1].
 $$
\end{lemma}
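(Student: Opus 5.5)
The plan is to set $\gamma:=\inf_{n\ge1} a_n/n\in[-\infty,a_1]$ and show that $\limsup_n a_n/n\le\gamma$. Since trivially $\liminf_n a_n/n\ge\gamma$, this gives existence of the limit and its equality with the infimum. The bound $\gamma\le a_1$ is immediate by taking $n=1$ in the infimum.

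To prove the upper bound, fix any $m\ge1$. Write every $n\ge m$ as $n=qm+r$ with $q\ge 0$ and $0\le r<m$. Iterating subadditivity gives
 $$
   a_n\le q\,a_m + a_r,
 $$
with the convention that the term $a_r$ is absent when $r=0$. Set $C_m:=\max(0,\max_{1\le r<m}|a_r|)$, a finite constant depending only on $m$. Dividing by $n$ yields
 $$
   \frac{a_n}{n}\le \frac{qm}{n}\cdot\frac{a_m}{m}+\frac{C_m}{n}.
 $$
As $n\to\infty$ we have $qm/n\to1$ and $C_m/n\to0$, so $\limsup_n a_n/n\le a_m/m$. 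Taking the infimum over $m$ then gives $\limsup_n a_n/n\le\gamma$.

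The only point needing care is the case $\gamma=-\infty$, and also $a_m<0$, where multiplying by the factor $qm/n\le1$ could in principle spoil the inequality. For $a_m\le 0$ we have $(qm/n)(a_m/m)\to a_m/m$ all the same, because the factor tends to $1$; so the limsup bound still holds. If $\gamma=-\infty$, the same argument shows $\limsup_n a_n/n\le a_m/m$ for every $m$, and these values are arbitrarily negative, so the limit is $-\infty$. No real obstacle is expected; this is a routine Euclidean-division argument.
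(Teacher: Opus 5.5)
Your proof is correct and takes essentially the paper's route: the paper leaves Fekete's lemma as an exercise based on the inequality $a_{qn+r}\le q\cdot a_n+a_r$, which is exactly your Euclidean-division bound, followed by letting $n\to\infty$ with $m$ fixed and then taking the infimum over $m$. Your extra care about the sign of $a_m$ is harmless but not needed, since $(qm/n)\cdot(a_m/m)\to a_m/m$ whatever the sign of $a_m$.
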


\begin{exercise}
Deduce Fekete's lemma from the inequality $a_{qn+r}\le q\cdot a_n+a_r$ for all integers $q,n,r\ge1$.
\end{exercise} 

\begin{proof}[Proof of the Proposition]
The existence of the average exponent follows from the subadditivity of the  sequence $a_n(\mu):=\int_X \log\|A^n_x\|_{x,f^n(x)} \, d\mu(x)$, a direct consequence of \eqref{eq-subadd}. Therefore $a_n(\mu)/n$ converges to $\inf_{n\ge1} a_n(\mu)/n$ by Fekete's lemma. 
\end{proof}

\subsection{Semicontinuity of the average top exponent}

Given a cocycle $F:V\to V$, we are interested in the dependence of the exponent \emph{on the measure} $\mu$, i.e., on the map
 $$
   \mu\in\Prob(f) \longmapsto \lambda^+(F,\mu)
 $$
and its continuity properties.

To define these, we \emph{choose a topology} on $\Prob(f)$. We consider, as usual, the classical weak * topology defined by the linear forms $\mu\mapsto \mu(\phi)$ for each $\phi\in C_c(X)$. It is a metrizable topology related to the Wasserstein distance of optimal transport and, when $X$ is compact, it turns $\Prob(f)$ into a compact set.  We often call it \emph{weak topology}.

The key point is that the functions appearing in \eqref{eq-def-top-exp}, i.e.,
 $$
    x \longmapsto \frac1n\log\|F_x^n\|_{x,f^n(x)} \qquad (n\ge1)
 $$
are continuous.

\begin{exercise}
Check this continuity.
\end{exercise}

Hence, by definition of the weak topology the functions
 \begin{equation}\label{eq-top-exp-n}
   \mu \longmapsto \frac1n\int \log\|F_x^n\|_{x,f^n(x)}\, d\mu \qquad (n\ge1)
  \end{equation}
are continuous. This fact is enough, applying Fekete lemma, to see that the constant of the relevant subadditive process, as an infimum of continuous functions, is upper-semicontinuous, as stated below.

\begin{proposition}\label{prop-average-exp-usc}
The average top Lyapunov exponent 
 $
    \mu\in\Prob(f)\longmapsto \lambda^+(A,\mu)$ is upper semicontinuous wrt the weak  topology. 
\end{proposition}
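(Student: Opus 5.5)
The plan is to write $\lambda^+(F,\cdot)$ as an infimum of continuous functions on $\Prob(f)$ and conclude, since an infimum of any family of continuous functions is upper semicontinuous. For each $n\ge1$, set
 $$
   \Lambda_n(\mu) := \frac1n\int_X \log\|F_x^n\|_{x,f^n(x)}\, d\mu(x).
 $$

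\emph{Step 1: each $\Lambda_n$ is continuous.} The function $x\mapsto \log\|F^n_x\|_{x,f^n(x)}$ is continuous on the compact space $X$. It is finite everywhere because $F$ is an automorphism, so $F^n_x$ is invertible and its norm is positive. By the definition of the weak topology, $\mu\mapsto\Lambda_n(\mu)$ is therefore continuous. Checking the continuity of $x\mapsto\log\|F^n_x\|$ (the exercise above) is the only point needing care. In a local trivialization of the bundle, the norms $\|\cdot\|_x$ depend continuously on $x$ and $F_x$ is a continuous matrix-valued map. Hence the operator norm, a supremum over a compact unit sphere of a jointly continuous function, is continuous. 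The same then holds for the iterate $F^n_x$, which is a composition of continuous maps.

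\emph{Step 2: $\lambda^+(F,\mu)=\inf_n\Lambda_n(\mu)$.} Fix $\mu\in\Prob(f)$. By \eqref{eq-subadd} and the $f$-invariance of $\mu$, the sequence $a_n(\mu):=n\Lambda_n(\mu)$ is subadditive. Indeed, integrating $\vf(x,n+m)\le\vf(x,m)+\vf(f^mx,n)$ and using $\int \vf(f^mx,n)\,d\mu=\int\vf(x,n)\,d\mu$ gives $a_{n+m}\le a_m+a_n$. Fekete's lemma, as in the proof of Proposition~\ref{prop-average-exp}, then gives $\lambda^+(F,\mu)=\lim_n a_n/n=\inf_n a_n/n$.

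\emph{Step 3: conclusion.} Suppose $\mu_k\to\mu$ in $\Prob(f)$. For every $n$ we have $\limsup_k\lambda^+(F,\mu_k)\le\limsup_k\Lambda_n(\mu_k)=\Lambda_n(\mu)$. Taking the infimum over $n$ yields $\limsup_k\lambda^+(F,\mu_k)\le\lambda^+(F,\mu)$. Note that invariance of $\mu_k$ was used in Step 2 to obtain the infimum formula, so the argument is genuinely restricted to $\Prob(f)$. I do not expect a real obstacle: the only substantive input is Step 2, which is exactly the reason subadditivity converts a limit into an infimum.
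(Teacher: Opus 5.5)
Your proposal is correct and follows the paper's own argument. The paper likewise notes that each $\mu\mapsto\frac1n\int\log\|F^n_x\|_{x,f^n(x)}\,d\mu$ is continuous for the weak topology, then uses subadditivity and Fekete's lemma to write $\lambda^+(F,\mu)$ as the infimum of these functions, which makes it upper semicontinuous. Your version just spells out the details the paper leaves implicit or as an exercise: the continuity of $x\mapsto\log\|F^n_x\|$, and the use of $f$-invariance to get subadditivity of the integrals.
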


\begin{remark}
Given a fixed endomorphism $F:V\to V$ of a Riemannian vector bundle, the map $\mu\mapsto \lambda^+(F,\mu)$ is not necessarily lower semicontinuous.
\end{remark}

\begin{proof}[Proof of the remark]
A counterexample to continuity is provided by the cocycle: $X=\{0,1\}^\ZZ$, $f:(x_n)\mapsto (x_{n+1})$, $\mu_t$ is the Bernoulli measure $(t\delta_0+(1-t)\delta_1)^{\ZZ}$ for $0\le t\le 1$ and
 $$
    A(x)=\mat{ 0 & -1\\ 1 & 0} \text{ if $x_0=0$},\quad \mat{2 & 0 \\ 0 & 1/2} \text{ otherwise.}
  $$
We then define $F:V\to V$ by $F_x(v):=A(x).v$.
It is easily checked that $\lambda^+(F,\mu_t)=\log 2$ if $t=0$ but vanishes for all $0<t\le 1$.
\end{proof}

\subsection{Pointwise top exponent}
Recall Kingman's ergodic theorem \cite[Thm 5.2, p.36]{Krengel-book} which studies subadditive processes.

\begin{theorem}[Kingman]\label{thm-Kingman}
Let $\vf_n:X\to\RR$ be a subadditive process for some probabilistic dynamics $(X,\mu,f)$ such that $\vf_n\in L^1(\mu)$ for each $n\ge1$. 

Then the following limit exists $\mu$-a.e.
 $$
   \overline{\vf}(x):=\lim_n \frac1n\vf_n(x).
 $$
Moreover, this limit is $f$-invariant modulo $\mu$.  If the constant $\gamma$ of the process \eqref{eq-const-subadd} is finite, then  $\overline{\vf}$ is integrable with $\int_X \overline{\vf}\, d\mu=\gamma$. Additionally, the convergence above occurs also in $L^1(\mu)$.
\end{theorem}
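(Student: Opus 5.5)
The statement to prove is Kingman's subadditive ergodic theorem, which the paper cites from Krengel. I would prove it by the classical route: reduce to a nonpositive process, then control the $\liminf$ and the $\limsup$ of $\frac1n\vf_n$ separately, using Birkhoff's ergodic theorem as a black box.

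\emph{Step 1: reduction.} Set $\psi_n:=\vf_n-\sum_{k=0}^{n-1}\vf_1\circ f^k$. By \eqref{eq-subadd}, $\psi$ is again subadditive, and $\psi_n\le 0$. Birkhoff's theorem handles the additive part $\frac1n\sum_{k<n}\vf_1\circ f^k$, both a.e.\ and in $L^1$. So we may assume $\vf_n\le0$.

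Define
\[
\underline\vf:=\liminf_n \tfrac1n\vf_n,\qquad \overline\vf:=\limsup_n \tfrac1n\vf_n .
\]
Subadditivity gives $\vf_{n+1}\le \vf_1+\vf_n\circ f$. Hence $\underline\vf\le\underline\vf\circ f$, and similarly for $\overline\vf$. A measurable function $g$ with $g\le g\circ f$ under an invariant probability is invariant a.e. (compare the distributions of $g$ and $g\circ f$, truncating at $\pm M$ to handle the value $-\infty$). This gives the invariance claim.

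\emph{Step 2: upper bound.} Fix $m\ge1$. Write $n=qm+r$ with $0\le r<m$ and iterate subadditivity:
\[
\vf_n\le\sum_{j=0}^{q-1}\vf_m\circ f^{jm}+\vf_r\circ f^{qm}.
\]
Since the process is nonpositive, the remainder term can be dropped. Apply Birkhoff's theorem to $\vf_m$ under $f^m$, and average over the $m$ shifted starting points (equivalently, use $\vf_n\le\frac1m\sum$ over the offsets $i<m$, with boundary terms bounded by $\vf_1$). This yields $\overline\vf\le \frac1m E(\vf_m\mid\mathcal I)$ a.e., where $\mathcal I$ is the invariant $\sigma$-field. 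Integrating and taking the infimum over $m$ gives
\[
\int\overline\vf\,d\mu\le\gamma .
\]

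\emph{Step 3: lower bound.} This is the main obstacle. The goal is $\int\underline\vf\,d\mu\ge\gamma$ whenever $\gamma>-\infty$. I would use the Katznelson--Weiss / Steele stopping-time argument. Fix $\epsilon>0$ and a truncation $\underline\vf_M:=\max(\underline\vf,-M)$, which is invariant. For a.e.\ $x$ there is a first $\ell(x)\le N$ with
\[
\vf_{\ell(x)}(x)\le \ell(x)\bigl(\underline\vf_M(x)+\epsilon\bigr),
\]
except on a bad set $B_N$ whose measure tends to $0$ as $N\to\infty$.

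Cover the orbit segment $[0,n)$ greedily: use blocks of length $\ell$ at good points, and steps of length $1$ at bad points. Subadditivity and $\vf\le0$ then give
\[
\vf_n(x)\le\sum_{\text{good blocks}}\ell\,(\underline\vf_M+\epsilon).
\]
The number of indices not covered by good blocks is at most $\sum_{k<n}1_{B_N}(f^kx)+N$. Integrate and divide by $n$, using invariance of $\underline\vf_M$ and $\underline\vf_M\ge -M$ to bound the uncovered part. Then let $n\to\infty$, $N\to\infty$, $\epsilon\to0$, $M\to\infty$. This yields $\gamma\le\int\underline\vf\,d\mu$.

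\emph{Step 4: conclusion.} Combining Steps 2 and 3, $\underline\vf\le\overline\vf$ and the two integrals squeeze:
\[
\int\overline\vf\,d\mu\le\gamma\le\int\underline\vf\,d\mu .
\]
Hence the limit $\overline\vf=\underline\vf$ exists a.e., is integrable, and satisfies $\int\overline\vf\,d\mu=\gamma$.

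For $L^1$ convergence: the positive part is controlled because $\frac1n\vf_n\le0$ after the reduction. Fatou's lemma together with $\frac1n\int\vf_n\,d\mu\to\gamma=\int\overline\vf\,d\mu$ gives $\|\tfrac1n\vf_n-\overline\vf\|_{L^1}\to0$ (the Scheffé-type argument for nonpositive functions whose integrals converge). If $\gamma=-\infty$, Step 2 alone gives $\overline\vf=-\infty$ a.e., so the limit still exists.

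The delicate bookkeeping in the covering argument of Step 3 is where I expect most of the work.
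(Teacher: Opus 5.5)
The paper gives no proof of this theorem; it only quotes it from Krengel's book, so your proposal has to stand on its own. For $\gamma>-\infty$ it does. It is the standard Katznelson--Weiss/Steele argument, and the following steps are all sound:
\begin{itemize}
\item the reduction to $\vf_n\le 0$;
\item the invariance of $\underline\vf$ and $\overline\vf$ via $g\le g\circ f$;
\item the block-plus-Birkhoff bound $\int\overline\vf\,d\mu\le\gamma$;
\item the stopping-time bound $\gamma\le\int\underline\vf\,d\mu$, where invariance of $\underline\vf_M$ lets you replace $\underline\vf_M(f^kx)$ by $\underline\vf_M(x)$ in each good block, and $\underline\vf_M\ge -M$ controls the uncovered indices;
\item the squeeze, and the Scheff\'e argument for $L^1$ convergence.
\end{itemize}

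The gap is the case $\gamma=-\infty$. The statement covers this case, because it asserts a.e.\ existence of the limit without assuming $\gamma$ finite. Your sentence ``Step 2 alone gives $\overline\vf=-\infty$ a.e.'' is false when $\mu$ is not ergodic. Step 2 only yields $\int\overline\vf\,d\mu=-\infty$, i.e.\ that $\overline\vf$ is not integrable. For example, take $X=\{a,b\}$, $f=\Id$, $\mu=\frac12(\delta_a+\delta_b)$, $\vf_n(a)=0$ and $\vf_n(b)=-n^2$; this is subadditive because $(n+m)^2\ge n^2+m^2$. Then $\gamma=\inf_n(-n/2)=-\infty$, but $\overline\vf(a)=0$. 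Your claim is only right for ergodic $\mu$, where $\overline\vf$ is a.e.\ constant. So you still have to show $\underline\vf=\overline\vf$ a.e.\ on $\{\overline\vf>-\infty\}$.

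The standard repair is to truncate. After your reduction, set $\vf^{(K)}_n:=\max(\vf_n,-Kn)$. This process is still nonpositive, integrable and subadditive, since $\max(a+b,c+d)\le\max(a,c)+\max(b,d)$, and its constant is at least $-K$. Your finite-$\gamma$ argument therefore shows that $\frac1n\vf^{(K)}_n=\max(\frac1n\vf_n,-K)$ converges a.e. Hence $\max(\underline\vf,-K)=\max(\overline\vf,-K)$ a.e.\ for every $K\in\NN$, which forces $\underline\vf=\overline\vf$ a.e.

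Alternatively, run Steps 2--4 for $\mu$ restricted to each invariant set $\{\overline\vf\ge -c\}$. On such a set, Step 2 forces the restricted constant to be at least $-c$, so the finite case applies.

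This does not affect the paper, which only applies the theorem to bounded processes where $\gamma$ is finite. It is, however, needed for the statement as written.
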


Since $X$ is compact and $F$ is continuous, $\log\|F_x\|_{x,f(x)}$ is bounded, hence Kingman's theorem immediately yields the following fact:

\begin{theorem}\label{thm-pw-exponent}
Let $p:V\to X$ be a smooth vector bundle with $X$ a compact manifold and $\cF:V\to V$ a bundle automorphism.

The pointwise Lyapunov exponent $\lambda^+(f,x)$ exists for $\mu$-a.e. $x$,  is $f$-invariant, and satisfies
 $$
 \lambda^+(f,\mu)=\int_M \lambda^+(f,x) \,d\mu(x).
 $$
\end{theorem}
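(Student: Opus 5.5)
The plan is to apply Kingman's theorem (Theorem~\ref{thm-Kingman}) to the process $\vf_n(x):=\log\|\cF^n_x\|_{x,f^n(x)}$ over the probabilistic dynamics $(X,\mu,f)$. Here $\lambda^+(f,x)$ and $\lambda^+(f,\mu)$ are read as $\lambda^+(\cF,x)$ and $\lambda^+(\cF,\mu)$. I will verify the hypotheses of that theorem one at a time and then read off the conclusions.

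\emph{Step 1: subadditivity.} By the cocycle identity, $\cF^{n+m}_x=\cF^n_{f^mx}\circ\cF^m_x$. Submultiplicativity of operator norms then gives $\|\cF^{n+m}_x\|_{x,f^{n+m}x}\le \|\cF^n_{f^mx}\|_{f^mx,f^{n+m}x}\,\|\cF^m_x\|_{x,f^mx}$, which is exactly \eqref{eq-subadd} after taking logarithms.

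\emph{Step 2: boundedness, hence integrability and finiteness of $\gamma$.} This is the only step needing a small argument. Since $X$ is compact and $\cF$ is a continuous bundle automorphism with continuous inverse, the functions $x\mapsto\|\cF_x\|_{x,fx}$ and $x\mapsto\|\cF_x^{-1}\|_{fx,x}$ are continuous and positive. Hence both are bounded by some $e^C$. From this I get $-C\le\log\|\cF_x\|\le C$, using $\|\cF_x\|\ge\|\cF_x^{-1}\|^{-1}$ for the lower bound.

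Iterating Step 1 gives $\vf_n\le nC$. For the lower bound, $\|\cF^n_x\|\ge \|(\cF^n_x)^{-1}\|^{-1}\ge e^{-nC}$, since the inverse is a product of $n$ factors each of norm at most $e^C$. Therefore $|\vf_n|\le nC$. Each $\vf_n$ is measurable, being continuous by the earlier exercise, so it lies in $L^1(\mu)$ and the process is bounded. This gives $|\gamma(\vf)|\le C$, in particular $\gamma$ is finite.

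\emph{Step 3: conclusion.} Kingman's theorem now yields that $\lambda^+(\cF,x)=\lim_n\frac1n\vf_n(x)$ exists $\mu$-a.e., is $f$-invariant mod $\mu$, and satisfies $\int\lambda^+(\cF,x)\,d\mu=\gamma(\vf)$. By Fekete's lemma, as in Proposition~\ref{prop-average-exp}, $\gamma(\vf)=\inf_n\frac1n\int\vf_n\,d\mu=\lim_n\frac1n\int\vf_n\,d\mu=\lambda^+(\cF,\mu)$. The $L^1$ convergence in Kingman's theorem gives the same identity directly. No step is a real obstacle; the only care needed is the lower bound in Step 2, which is where invertibility of $\cF$ is used.
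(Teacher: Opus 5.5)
Your proposal is correct and follows the paper's route. The paper simply notes that $\log\|\cF_x\|_{x,f(x)}$ is bounded by compactness and continuity, invokes Kingman's theorem for the subadditive process $\log\|\cF^n_x\|_{x,f^n(x)}$, and identifies its constant with $\lambda^+(\cF,\mu)$ as remarked after Proposition~\ref{prop-average-exp}. Your Step~2 also makes explicit what the paper leaves implicit: the lower bound $\log\|\cF^n_x\|\ge -nC$ uses the invertibility of $\cF$ and does not follow from submultiplicativity alone.
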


\begin{exercise}
State and prove versions of Propositions \ref{prop-average-exp} and \ref{prop-average-exp-usc} and Theorem~\ref{thm-pw-exponent} for the \emph{bottom} exponents $\lambda^-(f,\cdot):=-\lambda^+(f^{-1},\cdot)$.
\end{exercise}

\begin{remark}
An illustration of the subtlety of the ergodic theory of subadditive processes (vs. additive ones) is the following phenomenon discovered by P. Walters \cite{Walters-nonuniform}: 
There is a smooth diffeomorphism of a compact manifold which is uniquely ergodic, i.e., for any continuous real function $a\in C^0(M,(0,\infty))$, the limit
 $$
   \lim_{n\to\infty}\frac1n\log|a(f^{n-1}x)a(f^{n-2}x)\dots a(x)|
 $$
exists for \emph{all} $x\in M$ and is constant, equal to the average of $A$ wrt to the unique invariant probability measure, but nevertheless there is some $A\in C^0(M,\GL(\RR^2))$ such that the following limit fails to exist for some $x$:
 $$
   \lim_{n\to\infty}\frac1n\log\|A(f^{n-1}x)A(f^{n-2}x)\dots A(x)\|.
 $$
\end{remark}

\subsection{Multiplicative ergodic theorem}

The Lyapunov exponents can be given a more precice geometric interpretation as  rates of growth of vectors under iteration of a vector bundle morphism. This is provided by the \emph{multiplicative ergodic theorem} (see, eg, \cite[p.50]{Krengel-book}). 
For this statement, we define the angle between two subspaces of the same fiber as
 $$
   \angle(A,B):=\min\{\arccos(a\cdot b)\ge0: (a,b)\in A\times B, \|a\|=\|b\|=1\} \in[0,\pi/2].
 $$

\begin{theorem}[Oseledets]\label{thm-Oseledets}
Let $p:V\to X$ be a measured finite-dimensional vector bundle  and  let $F:V\to V$ be a bundle automorphism  over a measure-preserving automorphism $(X,\mu,f)$. Assume the integrability condition:
 $$
 \log\|F_x\|_{x,fx}\in L^1(\mu).
 $$

Then there exist a set $X'\subset X$ of full $\mu$-measure, a measurable function $r:X'\to\mathbb N^*$, and, for each pair $(x,i)$, where $x\in X'$ and $i\in\{1,2,\dots,r(x)\}$, a subspace $V^i_x$ of $V_x$ and a number $\lambda^i_x$, such that:
\begin{itemize}
\item $V^i_x$ and $\lambda^i_x$ depend measurably on $x$;
\item $r(f(x))=r(x)$, $\lambda^i_{f(x)}=\lambda^i_x$, and $V^i_{f(x)}=F_x(V^i_x)$;
\item $V_x=V^1_x\oplus\dots\oplus V^{r(x)}_x$ with $i<j\implies\lambda^i_x>\lambda^j_x$;
\color{black}
   \item $\lim_{|n|\to\infty} \frac1n\log\|F^n_x.v\|=\lambda_x^i$ for all $v\in V_x^i\setminus0$;
   \item  $\lim_{|n|\to\infty} \frac1n\log\angle(V^i_{f^nx},V^j_{f^nx})=0$.
\end{itemize}
\end{theorem}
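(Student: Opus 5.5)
The plan is to build the Oseledets filtration from Kingman's theorem applied to exterior powers, and then to intersect a forward filtration with a backward one to get the splitting. By the ergodic decomposition and the invariance of the objects involved, we may assume $\mu$ ergodic; the general case follows by working fiberwise on ergodic components, with measurability inherited from the constructions below. We fix a measurable trivialization of $V$ over a full-measure set, which is possible since measurable sections always exist. The cocycle then becomes a measurable map $A:X\to\GL(\RR^d)$ with $\log\|A\|$ and $\log\|A^{-1}\|$ integrable; for the inverse this follows from the integrability hypothesis applied to $F^{-1}$, which we assume is part of the automorphism setting.

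\emph{Step 1 (exponents).} For $k=1,\dots,d$, the functions $\log\|\Lambda^k A^n_x\|$ form an integrable subadditive process. Theorem~\ref{thm-Kingman} gives a.e. limits $\Lambda_k$, which are constant by ergodicity. Setting $\chi_1+\dots+\chi_k:=\Lambda_k$ defines $\chi_1\ge\dots\ge\chi_d$. The distinct values are the $\lambda^i$, with multiplicities $d_i$ and $r$ the number of distinct values.

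\emph{Step 2 (forward filtration).} Write the singular value decomposition $A^n_x=U_n\,\mathrm{diag}(\sigma_j^{(n)})\,W_n$. Then $\frac1n\log\sigma_j^{(n)}\to\chi_j$, because $\sigma_1\cdots\sigma_k=\|\Lambda^k A^n_x\|$. Let $E^{(n)}_i(x)$ be the span of the right singular vectors with exponent $\le\lambda^i$. The key estimate is that $E^{(n)}_i$ is Cauchy in the Grassmannian with exponentially small increments, of order $e^{-n(\lambda^{i-1}-\lambda^i-\epsilon)}$. This follows from $\|A(f^nx)\|\le e^{\epsilon n}$ for large $n$, which holds since $\frac1n\log^+\|A(f^nx)\|\to0$ a.e. by integrability and Borel--Cantelli. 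The limits give an invariant filtration $V_x=L_1\supset L_2\supset\dots\supset L_r$ with $\lim_{n\to+\infty}\frac1n\log\|A^n_xv\|=\lambda^i$ for $v\in L_i\setminus L_{i+1}$. This Raghunathan-type estimate is the main technical obstacle. I would carry it out by comparing $A^{n+1}_x=A(f^nx)A^n_x$ with $A^n_x$ via the standard perturbation bound on singular subspaces.

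\emph{Step 3 (splitting and angles).} Apply Step 2 to $F^{-1}$ over $f^{-1}$ to get a backward filtration $L^-_1\subset\dots$ with exponents $-\lambda^i$. Define $V^i_x:=L_i(x)\cap L^-_{i}(x)$, where $L^-_i$ is the backward subspace of vectors with backward exponent $\ge\lambda^i$; indices are arranged so the dimensions are complementary. Transversality, $L_{i+1}\cap L^-_i=0$, comes from a volume argument: the sum of exponents on the $\Lambda^k$ forward and backward must agree by invariance of $\mu$, since $\int\log|\det A|$ is additive. A nontrivial intersection would violate this. Hence $V_x=\bigoplus V^i_x$, the spaces are invariant, and vectors in $V^i$ have exponent $\lambda^i$ as $n\to\pm\infty$. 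Finally, the subexponential angle decay follows by writing $\log|\det A^n_x|=\sum_i d_i\lambda^i n+o(n)$ and comparing it with the product of determinants on the $V^i$ times the sine of the angles. All volumes are bounded above by the growth rates, and the determinant sum is exact by Birkhoff's theorem, which forces $\frac1n\log\angle\to0$.
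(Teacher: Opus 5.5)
The paper does not prove this theorem. It quotes it from Krengel's book and uses it as a black box, and its main applications have $\dim V=2$. Your overall architecture is the standard Raghunathan--Ruelle proof. You are also right that the two-sided statement needs $\log\|F_x^{-1}\|\in L^1(\mu)$ on top of the stated hypothesis; this is automatic for the continuous invertible cocycles over compact spaces that the paper uses.

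Step 3 is right in spirit, but its transversality claim should be made precise as follows:
\begin{itemize}
\item $W_x:=L_{i+1}(x)\cap L^-_i(x)$ is an invariant measurable subbundle;
\item the forward and backward Birkhoff averages of $\log|\det A_x|_{W_x}|$ converge a.e. to the same invariant function $g$;
\item the growth rates on $L_{i+1}$ and $L^-_i$ force $g\le(\dim W)\lambda^{i+1}$ and $g\ge(\dim W)\lambda^{i}$, so $W=0$ a.e.
\end{itemize}
This argument, like the definition of $V^i$, uses that $L_{i+1}$ and $L^-_i$ are invariant and have the claimed growth rates. Step 2 does not deliver that.

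\textbf{The gap in Step 2.} The estimate you state does not yield the conclusion you draw from it.
\begin{itemize}
\item What it does give: the bound $d(E^{(n)}_i,E^{(n+1)}_i)\lesssim e^{-n(\lambda^{i-1}-\lambda^i-\epsilon)}$ produces the limit $L_i$ with $d(E^{(n)}_i,L_i)\lesssim e^{-n(\lambda^{i-1}-\lambda^i-\epsilon)}$. It also gives the lower bound $\liminf_n\frac1n\log\|A^n_xv\|\ge\lambda^i$ for $v\notin L_{i+1}$.
\item What it does not give: the upper bound on $L_i$. For $v\in L_i$ you only know that its component orthogonal to $E^{(n)}_i$ is $O(e^{-n(\lambda^{i-1}-\lambda^i-\epsilon)})$. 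If that component lies in the $\lambda^1$-block of right singular vectors, it is stretched by $e^{n\lambda^1}$. You then only get $\|A^n_xv\|\lesssim e^{n(\lambda^i+\epsilon)}+e^{n(\lambda^1-\lambda^{i-1}+\lambda^i+2\epsilon)}$, which is useless as soon as $\lambda^1>\lambda^{i-1}$, i.e. with three or more distinct exponents.
\end{itemize}
Without this upper bound you do not have the characterization $L_i(x)=\{v:\limsup_n\frac1n\log\|A^n_xv\|\le\lambda^i\}$. Hence you also do not have the invariance $A_xL_i(x)=L_i(fx)$: note that $E^{(n)}_i(fx)$ is built from $A^n_{fx}$, not from $A^{n+1}_x$. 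Step 3 needs both.

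\textbf{The missing ingredient.} You need the block-by-block form of Raghunathan's lemma. Write $P^{(n)}_j$ for the orthogonal projection onto the right singular vectors of $A^n_x$ with exponent $\lambda^j$. One needs
\[
\|P^{(n)}_j|_{E^{(m)}_i}\|\le C(x)\,e^{-n(\lambda^j-\lambda^i-\delta)}\quad\text{for all } j<i \text{ and all } m\ge n.
\]
\begin{itemize}
\item The case $m=n+1$ follows from your own computation, run with $\|A(f^nx)^{-1}\|\le e^{\epsilon n}$: a unit vector $w\in E^{(n+1)}_i$ satisfies $e^{n(\lambda^j-\epsilon)}\|P^{(n)}_jw\|\le\|A^n_xw\|\le\|A(f^nx)^{-1}\|e^{(n+1)(\lambda^i+\epsilon)}$.
\item Uniformity in $m$ needs an induction on $i-j$: decompose at each intermediate time and sum a geometric series.
\item Letting $m\to\infty$ then gives $\|P^{(n)}_jv\|\lesssim e^{-n(\lambda^j-\lambda^i-\delta)}$ for $v\in L_i$, hence the upper bound and the invariance.
\end{itemize}
When $r\le2$, which covers the surface case used throughout the paper, your coarse estimate already coincides with this one. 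The gap therefore only affects the general finite-dimensional statement.
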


\subsection*{Some notations}
When necessary, one writes $r(F,x),\lambda^i(F,x)$, or $E^i(F,x)$.
When $\mu$ is ergodic, then one denotes by $r(F,\mu),\lambda^i(F,\mu)$ the $\mu$-ae constant values of $r(F,x),\lambda^i(F,x)$. 

The \new{Lyapunov exponents repeated according to multiplicity} are $L^1_x\ge L^2_x\ge\dots\ge L^d_x$ such that, for each $1\le i\le r(x)$,
 $$
    \#\{j:L^j_x=\lambda^i_x,\} = \dim V^i_x.
 $$
The \new{average Lyapunov exponents} of $\mu\in\Prob(f)$ are:
 $$
   \forall 1\le j\le d\quad L^j(F,\mu) = \int_X L^j(F,x)\, d\mu .
 $$
These objects all depend on the cocycle $F$. But, when $F$ is clearly understood, one may omit it from the notations.

The points in $X$ where the functions above are defined and have the stated properties  are called \new{Oseledets-regular points}. We write $X_\reg^F$ for the set of these points.
It is easy to check that this set is measurable and invariant. By Oseledets theorem, $X_\reg$ has full $\mu$-measure.

The \new{Lyapunov spectrum} at $x\in X_\reg$ is $\{\lambda^i(x):1\le i\le r(x)\}$.

The extremal exponents are  the top and bottom Lyapunov exponents: 
 $$
  \text{for $\mu$-a.e. $x\in M$ } \lambda^+(x)=\lambda^1_x \text{ and }\lambda^-(x)=\lambda^{r(x)}_x.
 $$

\smallbreak

\begin{remark}\label{rem-LE-inv}
The exponents are antisymmetric in the following sense. Given any $\mu\in\Prob(f)$, for $\mu$-a.e. $x\in X$, we have
 \begin{itemize}
  \item[$\circ$] $r(F^{-1},x)=r(F,x)$;
  \item[$\circ$] $\lambda^i(F^{-1},x)=-\lambda^{r+1-i}(F,x)$ ($r:=r(\mu)$).
 \end{itemize}
 In particular, the top exponent of $(F^{-1},\mu)$ is minus the bottom exponent of $(F,\mu)$.
\end{remark}

The following classical fact will allow us to restrict to hyperbolic measures of saddle type (i.e., with both positive and negative exponents a.e.). Moreover, its proof will be a nice exercise.

\begin{lemma}\label{lem-all-negative-exp}
Let $f$ be a $C^1$ diffeomorphism of a closed manifold and let $\mu\in\Proberg(f)$. If all Lyapunov exponents of $\mu$ are negative, then $\mu$ must be a sink.\footnote{More precisely, $\mu$ must be the unique invariant probability measure supported by an attracting hyperbolic periodic orbit.} Similarly, if all its exponents are positive, then it must be a source.
\end{lemma}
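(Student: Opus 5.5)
I plan to treat the case where all exponents are negative; the source case then follows by applying it to $f^{-1}$, using Remark~\ref{rem-LE-inv}, which says the exponents of $(Df^{-1},\mu)$ are the negatives of those of $(Df,\mu)$. So assume $\mu$ is ergodic and $\lambda^+(\mu)=\lambda^1(\mu)<0$. Fix $\lambda$ with $\lambda^+(\mu)<-2\lambda<0$.

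The first step is to get uniform contraction for one iterate on a set of positive measure. By Theorem~\ref{thm-pw-exponent} and Proposition~\ref{prop-average-exp}, $\lambda^+(\mu)=\inf_n \frac1n\int\log\|D_xf^n\|\,d\mu$. So there is $N$ with $\frac1N\int\log\|D_xf^N\|\,d\mu<-2\lambda$. Pointwise Kingman (Theorem~\ref{thm-Kingman}) then gives, for $\mu$-a.e.\ $x$ and all large $n$, $\|D_xf^n\|\le e^{-2\lambda n}$. Hence there is a set $K$ with $\mu(K)>0$ and an integer $n$ such that $\|D_xf^n\|\le e^{-2\lambda n}$ for every $x\in K$.

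The second step passes from the derivative to a neighborhood. By compactness, $Df^n$ is uniformly continuous. So there is $r>0$ such that $\operatorname{Lip}(f^n|_{B(x,r)})\le e^{-\lambda n}<1$ for every $x\in K$, working in exponential charts and shrinking $r$ if necessary. Therefore $f^n(B(x,r))\subset B(f^nx,e^{-\lambda n}r)$. However, this single iterate is not enough: iterating requires $f^{n}x$ to return to $K$, and the contraction constant must survive the transit. This is the main obstacle.

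To handle it, I would use a Pliss-type or Lusin argument. Replace $K$ by a Pesin-type set on which the tempered estimate $\|D_{x}f^m\|\le C e^{-2\lambda m}$ holds for \emph{all} $m\ge0$, with a uniform $C$; such a set of positive measure exists because $\sup_m(\log\|D_xf^m\|+2\lambda m)$ is finite a.e. A standard inductive argument then applies. Using the modulus of continuity of $Df$, one shows that for a small enough $r$ (depending on $C$ and $\lambda$) we have $f^m(B(x,r))\subset B(f^mx,C'e^{-\lambda m}r)$ for all $m$. The induction keeps the accumulated derivative product controlled, since the points $f^m y$ stay exponentially close to $f^m x$.

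The conclusion then follows from Poincaré recurrence. Pick a density point $x\in K\cap\supp\mu$ and a return time $m$ with $f^mx\in B(x,r/3)$ and $C'e^{-\lambda m}<1/3$. Then $f^m$ maps $\overline{B}(x,r/2)$ into itself as a contraction. It therefore has a unique fixed point $p$, and $f^{km}y\to p$ for every $y\in B(x,r/2)$. By ergodicity and $\mu(B(x,r/2))>0$, $\mu$-a.e.\ point enters this ball and so converges to the orbit of $p$. The Birkhoff averages of $\mu$-typical points therefore converge to the periodic measure on the orbit of $p$, so $\mu$ equals that measure. Finally, the orbit of $p$ is attracting and hyperbolic because its exponents are those of $\mu$, all negative.
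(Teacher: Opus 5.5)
\paragraph{Verdict.} Your reduction to the sink case, the choice of $K$ and $r$, and the final recurrence and fixed-point argument are fine, and you correctly locate the obstacle. However, the step you use to overcome it, the ``standard inductive argument'' on a Pesin-type set, does not work for a $C^1$ diffeomorphism in dimension $\ge 2$.

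\paragraph{Where the induction breaks.} The bound $\|D_xf^m\|\le Ce^{-2\lambda m}$ at the single point $x$ controls only the full products $D_{f^{m-1}x}f\cdots D_xf$. In an induction on $m$, the nonlinear error committed at time $j$ has size $\omega(d_j)\,d_j$, where $d_j=d(f^jy,f^jx)$ and $\omega$ is the modulus of continuity of $Df$. That error is afterwards acted on by the derivatives of $f^{m-j}$ along the orbit from time $j$ on. Your hypothesis only gives $\|D_{f^jx}f^{m-j}\|\le\|D_xf^m\|\,\|(D_xf^j)^{-1}\|$, which is exponentially large in $j$ as soon as the two exponents differ: directions that have been strongly contracted may be re-expanded later. (In dimension one norms multiply, and your induction does go through.)

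\paragraph{Why a genuine Pesin set does not rescue it.} Suppose you strengthen $K$ to require tempered bounds $\|D_{f^jx}f^k\|\le Ce^{\kappa j}e^{-2\lambda k}$ along the whole orbit. The induction then still needs $e^{\kappa j}\,\omega(d_j)$ to remain small for all $j$. Since $d_j$ decays only exponentially, this holds when $f$ is $C^{1+\alpha}$, but it fails for a general modulus of continuity (think of $\omega(s)\approx 1/\log(1/s)$). This is the same obstruction that forces $r>1$ in Pesin's stable manifold theorem. So your claimed estimate $f^m(B(x,r))\subset B(f^mx,C'e^{-\lambda m}r)$ is unproved in the $C^1$ setting of the lemma.

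\paragraph{The missing idea.} It is the one behind the paper's Exercise~\ref{exo-all-negative-exp}: absorb the nonlinearity into a function defined at a \emph{fixed} scale, and let the Pliss lemma handle the returns.
\begin{enumerate}
\item Take $N$ large and then $\rho>0$ small so that $\phi(z):=\log\Lip(f^N|B(z,\rho))$ is $<-\gamma N$ on a set of measure $>1-\eps$. This follows from Kingman's theorem plus uniform continuity of $Df^N$.
\item Since $\phi$ is bounded, $\int\phi\,d\mu<-\gamma N/2$ once $\eps$ is small. Hence the Birkhoff averages of $\phi$ along $f^N$-orbits are negative on a set of positive measure.
\item Apply the Pliss lemma (Theorem~\ref{thm-Pliss}, in reversed time) to $a_j:=\phi(f^{jN}x)$, combined with Birkhoff's theorem. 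This gives a set of positive measure of points $z$ with $\sum_{j=0}^{\ell-1}\phi(f^{jN}z)\le-\gamma'N\ell$ for every $\ell\ge1$.
\item For such $z$, a trivial induction yields $\Lip(f^{\ell N}|B(z,\rho))\le e^{-\gamma'N\ell}$ and $f^{\ell N}(B(z,\rho))\subset B(f^{\ell N}z,e^{-\gamma'N\ell}\rho)$. The induction works because every intermediate image lies in a ball of radius at most $\rho$ around the corresponding orbit point, where the bound encoded in $\phi$ applies. Only the uniform continuity of $Df$ is used.
\end{enumerate}
With this in place of your Pesin-set step, your recurrence argument concludes the proof.
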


The proof of this lemma will require the following important result. The Pliss Lemma allows one to deduce uniform estimates from simple averages by restricting oneself to a subset of positive measure.

\begin{theorem}[Pliss Lemma]\label{thm-Pliss}
Let $a_1,\dots,a_N$ be a finite sequence of real numbers with average $\alpha:=\frac1N(a_1+\dots+a_N)$. Let $A:=\max(a_1,\dots,a_N)$.

We have, for any $\beta<\alpha$, 
 $$
    \#\{1\le k\le N: \forall 0\le\ell\le k\; (a_\ell+a_{\ell+1}+\dots+a_k) \ge \beta\cdot\alpha\} \geq \frac{\alpha-\beta}{A-\beta} \cdot N.
 $$     
\end{theorem}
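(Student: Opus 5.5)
The statement as printed (sums $\ge \beta\cdot\alpha$) looks like a typo. I will prove the standard form of the Pliss lemma: the number of indices $k$ such that
$$
\forall 1\le \ell\le k\quad a_\ell+\dots+a_k\ \ge\ \beta\,(k-\ell+1)
$$
is at least $\frac{\alpha-\beta}{A-\beta}N$. I call such $k$ \emph{$\beta$-good}. The quotient makes sense because $\beta<\alpha\le A$, so $A-\beta>0$. The plan is the classical ``record times of a random walk'' argument: subtract the drift $\beta$, look at partial sums, and count the times at which the running maximum is attained.

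Set $b_j:=a_j-\beta$ and $S_0:=0$, $S_k:=b_1+\dots+b_k$. The condition in the definition is equivalent to $S_k-S_{\ell-1}\ge 0$ for all $1\le \ell\le k$. So $k$ is $\beta$-good exactly when
$$
S_k=M_k,\qquad\text{where } M_k:=\max_{0\le m\le k}S_m .
$$
The sequence $(M_k)_{0\le k\le N}$ is nondecreasing, with $M_0=0$ and $M_N\ge S_N=N(\alpha-\beta)$.

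Next I bound each increment $M_k-M_{k-1}$.
\begin{itemize}
\item If $k$ is not good, then $M_k=M_{k-1}$, so the increment is zero.
\item If $k$ is good, then $M_k=S_k=S_{k-1}+b_k\le M_{k-1}+b_k$, so the increment is at most $b_k\le A-\beta$.
\end{itemize}
Summing over $k=1,\dots,N$ telescopes to
$$
N(\alpha-\beta)\ \le\ M_N-M_0\ \le\ (A-\beta)\cdot\#\{k:\ k \text{ is }\beta\text{-good}\},
$$
which is the claimed inequality.

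There is no real analytic obstacle here. The only delicate point is getting the reformulation exactly right: the quantifier is over $1\le\ell\le k$, and ties $S_k=S_m$ must count as good. Once that is fixed, the increment bound at good times, which uses $M_{k-1}\ge S_{k-1}$, does all the work.
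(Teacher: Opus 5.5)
Your proof is correct, and your reading of the misprinted statement, that every block $a_\ell+\dots+a_k$ with $1\le\ell\le k$ has average at least $\beta$, is exactly what the paper's proof establishes with its ``Pliss integers''. Your route differs from the paper's.

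The paper argues by covering. Every non-Pliss $k$ lies in a bad interval $\{\ell(k),\dots,k\}$, meaning one with average $<\beta$. These intervals are shown to be nested or disjoint, so the non-Pliss integers lie in a disjoint union $B$ of bad intervals. Then $N\alpha\le\beta\,\#B+A\,(N-\#B)$ gives $N-\#B\ge\frac{\alpha-\beta}{A-\beta}N$.

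You instead subtract the drift and identify the good times with the record times of $S_k$. The running maximum grows only at those times, by at most $A-\beta$ each, yet it must grow by at least $N(\alpha-\beta)$ in total.

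What your argument buys is brevity, and it has no combinatorial nested-or-disjoint step. That step is the delicate point of the paper's proof: it requires choosing $\ell(k)$ as large as possible, i.e.\ the shortest bad interval ending at $k$. With the longest ones it fails. For example, $a=(5,-6,5)$ and $\beta=0$ give the bad intervals $\{1,2\}$ and $\{2,3\}$, which overlap without being nested, and their union is not bad.

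What the paper's approach buys is an explicit decomposition of $\{1,\dots,N\}$ into disjoint maximal bad blocks and good times. This is the same kind of maximal-block structure the paper later exploits for neutral blocks in Chapter~\ref{chapter-neutral}.
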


\begin{remark}
Pliss Lemma implies the lower bound:
 $$
   m:= \#\{1\le k\le N: a_k \ge \beta\cdot\alpha\} \geq \frac{\alpha-\beta}{A-\beta} \cdot N\;,
 $$
which is obvious and optimal: consider the case where $a=(\beta-\eps,\dots,\beta-\eps,A,\dots,A)$ for some arbitrarily small $\eps>0$ and notice that
 $$
    (N-m)(\beta-\eps)+m\cdot A \ge N \alpha\;,
 $$
so $m(A-\beta+\eps)\ge N(\alpha-\beta+\eps)$, which yields the Pliss bound above as $\eps\to0$.
\end{remark}

\begin{proof}[Proof of Pliss Lemma]
Say that an integer interval in $\{1,\dots,N\}$ is bad if $(\# I)^{-1}\sum_{i\in I} a_i<\beta$. Otherwise say that the interval is good.  Say that an integer $1\le k\le N$ is Pliss if for all $0\le\ell\le k$, the interval $\{k-\ell,\dots,k\}$ is good. We have to show that there are at least $N\cdot (\beta-\alpha)/(A-\alpha)$  Pliss integers.

To this end, notice that any non-Pliss integer $1\le k\le N$ is contained in a bad interval $\{\ell(k),\dots,k\}$ for some $1\le\ell(k)\le k$ which we can assume to be maximal. We claim that such bad intervals are disjoint or nested. Otherwise, there would be integers $k,k'$ such that $\ell(k)<\ell(k')\le k<k'$. Note that $\{k+1,\dots,k'\}$ is good by maximality of $\ell(k')$. This implies that $\{\ell(k'),\dots,k\}$ is bad. But this would contradict the maximality of $\ell(k)$.

Thus the non-Pliss integers are contained in a disjoint union $B$ of bad intervals. The average $(\#B)^{-1}\sum_{i\in B}a_i$ is therefore less than $\beta$. Thus
 $$
    N\cdot \alpha = \sum_{i=1}^N a_i \le (\#B)\cdot\beta + (N-\#B)\cdot A
     = (N-(N-\#B))\cdot\beta + (N-\#B)\cdot A\;,
 $$ 
hence
 $$
    N\cdot (\alpha-\beta) \le (N-\#B)\cdot (A-\beta).
 $$
Therefore the number of Pliss integers is at least $N-\#B\ge N(\alpha-\beta)/(A-\beta)$, proving the claim.
\end{proof}

\begin{exercise}\label{exo-all-negative-exp}
Let $f$ be a $C^1$ diffeomorphism of a closed manifold and let $\mu\in\Proberg(f)$ with only negative Lyapunov exponents.
 Show that there is $\gamma>0$ such that for any $\eps>0$, there are $N\ge1$ and $\rho>0$ satisfying $\mu\{x:\Lip(f^N|B(x,\rho)<e^{-\gamma N}\}>1-\eps$. Using Pliss lemma show that there is a set of positive measure of points $x$ such that
  $$
     \lim_{n\to\infty} \diam(f^nB(x,\rho)) = 0.
  $$
Deduce Lemma~\ref{lem-all-negative-exp} from this.
\end{exercise}

\section{Projective dynamics and Lyapunov exponents}
\label{secProjDynLyap}

The Oseledets theorem expresses the Lyapunov exponents as averages of \emph{measurable functions}. Such functions are usually not continuous and, indeed, the Lyapunov exponents do not depend continuously on the measure (see Exercise~\ref{exo-all-negative-exp}). To analyze these discontinuities, one introduces the Grassmanian bundle, whose fibers are the set of vector subspaces of the original vector bundle.
There, the exponents can be expressed as \emph{averages of a continuous function}.

Since we are focusing on surface dynamics, the Grassmanian bundle simplifies to the projectivization of the original bundle. Thus we specialize a bit Definition~\ref{def-projectivization} and simplify the notations:

\begin{definition}\label{def-projective}
Let $(p:V\to X,f,A)$ be a linear cocycle with $\dim V=2$.
The \new{projectivization of the linear cocycle}  is defined by:
 \begin{itemize}
  \item[--] $\hV$, the bundle over $X$ with fibers $\hV_x:=\{\RR.v:v\in V_x\setminus 0\}$;
  \item[--] $\hf:\hV\to\hV$, the map $(x,E)\mapsto(f(x),A_x(E))$;
  \item[--] $\hpi:\hV\to X$, the projection: $(x,E)\mapsto x$;
  \item[--] $\hvf:\hV\to\RR$, the function $(x,E)\mapsto \log|A_x|_E|_{x,f(x)}$, where $|L|_{x,y}$ is the Jacobian of the linear map $L:V_x\to V_y$ with respect to the Euclidean structures $\|\cdot\|_x$ and $\|\cdot\|_y$.
 \end{itemize}
\end{definition}

An (invariant) lift of $\mu\in\Prob(f)$ is $\hmu\in\Prob(\hf)$ such that $\hpi_*(\hmu)=\mu$. 

\medbreak

The measurable splittings defined by Oseledets theorem induce measurable sections, as defined below:

\begin{definition}
Let $(\hpi,\hV,A,\hf)$ be as above. Let $X_\#$ be the set of points in $X_\reg$ with $r(x)=\dim V$. Then the Oseledets sections are the measurable maps
 $$
    \Gamma^i:X_\#\to\hV, \quad x\longmapsto E_x^i.
 $$
When $\dim V=2$, we set $X_\hyp:=\{x\in X_\reg:\lambda^1(x)>0>\lambda^2(x)\}$ and we define the \new{unstable section} to be $\Gamma^+:=\Gamma^1|X_\hyp$
and the \new{stable section} to be $\Gamma^-:=\Gamma^2|X_\hyp$.
\end{definition}

\begin{proposition}\label{prop-lift-projective}
Let $(\hpi,\hV,A,\hf)$ be as above. Assume that the cocycle is invertible.
Let $\mu\in\Prob(f)$. Then the following are true:

\smallbreak\noindent
(i) $\mu$ always has some \new{lift}, i.e., there exists $\hmu\in\Prob(\hf)$ with $\hpi_*(\hmu)=\mu$.

\smallbreak\noindent
(ii) If $\mu$ is ergodic, then a.e. ergodic component of a lift is an ergodic lift. In particular, $\mu$ admits an ergodic lift.

\smallbreak\noindent
(iii) If $\hmu$ is any ergodic lift of $\mu$ (so $\mu$ must be ergodic), then
 $$
    \hmu(\hvf) = \lambda(x,v) \text{  for $\hmu$-a.e. }\hv=(x,v)\in V.
 $$

\smallbreak\noindent
(iv) If $\mu$ is ergodic, then
 \begin{equation}\label{eq-Exponents-Erg-Lifts}
    \{\hmu(\hvf):\hmu\in\Proberg(\hf)\text{ s.t. }\hpi_*(\hmu)=\mu\}=
        \{\lambda^i(\mu):1\le i\le r(\mu)\}.
  \end{equation}

\smallbreak\noindent
(v) In general,  given the ergodic decomposition $\mu=\int_{\Proberg(f)} \nu\, dP$,  there is a disintegration $\hP = \int \hP_\nu\, d\overline{P}$ such that
 $$
   \hnu = \int_{\Proberg(f)} \left(\int_{\Proberg(\hf)\cap \hpi_*^{-1}(\nu)} \hnu\, d\hP_\nu\right)\, dP(\nu).
 $$
(vi) For any lift $\hmu$ of $\mu$, $\hmu\left(\hM_\#\setminus\bigcup_{i=1}^d \hGamma^i(X_\#)\right)=0$.

\end{proposition}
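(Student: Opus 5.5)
The plan is to prove the six items in order, using compactness of the projective fibers, the cocycle identity for $\hvf$, and Oseledets' Theorem~\ref{thm-Oseledets}. For (i), take any Borel probability $\hmu_0$ on $\hV$ with $\hpi_*\hmu_0=\mu$; for instance push $\mu$ forward by a measurable section, which exists as noted earlier. Then form the Cesàro averages $\frac1n\sum_{k<n}\hf^k_*\hmu_0$. Each of these projects to $\mu$ because $\hpi\circ\hf=f\circ\hpi$ and $\mu$ is $f$-invariant. The fibers are compact and $X$ is compact, so $\hV$ is compact and a weak limit point exists. It is $\hf$-invariant by the usual Krylov–Bogolyubov argument, and it still projects to $\mu$ because $\hpi_*$ is weakly continuous.

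For (ii), disintegrate a lift $\hmu$ into ergodic components. Since $\hpi_*$ is affine and continuous, $\mu=\int\hpi_*(\hmu_\xi)\,d\xi$. Each $\hpi_*\hmu_\xi$ is ergodic, being a factor of an ergodic measure. Because $\mu$ is extremal in $\Prob(f)$, almost every $\hpi_*\hmu_\xi$ equals $\mu$.

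For (iii), use the cocycle identity $\sum_{k<n}\hvf(\hf^k(x,E))=\log\|A^n_x|_E\|$. Birkhoff's theorem applied to the continuous function $\hvf$ gives, for $\hmu$-a.e. $(x,E)$, that $\frac1n\log\|A^n_x v\|\to\hmu(\hvf)$ for $v\in E\setminus0$. I read this limit as the "$\lambda(x,v)$" of the statement. The same identity, run for negative times via the inverse cocycle, gives the analogous backward limit.

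For (iv) and (vi), the key fact is that a vector $v$ having the same exponential rate in both time directions must lie in an Oseledets space. I would argue this in dimension $2$, which is the setting of Definition~\ref{def-projective}. If $r(x)=1$, every direction has rate $\lambda^1$ and there is nothing to prove. If $r(x)=2$, write $v=v_1+v_2$ with $v_1\ne0\ne v_2$. Then the forward rate is $\lambda^1$ and the backward rate is $\lambda^2$, where the backward rate is computed with $n\to-\infty$ and uses the subexponential angle condition. Applying Birkhoff to both $\hf$ and $\hf^{-1}$ for the ergodic $\hmu$ makes the forward and backward rates coincide $\hmu$-a.e. Hence $\hmu$-a.e. $(x,E)$ satisfies $E=E^i_x$ for some $i$, which is (vi). This step is where the precise form of Oseledets matters, especially the angle estimate for negative times, so I expect it to be the main obstacle. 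It also explains why invertibility is assumed.

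Given this, (iii) shows that the left side of \eqref{eq-Exponents-Erg-Lifts} is contained in the spectrum. For the reverse inclusion, the graph of $\Gamma^i$ carries the invariant lift $(\Gamma^i)_*\mu$, which projects to $\mu$, by the equivariance $A_x E^i_x=E^i_{fx}$. This lift is ergodic because it is isomorphic to $(f,\mu)$ via $\hpi$. Its integral of $\hvf$ equals $\lambda^i$ by Birkhoff and the Oseledets growth rate. Finally, (v) follows from combining the ergodic decomposition of a lift $\hmu$ with (ii). Group the ergodic components $\hnu$ of $\hmu$ according to their projections $\nu=\hpi_*\hnu$, which are ergodic. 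The distribution of these projections is $P$, since the components integrate to $\mu$. Disintegrating the decomposition measure over the map $\hnu\mapsto\hpi_*\hnu$ by Rokhlin's theorem then produces the measures $\hP_\nu$ concentrated on $\hpi_*^{-1}(\nu)$.
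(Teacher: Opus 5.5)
Your proposal is correct. For (i), (ii), (iii) and (v) it follows the paper's argument: Krylov--Bogolyubov averaging of the pushforward of a measurable section, extremality of $\mu$, Birkhoff plus the telescoping identity for $\hvf$, and Rokhlin disintegration over $\hnu\mapsto\hpi_*\hnu$.

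For (iv) and (vi) you take a genuinely different route. The paper proves (iv) directly. The inclusion ``$\subset$'' comes from (iii) together with the fact that the forward growth rate of every vector lies in the spectrum. The inclusion ``$\supset$'' comes from a limit of empirical measures along the orbit of $(x,E^1_x)$: it is decomposed ergodically, and since $\lambda^1(\mu)$ is the maximum of the spectrum, a.e. component must realize it; the same is then done with $A^{-1}$ for the bottom exponent. The paper then declares (vi) a consequence of (v). That step tacitly uses the classification of ergodic lifts, which the paper only establishes later in Proposition~\ref{prop-lift-simple}, by a forward argument: directions off $E^2_x$ are attracted exponentially to $E^1_x$, and invariance of $\hmu$ does the rest.

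You instead obtain (vi) first, by time reversal. Forward and backward Birkhoff averages of $\hvf$ agree a.e., whereas a direction with nonzero components on both Oseledets lines has forward rate $\lambda^1$ and backward rate $\lambda^2$. Then ``$\subset$'' in (iv) follows from (vi) and (iii), and ``$\supset$'' from the explicit ergodic lifts $(\Gamma^i)_*\mu$.

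Your version is more self-contained. It yields Proposition~\ref{prop-lift-simple}(1)--(2) along the way and uses only $\hvf$ and the two-sided angle control in Oseledets' theorem. The price is that it needs invertibility and one-dimensional Oseledets lines for ``$\supset$''. The paper's extremality argument, by contrast, realizes the top and bottom exponents regardless of multiplicities.

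Two small points to tidy up:
\begin{itemize}
\item Item (vi) concerns arbitrary lifts, while your argument is written for ergodic $\hmu$. Note that forward and backward Birkhoff limits coincide a.e. for any invariant measure, or apply your argument to each ergodic component.
\item When $r(\mu)=1$ there is no section $\Gamma^1$ into $\hV$. In that case ``$\supset$'' should come from the existence of an ergodic lift (ii) combined with (iii).
\end{itemize}
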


The above is true for a vector bundle of any finite dimension. 
For simplicity, we prove item (iv) under the assumption that $r(\mu)\le 2$. For the general case, see \cite[Thm 6.1]{Viana-LE-book}.

\begin{proof}
Let $\mu\in\Prob(f)$.

We first prove (i). Using the definition of a vector bundle, one can build a measurable section $\Gamma:X\to\hV$ (not necessarily continuous or invariant). The image measure $\hmu_0:=\Gamma_*(\mu)$ on $\hV$ satisfies: $\hpi_*(\hmu_0)=\mu$. As in the classical Krylov-Bogoliubov argument, any accumulation point of the sequence of averages 
 $$
    \frac1n\sum_{k=0}^{n-1} \hf^k_*(\hmu_0)
 $$
produces a lift $\hmu$ of $\mu$.
Indeed, since $\hf_*,\hpi_*:\Prob(\hV)\to\Prob(X)$ are continuous and $\Prob(\hf)$ is a closed subset of the compact $\Prob(\hV)$, the above measures admit at least one accumulation point $\hmu$ and this point belongs to $\Prob(\hf)$. This proves item (i).
\smallbreak

We turn to item (ii): by applying (i) to $\mu\in\Proberg(f)$, we get some not necessarily ergodic lift $\hmu\in\Prob(f)$. Write its ergodic decomposition $\hmu=\int_{\Proberg(\hf)} \hnu\, d\hP$. Taking the image by $\hpi_*$ which is continuous, we get
 $$
    \mu=\int_{\Proberg(\hf)} \hpi_*(\hnu)\, d\hP.
 $$
Since $\mu$ is ergodic, it is an extreme point of $\Prob(f)$. Hence $\hpi_*(\hnu)=\mu$ a.e. Thus, a.e. ergodic component $\hnu$ is an ergodic lift of $\mu$, proving item (ii).

\smallbreak

To prove item (iii), we rely on the following identity: 
 \begin{equation}\label{eq-basic-hvf}
    \forall (x,v)\in V\setminus 0\; \forall n\ge1 \quad \sum_{k=0}^{n-1} \hvf(\hf^k(x,v)) = \log\|F^n.v\|_{f^nx}-\log\|v\|_x.
  \end{equation}
This and the Birkhoff theorem imply that for $\hmu$-a.e. $\hv=(x,\RR.v)\in\hV$
 $$
    \hmu(\hvf) = \lim_{n\to\infty} \frac1n\sum_{k=0}^{n-1} \hvf(\hf^k(\hv))=\lim_{n\to\infty} \frac1n\log\|F^n_x.v\|_{f^nx} = \lambda(x,v).
 $$
Item (iii) is proved.

\smallbreak

We turn to item (iv). First, if  $\hmu\in\Proberg(\hf)$ is some ergodic lift of $\mu$, then one has item (iii), i.e., $\hmu(\hvf)=\lambda(x,v)$ for $\hmu$-a.e. $\hv$, that is, $\hmu(\hvf)$ belongs to the spectrum of $(F,\mu)$. 

Conversely, let us see that $\lambda^1(\mu)$ is an average $\hmu(\hvf)$ for some ergodic lift $\hmu$ of $\mu$.
Pick some $\mu$-generic $x\in X_\reg$ and fix some $v\in E^1_x\setminus 0$. Using the weak topology on $\Prob(\hV)$, take any accumulation point
 $$
     \hmu_\infty = \lim_j \frac1{n_j} \sum_{k=0}^{n_j-1} F^k_*\delta_{(x,v)}.
 $$
Since $\hvf$ is continuous, it holds that
 $$
   \hmu_\infty(\hvf)=\lim_j \frac1{n_j} \sum_{k=0}^{n_j-1} \hvf(\hv^k\hv) =\lambda(x,v)=\lambda^1(\mu).
 $$ 
Since $\hmu_\infty$ might fail to be ergodic, we consider its ergodic decomposition  $\hmu_\infty = \int \hnu\, d\hP$. We must have
 $$
     \lambda^1(\mu) = \int \hnu(\hvf)\, d\hP\;.
 $$
Therefore the inequality $\hnu(\hvf)\le\lambda^1(\mu)$ must be an equality for $\hP$-a.e. $\hnu$: a.e. ergodic component $\hmu$  of $\hmu_\infty$ is an ergodic lift $\hmu$ with $\hmu(\hvf)=\lambda^1(\mu)$.

The previous argument applied to $F^{-1}$ gives an ergodic lift $\hmu$ realizing the bottom exponent: $\lambda^{r(\mu)}(\mu)=\hmu(\hvf)$.

For $r(\mu)\le 2$, this is enough to prove item (iv). 

\smallbreak

To prove item (v), let $\hmu=\int \hnu\, d\hP$ be the ergodic decomposition of $\hmu$. Since $\hpi_*:\Prob(\hf)\to\Prob(f)$ is continuous and affine, we have
 $$
    \mu= \hpi_*(\hmu) = \int_{\Proberg(\hf)} \hpi_*(\hnu)\, d\hP
     = \int_{\Proberg(f)} \nu \, d\hpi_*(\hP)\;.
 $$
Since the ergodic decomposition $\mu=\int \nu\, dP$ is unique, we have $\hpi_*(\hP)=P$.

Consider the Rokhlin disintegration of $\hP$ corresponding to the map $\hpi:\Proberg(\hf)\to\Proberg(f)$:
 $$
   \hP=\int_{\Proberg(f)} \hP_\nu\, d\hpi_*(\hP)
       \text{ with } \hP_\nu(\Proberg(\hf)\cap\hpi_*^{-1}(\nu))=1 \text{ $\hP$-a.e.}
 $$
Hence, the ergodic decomposition of $\hmu$ can be rewritten as follows:
 $$
   \hmu = \int_{\Proberg(f)} \left( \int_{\Proberg(\hf)\cap\hpi_*^{-1}(\nu)} \,d\hP_\nu(\hnu)\right) \, dP,
 $$
which proves item (v).

\smallbreak

Item (vi) is an immediate consequence of (v).
\end{proof}

\section{Reading the simple Lyapunov spectrum}

We consider a linear invertible cocycle $A$ of a vector bundle $p:V\to X$.
Recall that $X_\reg$ is the set of Oseledets regular points. The following is the set of points with \new{simple Lyapunov spectrum}:
 $$
   X_\#:=\{x\in X_\reg:r(x)=\dim V\}\;.
 $$
We also consider 
 $$
    X_=:=\{x\in X_\reg:r(x)=1\}=\{x\in X_\reg:\lambda^+(F,x)=\lambda^-(F,x)\}.
 $$
On $X_=$, the Oseledets theorem reduces to the following equality:
 $$
    \lim_{|n|\to\infty} \frac1n\log\|F^n_x.v\|=\lambda^+(F,x)=\lambda^-(F,x)\text{ for all }v\in V_x\setminus0.
 $$

Over $X_\#$, the situation is more interesting. To describe it, it is convenient to use the projective dynamics as follows. We give the general statement though we are only going to consider the case where $\dim V=2$ and $X_\reg=X_=\sqcup X_\#$ is a partition.

Recall that $\hV^i$ ($1\le i\le\dim V$) is the measurable sub-bundle defined by Oseledets theorem.

\begin{proposition}\label{prop-lift-simple}
Let $\hV^i_\#$ be the restriction of the bundle $\hV^i$ to $X_\#$.
Let $\mu\in\Proberg(f)$ with $\mu(X_\#)=1$. Then the following are true:
 \begin{enumerate}
  \item $\mu$ admits exactly $d=\dim V$ ergodic lifts $\hmu^1,\dots,\hmu^d$
 $$
   \hmu^i := \int_{X_\#} \delta_{(x,V^i_x)} \, d\mu(x) \quad (i=1,\dots,d).
 $$
 \item For each $i=1,\dots,d$, $\hmu^i(\hvf)=\lambda^i(A,\mu)$.
 \item The ergodic components of any invariant lift are, a.s., ergodic lifts.
\end{enumerate}  
\end{proposition}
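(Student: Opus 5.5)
The plan is to first check that each $\hmu^i$ is an invariant ergodic lift, and then to show, via a contraction argument in the projective fibers, that any ergodic lift must charge one of the Oseledets sections.

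\emph{Step 1: each $\hmu^i$ is an invariant ergodic lift.} The section $\Gamma^i:x\mapsto(x,V^i_x)$ is measurable, and $\hpi\circ\Gamma^i=\Id$, so $\hpi_*\hmu^i=\mu$. Theorem~\ref{thm-Oseledets} gives the equivariance $V^i_{fx}=A_x(V^i_x)$, that is, $\hf\circ\Gamma^i=\Gamma^i\circ f$ on the invariant set $X_\#$. Hence $\hf_*\hmu^i=\Gamma^i_*f_*\mu=\hmu^i$.

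Moreover, $\Gamma^i$ is a measurable isomorphism between $(X_\#,\mu,f)$ and $(\Gamma^i(X_\#),\hmu^i,\hf)$, with inverse $\hpi$. So $\hmu^i$ is ergodic because $\mu$ is.

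Item (2) then follows from Proposition~\ref{prop-lift-projective}(iii): for $\hmu^i$-a.e. $(x,V^i_x)$ we have $\hmu^i(\hvf)=\lambda(x,v)$ with $v\in V^i_x$, and this equals $\lambda^i(\mu)$. Since the exponents are distinct on $X_\#$, the measures $\hmu^i$ are pairwise distinct. They are also mutually singular, since the graphs $\Gamma^i(X_\#)$ are disjoint.

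\emph{Step 2: no other ergodic lifts.} Let $\hnu$ be an ergodic lift. I will show that $\hnu$ is carried by $\bigcup_i\Gamma^i(X_\#)$. By ergodicity and invariance of each graph, $\hnu$ then gives full mass to a single graph $\Gamma^i(X_\#)$. Any measure on that graph projecting to $\mu$ equals $\Gamma^i_*\mu=\hmu^i$, because the graph meets each fiber in one point.

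This is the content of item (vi) of Proposition~\ref{prop-lift-projective}. Its proof was only sketched there, so I will argue it directly; this is the main obstacle.

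Fix a line $E\in\hV_x$ not contained in any proper sum of Oseledets spaces. Write $v=\sum_j v_j$ with $v_j\in V^j_x$, and let $j_0$ be the smallest index with $v_{j_0}\ne0$. Under forward iteration, the direction $A^n_xE$ converges to $V^{j_0}_{f^nx}$. To quantify this, apply Oseledets' subexponential angle estimates to $\|A^n v_j\|/\|A^n v_{j_0}\|\approx e^{n(\lambda^j-\lambda^{j_0})}$. Then for every $\delta>0$, the time-$n$ image of the set $\{\hv : d(\hv,\Gamma^{j_0})\ge\delta\}$ is eventually within $o(1)$ of $\Gamma^{j_0}$, uniformly on sets of large $\mu$-measure. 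The uniformity comes from Egorov applied to Oseledets regularity, i.e.\ working on a Lusin/Pesin set.

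Now suppose $\hnu$ gave positive mass to a compact set $K$ of points at distance $\ge\delta$ from all graphs, with base points in such a uniformity set. By Poincar\'e recurrence, $\hnu$-a.e. point of $K$ returns to $K$ infinitely often. But its iterates converge to a graph, which contradicts staying $\delta$-far from all graphs at return times.

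Points lying in a proper sum of Oseledets spaces but in no single graph are handled by the same argument: restrict to that sum, take the top index present, and get attraction to the corresponding graph. Alternatively, for $d=2$ the only such points lie on the graphs themselves, which is our case of interest.

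\emph{Step 3: item (3).} This follows from Proposition~\ref{prop-lift-projective}(ii) applied to ergodic $\mu$: a.e. ergodic component of an invariant lift projects to $\mu$, hence is an ergodic lift, hence is one of the $\hmu^i$ by Step 2.
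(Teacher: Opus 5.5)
Your proof is correct and follows essentially the paper's route. Each $\hmu^i$ is the push-forward of $\mu$ by the equivariant Oseledets section. An arbitrary ergodic lift is then forced onto a single graph, because every direction is attracted forward to the Oseledets line of the smallest index present in its decomposition, and invariance upgrades this to full mass on that graph (your Poincar\'e recurrence argument makes explicit the paper's terse ``by invariance of $\hmu$'').

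The Egorov/Lusin uniformity you invoke is not needed: pointwise convergence together with recurrence already gives the contradiction. As literally stated, that uniformity claim is also false, since lines close to the sum of the lower Oseledets spaces are attracted arbitrarily slowly, so you should simply drop it.
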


\begin{proof}
We first prove (1), starting with showing that each $\hmu^i$, $1\le i\le r(\mu)=d$, is an ergodic lift.

We recall Oseledets theorem. Since $\mu(X_\#)=1$, for $\mu$-a.e. $x\in X$, $E^i_x$ is one-dimensional and satisfies $Df_x(E^i_x)=E^i_{fx}$. Therefore, there is a unique measurable section $\Gamma^i:X_\#\to\hV$, $x\mapsto E^i_x$. In particular, $\Gamma^i\circ f =F\circ\Gamma^i$, so $\hmu^i:=\Gamma^i_*(\mu)\in\Proberg(\hf)$ (defined in the above statement) is an ergodic lift. Obviously, $\hmu^i(\hvf)=\int_{X_\#} \hvf(E^i_x)\, d\mu(x)=\lambda^i(\mu)$.

Conversely, let $\hmu$ be an arbitrary ergodic lift of $\mu$. We are going to show that it must be $\hmu^i$ for some $1\le i\le r(\mu)$, which in particular implies  that $\hmu(\hvf)=\lambda^i(\mu)$.

Let $(x,\RR.v)$ be some $\hmu$-typical point. We can assume that $\|v\|_x=1$. We  write $v = \sum_{i=1}^d \alpha_i v^i_x$ where $\alpha_i\in\RR$ and $v^i_x$ a unit vector of $E^i_x$ (unique up to sign). Let $I$ be the smallest integer such that $\alpha^I\ne 0$. By ergodicity, this integer is a.e.-constant. 

If $I=d$, then $\hmu=\hmu^d$ and the claim is proved. Otherwise, note that $\lambda^I>\lambda^{I+1}>\dots>\lambda^d$ for all $i>I$. Using the Euclidean distance on the unit sphere in $V_x$, we obtain
 $$
    d\left(\frac{Df^n.v}{\|Df^n.v\|}, E^I_{f^nx}\right)
    =d\left(\hf^n(v), \hf^n(E^I_x)\right)
    \leq C(x) e^{n(\lambda^I-\lambda^{I+1})} \to 0.
 $$
By invariance of $\hmu$, this implies the distance must be zero for each $n$. Thus, $\hv=E^I_x$, i.e., $\hmu=\hmu^I$ and  (1) is proved.
\medbreak

For each $1\le i\le d$, the Oseledets theorem gives $A_x(V^i_x)=V^i_{f(x)}$, which implies the invariance: $\hf_*(\hmu^i)=\hmu^i$. Since each space $E^i_x$ is one-dimensional, we have $\log|Df^n_x|=\sum_{k=0}^{n-1}\log|Df|_{E^i_{f^kx}}|$. Since $\mu$ is invariant and $\log\|Df|_E\|$ is bounded, the following holds:
 $$\begin{aligned}
   \hmu^i(\hvf) &= \int_{X_\#} \log | Df|_{E^i_x}|\, d\mu 
   = \int_{X_\#} \frac1n\log | Df^n|_{E^i_x}|\, d\mu\\
   &= \int_{X_\#} \lim_{n\to\infty} \frac1n\log | Df^n|_{E^i_x}|\, d\mu
   = \int_{X_\#} L^i_x\, d\mu = \lambda^i(A,\mu).
 \end{aligned}$$
This proves (2).
\medbreak

Let $\hmu=\int \hmu_\xi\, d\xi$ be the ergodic decomposition of a lift $\hmu\in\Prob(\hf)$ of $\mu\in\Proberg(f)$. Since $\hpi_*$ is a continuous and linear map, we get
 $$
   \hmu= \hpi_*(\hmu) = \int \hpi_*(\hmu_\xi)\, d\xi\;.
 $$
Hence by ergodicity of $\hmu$, for a.e. $\xi$, $\hpi_*(\mu_\xi)=\mu$, the ergodic components of the lift are ergodic lifts of $\hmu$. This proves (3).
\end{proof}

\begin{remark}
Without the assumption of simple Lyapunov spectrum (i.e., $\mu(X_\#)=1$), the Lyapunov exponents of $\mu\in\Proberg(f)$ are still  the average dilations wrt all the ergodic lifts
 $$
  \left\{ \hmu(\hvf): \hmu\in\Proberg(\hf) \text{ s.t. } \pi_*\hmu=\mu \right\} \; .
 $$
\end{remark}

\section{Continuity properties of the top Lyapunov exponents}

\subsection{The setting}
We are going to apply the preceding results to the continuity of the top Lyapunov exponent. 
Given $\nu_1,\nu_2,\dots\in\Proberg(f)$ weakly converging to some $\mu\in\Prob(f)$, we consider the average top exponents
 \begin{equation}\label{eq-lsc-defect}
      \lambda^+(f,\mu) - \limsup_{k\to\infty} \lambda^+(f,\nu_k) \ge 0
  \end{equation}
(the inequality is Proposition~\ref{prop-average-exp-usc}). By ergodicity, the average top Lyapunov exponent of $\nu_k$ coincides with its top exponent a.e.,
 $$
    \lambda^+(f,\nu_k)=\lambda^1(f,x) \text{ for $\nu_k$-a.e. }x\in M,
 $$
while, for not necessarily ergodic limit measures, we have
 $$
    \lambda^+(f,\mu) = \int \lambda^+(f,x)\, d\mu.
 $$
In this section, we express \eqref{eq-lsc-defect} in terms of suitable projective lifts of $\nu_k,\mu$, restricting, when necessary, to surface diffeomorphisms.

\subsection{Top Lyapunov exponents of $\nu_k$'s and their limit}
In our applications, each ergodic $\nu_k$ will have positive entropy for the surface diffeomorphism $f$, hence have, almost everywhere, exactly one positive and one negative exponent. In particular, there is no harm assuming that
 \begin{equation}\label{eq-nuk-simple-LE}
  \text{each $\nu_k$, $k\ge1$, has simple Lyapunov spectrum almost everywhere}.
\end{equation}
Thus, Proposition~\ref{prop-lift-simple} gives the existence of the lift $\hnu_k^1$ of $\nu_k$ to $\hGamma^1$. Moreover,
 $$
   \forall k\ge1\quad  \lambda^+(f,\nu_k) = \int \vf \, d\hnu_k^1.
 $$
By compactness, perhaps after passing to a subsequence, the measures $\hnu_k^1$ converge to some $\hmu\in\Prob(\hf)$. By definition of the weak convergence and the continuity of $\vf:\hM\to\RR$, we have
 $$
    \lim_{k\to\infty} \lambda^+(f,\nu_k) = \lim_{k\to\infty} \hnu_k(\vf)
     = \hmu(\vf).
 $$
Obviously, $\hmu$ is a lift of $\mu$. However, $\hmu$ does not need to coincide with the lift $\hmu^1$. Indeed, such a lift may fail to exist as $\dim(E^1_x)$ may be greater than $1$ on a set of positive $\mu$-measure.

\subsection{Decomposition into hyperbolic and neutral parts}
Recall that $M_\#:=\{x\in M_\reg:r(f,x)=\dim M\}$ is the set of (Oseledets regular) points with simple Lyapunov spectrum. Since possibly $\mu(M_\#)<1$, we cannot apply Proposition~\ref{prop-lift-projective} directly. We consider the decomposition (distinct from the neutral decomposition to be discussed later):
 $$
    \mu = t\cdot\mu_\# + (1-t)\cdot\mu_*, \text{ where }
      \mu:=\mu|_{M_\#},\;  \mu_* := \mu|_{M_\reg\setminus M_\#},
      \text{ and } 0\le t\le 1.
 $$
where we use the following notation for any invariant measurable subset $X$, we let
 $$
   \mu|_X:=\frac{\mu(\cdot\cap X)}{\mu(X)} \text{ if }\mu(X)>0 \text{ and } 
   \mu|_X := \mu \text{ otherwise.}
 $$
We are going to control separately the two components $\hmu_\#,\hmu_*$.

\subsection{Control of the neutral part in dimension $2$} $ $
When $\dim M=2$, we control $\mu_*$ by using the fact that 
 $M\setminus M_\#$ coincides with the set $M_=$ of points for which all Lyapunov exponents are equal. Thus the following lemma applies and shows in particular that, on surfaces,  the exponents and the entropy of $\mu_*$ must vanish, and, consequently,
\begin{equation}\label{eq-entropy-limit}
 h(f,\mu) = t\cdot h(f,\mu_\#) \text{ and } \lambda^+(f,\mu) = t\cdot\lambda^+(f,\mu_{\#}).
\end{equation}

\begin{lemma}\label{lem-equal-zero}
Let $f$ be a diffeomorphism of a closed manifold $M$ and let $\nu_k\in\Proberg(f)$ $(k\ge1)$ weakly converge to some $\mu$. Assume that all $\nu_k$ are non atomic.

Then $L^1(f,x)=\dots=L^d(f,x)=0$ for $\mu$-a.e. $x\in M_=$. In particular, $h(f,\mu|M_=)=0$. 
\end{lemma}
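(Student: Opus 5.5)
The plan is to pass to the ergodic decomposition of $\mu$, show that no ergodic component living on $M_=$ can have a nonzero common exponent, and then read off the entropy statement from the Ruelle--Margulis inequality. Throughout I assume $f$ is at least $C^1$, so that Lemma~\ref{lem-all-negative-exp} and Theorem~\ref{thm-Ruelle-Margulis} apply.

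First, $M_=$ is a measurable invariant set and the functions $L^j(f,\cdot)$ are invariant. Hence if $\mu(M_=)>0$, a.e. ergodic component $m$ of $\mu$ either gives full measure to $M_=$ or gives it measure zero. On a component $m$ with $m(M_=)=1$, the exponents are $m$-a.e. constant, so $L^1(f,m)=\dots=L^d(f,m)=\lambda_m$ for a single number $\lambda_m$. It therefore suffices to show that $\lambda_m=0$ for a.e. such component.

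Suppose instead that $\lambda_m<0$ on a set of components of positive weight. Then all exponents of $m$ are negative, and Lemma~\ref{lem-all-negative-exp} forces $m$ to be the measure on an attracting periodic orbit $\mathcal O$. Since there are only countably many components of this type carrying positive weight, some periodic sink orbit $\mathcal O$ satisfies $\mu(\mathcal O)>0$.

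Now I repeat the argument of Lemma~\ref{lem-no-sink-source}. Let $B$ be the basin of $\mathcal O$. It is an open invariant set, and by the portmanteau theorem $\liminf_k \nu_k(B)\ge \mu(B)\ge\mu(\mathcal O)>0$. So for large $k$ we have $\nu_k(B)>0$, and by invariance and ergodicity $\nu_k(B)=1$. Every point of $B$ has its $\omega$-limit set equal to $\mathcal O$. By Birkhoff's theorem, the empirical measures of $\nu_k$-typical points converge both to $\nu_k$ and to the uniform measure on $\mathcal O$. Hence $\nu_k$ is carried by $\mathcal O$ and is atomic, which contradicts the hypothesis. The case $\lambda_m>0$ is symmetric: Lemma~\ref{lem-all-negative-exp} gives a source, and we use its basin for $f^{-1}$. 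This shows $L^1=\dots=L^d=0$ for $\mu$-a.e. $x\in M_=$.

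For the entropy claim, write $\mu|M_=$ as an integral of ergodic components $m$ with all exponents zero. The Ruelle--Margulis inequality (Theorem~\ref{thm-Ruelle-Margulis}), in its form bounding $h$ by the integral of the sum of positive exponents, gives $h(f,m)\le 0$, so $h(f,m)=0$. Since entropy is affine under ergodic decomposition, $h(f,\mu|M_=)=\int h(f,m)\,dm=0$.

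The step I expect to need the most care is the measurable bookkeeping: checking that $M_=$ is invariant and that the exponents are constant on ergodic components. The dynamical core is just the basin argument already used in Lemma~\ref{lem-no-sink-source}.
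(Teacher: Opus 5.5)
Your proof is correct and follows essentially the same route as the paper: ergodic components with a common nonzero exponent are sinks or sources by Lemma~\ref{lem-all-negative-exp}, one of them has an open basin of positive $\mu$-measure, and weak convergence plus ergodicity then forces some $\nu_k$ onto that periodic orbit; the entropy claim follows from Ruelle--Margulis applied to each ergodic component. The one step to state more carefully is the countability: you need that there are only countably many sinks altogether (their basins are pairwise disjoint open sets), because knowing only that countably many components individually carry positive weight does not rule out uncountably many zero-weight sink components carrying positive total weight.
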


\begin{proof}
Up to replacing $f$ by $f^{-1}$, we can assume by contradiction that $L^1(f,x)=\dots=L^d(f,x)<0$ for a set of points $x\in M_=$ with positive $\mu$-measure. This implies that the ergodic decomposition of $\mu$ contains (with positive measure) elements which have only negative exponents, hence are sinks by Lemma~\ref{lem-all-negative-exp}. Since sinks have disjoint open basins, there can be at most countably many of them.  Thus, there is a sink with a basin $B$ such that $\mu(B)>0$. 

By weak convergence, $\nu_k(B)>0$ for all large $k$. Since $\nu_k$ is ergodic, this implies that $\nu_k$-a.e. orbit visits $B$ and therefore converges to the sink. The pointwise ergodic theorem shows that $\nu_k$ must be itself a sink, a contradiction to the non-atomic assumption.
\end{proof}

\subsection{Control of the simple spectrum part}
It is convenient to use the following form of the ergodic decomposition.
For $\mu$-a.e. $x\in M$ we denote by $\mu_x$ the \new{empirical measure} $\lim_{n\to\infty}\frac1n\sum_{k=0}^{n-1}\delta_{f^kx}$. It is known that, for $\mu$-a.e. $x\in M$, the following are true:
 \begin{enumerate}
  \item $\mu_x$ is ergodic;
  \item $\lambda^+(f,x)=\lambda^+(f,\mu_x)$ and $L^j(f,x)=L^j(f,\mu_x)$ for all $1\le j\le \dim M$;
  \item the ergodic decomposition of $\mu$ can be written as
$\mu  = \int_{\bar M} \mu_{\bar x} \, d\bar\mu$ where $(\bar M,\bar\mu)$ is the quotient of $(M,\mu)$ by the map $x\mapsto\mu_x$ and $\mu_{\bar x}=\mu_x$.
\end{enumerate}

We can now apply Proposition~\ref{prop-lift-projective} to $\hmu_\#$ as a lift of $\mu_\#$. It yields a measurable function $a:M_\#\to[0,1]$ such that $\hmu|_{\hpi^{-1}(M_\#)} = \int_{M_\#} \left[(1-a(x))\hmu_x^1 + a(x)\hmu_x^2\right] \, d\mu_\#(x)$.

\subsection{Control of the whole lift, the top exponent, and the entropy}
We can now put together the pieces in the case of a surface diffeomorphism (hence $M_\reg\setminus M_\#=M_=$). We have
 $$
    \hmu = t\cdot \int_{M_\#} \left[(1-a(x))\hmu_x^1 + a(x)\hmu_x^2\right] \, d\mu_\#(x) + (1-t)\cdot \hmu|_{\hpi^{-1}(M_=)}.
 $$

Thus, we obtain:

\begin{proposition}\label{prop-disc-LE-loss-mass}
Let $f$ be a diffeomorphism of a closed surface.
Let $\mu_1,\mu_2,\dots\in\Proberg(f)$ be atomless and converge weakly to some $\mu\in\Prob(f)$ with ergodic decomposition $\mu=\int_{\bar M} \mu_{\bar x}\, d\bar x$. Then
 $$
    \hmu:=\lim_k \hmu^1 = \int_{M_\#} \left((1-\alpha(\bar x))\hmu_x^1+\alpha(\bar x)\hmu_x^2)\right)\, dx
    + \int_{M_=} \hmu_x^0\, d\mu(x)\;,
 $$
where $\hmu_x^1,\hmu_x^2$ are the two ergodic lifts of the hyperbolic measures $\mu_x$ for $x\in M_\#$, and $\hmu_x^0$ are the lifts of the measures $\mu_x$ for $x\in M_=$ {\rm(}i.e., for which both exponents are zero{\rm)}.
\end{proposition}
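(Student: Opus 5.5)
The plan is to decompose the limit lift $\hmu:=\lim_k\hnu_k^1$ (taken along a subsequence, which exists by compactness of $\Prob(\hM)$ and lies in $\Prob(\hf)$) according to the invariant partition $M_\reg=M_\#\sqcup M_=$ of the surface, and then to identify each piece using the earlier lift results. First, $\hpi_*$ is continuous and $\hpi_*\hnu_k^1=\nu_k$, so $\hpi_*\hmu=\mu$: $\hmu$ is a lift of $\mu$. Since $\hpi^{-1}(M_\#)$ and $\hpi^{-1}(M_=)$ are $\hf$-invariant Borel sets covering $\hpi^{-1}(M_\reg)$, which has full $\hmu$-measure, we can write $\hmu=\hmu|_{\hpi^{-1}(M_\#)}\cdot\mu(M_\#)+\hmu|_{\hpi^{-1}(M_=)}\cdot\mu(M_=)$, each restriction being an invariant lift of $\mu|_{M_\#}$, resp. $\mu|_{M_=}$.

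Next, I will disintegrate. Take the ergodic decomposition of $\hmu$, $\hmu=\int\hnu\,d\hP$. By Proposition~\ref{prop-lift-projective}(v), $\hP$ disintegrates over the ergodic decomposition $\mu=\int\mu_{\bar x}\,d\bar x$, with $\hP_{\bar x}$ carried by ergodic lifts of $\mu_{\bar x}$. For $\bar x$ with $\mu_{\bar x}(M_\#)=1$ the spectrum is simple, and Proposition~\ref{prop-lift-simple}(1) says the only ergodic lifts are $\hmu^1_x$ and $\hmu^2_x$. Hence $\int\hnu\,d\hP_{\bar x}=(1-\alpha(\bar x))\hmu^1_x+\alpha(\bar x)\hmu^2_x$ with $\alpha(\bar x):=\hP_{\bar x}(\{\hmu^2_x\})$. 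Measurability of $\alpha$ comes from the measurability of the Rokhlin disintegration together with that of the Oseledets sections $\Gamma^1,\Gamma^2$. Integrating over $\bar x$ with $\mu_{\bar x}(M_\#)=1$ gives the first term of the formula. Formally, this amounts to rewriting the integral over $\bar M$ as an integral over $M_\#$ against $\mu$, since $\alpha$ is constant on ergodic classes.

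The remaining part is the one over $M_=$. Here I will define $\hmu^0_x:=\int\hnu\,d\hP_{\bar x}$, which is an invariant lift of $\mu_x$, and then invoke Lemma~\ref{lem-equal-zero}; this uses the hypothesis that the $\nu_k$ are atomless. The lemma gives both exponents equal to zero for $\mu$-a.e. $x\in M_=$, which is exactly the parenthetical claim. Nothing finer is asserted about $\hmu^0_x$, and nothing finer is possible, because every direction grows subexponentially there.

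The main obstacle I expect is the measure-theoretic bookkeeping: making sure that the Rokhlin disintegration of $\hP$ over $\Proberg(f)$ is compatible with the pointwise parametrization $x\mapsto\mu_x$. I would handle this by composing with the measurable quotient map $x\mapsto\bar x$ and checking that $\alpha$, so defined, is invariant and measurable. The analytic content is otherwise just Propositions~\ref{prop-lift-projective} and~\ref{prop-lift-simple} together with Lemma~\ref{lem-equal-zero}.
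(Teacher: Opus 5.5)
Your proposal is correct and takes essentially the same route as the paper. You split $\hmu$ along the invariant partition $M_\reg=M_\#\sqcup M_=$, and on $M_\#$ you use Proposition~\ref{prop-lift-projective} together with Proposition~\ref{prop-lift-simple} to write the lift as a measurable combination of the two Oseledets lifts $\hmu^1_x,\hmu^2_x$. On $M_=$ you invoke Lemma~\ref{lem-equal-zero}, which is where the atomless hypothesis enters, to get that both exponents vanish. Your version is more explicit than the paper's about the disintegration bookkeeping, namely the weight $\alpha(\bar x)=\hP_{\bar x}(\{\hmu^2_x\})$ and its measurability.
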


We note the following consequences for the exponents and entropies of a sequence of measures and its limit:
 
\begin{corollary}
In the setting of the above proposition, let $\nu_k\to\mu$ in the weak topology of $\Prob(f)$. Perhaps for a subsequence, we have the following equality:
 $$
    \lim_k \lambda^1(f,\mu_k) = \int_{M_\#} \left((1-\alpha(x))\lambda^1(f,\mu_x)+\alpha(x)\lambda^2(f,\mu_x)\right)\, d\mu(x)\;.
  $$
In particular, the defect of lower continuity equals
 $$
   \lambda^1(f,\mu)-\lim_k \lambda^1(f,\mu) = \int_{M_\#} \alpha(x)\left(\lambda^1(f,\mu_x)-\lambda^2(f,\mu_x)\right)\, d\mu(x)\;.
 $$
\end{corollary}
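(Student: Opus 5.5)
The plan is to integrate the continuous dilation function $\vf$ against the decomposition of $\hmu$ given by Proposition~\ref{prop-disc-LE-loss-mass}. The relevant lifts are $\hmu_x^1,\hmu_x^2$ over $M_\#$ and $\hmu_x^0$ over $M_=$. First, along a subsequence, $\hnu_k^1\to\hmu$ weakly. Since $\vf$ is continuous on the compact space $\hM$, we get $\lim_k\lambda^1(f,\nu_k)=\lim_k\hnu_k^1(\vf)=\hmu(\vf)$. This identity was already observed in the setting of this section.

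Next, I substitute the decomposition of $\hmu$ and evaluate $\vf$ on each piece.

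\emph{Simple-spectrum pieces.} For $x\in M_\#$, the measures $\hmu_x^1,\hmu_x^2$ are the ergodic lifts of the ergodic measure $\mu_x$ supported on the Oseledets sections. Proposition~\ref{prop-lift-simple}(2) gives $\hmu_x^i(\vf)=\lambda^i(f,\mu_x)$.

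\emph{Neutral piece.} For $x\in M_=$, any ergodic component of $\hmu_x^0$ is an ergodic lift of $\mu_x$. Its $\vf$-integral is therefore an exponent of $\mu_x$, by Proposition~\ref{prop-lift-projective}(iii)--(iv). By Lemma~\ref{lem-equal-zero}, all exponents vanish $\mu$-a.e. on $M_=$; this is where the atomless assumption is used. Hence $\hmu_x^0(\vf)=0$, and the $M_=$ term contributes nothing.

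To exchange $\vf$ with the integrals over $x$, I use Fubini. This is justified because $\vf$ is bounded and the families $x\mapsto\hmu^i_x$ are measurable, by the construction in Proposition~\ref{prop-lift-projective}(v). This yields the first displayed formula.

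For the defect, I use $\lambda^1(f,\mu)=\int\lambda^+(f,x)\,d\mu(x)$ (Theorem~\ref{thm-pw-exponent}). Its integrand equals $\lambda^1(f,\mu_x)$ on $M_\#$ and $0$ a.e. on $M_=$, again by Lemma~\ref{lem-equal-zero}. Subtracting the first formula gives
\[
\lambda^1(f,\mu)-\lim_k\lambda^1(f,\nu_k)=\int_{M_\#}\Bigl(\lambda^1(f,\mu_x)-(1-\alpha(x))\lambda^1(f,\mu_x)-\alpha(x)\lambda^2(f,\mu_x)\Bigr)\,d\mu(x),
\]
and the integrand simplifies to $\alpha(x)\bigl(\lambda^1(f,\mu_x)-\lambda^2(f,\mu_x)\bigr)$.

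The only delicate point is the measurability and Fubini bookkeeping, namely that $\alpha$ and the lifts can be chosen measurably in $x$. I would take this directly from the disintegration in Proposition~\ref{prop-lift-projective}(v). Everything else is a direct evaluation of a continuous function against the given decomposition.
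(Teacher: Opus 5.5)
Your argument is correct and follows the paper's route. The paper gives no separate proof: it treats the corollary as an immediate consequence of evaluating the continuous dilation function $\vf$ (via $\lim_k\lambda^+(f,\nu_k)=\hmu(\vf)$) on the decomposition of Proposition~\ref{prop-disc-LE-loss-mass}, with $\hmu_x^i(\vf)=\lambda^i(f,\mu_x)$ on $M_\#$ and the $M_=$ contribution vanishing by Lemma~\ref{lem-equal-zero}, exactly as you do, including your reading of the second display as $\lambda^1(f,\mu)-\lim_k\lambda^1(f,\nu_k)$.
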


\chapter{Neutral decomposition}\label{chapter-neutral}

Recall our general strategy: given a sequence of ergodic measures $\nu_k\in\Proberg(f)$ weakly converging to some $\mu\in\Prob(f)$, 
we want to use discontinuities of the top exponent of measures to derive a non-trivial bound on their entropies.
This chapter accomplishes the linear part of this strategy, deducing from any discontinuity of the exponent
 \begin{equation}\label{eqDiscLyap}
    \lambda^+(\mu)-\limsup_k \lambda^+(\nu_k) \qquad (\text{nonnegative by upper semicontinuity}),
 \end{equation}
the existence of a \emph{neutral part} of the measure $\mu$ whose non-expanding properties will be used in later chapters to bound the entropy.

More precisely, we consider the unstable lifts $\hnu_k^+\in\Proberg(\hf)$ converging to some $\hmu\in\Prob(\hf)$. As we have observed in the previous chapter, since $\hvf$ is continuous on $\hM$, we get
 $$
   \lim_k \lambda^+(\nu_k) = \lim_k \hnu_k^+(\hvf) = \hmu(\hvf).
 $$
Hence, any discontinuity of the exponent means that $\hmu$ differs from the unstable lift $\tmu^+$ of $\mu$. In Theorem~\ref{thm-neutral-decomposition-projective}, this will lead to a {\bf neutral decomposition}:
 $$
    \hmu = (1-\beta)\hmu_0 + \beta \hmu_1 \quad (0\le\beta\le 1,\; \hmu_0,\hmu_1\in\Prob(\hf)),
 $$
where $\hmu_0$ has some non-expanding properties at the level of the differential $Df$.

The neutral part $(1-\beta)\hmu_0$ is obtained by taking a limit of the empirical measure defined by $\nu_k$-typical orbits restricted to certain segments. Those are the {\bf neutral orbit segments} during which the unstable direction does not expand going forward.

We will later show that the lack of expansion within neutral orbit segments prevents them from contributing to the entropy, which yields a bound of the following form:
 $$
    \limsup_k h(\nu_k) \le \beta\cdot h(\mu_1) \qquad (\mu_1:=\hpi_*(\hmu_1)).
 $$

We will define and build an abstract neutral decomposition in the setting of an additive cocycle:
 \begin{itemize}
  \item[--] $T:X\to X$ is a homeomorphism of a compact metric space;
  \item[--] $\vf:X\to\RR$ is a continuous function;
  \item[--] $\nu_k\in\Proberg(T)$ weakly converge to $\mu\in\Prob(T)$.
\end{itemize}
We will apply this construction to the projective dynamics of a smooth surface diffeomorphisms by taking:
 \begin{itemize}
  \item[--] $T:X\to X$, the projective extension of the surface diffeomorphism;
  \item[--] $\vf$, the logarithm of its dilation;
  \item[--] $\nu_k$,  the unstable lift of a hyperbolic measure in $\Proberg(f)$.
 \end{itemize}
Our study of Lyapunov exponents in the previous chapter will allow us to relate the term $(1-\beta)\hmu_0$ of the neutral decomposition to the discontinuity \eqref{eqDiscLyap}.

\section{Neutral blocks}
The first step is to define the \emph{neutral segments} in the orbit of a point $x\in X$. They correspond to a controlled, weak expansion starting at some time $a$ and going forward \emph{any number of iterates} until some time $b$. As we shall see in future chapters, this will be sufficient to prevent significant entropy creation for the nonlinear dynamics in our application to surface diffeomorphisms. 

The structure of the neutral orbit segments will allow the definition of a neutral set which we will use in the next section to construct a neutral decomposition of the measure.

\begin{definition}\label{def-abstract-neutral-block}
Let $\alpha>0$ and $L\ge1$.
Given $x\in X$ and integers $a,b\in\ZZ$ with $a<b$,  a finite \new{integer interval}
 $$
   \ii{a,b}:=\{a,a+1,\dots,b-1\}
 $$
is called an $\alpha$-\NEW{neutral block} for $x$ if 
  $$
    \forall a< i\le b\quad S_a^i\vf(x):=\vf(T^a(x))+\vf(T^{a+1}(x))+\dots+\vf(T^{i-1}(x))\le \alpha\cdot (i-a).
  $$
The integer interval $\ii{a,b}$  is an \emph{$(\alpha,L)$-neutral block} for $x$ if  it is $\alpha$-neutral with length $b-a\ge L$.

We also say that $(T^j(x))_{j\in\ii{a,b}}$ is an $(\alpha,L)$-\new{neutral orbit segment}.
\end{definition}

Let us note that all neutral blocks are finite by definition.\footnote{It is not clear what it would mean for an infinite interval such as $\ii{-\infty,b}$ to be $\alpha$-neutral.} 
We also note that  given $x\in X$ which is not periodic, one can identify the block $\ii{a,b}$ with the orbit segment $(T^j(x))_{a\le j<b}$.

\medbreak

The following terminology will be useful. We say that two integer intervals $\ii{a,b}$ and $\ii{c,d}$:
 \begin{itemize}
  \item[--] are \emph{nested} if one is included in the other;
  \item[--] \emph{overlap} if they intersect but are not nested.
 \end{itemize}

\begin{lemma}\label{lem-neutral-nested}
Given $x\in X$, $\alpha>0$,  the union of two overlapping $\alpha$-neutral blocks  is itself $\alpha$-neutral.
\end{lemma}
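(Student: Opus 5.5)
Let $\ii{a,b}$ and $\ii{c,d}$ be two overlapping $\alpha$-neutral blocks for $x$. Since they intersect without being nested, after possibly exchanging their names we may assume $a<c<b<d$. Indeed, $a=c$ or $b=d$ would force nesting, and intersection forces $c<b$. The union is then $\ii{a,d}$. We have to check that for every $a<i\le d$,
 $$
   S_a^i\vf(x)\le\alpha\cdot(i-a).
 $$
The plan is a two-case split according to the position of $i$, using only the additivity of Birkhoff sums: $S_a^i=S_a^c+S_c^i$ for $a\le c\le i$.

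\emph{First case, $a<i\le b$.} The inequality is exactly the neutrality of $\ii{a,b}$, so nothing has to be done.

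\emph{Second case, $b<i\le d$.} Since $a<c<b$, we may split the sum at the time $c$:
 $$
   S_a^i\vf(x)=S_a^c\vf(x)+S_c^i\vf(x).
 $$
The first term is at most $\alpha\cdot(c-a)$. This uses the neutrality of $\ii{a,b}$ at the index $c$, which is legitimate because $a<c\le b$. The second term is at most $\alpha\cdot(i-c)$. This uses the neutrality of $\ii{c,d}$ at the index $i$, which is legitimate because $c<i\le d$. Adding the two bounds gives $S_a^i\vf(x)\le\alpha\cdot(i-a)$.

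The only point needing care is the choice of splitting time in the second case. One might be tempted to cut at $b$, but $S_b^i$ is not controlled by either block. Cutting at $c$, the starting time of the second block, makes both pieces covered by a neutrality hypothesis. The overlap hypothesis $c<b$ is what guarantees that $c$ is an admissible index for the first block. There is no real obstacle beyond this bookkeeping. As a remark, the length condition for $(\alpha,L)$-blocks is automatically inherited by the union, since its length exceeds that of each block.
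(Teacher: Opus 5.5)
Your proof is correct. It has the same two-case structure as the paper's proof, but it differs at the one step that matters, and there your version is the right one.

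In the case $b<i\le d$, the paper splits at $b$. It writes $S_a^i\vf(x)=S_a^b\vf(x)+S_b^i\vf(x)\le\alpha(b-a)+\alpha(i-b)$ and cites the neutrality of $\ii{c,d}$ for the second term. That neutrality only controls sums starting at time $c$. Since $S_b^i\vf(x)=S_c^i\vf(x)-S_c^b\vf(x)$, and $S_c^b\vf(x)$ may be very negative, the bound on $S_b^i\vf(x)$ does not follow when $c<b$.

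A concrete failure: take $\alpha=1/10$ and let $\vf$ take the values $0,-10,5$ at $x,Tx,T^2x$. The blocks $\ii{0,2}$ and $\ii{1,3}$ are both $\alpha$-neutral and overlapping, yet $S_2^3\vf(x)=5>\alpha$. Splitting at $b$ is only legitimate for adjacent blocks ($c=b$), and those are not overlapping.

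Your cut at $c$ is exactly the needed repair. It uses the first block at the index $c$ (valid since $a<c\le b$) and the second block at the index $i$ (valid since $c<i\le d$). The pitfall you flag in your last paragraph is precisely the one the paper's written proof falls into. The lemma itself is true, and your argument establishes it. Your reduction of ``overlapping'' to $a<c<b<d$ is also correct.
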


\begin{proof}
Let $\ii{a,b}$ and $\ii{c,d}$ be two $\alpha$-neutral blocks with $a\le c <b\le d$. We need to show that their union  $\ii{a,d}$ is $\alpha$-neutral, i.e., that for any $a<j\le d$, one has $S_a^j\vf(x)\le \alpha\cdot(j-a)$. 

If $j\le b$, the assumption that $\ii{a,b}$ is $\alpha$-neutral is enough to conclude. If $b<j\le d$, we also use that $\ii{c,d}$ is $\alpha$-neutral:
 $$
   S_a^j\vf(x) = S_a^{b}\vf(x)+S_b^j\vf(x)
    \leq \alpha\cdot (b-a)+\alpha (j-b) = \alpha\cdot(j-a).
 $$
\end{proof}

For $x\in X$, let $N_x(\alpha,L)\subset\ZZ$ be the union of all $(\alpha,L)$-neutral blocks for $x$.

\begin{proposition}
Given  $\alpha>0$ and $L\ge 1$, let $x\in X$ satisfy
 $$
   \liminf_{n\to\infty} \frac1n S_{-n}^0\vf(x) > \alpha \text{ and }
   \liminf_{n\to\infty} \frac1n S_0^n\vf(x) > \alpha.
 $$ 
Then $N_x(\alpha,L)$ is a disjoint union of $(\alpha,L)$-neutral blocks that are maximal for the inclusion. Moreover, it is equivariant, i.e.,  $\forall x\in X$ $N_{T(x)}(\alpha,L)=N_x(\alpha,L)-1$. 
\end{proposition}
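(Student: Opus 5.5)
The proof has three steps: the blocks containing a given time are bounded, so each point lies in a maximal block; maximal blocks are disjoint; and the neutral set shifts under $T$.

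\emph{Step 1: blocks are bounded.} Fix $j\in N_x(\alpha,L)$ and consider all $(\alpha,L)$-neutral blocks $\ii{a,b}$ containing $j$. Any two of them intersect, since both contain $j$. So they are either nested or overlapping, and in the overlapping case Lemma~\ref{lem-neutral-nested} says their union is again $\alpha$-neutral. Its length is still at least $L$. The family is therefore directed under union. It remains to see that the endpoints are bounded.

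For the right endpoint, the forward hypothesis gives $\delta>0$ and $n_0$ such that $S_0^n\vf(x)\ge(\alpha+\delta)n$ for all $n\ge n_0$. If $\ii{a,b}$ is $\alpha$-neutral, then $S_a^b\vf(x)\le\alpha(b-a)$. Writing $S_a^b=S_0^b-S_0^a$ (with the usual convention for $a<0$), we get for large $b$
 $$
 (\alpha+\delta)b-S_0^a\vf(x)\le \alpha(b-a).
 $$
Since $a\le j$ ranges over a set bounded above, the left side grows like $\delta b$ and this forces $b$ to be bounded. Here one also uses $a\ge$ some lower bound, or more simply applies the neutrality inequality with $i=b$ and a fixed $a\le j$.

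The left endpoint is symmetric and uses the backward hypothesis: $S_a^{j+1}\vf(x)\le\alpha(j+1-a)$ while $S_{a}^{0}\vf(x)\ge(\alpha+\delta)|a|$ for $a\to-\infty$.

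The only step needing care is an ordering issue: the bound on $b$ depends on a bound on $a$ and vice versa. I would handle it by first bounding $a$ using only $i=j+1\le b$. That inequality involves $S_a^{j+1}=S_a^0+S_0^{j+1}$, and $S_0^{j+1}$ is a fixed number depending only on $x$ and $j$. Then I bound $b$ using $i=b$ with $a$ now confined to a bounded range.

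\emph{Step 2: maximal blocks.} Because the endpoints are bounded, there are finitely many candidate blocks. Their union is a single $\alpha$-neutral block $\ii{a_j,b_j}$, obtained by repeated unions of pairwise intersecting blocks, and this is the unique maximal block containing $j$.

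Two distinct maximal blocks are disjoint. If they intersected, they would be either nested, contradicting maximality, or overlapping, in which case their union would be a strictly larger neutral block, again a contradiction.

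\emph{Step 3: equivariance.} The hypotheses are invariant under replacing $x$ by $T(x)$, since shifting by one changes the Birkhoff sums by a bounded amount. Also, $S_a^i\vf(T(x))=S_{a+1}^{i+1}\vf(x)$. Hence $\ii{a,b}$ is $(\alpha,L)$-neutral for $T(x)$ if and only if $\ii{a+1,b+1}$ is $(\alpha,L)$-neutral for $x$. Taking unions gives $N_{T(x)}(\alpha,L)=N_x(\alpha,L)-1$.
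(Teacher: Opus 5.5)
Your proof is correct and takes essentially the same route as the paper. The two $\liminf$ hypotheses confine every neutral block through a given integer to a bounded window, Lemma~\ref{lem-neutral-nested} then gives a unique maximal block through each point (and hence disjointness), and equivariance follows from $S_a^i\vf(T(x))=S_{a+1}^{i+1}\vf(x)$; your write-up is in fact more careful than the paper's sketch, since you bound the left endpoint first via $i=j+1$, then the right one, and you spell out why distinct maximal blocks are disjoint.
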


\begin{proof}
Let $a\in\ZZ$. By assumption, there is $N$ such that for any $c\le a-N$, $S^a_c\vf(x) > \alpha (c-a)$. Thus any $\alpha$-neutral block $\ii{c,d}$, containing $a$, satisfies $c>a-N$. Likewise, perhaps after increasing $N$, $d<a+N$. Thus, all  $(\alpha,L)$-neutral blocks in the orbit of $x$ containing $a$ are subsets of $[a-N,a+N]$. Thus, any $(\alpha,L)$-neutral block is contained in a maximal one by Lemma~\ref{lem-neutral-nested}.

The equivariance follows immediately from the definition.
\end{proof}

\begin{definition}
The set of $(\alpha,L)$-\new{neutral points} is
 $$
   \Neutral(\alpha,L):=\{x\in X: 0\in N_x(\alpha,L)\} \subset X.
 $$
\end{definition}

These sets are almost invariant when $L$ is large.

\begin{lemma}\label{lem-almost-inv}
The set $\Neutral(\alpha,L)$ is measurable. Moreover, for any $x\in X$,
\begin{equation}\label{eq-Neutral-near-inv}
   \limsup_{n\to\infty} \frac1n\#\{0\le k<n: x\in (T^{-k}\Neutral(\alpha,L))\diffsym(T^{-k-1}\Neutral(\alpha,L))\} \leq 2/L.
\end{equation}
\end{lemma}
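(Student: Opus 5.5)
Measurability comes from writing $\Neutral(\alpha,L)$ as a countable union. By definition, $x\in\Neutral(\alpha,L)$ if and only if there are integers $a\le 0<b$ with $b-a\ge L$ such that $\ii{a,b}$ is an $\alpha$-neutral block for $x$. For fixed $(a,b)$, the set of such $x$ is
$$
\bigcap_{a<i\le b}\{x: S_a^i\vf(x)\le\alpha(i-a)\}.
$$
Each set in this finite intersection is closed, because $\vf$ and $T$ are continuous. Taking the union over the countably many pairs $(a,b)$ gives an $F_\sigma$ set, hence a Borel one.

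For the almost-invariance estimate, first translate the condition into a statement about the integer set $N_x:=N_x(\alpha,L)\subset\ZZ$. Equivariance, which holds for every $x$ directly from the definition, gives $T^kx\in\Neutral(\alpha,L)$ if and only if $k\in N_x$. So $x$ lies in the symmetric difference at time $k$ exactly when exactly one of $k$ and $k+1$ belongs to $N_x$, that is, when $k$ or $k+1$ is a boundary point of $N_x$. We must then count the boundary transitions of $N_x$ inside $[0,n]$ and show that there are at most about $2n/L+O(1)$ of them.

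The boundary count rests on one structural fact: every connected component (maximal run of consecutive integers) of $N_x$ has length at least $L$. A component is a union of $(\alpha,L)$-neutral blocks, each of which has length at least $L$. Note that this holds without any maximality or disjointness of blocks, so no hypothesis on $x$ is needed. It might happen that a component is infinite or that $N_x$ is all of $\ZZ$, but this only reduces the number of transitions.

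Now count. Each finite component contributes at most two transition times $k$: one entering it and one leaving it. A component that meets $[0,n)$ but is not contained in it can only be one of the at most two components straddling the endpoints. Every other component meeting $[0,n)$ lies inside it and has length at least $L$; since these components are disjoint, there are at most $n/L$ of them. The number of transitions with $0\le k<n$ is therefore at most $2(n/L+2)$. Dividing by $n$ and letting $n\to\infty$ gives the bound $2/L$.

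The only delicate point is that components have length at least $L$ even when blocks overlap or are nested. This is immediate because a component is a union of intervals each of length at least $L$. So I expect no real obstacle; the main care is in handling the boundary components near $0$ and $n$, and in correctly interpreting the symmetric difference through equivariance.
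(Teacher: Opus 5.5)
Your proof is correct and follows the same route as the paper: measurability via a countable union of closed sets, equivariance to turn the symmetric difference at time $k$ into ``exactly one of $k,k+1$ lies in $N_x(\alpha,L)$'', and a count of these boundary points using that the pieces of $N_x(\alpha,L)$ have length at least $L$. Your version is actually slightly more careful than the paper's in two places, and in both cases it works for every $x$:
\begin{enumerate}
\item For measurability, the paper writes $\Neutral(\alpha,L)$ as $\bigcup_{\ell\ge L}\{x:\ii{-\ell,1}\text{ is }\alpha\text{-neutral}\}$. This misses points where $0$ lies in an $(\alpha,L)$-neutral block but no long block ends at $1$. Your union over all $a\le 0<b$ with $b-a\ge L$ has no such gap.
\item For the count, the paper uses maximal blocks, whose existence was only established under the $\liminf$ hypothesis of the preceding proposition. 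Your count over connected components of $N_x(\alpha,L)$ does not need that hypothesis.
\end{enumerate}
One small bookkeeping point: a transition at $k=n-1$ into a component starting at $n$ is not among your ``components meeting $[0,n)$''. It can only occur when no component straddles $n$, so your bound $2(n/L+2)$ still holds.
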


\begin{proof}
The set $\Neutral(\alpha,L)$ is measurable since it is equal to
 $$
 \bigcup_{\ell\ge L} \{x\in X: \ii{-\ell,1} \text{ is $\alpha-$neutral for }x\}.
 $$
Using the equivariance, we get, for any $k\in\ZZ$,
 $$
   \forall i\in\ZZ\; x\in T^{k+i}\Neutral(\alpha,L)\iff i\in N_x(\alpha,L).
 $$
Therefore, 
 $$
    x\in (T^{-k}\Neutral(\alpha,L))\diffsym(T^{-k-1}(\alpha,L))\iff
    k\in \underbrace{N_x(\alpha,L)\diffsym(N_x(\alpha,L)-1)}_{\partial N_x(\alpha,L)}.
 $$
The elements of $\partial N_x(\alpha,L)$ are endpoints of maximal $(\alpha,L)$-neutral blocks. Hence $\partial N_x(\alpha,L)\cap\ii{0,n}$ has at most $2+2n/L$ elements. 
This proves \eqref{eq-Neutral-near-inv}.
\end{proof}

\section{Neutral decomposition}

We define and discuss the neutral decomposition of a sequence of measures.
By the Birkhoff ergodic theorem, one can reconstruct an ergodic measure from its typical orbits. More precisely, if $\nu\in\Proberg(T)$, then  the Birkhoff ergodic theorem gives, for $\nu$-a.e. $x\in X$,
 $$
    \lim_{n\to\infty} \frac1n\sum_{k=0}^{n-1} \delta_{T^kx}  = \mu\;.
 $$
We will obtain the neutral decomposition by splitting the above sums into neutral blocks and the remainder. We will then let $\alpha\to0$ (to capture all expansion) and $L\to\infty$ (to get invariance).

\medbreak

Recall that a sequence $\nu_k\in\Proberg(T)\to\mu\in\Prob(T)$ has been fixed together with a continuous function $\vf:X\to\RR$ satisfying
 \begin{equation}\label{eq-positive-avg}
    \liminf_k \int_X \vf\, d\nu_k>0.
 \end{equation}
We will consider $\alpha>0$ arbitrarily small, and in particular smaller than the above limit.
\medbreak

We consider \new{subprobability Borel measures} on $X$, i.e., positive Borel measures with total mass at most $1$. The set of such measures is compact for the weak (star) topology which is metrizable (for the same reasons as the set of probability measures).

\paragraph{Some notations.} For a subprobability measure $\nu$ and an integrable, nonnegative function $g\in L^1(\nu)$, we write $g\cdot \nu$ for the subprobability measure\footnote{Often the measure  $g\cdot\nu$ is denoted by $g\,d\nu$.} defined by $(g\cdot \nu)(E)=\int_E g\, d\nu$ for all measurable set $E\subset X$.

When $g$ is a measurable function, either nonnegative or integrable, we also write $\nu(g)$ for $\int g\, d\nu$.

\begin{definition}\label{def-neutral-dec}
A $\vf$-\NEW{neutral decomposition} of the sequence $(\nu_k)$ is:
$\mu = m_0 + m_1$ where $m_0,m_1$ are both $T$-invariant, Borel subprobability measures satisfying, for some sequence $k_i\to\infty$,
 $$
    \mu^0_{\alpha,L,k_i} := 1_{\Neutral(\alpha,L)}\cdot \nu_{k_i}\hskip-0.7cm\stackrel{\tiny\begin{array}{c}(\alpha,L)\to(0,\infty)\\i\to\infty\end{array}}\longrightarrow\hskip-0.7cm m_0
    \text{ and }
    \mu^1_{\alpha,L,k_i} := (1-1_{\Neutral(\alpha,L)})\cdot \nu_{k_i} \hskip-0.7cm\stackrel{\tiny\begin{array}{c}(\alpha,L)\to(0,\infty)\\i\to\infty\end{array}}\longrightarrow\hskip-0.7cm  m_1.
 $$
More explicitly, for any pair of neighborhoods $\cV_0,\cV_1$ of $m_0,m_1$, there are $(\alpha_1,L_1)$ such that for all $0<\alpha<\alpha_1$ and all $L\ge L_1$, for all $i$ larger than some $i_1(\cV_0,\cV_1,\alpha,L)$,
  $$
     \mu^0_{\alpha,L,k_i} := 1_{\Neutral(\alpha,L)}\cdot \nu_{k_i} \in \cV_0
     \text{ and }
     \mu^1_{\alpha,L,k_i} := (1-1_{\Neutral(\alpha,L)})\cdot \nu_{k_i} \in \cV_1.
  $$
\end{definition}

We call $m_1$ the \emph{positive component} and we let $\beta:=m_1(X)$ and $\mu_1:=\beta^{-1}m_1$ (if $\beta=0$, we set $m_1:=\mu$). We call $m_0$ the \emph{neutral component} and we let $\mu_0:=(1-\beta)^{-1}\cdot m_1$ (if $\beta=1$, we set $\mu_0:=\mu$). In any case, we write:
 \begin{equation}\label{eq-neutral-decomp}
   \mu = (1-\beta) \mu_0 + \beta\cdot \mu_1
   \qquad
   (0\le\beta\le 1,\; \mu_0,\mu_1\in\Prob(T)).
  \end{equation}

Under the positivity assumption \eqref{eq-positive-avg}, the neutral decomposition always exists, as stated in the theorem below:

\begin{theorem}\label{thm-neutral-dec}
Given a sequence $\nu_k\in\Proberg(T)\to\mu$ and a continuous function $\vf:X\to\RR$ satisfying \eqref{eq-positive-avg}, there exists a $\vf$-neutral decomposition $\mu=m_0+m_1$ such that
 \begin{itemize}
 \item[(A)] $m_0(\varphi)=0$;
 \item[(B)] for $m_1$-a.e. $x\in X$, $\liminf_n \frac1nS_n\varphi(x)>0$.
\end{itemize}
\end{theorem}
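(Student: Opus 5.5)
We first build the decomposition by compactness and a diagonal argument. For fixed $(\alpha,L)$, the sequence $\mu^0_{\alpha,L,k}=1_{\Neutral(\alpha,L)}\cdot\nu_k$ lies in the compact set of subprobability measures. Its complement part $\mu^1_{\alpha,L,k}$ equals $\nu_k-\mu^0_{\alpha,L,k}$ and therefore converges whenever $\mu^0$ does. Take countable sequences $\alpha_j\downarrow0$ and $L_j\uparrow\infty$, and choose a diagonal subsequence $k_i$ along which $\mu^0_{\alpha_j,L_j,k_i}\to m_0^{(j)}$ for every $j$. Passing to a further subsequence in $j$, we get $m_0^{(j)}\to m_0$, and we set $m_1:=\mu-m_0$.

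The definition asks for convergence along all $(\alpha,L)\to(0,\infty)$, not only along the chosen countable family. For this we use monotonicity: $\Neutral(\alpha,L)$ increases with $\alpha$ and decreases with $L$. If $\alpha'\le\alpha$ and $L'\ge L$, then $\mu^0_{\alpha',L',k}\le\mu^0_{\alpha,L,k}$. Testing against nonnegative continuous functions, $\liminf_i$ and $\limsup_i$ of $\mu^0_{\alpha,L,k_i}(g)$ are sandwiched between the values at neighbouring grid points $(\alpha_j,L_j)$ and $(\alpha_{j'},L_{j'})$. Both of those are close to $m_0(g)$ for large $j,j'$, because the limits $m_0^{(j)}(g)$ are monotone in $j$ and hence actually converge, not just along a subsequence. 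This gives the required uniformity.

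Invariance of $m_0$ comes from Lemma~\ref{lem-almost-inv}. For $\nu_k$ ergodic, Birkhoff applied to $1_{\Neutral}$ and to its preimage under $T$ gives $\nu_k(\Neutral\diffsym T^{-1}\Neutral)\le 2/L$. Therefore $\|T_*\mu^0_{\alpha,L,k}-\mu^0_{\alpha,L,k}\|\le 2/L$, which tends to $0$, so $m_0$ and hence $m_1$ are $T$-invariant.

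For (A), we show $m_0(\vf)\le0$ and $m_0(\vf)\ge0$. For the upper bound, use Birkhoff along a $\nu_k$-typical orbit. The neutral times in $\ii{0,n}$ form a disjoint union of maximal neutral blocks, up to at most two boundary blocks. Each block contributes a sum of $\vf$ at most $\alpha\cdot$(its length), so $\mu^0_{\alpha,L,k}(\vf)\le\alpha$ and in the limit $m_0(\vf)\le0$.

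The lower bound is the main obstacle. We must show that neutral blocks cannot have very negative sums, since a maximal block might end after a long contraction. The idea is to use maximality together with \eqref{eq-positive-avg}. Take a maximal block $\ii{a,b}$ for a typical orbit. Since $\liminf\frac1n S_b^{b+n}\vf>\alpha$, the process must eventually escape beyond $\alpha$-growth. Maximality means that extending to $b+1$ fails, i.e.\ some $S_a^i\vf$ with $a<i\le b+1$ exceeds $\alpha(i-a)$. Because $S_a^i\le\alpha(i-a)$ for $i\le b$, the failure occurs exactly at $i=b+1$, giving
$$S_a^{b+1}\vf>\alpha(b+1-a),\qquad\text{hence}\qquad S_a^b\vf\ge\alpha(b-a)-\|\vf\|_\infty.$$
So each maximal block has average at least $\alpha-\|\vf\|_\infty/L$. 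Summing over blocks gives $\mu^0_{\alpha,L,k}(\vf)\ge-(\alpha+\|\vf\|_\infty/L)$, which tends to $0$. This proves (A). I expect the delicate part to be the existence of maximal blocks: the Proposition needs liminf conditions at $\nu_k$-typical points, and these follow from $\nu_k(\vf)>\alpha$ by ergodicity.

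For (B), we use the ergodic decomposition of $m_1$ and show that no ergodic component $\eta$ of $m_1$ has $\eta(\vf)\le0$. Assume such components carry positive mass. Their typical points have $\frac1nS_n\vf\to\eta(\vf)\le0$. Approximating by $\nu_k$-orbits, which weakly shadow these statistics, and applying Pliss-type reasoning (Theorem~\ref{thm-Pliss}), we would show that nearly all of the $m_1$-mass there lies in $(\alpha,L)$-neutral blocks. That contradicts the mass being outside $\Neutral$ in the limit. Concretely, we would restrict to a compact set where $\frac1nS_n\vf<\alpha/2$ uniformly for $n\ge L$ and use weak convergence near that set. Since this step is subtle, I would first handle it by showing $m_1(\vf\text{-averages})$ only through (A) together with $\mu(\vf)$, and then refine to pointwise (B).
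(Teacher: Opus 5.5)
Your construction of $m_0,m_1$, the invariance argument and part (A) are correct. The grid-and-sandwich construction is lighter than the paper's Lemma~\ref{lem-adm-seq}, which makes $\lim_i 1_{\Neutral(\alpha,L)}\cdot\nu_{k_i}$ exist for \emph{every} $(\alpha,L)$, and it gives exactly what Definition~\ref{def-neutral-dec} asks for.

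The genuine gap is (B): what you write is a plan, and its concrete mechanism would not work. First, the set where $\frac1nS_n\vf<\alpha/2$ for all $n\ge L$ is cut out by infinitely many conditions, so weak convergence gives no control of the $\nu_k$-mass near it. Second, even pointwise, small forward averages for $n\ge L$ do not put time $0$ inside an $\alpha$-neutral block. Neutrality of $\ii{a,b}$ requires $S_a^i\vf\le\alpha(i-a)$ for \emph{every} $a<i\le b$, including the short initial sums. Third, you do not say how neutral blocks found at a given scale are transferred to the limit $m_0$. At one fixed scale $(\alpha_0,L_0)$ they only control $\mu-m_0^{\alpha_0,L_0}$, whereas $m_1=\mu-m_0\ge\mu-m_0^{\alpha_0,L_0}$. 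Moreover, blocks of bounded length stop being $(\alpha,L)$-neutral once $L$ exceeds that length. Finally, your fallback $m_1(\vf)=\mu(\vf)>0$ is only an averaged statement and cannot give (B).

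What is missing is the following.

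\textbf{Backward neutral blocks via Pliss.} Let $B:=\{x:\lim_n\frac1nS_n\vf(x)\le0\}$ be the bad set. For $\mu$-a.e.\ point of $B$, the \emph{backward} Birkhoff averages equal the forward ones. So the Pliss lemma (Theorem~\ref{thm-Pliss}), applied to $\alpha-\vf$ along the backward orbit, gives infinitely many $a$ such that the block of length $a$ ending at the point (the paper uses $\ii{-a,0}$) is $\alpha$-neutral.

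\textbf{Reduction to continuity sets.} Restricting to $L\le a<K$ gives a set $W$ defined by finitely many closed inequalities. It satisfies $\mu(\partial W)=0$ as soon as $\alpha$ avoids the countable set of values for which $\mu(\{S_n\vf=n\alpha\})>0$ for some $n$. Such sets do pass to the weak limit.

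\textbf{Transfer to $m_0$: the paper's two-scale comparison.} The paper fixes $(\alpha_0,L_0)$ so that $m_0^{\alpha,L}(X)$ is within $\gamma/100$ of $m_0(X)$ for all $\alpha\le\alpha_0$, $L\ge L_0$. Then $\Neutral(\alpha_0,L_0)\setminus\Neutral(\alpha_1,L_1)$ has asymptotic $\nu_k$-mass $<\gamma/50$. On the other hand, $W_0(K_0)\setminus\Neutral(\alpha_1,L_1)$ lies (up to one time step) in $(\alpha_0,L_0)$-neutral blocks and has asymptotic mass $>\gamma$ for suitable $(\alpha_1,L_1)$. This contradiction is phrased in the paper with densities along $\nu_k$-typical orbits.

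\textbf{Transfer to $m_0$: a route closer to your intuition.} Use blocks $\ii{-a+1,1}$, which contain time $0$. Then $W\subset\Neutral(\alpha_j,L_j)$, hence $1_W\cdot\nu_{k_i}\le 1_{\Neutral(\alpha_j,L_j)}\cdot\nu_{k_i}$. Passing to the limit gives $1_W\cdot\mu\le m_0^{(j)}$ at each grid point, provided you choose the $\alpha_j$ off the countable set. Letting $K\to\infty$ gives $1_B\cdot\mu\le m_0^{(j)}$. Positivity survives the weak limit $j\to\infty$, so $m_0\ge 1_B\cdot\mu$ and therefore $m_1(B)=0$.
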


On the one hand, $\beta>0$ ensures that $m_1\ne0$ so that item (B) is always relevant. On the other hand, it may happen that $\beta=1$ so that $m_0=0$ and then item (A) is trivial.

We can rewrite the conclusion of the theorem as follows:

\begin{corollary}
In the setting of the theorem above,  we also define $\mu_0,\mu_1\in\Prob(T)$ and $0<\beta\le 1$ such that $m_0=(1-\beta)\mu_0$ and $m_1=\beta\mu_1$, so that the following holds:
 $$
   \mu=(1-\beta)\mu_0+\beta\mu_1\text{ with }\mu_0,\mu_1\in\Prob(T).
 $$
and
 \begin{itemize}
 \item[(A)] if $\beta<1$, then $\mu_0(\varphi)=0$;
 \item[(B)] for $\mu_1$-a.e. $x\in X$, we have $\liminf_n \frac1nS_n\varphi(x)>0$.
\end{itemize}
\end{corollary}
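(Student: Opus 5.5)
The corollary is a direct rewriting of Theorem~\ref{thm-neutral-dec}, so the plan is to apply that theorem and normalize the two subprobability measures. Apply Theorem~\ref{thm-neutral-dec} to the given sequence $\nu_k\to\mu$ and the continuous $\vf$ satisfying \eqref{eq-positive-avg}. This gives a $\vf$-neutral decomposition $\mu=m_0+m_1$ with $m_0,m_1$ invariant subprobability measures satisfying (A) $m_0(\vf)=0$ and (B) $\liminf_n\frac1nS_n\vf(x)>0$ for $m_1$-a.e.\ $x$. Set $\beta:=m_1(X)$. Since $\mu(X)=1$, we get $m_0(X)=1-\beta$. Following the convention after Definition~\ref{def-neutral-dec}, put $\mu_1:=\beta^{-1}m_1$ and $\mu_0:=(1-\beta)^{-1}m_0$ whenever these masses are nonzero, and $\mu_1:=\mu$ (resp.\ $\mu_0:=\mu$) otherwise. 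These are $T$-invariant probability measures, and $\mu=(1-\beta)\mu_0+\beta\mu_1$ holds in all cases, including the degenerate ones where one term has zero coefficient.

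Item (A) for $\mu_0$: when $\beta<1$, $\mu_0$ is the honest normalization of $m_0$, so $\mu_0(\vf)=(1-\beta)^{-1}m_0(\vf)=0$. Item (B) for $\mu_1$: when $\beta>0$, $\mu_1$ and $m_1$ are proportional, so they have the same null sets and the $m_1$-a.e.\ property transfers to $\mu_1$-a.e.

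The main obstacle is proving $\beta>0$, which also removes the degenerate case $\mu_1=\mu$ where (B) would be unjustified. I would argue by contradiction. If $\beta=0$, then $m_1=0$ and $\mu=m_0$, so by (A) $\mu(\vf)=0$. On the other hand, $\vf$ is continuous, so weak convergence gives
\[
\mu(\vf)=\lim_i\nu_{k_i}(\vf)\ge\liminf_k\nu_k(\vf)>0
\]
by \eqref{eq-positive-avg}. This contradicts $\mu(\vf)=0$, so $\beta>0$ and the statement follows.
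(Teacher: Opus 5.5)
Your proposal is correct and follows the paper. The paper presents the corollary as a mere rewriting of Theorem~\ref{thm-neutral-dec}, normalizing $m_0,m_1$ exactly as you do, and you also read correctly the convention after Definition~\ref{def-neutral-dec}, whose text has the typos $m_1:=\mu$ for $\mu_1:=\mu$ and $(1-\beta)^{-1}m_1$ for $(1-\beta)^{-1}m_0$. Your proof of $\beta>0$ is the paper's own argument from the proof of that theorem: $\mu(\vf)=m_0(\vf)+m_1(\vf)=m_1(\vf)$, while $\mu(\vf)=\lim_k\nu_k(\vf)>0$ by weak convergence and \eqref{eq-positive-avg}, so $m_1\neq0$.
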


\begin{remark}
We do not claim uniqueness of the neutral decomposition even under the assumptions (A) and (B), see Exercise~\ref{exo-nonunique-neutral}.
\end{remark}

\begin{remark}
The neutral decomposition in our work \cite{BCS-3} with Crovisier and Sarig on SRB measures is slightly more general than the above which closely follows \cite{BCS-2}. Indeed, \cite{BCS-3} does not start with invariant measures (such as the $\nu_k$'s above) but has to work with longer and longer \emph{orbit segments}.
 One could do the same here by considering long enough orbit segments of typical points for the $\nu_k$ (see Remark~\ref{rem-orbit-seg}) but we want to keep these lectures as simple as possible.
\end{remark}

\subsection*{Ergodicity of the limit measure}
The above is interesting only when the limit measure $\mu\in\Prob(T)$ \emph{fails to be ergodic}. Indeed, if $\mu$ is ergodic then the neutral decomposition in \eqref{eq-neutral-decomp} must be trivial with $\beta=1$, and hence $\mu=\mu_1$. To see this, note that if $\mu$ is ergodic, then it is an extremal point in $\Prob(T)$, hence the decomposition must be trivial, i.e.,  $\beta=0$, $\beta=1$, or $\mu_0=\mu_1=\mu$.  Condition \eqref{eq-positive-avg} excludes $\beta=0$. Item (A) shows that $\mu_0=\mu$ imply $\beta=1$. Thus $\beta=1$.

We will later apply the above to surface diffeomorphisms by considering projective lifts. In that setting ergodicity of the limit of the lifts is not implied by the ergodicity of the limiting measure on the surface. See \S\ref{sec-applying-neutral}.

\subsection*{Ergodic components and neutral decompositions}
To get some intuition about the non-ergodic behavior of $\mu$, one can consider a measure with a finite ergodic decomposition $\mu=\sum_{i=1}^I \alpha^i\mu^i$. Then $\nu_k$-typical orbits will essentially be made of arbitrarily long intervals, each one nearly $\mu^i$-typical (say of ``type $i$'') in such a way that their union occupy a fraction close to $\alpha^i$ of $\nu_k$-typical orbits. These measures $\mu^i$ may have different averages $\mu^i(\varphi)$, even with opposite signs. The way the intervals of different types alternate can lead to different neutral blocks.

This shows that sequences with the same limit measure can have distinct neutral decomposition. Let us see an example.

\subsection*{Distinct decompositions for the same limit measure}

Let us show by an example that the neutral decomposition depends not only on the limit measure $\mu$ but on both sequences $\nu_k$ \emph{and} $k_i\to\infty$.

\smallbreak

We take $T$ the shift on $X:=\{-2,+1,+2\}^\ZZ$ and $\mu:=\frac13(\delta_{-2}+\delta_1+\delta_{+2})$ where $\delta_s$ is the Dirac measure concentrated on the sequence with the constant value $s$. Let $\vf:X\to\RR$ with $\vf(x):=x_0$.
Let
 $$\begin{aligned}
   \nu_{2k}\in\Proberg(T)\;\; &\text{be supported}\text{ on the periodic orbit}\\
   &\text{obtained by repeating }(\underbrace{-2,\dots,-2}_{k\; \rm times},\underbrace{+2,\dots,+2}_{k\; \rm times},\underbrace{+1,\dots,+1}_{k\; \rm times})\;,\\
 \nu_{2k+1}\in\Proberg(T)\;\; &\text{ be supported}\text{ on the periodic orbit}\\  
  &\text{obtained by repeating }(\underbrace{-2,\dots,-2}_{k\; \rm times},\underbrace{+1,\dots,+1}_{k\; \rm times},\underbrace{+2,\dots,+2}_{k\; \rm times}) \;.  
 \end{aligned}$$
The exercise below invites you to check that this sequence $\nu_k$ which weakly converges to $\mu$ admits at least two neutral decompositions.
 
\begin{exercise}\label{exo-nonunique-neutral}
Check that $\nu_k\to\mu$ in the weak topology. Prove that the following are distinct neutral decompositions, obtained by two distinct subsequences $\nu_{k_i}$:
 $$
   (1)\;  \mu = \frac{2}{3} \frac{\delta_{-2}+\delta_{+2}}2
      + \frac{1}{3} \delta_{+1}\; \quad
   (2) \; \mu = \frac{5}{6} \frac{2\delta_{-2}+2\delta_{+2}+\delta_{+1}}{5} 
      + \frac{1}{6} \delta_{+2}.   
 $$
\end{exercise}

Though there is no serious difficulty in building the decomposition, its uniqueness properties are not so clear. Let us state two problems in this direction.

\begin{problem}
Is there another decomposition with with better properties such as a smaller $\beta$ (leading to a stronger entropy bound in our application)? 

Is therea decomposition invariant by time-reversal, i.e., replacing $T$ by $T^{-1}$ (leading to a stronger semicontinuity result about the Hausdorff dimension)?
\end{problem}

\section{Construction of a neutral decomposition}

In this section, we build a $\vf$-neutral decomposition for a given sequence $\nu_k\to\mu$ and function $\vf:X\to\RR$, proving Theorem~\ref{thm-neutral-dec}. 

\medbreak
Recall that $\Neutral(\alpha,L)\subset X$ is the measurable set of points in $X$ that are contained in some $\alpha$-neutral block of length at least $L$. In particular,
\begin{equation}\label{eqNeutralMono} 
  \forall 0<\alpha\le\alpha'\;\forall L\ge L'\quad
  \Neutral(\alpha,L)\subset \Neutral(\alpha',L').
\end{equation}

\medbreak

For this proof it is convenient to say that a subsequence $k_i\to\infty$ is \emph{admissible} if for any $\alpha>0,L\ge1$ the following weak limit exists:
 $$
   m_{0}^{\alpha,L} := \lim_{i} \left(1_{\Neutral(\alpha,L)}\cdot \nu_{k_i}\right)
 $$
Note that the above weak limit always belong to the set $\SProb(X)$ of Borel sub-probabilities on $X$.
Note also that this asks for the convergence of uncountably many sequences $(\alpha_j,L_j)\to(0,\infty)$ but the monotonicity \eqref{eqNeutralMono} will save the day.

\medbreak

The first step of the proof of the theorem is to build an admissible sequence.
\begin{lemma}\label{lem-adm-seq}
There exists an admissible sequence $k_i\to\infty$.
\end{lemma}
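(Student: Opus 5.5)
The plan is a diagonal extraction over a countable family of parameters, followed by a monotonicity argument that extends convergence from this family to all $(\alpha,L)$. The space $\SProb(X)$ of subprobability measures is compact and metrizable for the weak topology. So for any countable family of pairs $(\alpha_j,L_j)$ we can extract, by Cantor's diagonal procedure, a subsequence $k_i\to\infty$ along which
$$
1_{\Neutral(\alpha_j,L_j)}\cdot\nu_{k_i}
$$
converges for every $j$. I would take the countable family $\{(\alpha,L):\alpha\in\mathbb Q_{>0},\ L\in\NN_{\ge1}\}$. Since $\Neutral(\alpha,L)$ only depends on whether block lengths satisfy $b-a\ge L$ for integer lengths, $\Neutral(\alpha,L)=\Neutral(\alpha,\lceil L\rceil)$. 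Hence integer $L$ suffice, and the only remaining issue is the uncountably many values of $\alpha$.

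To handle arbitrary $\alpha>0$, I would use the monotonicity \eqref{eqNeutralMono}. For fixed $L$ and $x$, $\Neutral(\alpha,L)$ increases with $\alpha$. Moreover, $\bigcap_{\alpha'>\alpha}\Neutral(\alpha',L)=\Neutral(\alpha,L)$, because each neutrality condition $S_a^i\vf(x)\le\alpha'(i-a)$ is closed in $\alpha'$. However, a given $x$ can lie in infinitely many candidate blocks, so a little care is needed: only blocks containing $0$ of bounded length matter, up to small measure. Consequently, the map
$$
\alpha\mapsto m_0^{\alpha,L}(g),\qquad g\ge0 \text{ continuous},
$$
defined along the diagonal subsequence for rational $\alpha$, is monotone nondecreasing. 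It therefore has at most countably many discontinuities. The natural fix is to enlarge the countable family by adding, for each $L$, the (countably many) jump points of these monotone functions, and then re-diagonalize. This is circular, so instead I would argue directly, as follows.

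Fix $L$ and an irrational $\alpha$. For rationals $q<\alpha<q'$ we have the sandwich
$$
1_{\Neutral(q,L)}\nu_{k}\le 1_{\Neutral(\alpha,L)}\nu_{k}\le 1_{\Neutral(q',L)}\nu_{k}.
$$
So every limit point of $1_{\Neutral(\alpha,L)}\nu_{k_i}$ lies between $m_0^{q,L}$ and $m_0^{q',L}$. If $\sup_{q<\alpha}m_0^{q,L}=\inf_{q'>\alpha}m_0^{q',L}$, tested on a countable dense family of nonnegative continuous $g$, the limit exists. This equality fails only for countably many $\alpha$ (the jump points, at most countably many for each $g$ and each $L$, hence countably many overall). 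The key step is therefore a two-stage diagonalization. First, extract along rationals. Second, identify the countable exceptional set $E$ of $\alpha$ where the monotone limit functions jump. Third, further extract a diagonal subsequence making $1_{\Neutral(\alpha,L)}\nu_{k_i}$ converge for all $\alpha\in E$ and all $L$. Further subsequences preserve the convergence already obtained, and at non-exceptional $\alpha$ the sandwich argument still gives convergence, since it only used the limits at rationals.

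I expect the main obstacle to be this uncountability in $\alpha$. In particular, one must justify that the sandwich argument yields existence of the limit rather than just bounds on limit points. This requires testing against a countable convergence-determining family of nonnegative continuous functions and using monotonicity in $\alpha$ for each fixed test function.
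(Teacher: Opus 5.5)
Your proposal is correct and follows essentially the same route as the paper. It uses a first diagonal extraction over a countable dense set of $\alpha$ (for each of the countably many relevant $L$), then a second extraction along the countably many jump points of the monotone functions $\alpha\mapsto m_0^{\alpha,L}(u)$ for $u$ in a countable dense family of nonnegative continuous functions, and finally a sandwich argument at every remaining $\alpha$ based on the monotonicity \eqref{eqNeutralMono}.

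The blemishes are only cosmetic. The re-diagonalization is not circular: the jump set is fixed by the first extraction and its limits persist along further subsequences, exactly as the paper uses it. Your remark about $\bigcap_{\alpha'>\alpha}\Neutral(\alpha',L)$ is not needed, since monotonicity alone drives the argument.
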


\begin{proof}
In this proof, we use the notation $\mu_{\alpha,L,\ell}:=1_{\Neutral(\alpha,L)}\cdot\nu_{\ell}$.

We rely on the classical Cantor's diagonal argument in compact metric spaces such as $[0,1]$ or $\SProb(M)$. Let us first remark that since $L$ takes only countably many values, it is enough to find, for each $L\in\NN$, a subsequence $(k_i)$ so that $(\mu_{\alpha,L,k_i})$ converges (note that $k_i$ depends on $L$). Indeed, the diagonal Cantor argument will then conclude.

Thus we fix $L\in\NN^*$.
We let $\cA$ be a dense countable subset of $(0,1)$. An application of Cantor's diagonal argument yields a strictly increasing sequence of integers $\ell_i$ such that:
 $$
    \forall \alpha\in \cA \quad
    \text{the limit }m_0^{\alpha,L}:=\lim_i \mu_{\alpha,L,\ell_i}
    \text{ exists}.
 $$
 
We extend the function $\alpha\mapsto m_0^\alpha$ from $\cA$ to $(0,1)$.
Let $\mathcal U$ be a countable dense subset of the continuous nonnegative functions on $X$.
We fix $u\in\mathcal U$ and consider the function 
 $$
   \alpha\in(0,1)\longmapsto \tm_0^{\alpha,L}(u):=\sup\{m_0^{\beta,L}(u):\beta\in \cA\cap(0,\alpha]\}.
 $$
Since $\beta\mapsto m_0^{\beta,L}$ is non-decreasing, for $\alpha\in\cA$, we have $\tm_0^{\alpha,L}(u)=m_0^{\alpha,L}(u)$ for all $u\in\cU$.

The function $\alpha\mapsto\tm_0^{\alpha,L}(u)$ being monotonic, its set of discontinuities $\Delta_{L,u}$ is countable and so is  $\Delta_L:=\bigcup_{u\in\mathcal U}\Delta_{L,u}$. 
Therefore Cantor's diagonal argument can be applied again, extracting $(k_i)$ from $(\ell_i)$ such that $m_0^{\alpha,L}(u):=\lim_i \mu_{\alpha,L,k_i}(u)$ does exist for all $\alpha\in \cA\cup\Delta$ and all $u\in\cU$.

Now, let  $0<\alpha_0<1$ be arbitrary (no longer contained in a countable set). We will show the existence of $\lim_i \mu_{\alpha_0,L,k_i}$. If $\alpha_0\in\cA\cup\Delta$, this is obvious. So we can assume that $\alpha_0\notin\cA\cup\Delta$. In particular, $\alpha=\alpha_0$ is a point of continuity of $\alpha\mapsto \tm^\alpha_0(u)$ for each $u\in\cU$. Fix some $u\in\cU$ and $\eps>0$. There exist $\alpha_-<\alpha_0<\alpha_+$ such that
 $$
    \tm_0^{\alpha_0,L}(u)-\eps < \tm_0^{\alpha_-,L}(u) \le
    \tm_0^{\alpha_+,L}(u) < \tm_0^{\alpha_0,L}(u)+\eps.
 $$
Without lost of generality, we can assume that $\alpha_-,\alpha_+\in\cA$ so that
$m_0^{\alpha_-,L}(u) = \tm_0^{\alpha_-,L}(u)$ and $m_0^{\alpha_+,L}(u) = \tm_0^{\alpha_+,L}(u)$. Thus, for some $i_0$,
 $$
  \forall i\ge i_0\quad \tm_0^{\alpha_0,L}(u)-\eps <  \mu_{\alpha_-,L,k_i}(u) \le
     \mu_{\alpha_+,L,k_i}(u) < \tm_0^{\alpha_0,L}(u)+\eps\;,
 $$
and since each $\alpha\mapsto \mu_{\alpha,L,k_i}(u)$ is nondecreasing, we obtain:
 $$
  \forall i\ge i_0\quad \tm_0^{\alpha_0,L}(u)-\eps < \mu_{\alpha_0,L,k_i}(u) < \tm_0^{\alpha_0,L}(u)+\eps\;.
 $$
Since $\eps>0$ is arbitrary, this means that $\lim_i \mu_{\alpha_0,L,k_i}(u)$ exists for each $u\in\cU$. This is easily seen to imply the weak  convergence of $\lim_i \mu_{\alpha_0,L,k_i}$ in $\SProb(M)$, as claimed.
\end{proof}

\medbreak

\begin{remark}\label{rem-orbit-seg}
Note that $m_0^{\alpha,L}$ can be written as a ``\emph{partial quasi-empirical measure}''. Indeed, from the definition of the weak (star) topology, if $u\in C(X)$, then by Birkhoff pointwise ergodic theorem, for any $\nu_{k_i}$-generic point $x_i$, we have
 $$\begin{aligned}
   m_0^{\alpha,L}(u) &= \lim_{i} \nu_{k_i} (1_{\Neutral(\alpha,L)}u)=
   \lim_{n\to\infty} \frac1n\sum_{m=0}^{n} 1_{\Neutral(\alpha,L)}(T^mx_i)u(T^mx_i)\\
   &=\lim_i \lim_{n\to\infty} \left(\frac1n\sum_{m=0}^{n} 1_{\Neutral(\alpha,L)}(T^mx_i)\delta_{T^mx_i}\right)\cdot u
 \end{aligned}$$
\end{remark}

\begin{proposition}\label{prop-m0}
Fix some admissible subsequence $k_i\to\infty$. For each $\alpha>0$ and $L\ge1$, let $m_0^{\alpha,L}$ be the corresponding limit in $\SProb(M)$. Then the following weak limit exists:
 $$
   m_0:=\lim_{(\alpha,L)\to(0,\infty)} m_0^{\alpha,L}\;.
 $$
It is a $T$-invariant Borel sub-probability measure.
\end{proposition}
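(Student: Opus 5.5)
We will use monotonicity to get existence of the limit, and the near-invariance estimate of Lemma~\ref{lem-almost-inv} to get invariance.

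\emph{Monotonicity.} By \eqref{eqNeutralMono}, for $0<\alpha\le\alpha'$ and $L\ge L'$ we have $1_{\Neutral(\alpha,L)}\le 1_{\Neutral(\alpha',L')}$. Hence, for every nonnegative continuous $u$ and every $i$,
 $$
   \nu_{k_i}(1_{\Neutral(\alpha,L)}u)\le \nu_{k_i}(1_{\Neutral(\alpha',L')}u).
 $$
Passing to the limit along the admissible subsequence gives $m_0^{\alpha,L}(u)\le m_0^{\alpha',L'}(u)$. So for each such $u$, the quantity $m_0^{\alpha,L}(u)$ is a bounded, nonnegative function of $(\alpha,L)$. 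It is nonincreasing as $\alpha$ decreases and as $L$ increases.

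\emph{Existence of the limit.} For each $u$, set $m_0(u):=\inf_{\alpha>0,L\ge1} m_0^{\alpha,L}(u)$. Given $\eps>0$, pick $(\alpha_1,L_1)$ with $m_0^{\alpha_1,L_1}(u)<m_0(u)+\eps$. Then for all $\alpha<\alpha_1$ and $L\ge L_1$, monotonicity gives
 $$
   m_0(u)\le m_0^{\alpha,L}(u)<m_0(u)+\eps.
 $$
This is exactly convergence of $m_0^{\alpha,L}(u)$ to $m_0(u)$ as $(\alpha,L)\to(0,\infty)$ in the product order. Every continuous function is a difference of two nonnegative ones, so the limit exists for all $u\in C(X)$. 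The resulting functional is positive, linear and of norm at most $1$. By Riesz it is a Borel subprobability $m_0$, and the net converges to it weakly. Alternatively, one can argue through the compactness of $\SProb(X)$: every cluster point has the same values on the countable family $\cU$, hence is unique.

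\emph{Invariance.} This is the main step, because $\Neutral(\alpha,L)$ is not invariant. Fix a continuous $u$. By invariance of $\nu_{k_i}$,
 $$
   \nu_{k_i}(1_{\Neutral(\alpha,L)}\,u\circ T)=\nu_{k_i}(1_{T\Neutral(\alpha,L)}\,u),
 $$
so
 $$
   \bigl|(T_*\mu^0_{\alpha,L,k_i})(u)-\mu^0_{\alpha,L,k_i}(u)\bigr|\le \|u\|_\infty\,\nu_{k_i}\bigl(\Neutral(\alpha,L)\diffsym T\Neutral(\alpha,L)\bigr).
 $$
Apply Birkhoff's theorem to the ergodic $\nu_{k_i}$ and the indicator of the symmetric difference. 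Lemma~\ref{lem-almost-inv} (after the index shift $k\mapsto k+1$, which does not affect densities) bounds the pointwise frequency by $2/L$. Therefore
 $$
   \nu_{k_i}\bigl(\Neutral(\alpha,L)\diffsym T\Neutral(\alpha,L)\bigr)\le 2/L
 $$
uniformly in $i$ and $\alpha$.

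\emph{Passing to the limits.} Since $u\circ T$ is continuous, we may let $i\to\infty$ and get
 $$
   |m_0^{\alpha,L}(u\circ T)-m_0^{\alpha,L}(u)|\le 2\|u\|_\infty/L.
 $$
Now let $(\alpha,L)\to(0,\infty)$. This yields $m_0(u\circ T)=m_0(u)$ for all $u\in C(X)$, i.e.\ $T_*m_0=m_0$.

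The points to watch are that Lemma~\ref{lem-almost-inv} gives a bound independent of $x$, $\alpha$ and $k$, and that $u\circ T$ remains continuous so the weak limits can legitimately be taken.
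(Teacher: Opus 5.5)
Your proposal is correct and follows the paper's proof. Monotonicity of $\Neutral(\alpha,L)$ gives the limit as an infimum over nonnegative $u$, and invariance comes from the bound $2\|u\|_\infty/L$ (from Lemma~\ref{lem-almost-inv} and Birkhoff's theorem) followed by letting $L\to\infty$. You also spell out a step the paper leaves implicit, namely using the $T$-invariance of $\nu_{k_i}$ to reduce to $\nu_{k_i}(\Neutral(\alpha,L)\diffsym T\Neutral(\alpha,L))$. And you rightly conclude $T_*m_0=m_0$ for the limit only, not for the approximants $m_0^{\alpha,L}$.
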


\begin{proof}
Let $u$ be some non-negative continuous function on $X$. The monotony properties of $\Neutral(\alpha,L)$ imply that
 $$
   \forall \alpha_1\ge\alpha_2\; \forall L_1\le L_2\quad 
    m_0^{\alpha_2,L_2}(u) \le m_0^{\alpha_1,L_1}(u).
 $$
Therefore, 
 $$
   \lim_{(\alpha,L)\to(0,\infty)} m_0^{\alpha,L}(u) 
    = \inf_{\alpha>0,L\ge 1} m_0^{\alpha,L} (u)
 $$
(including the existence of the limit). Since the above holds for any nonnegative $u$, this implies the existence of the weak limit $m_0:=\lim_{(\alpha,L)\to(0,\infty)} m_0^{\alpha,L}$.

\medbreak

Let us check the invariance of this limit $m_0$. 
For any $u\in C(X)$, we have $u\circ T-u\in C(X)$ and, by definition of weak convergence,
 $$
    m_0^{\alpha,L}(u\circ T-u) = \lim_i \nu_{k_i}(1_{\Neutral(\alpha,L)}(u\circ T-u))
 $$
By Lemma~\ref{lem-almost-inv} and the ergodic theorem,
  $$
     |\nu_{k_i}(1_{\Neutral(\alpha,L)}(u\circ T-u))| \le \sup|u| \cdot \frac{2}L.
  $$
Finally,
 $$
   m_0(u\circ T-u)  = \lim_{\alpha,L} m_0^{\alpha,L}(u\circ T-u) = 0.
 $$
This shows that $(T_*(m_0^{\alpha,L})-m_0^{\alpha,L})(u)=0$ for all $u\in C(X)$. Thus, $T_*(m_0^{\alpha,L})=m_0^{\alpha,L}$, i.e., the measure is $T$-invariant.
\end{proof}

\section{Properties of the neutral decomposition}
To prove the theorem, we apply Lemma~\ref{lem-adm-seq} to get an admissible sequence, and Proposition~\ref{prop-m0} to get  corresponding subprobability measures $m_0,m_1$. It remains to show that this decomposition has $m_1\ne0$ and satisfies the claims (A) and (B).
To simplify notations, we replace $(\nu_k)_{k\ge1}$ by the subsequence $(\nu_{k_i})_{i\ge1}$ defined by the admissible sequence.

\subsection*{Proof of (A) and $m_1\ne0$}
We assume $m_0\ne0$.
(A) is a simple consequence of the boundedness of $\vf$ and the maximality of the neutral blocks. By the Birkhoff theorem, $m_0^{\alpha,L}(\vf)$ is an average over $x$ of the limits:
 $$
    \lim_{n\to\infty} \frac1n\sum_{k=0}^{n-1} 1_{\Neutral(\alpha,L)}(T^kx)\vf(T^kx)
 $$
In particular, it is an average of $\vf(T^kx)$ for $k\in N_x(\alpha,L)$ (i.e., $T^{-k}x\in\Neutral(\alpha,L)$). To estimate $m_0^{\alpha,L}(\vf)$ it is therefore enough to estimate its average over neutral blocks.

Now, if $\ii{a,b}$ is an $(\alpha,L)$-neutral block for $x$, then
 $$
     \frac{1}{b-a}\sum_{k=a}^{b-1} \vf(T^kx) \le \alpha\;,
 $$
whereas the maximality of neutral blocks implies $\sum_{k=a}^b\vf(T^kx) > (b-a+1)\alpha$ so
 $$
     \frac{1}{b-a}\sum_{k=a}^{b-1} \vf(T^kx) > \alpha + \frac{\alpha-\vf(T^{b}x)}{b-a}.
 $$
Hence
 $$
    \alpha-\frac{\alpha+\|\vf\|_{\sup}}{L} \le m_0^{\alpha,L}(\vf)\le \alpha
 $$
Thus, in any limit where $L\to\infty$, $m_0(\vf)=\alpha$ and $m_0$ is obtained as $\alpha\to0$, proving (A).

Therefore $\mu(\vf)=m_0(\vf)+m_1(\vf)=m_1(\vf)$, but this is positive by assumption~\ref{eq-positive-avg}. Thus $m_1\ne0$ or $\beta\ne 0$.

\subsection*{Proof of (B)}
We proceed by contradiction, assuming
 \begin{equation}\label{eq-gamma-m1}
     \gamma := \frac12 m_1\left\{x\in X:\liminf_{n\to\infty}(1/n)S_n\vf(x)\le0\right\} > 0.
  \end{equation}
We are going to use Pliss lemma (Theorem~\ref{thm-Pliss}) to deduce from the above assumption the existence of many $(\alpha_0,L_0)$-neutral blocks disjoint from all $(\alpha_1,L_1)$-neutral blocks for $0<\alpha_1\ll\alpha_0\ll 1$ and $L_1\gg L_0\gg 1$. This will contradict the construction of $m_0$ as the limit of $m_0^{\alpha,L}$.

This convergence gives $\alpha_0>0$ and $L_0\ge1$ such that
 $$
   \forall 0<\alpha\le\alpha_0\;\forall L\ge L_0\quad \left| m_0^{\alpha,L}(X) - m_0(X) \right| < \frac{\gamma}{100}.
 $$
Note that we can assume that $\alpha_0$ does not belong to the countable set of values $\alpha$ such that $\mu\{x:\exists n\ge1$  s.t. $(1/n)S_n\vf(x)=\alpha\}>0$.
For $K\ge1$, define
 $$\begin{aligned}
   &W_0(K) := \{ x\in X: \exists 0\le a<K\; \ii{-a,0} \text{ is ($\alpha_0,L_0$)-neutral}\}.
  \end{aligned}$$
By the choice of $\alpha_0$, $\mu(\partial W_0(K))=0$ for any integer $K\ge1$. Thus, by construction of $m_1$ as a weak limit,
 $$
   \forall K\ge1\;\lim_{\alpha,L} \lim_k \nu_k(W_0(K)\setminus\Neutral(\alpha,L))
   = m_1(W_0(K)).
 $$
By the ergodic theorem, for $\mu$-a.e. $x\in X$, 
 $$
    \lim_n(1/n)(\vf(T^{-1}x)+v(T^{-2}x)+\dots+\vf(T^{-n}x))=\liminf_n(1/n)S_n\vf(x),
 $$
and this is nonpositive on a set of $m_1$-measure $2\gamma>0$ by \eqref{eq-gamma-m1}. For such an $x$, Pliss lemma implies that there are infinitely many integers $a\ge1$ such that $\ii{-a,0}$ is $\alpha_0$-neutral.\footnote{This is where we use $\alpha_0>0$.}  
Therefore, for $K_0$ large enough (in particular $K_0\ge L_0$), $m_1(W_0(K_0))>\gamma$. Thus, there are numbers $0<\alpha_1<\alpha_0$ and $L_1>200K_0/\gamma$ such that
 $$
    \lim_k \nu_k(W_0(K_0)\setminus\Neutral(\alpha_1,L_1))=:\delta > \gamma.
 $$

For each $x\in X$, we introduce the following subsets of $\ZZ$:
 $$\begin{aligned}
    &N_i(x):=\{n\in\ZZ:T^n(x)\in\Neutral(\alpha_i,L_i)\} \quad (i=0,1) \text{ and }\\
    &\Delta(x) := \{n\in\ZZ:T^n(x)\in W_0(K)\setminus\Neutral(\alpha_1,L_1)\}.
  \end{aligned}$$
Since $\nu_k$ is ergodic, the following densities exist for $\nu_k$-a.e. $x\in X$:
 $$\begin{aligned}
    &\dens(N_i(x)):=\limsup_{n\to\infty}\frac{\#(N_i(x)\cap[0,n))}{n} = \nu_{k}(\Neutral(\alpha_i,L_i)) \stackrel{k\to\infty}\longrightarrow m_0^{\alpha_i,L_i}(X)\\
    &\dens(\Delta(x)) = \nu_k(W_0(K)\setminus\Neutral(\alpha_1,L_1)) \stackrel{k\to\infty}\longrightarrow \delta > \gamma.
  \end{aligned}$$
Thus, for $k$ large enough, for $\nu_k$-a.e. $x\in X$, we have
 $$\begin{aligned}
    &\dens(N_i(x)) \in(m_0(X)-\gamma/100,m_0(X)+\gamma/100)\text{ for }i=0,1;\\
    &\dens(\Delta(x))  > \gamma.
  \end{aligned}$$
From now on, we omit the dependence on $x$.
 The estimates above imply that
 $$
    \dens(N_0) < \dens(N_1) + \gamma/50
 $$
We are going to show that this contradicts $N_1\cup\Delta\subset N_0$ by finding a large subset $\Delta'\subset\Delta$ that is disjoint from $N_1$. 

By construction, every $j\in\Delta$ is associated to an $(\alpha_0,L_0)$-neutral block $I(j):=\ii{j-a,j}$ with length $L_0\le j\le K_0$ (we do not claim that $I(j)$ is a maximal neutral block). We consider
 $$
   \Delta' := \bigcup\{I(j):j\in\Delta\text{ s.t. }I(j)\cap N_1\ne\emptyset\}
 $$
Obviously $\Delta\setminus\Delta'$ is disjoint from $N_1$.
To bound the density of $\Delta'$ in the integers, we are going to show that $\Delta'$  is contained in the union of the \emph{$K_0-1$-boundaries}  of the maximal $(\alpha_1,L_1)$-neutral blocks. By the $c$-boundary of an integer interval $\ii{a,b}$ for $c\in\NN$, we mean the subset $\ii{a-c,a+c}\cup\ii{b-c,b+c}$ of $\ZZ$.

To this end, let $(\ii{m_r,M_r})_{r\in\ZZ}$ be the family of the disjoint maximal $(\alpha_1,L_1)$-neutral blocks so that $N_1=\bigsqcup_{r\in\ZZ} \ii{m_r,M_r}$. We claim that
 \begin{equation}\label{eq-contrad1}
   \Delta' \subset \bigcup_{r\in\ZZ} \ii{M_r-K_0+1,M_r+K_0-1}\;.
  \end{equation}
This will imply that
 $$ 
   \dens(\Delta') \le (2K_0/L_1)\dens(N_1) < \gamma/100
 $$
so that we get our contradiction: 
 $$
    \dens(N_0) \ge \dens(\Delta\cup N_1) > m_0(X) + \gamma/2.
 $$
Let us prove the claim \eqref{eq-contrad1}. Consider $j\in\Delta$ such that $I(j)=\ii{j-a,j}$ intersects $A_1$, i.e., some maximal $(\alpha_1,L_1)$-neutral block $\ii{b,c}$. Since $j\notin N_1$,  $c< j$ so $j-a>c-K_0$. Obviously, $c>j-K_0$. Thus, $c-K_0<j-a<j<c+K_0$. 

\medbreak

This concludes the proof of (B) and of Theorem~\ref{thm-neutral-dec}.\qed

\section{Neutral decomposition for the projectivization of surface diffeomorphisms}
\label{sec-applying-neutral}

We state the consequences of the neutral decomposition for the projective dynamics of a surface diffeomorphism and make some comments.

\subsection*{Statement of the neutral decomposition}

The neutral decomposition (Theorem~\ref{thm-neutral-dec}) yields the following:

\begin{theorem}\label{thm-neutral-decomposition-projective}
Let $f\in\Diff^1(M)$ be a $C^1$ diffeomorphism of a closed surface. Let $\nu_k\in\Proberg(f)$ be ergodic, aperiodic\footnote{A measure is aperiodic if it gives zero measure to the set of all periodic points.} measures with
 $$
   \lambda^+(\nu_k)\ge\chi>0 \qquad (\text{for some }\chi>0).
 $$
In particular, the unstable lifts $\hnu_k^+\in\Proberg(\hf)$ are well-defined.
Recall the dilation function $\hvf:\hM\to\RR$, $(x,E)\mapsto\log\|Df_x|E\|$.

Assume that the unstable lifts $\hnu_k^+$ weakly converge to some $\hmu\in\Prob(\hf)$. Then there are invariant probability measures $\hmu_0,\hmu_1\in\Prob(\hf)$ and a number $0<\beta\le 1$ such that
 $$
    \hmu = (1-\beta)\hmu_0 + \beta \hmu_1
 $$
with the following properties ($\mu_i:=\hpi_*(\hmu_i),\mu:=\hpi_*(\hmu)$):
 \begin{enumerate}
   \item $\lim_k \lambda^+(\nu_k) = \beta\cdot\hmu_1(\hvf) = \beta\cdot \lambda^+(\mu_1)$;
   \item $\hmu_0(\hvf) = 0$ (if $\beta\ne1$);
   \item for $\mu_1$-a.e. $x\in M$, $\lambda^+(x)>0$. 
   \item for $\mu_1$-a.e. $x\in M$, $\lambda^-(x)\le0$.
 \end{enumerate}
\end{theorem}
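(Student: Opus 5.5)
We apply Theorem~\ref{thm-neutral-dec} in the abstract setting $X:=\hM$, $T:=\hf$ (a homeomorphism of a compact metric space, since $f$ is $C^1$), $\vf:=\hvf$ (continuous), and the ergodic measures $\hnu_k^+$ converging to $\hmu$. Assumption \eqref{eq-positive-avg} holds because $\hnu_k^+(\hvf)=\lambda^+(\nu_k)\ge\chi>0$ by Proposition~\ref{prop-lift-simple}. This yields $\hmu=m_0+m_1$ with $m_0(\hvf)=0$ and $\liminf_n\frac1n S_n\hvf>0$ for $m_1$-a.e. point. We normalize as in the Corollary: $\beta:=m_1(\hM)>0$, $\hmu_1:=\beta^{-1}m_1$ and $\hmu_0:=(1-\beta)^{-1}m_0$, or $\hmu_0:=\hmu$ if $\beta=1$. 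Item (2) is then (A) directly.

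For item (1), continuity of $\hvf$ gives $\lim_k\lambda^+(\nu_k)=\hmu(\hvf)=(1-\beta)\hmu_0(\hvf)+\beta\hmu_1(\hvf)=\beta\hmu_1(\hvf)$. It remains to identify $\hmu_1(\hvf)$ with $\lambda^+(\mu_1)$. By the Birkhoff theorem, $\hmu_1(\hvf)=\int\lim_n\frac1nS_n\hvf\,d\hmu_1$, and by \eqref{eq-basic-hvf} the pointwise limit at $(x,E)$ is the exponent $\lambda(x,E)$, which is at most $\lambda^+(x)$. So $\hmu_1(\hvf)\le\lambda^+(\mu_1)$. For the reverse inequality we use (B): a.e. $(x,E)$ has $\lambda(x,E)>0$. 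Consider the ergodic decomposition of $\hmu_1$ and use Proposition~\ref{prop-lift-projective}(iii)--(v). Each ergodic component is an ergodic lift of some ergodic $\nu$ on $M$, and its $\hvf$-average is a Lyapunov exponent of $\nu$ which is positive. On a surface, Lemma~\ref{lem-equal-zero} rules out components with equal exponents: all exponents would be zero, contradicting positivity. Hence each such $\nu$ has simple spectrum with $\lambda^1>0$, and by Proposition~\ref{prop-lift-simple} its ergodic lifts are exactly $\hnu^1$ and $\hnu^2$.

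The component cannot be $\hnu^2$ unless $\lambda^2(\nu)>0$, which would make $\nu$ a source by Lemma~\ref{lem-all-negative-exp}. This is excluded as in Lemma~\ref{lem-no-sink-source}, since the $\nu_k$ are ergodic and aperiodic: if $\mu$ charged a source, its backward basin would get positive $\nu_k$-mass, forcing $\nu_k$ to be that periodic measure. Applying the same argument to sinks excludes them too. Therefore a.e. component of $\hmu_1$ is the unstable lift $\hnu^+$, so $\hmu_1(\hvf)=\int\lambda^+\,d\mu_1=\lambda^+(\mu_1)$. This also gives item (3): $\lambda^+(x)\ge\lambda(x,E)>0$ for $\mu_1$-a.e. $x$.

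For item (4), we have just shown that the ergodic components $\nu$ of $\mu_1$ are neither sources nor of equal-exponent type. If $\lambda^-(x)>0$ on a set of positive $\mu_1$-measure, then both exponents would be positive there, and the component would be a source by Lemma~\ref{lem-all-negative-exp}, which is excluded. Hence $\lambda^-\le0$ $\mu_1$-a.e.

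The main obstacle is the identification of the ergodic components of $\hmu_1$ as unstable lifts, and in particular excluding sources, sinks and components with zero exponents. This is where aperiodicity of $\nu_k$ and the basin argument must be carefully invoked, since $\mu_1$ is only a piece of $\mu$; this is harmless because $\mu_1\ll\mu$.
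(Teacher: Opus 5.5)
Your proposal is correct and follows essentially the paper's own route: you apply Theorem~\ref{thm-neutral-dec} to $(\hM,\hf,\hvf,\hnu_k^+)$, read item (2) off (A), get items (3) and (4) from (B) together with the exclusion of sources via aperiodicity and the basin argument, and obtain item (1) from weak convergence and (A). Your explicit identification of a.e.\ ergodic component of $\hmu_1$ as an unstable lift justifies $\hmu_1(\hvf)=\lambda^+(\mu_1)$ more carefully than the paper's terse \eqref{eq-lambda-mu1}; one step is missing, though: before invoking Proposition~\ref{prop-lift-simple} for $\hnu_k^+(\hvf)=\lambda^+(\nu_k)$, you should check, as the paper does, that each $\nu_k$ has simple spectrum (otherwise it would be a source, contradicting aperiodicity).
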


\begin{proof}
Let $f,\nu_k,\hnu_k^+,\hmu$ be as above. We apply Theorem~\ref{thm-neutral-dec} with $T=\hf$, $X=\hM$, $\vf=\hvf$, and the measures $\hnu_k^+$. Note that the assumption in \eqref{eq-positive-avg} is satisfied, since
 $$
   \liminf_{k\to\infty} \hnu_k^+(\hvf) \ge \chi>0.
 $$
The theorem gives:
 $$
    \hmu = (1-\beta)\hmu_0 + \beta \hmu_1 \text{ with }\hmu_0,\hmu_1\in\Prob(\hf) \text{ and }0<\beta\le1,
 $$
with the two items: (A) $\hmu_0(\hvf)=0$ (if $\beta\ne1$) and (B)  $\hmu_1$-a.e. $\lim_n(1/n)S_n\hvf>0$. We are going to prove items (2), (3), (4), and (1), in that order.

\medbreak

We note that item (2) is a restatement of item (A).

\medbreak
We consider item (3). First,
we claim that 
 \begin{equation}\label{eq-lambda-nuk}
   \forall k\ge1\quad \lambda^+(\nu_k)=\hnu_k^+(\hvf).
 \end{equation}
This will follow from Proposition~\ref{prop-lift-simple} as soon as we see that the top exponent of the ergodic measure $\nu_k$ is simple.
By \eqref{eq-positive-avg}, $\nu_k$ has at least one positive exponent. If it had two (counting with multiplicity), it would be a sink, contradicting the aperiodicity assumption. This concludes the proof of  \eqref{eq-lambda-nuk}.

\medbreak

We then check that $\lambda^+(\mu_1)=\hmu_1(\hvf)$. Note that $\hmu_1$ is not necessarily ergodic. By item (B), $\mu_1$-a.e. there exists some $E\in T_xM$ satisfying
 $$
    \lim_n \frac1n\sum_{k=0}^{n-1}\log\|Df|Df^k(E)\| = \lim_n \frac1n\log\|Df^n_x|E\| >0.
 $$
This says that some exponent and therefore the top exponent is positive at $x$. We have proven item (3).

\medbreak

As previously, one cannot have two positive exponents (counting multiplicities) on a set of positive measure. Indeed, this would imply the existence of a source in the ergodic decomposition of $\mu_1$ and therefore of $\mu$, but this would force $\nu_k$ to be atomic, a contradiction. Thus, 
 $$
   \text{for $\mu_1$-a.e. $x\in M$} \quad \lambda^+(x)>0\ge\lambda^-(x).
 $$
We have proven item (4). 

The above inequalities imply that $\lambda^+(x)$ is a simple exponent at  $\mu_1$-a.e. $x\in M$. In particular, the space $E^1_x$ is defined 
and satisfies
 $$
   \lambda^+(x) = \lim_{n\to\infty} \frac{1}{n} S_n\hvf(x,E_x^1) > 0.
 $$
Hence,
 \begin{equation}\label{eq-lambda-mu1}
   \lambda^+(\mu_1) = \int_M \lambda^+(x)\, d\mu_1 = \hmu^1(\hvf).
 \end{equation}
We can now prove item (1):
 $$\begin{aligned}
   \lim_k \lambda^+(\nu_k) &\stackrel{(a)}= \lim_k \hnu_k^+(\hvf) \stackrel{(b)}= \hmu(\hvf) \stackrel{(c)}= (1-\beta)\hmu_0(\hvf)+\beta\cdot\hmu_1(\hvf)\\  &\stackrel{(d)}= \beta\cdot\hmu_1(\hvf) \stackrel{(e)}= \beta\cdot\lambda^+(\mu_1)\;,
 \end{aligned}$$
where (a) is \eqref{eq-lambda-nuk}, (b) is weak convergence, (c) the neutral decomposition, (d) is item (A), and (e) is \eqref{eq-lambda-mu1}.
\end{proof}

\subsection*{Some special cases and comments}
It is interesting to examine some special cases to make sense of the above decomposition. We wish also to point out some open questions.

\medbreak
\subsubsection*{When the limit $\mu$ is ergodic} In this case, the \emph{projection on the surface} of the neutral decomposition must be trivial: $\mu_0=\mu_1=\mu$. However, the decomposition in $\hM$:
 $$
   \hmu = (1-\beta)\cdot \hmu_0 + \beta \hmu_1
 $$
is not necessarily trivial.
We can compute it as follows. The ergodic $\mu:=\hpi_*(\hmu)$ has a single, simple positive exponent and a single, nonpositive one. Hence Proposition~\ref{prop-lift-simple} implies that $\hmu,\hmu_0,\hmu_1$ which are lifts of $\mu$, must be convex combinations of $\hmu^+,\hmu^-$, the well-defined unstable and stable lifts of the ergodic measure $\mu$.

Item (3) of the above theorem implies that $\hmu_1\ll\hmu^+$, hence $\hmu_1=\hmu^+$ by the ergodicity of $\hmu^+$. Item (2) therefore implies that
 $$
   \hmu_0=\frac{\lambda^+(\mu)\hmu^- - \lambda^-(\mu)\hmu^+}{\lambda^+(\mu) - \lambda^-(\mu)}\;.
  $$
Finally, writing $\hmu=\alpha\cdot\hmu^+ + (1-\alpha)\cdot\hmu^-$, we get:
 $$
   1-\beta = \left(1+\frac{|\lambda^-(\mu)|}{\lambda^+(\mu)}\right) (1-\alpha)\;.
 $$
This reflects exactly the fact that neutral blocks correspond to \emph{contraction plus cancellation}.

\subsubsection*{Time asymmetry} 
Given a sequence of measures $\nu_k\to\mu$, $\nu_k\in\Proberg(T)$, and some continuous function $\vf$, replacing $T$ by its inverse $T^{-1}$ does not leave the set of neutral orbit segments unchanged. This can already be seen in the ergodic example above. Using the notation introduced there, observe that $\alpha$ is invariant under time reversal, but $\beta$ is not, because of the factor
$1+|\lambda^-(\mu)|/\lambda^+(\mu)$ which becomes $1+\lambda^+(\mu)/|\lambda^-(\mu)|$. \footnote{This asymmetry is already present in the definition \ref{def-abstract-neutral-block} of neutral orbit segments and in their positions along the integers.} 

\begin{problem}
Is there a general argument that would get the best of both bounds in the above example, i.e., $1+\max\left(|\lambda^-(\mu)|/\lambda^+(\mu),\lambda^+(\mu)/|\lambda^-(\mu)|\right)$? Is there a way to get a decomposition which would be invariant under replacing $f$ by its inverse?
\end{problem}

We note that the last question is related to the problem of maximizing the Hausdorff dimension among hyperbolic measures.

\part{Entropy theory}

\begin{center}
{\Large Introduction}
\end{center}
\bigbreak

This second part is devoted to the necessary background on entropy theory. 
\medbreak

Chapter~\ref{chap-entropy} reviews the key results in the topological setting (using Bowen-Dinaburg dynamical balls) and the relations with exponents and unstable manifolds for hyperbolic ergodic measures. 

Chapter~\ref{chap-yomdin} reviews the basics of Yomdin theory and adapts these results to the projective extension.

\chapter{Background on entropy}\label{chap-entropy}

This chapter reviews the basics of entropy theory. We consider mainly continuous self-maps of compact metric spaces, before specializing to the smooth setting in order to relate entropy, exponents and unstable manifolds.

\medbreak

Until further notice, $f:M\to M$ is a self-map of a compact metric space $(M,d)$.

\section{Topological and Kolmogorov-Sinai entropies using dynamical balls}

As in many recent works (but not all) we will rely on Bowen's approach to topological entropy. This approach can be likened to a box dimension using dynamical distances. We note that this approach (also pioneered by Dinaburg) is different from the earlier and more general one of Kolmogorov and Sinai using measurable partitions.

\begin{definition}\label{def-htop-Bowen}
The $(\eps,n)$-\new{dynamical ball} with center $x\in M$ and radius $\eps>0$ is:
 \begin{equation}\label{eqDefBn}
     B_f(x,\eps,n) := \{ y\in M:\forall 0\le k<n\; d(f^ky,f^kx)<\eps\}.
  \end{equation}
The covering number
 $$
   r_f(Y,\eps,n) := \min\{\# C: Y\subset\bigcup_{x\in C} B_f(x,\eps,n)\}.
 $$
The \new{topological entropy} of a non-necessarily invariant subset $Y\subset X$ is\footnote{When $Y$ is not compact, Bowen's original definition of its topological entropy is: $\sup_{K\Subset Y} h_\top(f,K)$, where $K$ ranges over the compact subsets of $Y$. Hence it can be strictly smaller, e.g., for some discrete infinite sets.} 
 $$
   h_\top(f,Y) := \lim_{\eps\to0} h_\top(f,Y,\eps) \text{ where }
   h_\top(f,Y,\eps) := \limsup_{n\to\infty} \frac1n\log r_f(Y,\eps,n).
 $$
We write $h_\top(f):=h_\top(f,M)$.
\end{definition}

\begin{definition}\label{def-hKS-Katok}
Given $\mu\in\Proberg(f)$, its \new{Kolmogorov-Sinai entropy} is:
 $$
   h(f,\mu) := \lim_{\eps\to0} h(f,\mu,\eps) \text{ where }
   h(f,\mu,\eps) := \limsup_{n\to\infty}\frac1n\log \min\{r_f(Y,\eps,n):\mu(Y)>1/2\}.
 $$

Given a not necessarily ergodic $\mu\in\Prob(f)$ with ergodic decomposition $\mu=\int \mu_\xi d\xi$, its Kolmogorov-Sinai entropy is:\footnote{This formula is well-defined since $\mu\mapsto h(f,\mu)$ is a measurable non-negative function.}
 $$
   h(f,\mu,\eps) := \int h(f,\mu_\xi,\eps)\, d\xi  \text{ and }
   h(f,\mu) := \int h(f,\mu_\xi)\, d\xi.
 $$
\end{definition}

Observe that, even for a non-ergodic measure, $h(f,\mu)=\lim_\eps h(f,\mu,\eps)$ by monotonicity.

\begin{remark}
Kolmogorov and Sinai introduced first a more general notion of entropy for endomorphisms of probability spaces (see, e.g., \cite{CFS-book}). The above formulation, adapted to compact metric spaces, is due to Katok \cite{Katok-periodic}.
\end{remark}

It is a key feature of entropy that it is obtained by taking two successive limits. A key consequence is that entropy can be a highly irregular function over the set of invariant probability measures. We refer to Downarowicz's theory of entropy structures \cite{Downarowicz-ES} and Boyle symbolic extension theory \cite{Boyle-Downarowicz}. 

\section{The variational principle}

The basic link between topological and Kolmogorov-Sinai entropy is provided by the following fundamental results:

\begin{theorem}[Variational principle]
Let $f:M\to M$ be a continuous map on a compact metric space.
Then the \NEW{variational principle for entropy} holds:
 $$
   h_\top(f) = \sup\{h(f,\mu):\mu\in\Prob(f)\} = \sup\{h(f,\mu):\mu\in\Proberg(f)\}.
 $$
\end{theorem}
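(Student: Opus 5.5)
The plan is to prove the two inequalities $\sup_{\mu\in\Prob(f)} h(f,\mu)\le h_\top(f)$ and $h_\top(f)\le \sup_{\mu\in\Proberg(f)} h(f,\mu)$ separately. Together they give both equalities, since $\Proberg(f)\subset\Prob(f)$ and each $h(f,\mu)$ is an average of ergodic entropies.

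\emph{Upper bound.} Let $\mu\in\Proberg(f)$. Taking $Y=M$ in Definition~\ref{def-hKS-Katok} is admissible, since $\mu(M)=1>1/2$. Hence
 $$
   \min\{r_f(Y,\eps,n):\mu(Y)>1/2\}\le r_f(M,\eps,n)
 $$
for all $\eps,n$. Taking $\frac1n\log$ and $\limsup_n$ gives $h(f,\mu,\eps)\le h_\top(f,M,\eps)$, and letting $\eps\to0$ gives $h(f,\mu)\le h_\top(f)$. For a non-ergodic $\mu=\int\mu_\xi\,d\xi$, the definition $h(f,\mu)=\int h(f,\mu_\xi)\,d\xi$ bounds $h(f,\mu)$ by $h_\top(f)$ as well. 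The same formula shows $h(f,\mu)\le\sup_\xi h(f,\mu_\xi)$ on a set of positive measure, so the supremum over $\Prob(f)$ equals the supremum over $\Proberg(f)$.

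\emph{Lower bound.} The plan follows Misiurewicz's argument, which works with partitions. I will therefore invoke Katok's theorem \cite{Katok-periodic}: for ergodic $\mu$, the quantity of Definition~\ref{def-hKS-Katok} coincides with the Kolmogorov--Sinai partition entropy $\sup_P h(f,\mu,P)$. By affinity of partition entropy under ergodic decomposition, this extends to non-ergodic $\mu$. The steps are as follows.

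First, fix $\eps>0$ and choose maximal $(n,\eps)$-separated sets $E_n$. By maximality, the dynamical balls $B_f(x,\eps,n)$ centered at $E_n$ cover $M$, so $\limsup_n\frac1n\log\# E_n\ge h_\top(f,M,\eps)$.

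Second, let $\sigma_n$ be the uniform measure on $E_n$ and $\mu_n:=\frac1n\sum_{k<n} f^k_*\sigma_n$. Along a subsequence realizing the $\limsup$, the $\mu_n$ accumulate on some $\mu\in\Prob(f)$, by the same Krylov--Bogoliubov argument as in Proposition~\ref{prop-lift-projective}(i).

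Third, choose a finite partition $P$ with atoms of diameter $<\eps$ and $\mu(\partial P)=0$. Each atom of $P^{n}:=\bigvee_{k<n}f^{-k}P$ contains at most one point of $E_n$, so $H_{\sigma_n}(P^n)=\log\# E_n$.

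Fourth, apply Misiurewicz's combinatorial step. For fixed $q$, split $\ii{0,n}$ into blocks of length $q$ starting at each offset $j<q$, and use subadditivity and concavity of $H$ to get
 $$
   \tfrac{q}{n}\log\#E_n\le H_{\mu_n}(P^q)+\tfrac{2q^2}{n}\log\#P .
 $$
Letting $n\to\infty$ along the subsequence, and using $\mu(\partial P)=0$ so that $H_{\mu_n}(P^q)\to H_\mu(P^q)$, gives $q\,h_\top(f,M,\eps)\le H_\mu(P^q)$. Dividing by $q$ and letting $q\to\infty$ gives $h_\top(f,M,\eps)\le h(f,\mu,P)\le h(f,\mu)\le h_\top(f)$.

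Finally, let $\eps\to0$ to obtain $h_\top(f)\le\sup_{\Prob(f)}h$. The ergodic supremum then follows from the first paragraph.

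The main obstacle is the fourth step: the careful bookkeeping of block decompositions with the remainders at both ends, together with the bounding of the error by $2q\log\#P$. A secondary point is that the argument relies on identifying Katok's definition with the partition entropy. I would take that identification from \cite{Katok-periodic} rather than reprove it, since reproving it would need a Shannon--McMillan--Breiman type argument.
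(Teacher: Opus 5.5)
Your proof is correct and takes essentially the route the paper indicates. The paper gives no argument of its own: it cites Misiurewicz's proof, which is what you reproduce for the lower bound via $(n,\eps)$-separated sets, empirical averages and a partition with $\mu$-null boundaries. It also remarks that the suprema over $\Prob(f)$ and $\Proberg(f)$ coincide by the ergodic decomposition formula, exactly as in your first paragraph. With the paper's Katok-style definition your upper bound is immediate. Your one external input is the identification of that definition with partition entropy (Katok for ergodic measures, affinity under ergodic decomposition otherwise), and the paper itself takes this for granted in its remarks.
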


This celebrated result is due to Dinaburg \cite{Dinaburg} and Goodman \cite{Goodman}. Misiurewicz has given an elegant and powerful proof, see  \cite{Misiurewicz-VP}.

\begin{remark}
The above equality $\sup\{h(f,\mu):\mu\in\Prob(f)\} = \sup\{h(f,\mu):\mu\in\Proberg(f)\}$ is a consequence of the following formula for the entropy of $\mu$ in terms of its ergodic decomposition $\mu=\int \mu_\xi\, d\xi$:
 $$
    h(f,\mu) = \int h(f,\mu_\xi)\, d\xi\;.
 $$
Such a formula holds not only in the setting of the above theorem, but for any endomorphism of a standard probability space (i.e., a separable complete metric space equipped with the Borel $\sigma$-field). See \cite{Rokhlin-Lectures}.
\end{remark}

The variational principle puts the focus on the measures achieving the supremum:

\begin{definition}
Given $f:X\to X$ a Borel self-map of a Polish space, an  ergodic \NEW{measure maximizing the entropy (or MME for short)} is any $m\in\Proberg(f)$ such that
 $$
    h(f,m) = \sup\{h(f,\mu):\mu\in\Prob(f)\} = \sup\{h(f,\mu):\mu\in\Proberg(f)\}.
 $$
\end{definition}

\begin{exercise}
Show that $\mu\in\Prob(f)$ is an MME if and only a.e. of its ergodic components are MME. Deduce that the existence of an ergodic MME is equivalent to the existence of a not necessarily ergodic one.
\end{exercise}

The MMEs may fail to exist, including for diffeomorphisms of arbitrarily large, but finite smoothness: see \cite{Misiurewicz-NoMax} in dimension $4$ and \cite{Buzzi-NoMax} in dimension $2$. However, we have the following deep result:

\begin{theorem}[Newhouse]\label{thm-Newhouse}
Let $f:M\to M$ be a map on a closed manifold.
If $f$ is $C^\infty$ smooth, then there exists at least one measure maximizing the entropy. In fact, the map $\mu\mapsto h(f,\mu)$ is upper semicontinuous over the nonempty compact set $\Prob(f)$.
\end{theorem}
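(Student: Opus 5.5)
The plan is to prove upper semicontinuity through a \emph{uniform} control of the entropy at small scales, and then get existence of a measure of maximal entropy by compactness. The key quantity is the \emph{tail} (or local) entropy
 $$
   h^*(f,\eps) := \lim_{\delta\to0}\limsup_{n\to\infty}\frac1n\log\sup_{x\in M} r_f\big(B_f(x,\eps,n),\delta,n\big),
 $$
which measures how much complexity can still be created inside a single $(\eps,n)$-dynamical ball.

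The first step is a general, soft estimate valid for any continuous map on a compact metric space: for every $\mu\in\Prob(f)$,
 $$
   h(f,\mu)\le h(f,\mu,\eps)+h^*(f,\eps).
 $$
For ergodic $\mu$ this follows from Definition~\ref{def-hKS-Katok}. We cover a set of measure $>1/2$ by $r$ balls $B_f(x,\eps,n)$, and then cover each of these by $(\delta,n)$-balls, of which we need at most $\exp(n(h^*(f,\eps)+o(1)))$. We then let $n\to\infty$ and $\delta\to0$. For nonergodic $\mu$ we integrate over the ergodic decomposition.

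The second step is to show that, for fixed $\eps$, the map $\mu\mapsto h(f,\mu,\eps)$ is dominated by an upper semicontinuous function that it in turn dominates up to a small error. I would take a finite partition $P$ with diameter $<\eps$ and $\mu$-null boundaries, following Misiurewicz. Then $\nu\mapsto\frac1n H_\nu(P^n)$ is continuous at $\mu$, so $\nu\mapsto h(f,\nu,P)=\inf_n\frac1nH_\nu(P^n)$ is upper semicontinuous at $\mu$. We also have $h(f,\nu,2\eps)\lesssim h(f,\nu,P)\le h(f,\nu)$, using the standard comparison between Katok's and partition entropy. Combining with the first step gives, for $\nu_k\to\mu$,
 $$
   \limsup_k h(f,\nu_k)\le \limsup_k h(f,\nu_k,P)+h^*(f,2\eps)\le h(f,\mu,P)+h^*(f,2\eps)\le h(f,\mu)+h^*(f,2\eps).
 $$
So upper semicontinuity follows once we know $h^*(f,\eps)\to0$ as $\eps\to0$, i.e.\ once $f$ is \emph{asymptotically $h$-expansive}.

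The third step, and the main obstacle, is to prove $h^*(f,\eps)\to0$ for $C^\infty$ maps; this is where smoothness enters. I would use Yomdin's reparametrization theorem (Theorem~\ref{thm-Yomdin}, in the form given in Chapter~\ref{chap-yomdin}). The argument, following Buzzi's proof of asymptotic $h$-expansiveness, runs as follows.
\begin{itemize}
 \item Fix $r$. Cover the ball $B_f(x,\eps,n)$ by images of $C^r$ disks of bounded $C^r$ size.
 \item Iterate step by step, rescaling to scale $\eps$. At each step, Yomdin's algebraic lemma shows that the pieces of $f(\sigma)$ staying $\eps$-close to the orbit of $x$ can be reparametrized by at most $C_r\,\max(1,\Lip(f)\,\eps)^{\dim M/r}$ maps of bounded $C^r$ size. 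The gain comes from the fact that, at scale $\eps$, the higher derivatives of the rescaled $f$ are small.
 \item This bounds the number of $(\delta,n)$-balls needed by $\exp\big(n\,\frac{\dim M}{r}\log^+\Lip(f)+n\log C_r'\big)$, where the constant $C_r'$ can be made close to $1$ by shrinking $\eps$.
 \item Hence $\lim_{\eps\to0}h^*(f,\eps)\le \frac{\dim M}{r}\log^+\Lip(f)$ for every $r$. Letting $r\to\infty$ gives $0$.
\end{itemize}
The delicate points are controlling the constant $C_r$ independently of $n$ and handling the interaction between the two scales $\delta$ and $\eps$.

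Finally, since $\Prob(f)$ is compact and $\mu\mapsto h(f,\mu)$ is upper semicontinuous, the supremum is attained. By the variational principle that supremum equals $h_\top(f)$, so a maximizing measure exists. Passing to an ergodic MME then uses the exercise on ergodic components: since $h(f,\mu)$ is the integral of the entropies of the ergodic components, almost every component of a maximizing measure is itself an MME.
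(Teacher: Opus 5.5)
Your architecture is the one the paper points to: vanishing tail entropy for $C^\infty$ maps via Yomdin theory (Buzzi's bound, Theorem~\ref{thm-tail-zero}), then the soft implication $h^*_\top(f)=0\Rightarrow$ upper semicontinuity, then compactness. Your Steps 1 and 2 correctly solve the corresponding exercise.

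The gap is in Step 3, in how you get rid of Yomdin's constant. It is not true that $C_r'$ can be made close to $1$ by shrinking $\eps$. Zooming to scale $\eps$ only makes the higher derivatives negligible, so that the count is governed by $\|Df\|$ rather than by the full $C^r$ norm. In the limit the map is linear, and the count is then scale invariant. For instance, take a small parabola and stretch it transversally by a linear map with $\Lip=\Lambda$. It meets a small ball in two separated arcs, and no single small affine reparametrization can cover both. Yet $\Lambda^{1/r}$ is close to $1$ for large $r$. So the constant depends only on $r$ and $\dim M$, and it is at least $2$. What your sketch actually yields is $\lim_{\eps\to0}h^*(f,\eps)\le\frac{\dim M}{r}\log^+\Lip(f)+\log C_r$. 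Letting $r\to\infty$ then gives nothing, since $\log C_r\ge\log 2$ and nothing prevents it from growing with $r$. Also, the per-step factor should be $\max(1,\Lip(f))^{\dim M/r}$, not $\max(1,\Lip(f)\eps)^{\dim M/r}$: rescaling does not change $Df$.

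The missing idea is to pay the constant only once per block of $N$ iterates, i.e. to run the argument for $g=f^N$. This is where the factor $Y^{\lceil n/N\rceil}$ in Corollary~\ref{cor-Yomdin} comes from: Yomdin's constant depends only on $r$ and the dimension, and only the admissible scale depends on the map. Two inclusions give the comparison you need:
\[
B_f(x,\eps,nN)\subset B_{f^N}(x,\eps,n),\qquad B_{f^N}(z,\delta,n)\subset B_f(z,\delta',nN)
\]
once $\delta$ is small in terms of $\delta'$ and $N$. Hence $h^*_\top(f)\le\frac1N h^*_\top(f^N)$, and so, for each fixed $r$,
\[
h^*_\top(f)\le\frac{\log C_r}{N}+\frac{\dim M}{r}\cdot\frac1N\log^+\|Df^N\|_{\sup}.
\]
As $N\to\infty$ the right-hand side tends to $\frac{\dim M}{r}\lambda(f)$, which is Buzzi's bound. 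Only then do you let $r\to\infty$. The order of limits (fix $r$, let $N\to\infty$, then $r\to\infty$) is what makes the argument close.
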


It is striking that the existence of MME is obtained in this theorem without any  ``explicitly dynamical'' assumptions. We will see that the proof is more geometrico-analytic. See below.

\section{Tail entropy}

The following quantity $h^*_\top(f)$ vanishes exactly when the convergence $h(f,\mu,\eps)\stackrel{\eps\to0}\longrightarrow h(f,\mu)$ is uniform with respect to $\mu\in\Prob(f)$. It was introduced using another formalism in \cite{Misiurewicz-CTE} under the name of \new{conditional topological entropy}. It turns out to play a key role in the theory of symbolic extension entropy \cite{Boyle-Downarowicz}.

\begin{definition}[Misiurewicz]
The \NEW{tail entropy} of a map $f$ of a compact metric space $M$ is:
 $$\begin{aligned}
    &h^*_\top(f) := \lim_{\eps\to 0} h^*_\top(f,\eps) \text{ where}\\
    &h^*_\top(f,\eps) := \lim_{\delta\to0} \limsup_{n\to\infty} \frac1n\log \max_{x\in M} r_f(B_f(x,\eps,n),\delta,n).
 \end{aligned}$$
\end{definition}

The vanishing of this tail entropy, called \new{asymptotic entropy-expansiveness}, generalizes expansivity:

\begin{exercise}
Let $f:X\to X$ be a homeomorphism of a compact metric space. Assume that $f$ is \new{expansive}, i.e.,  there is $\eps_0>0$ such that, for all $x,y\in X$, $\sup_{n\in\ZZ} d(f^nx,f^ny)<\eps_0 \implies x=y$. Show that it implies $h^*_\top(f)=0$.
\end{exercise}

The key property of the tail entropy is that it controls the speed of convergence of entropy at the scale $\eps\to0$. The following upper bound is easy:

\begin{exercise}
Let $f:M\to M$ be a continuous map of a compact metric space. Check that
 $$
   | h(f,\mu) - h(f,\mu,\eps)| \le h^*_\top(f,\eps)
 $$
Deduce that $h^*_\top(f)=0$ implies the uniform convergence as announced.
\end{exercise}

The converse is more difficult but true. It is a consequence of a variational principle of Downarowicz (see \cite{Burguet-VP} for a direct proof).

We will see later that $C^\infty$ smoothness implies that the tail entropy vanishes. This gives in particular Newhouse theorem:

\begin{exercise}
Let $f:M\to M$ be a continuous self-map of a compact metric space. Show that if $h^*_\top(f)=0$, then the entropy is upper semicontinuous over $\Prob(f)$ with the weak topology. Deduce that there exists an MME.
\end{exercise}

\section{Shannon-McMillan-Breiman (SMB) ergodic theorem for entropy}
Tne following ergodic theorem states a fundamental ``equidistribution of information'' property. 

\medbreak
We will use the (non-dynamical) notion of \new{mean Shannon entropy}: given a partition\footnote{We always assume that partitions only contain measurable subsets.} $P$ of $M$, we set:
 $$
    H_\mu(P):=-\sum_{A\in P} \mu(A)\log\mu(A).
 $$
We also use the \new{iterates of a partition} $P$ under $f:M\to M$,
 $$
    P^n:=\bigvee_{k=0}^{n-1} f^{-k}P:=\{A_0\cap\dots\cap f^{-n+1}A_{n-1}:A_0,\dots,A_{n-1}\in P\}\setminus\{\emptyset\}
 $$ 
and $P(x)$ for the element of $P$ containing a point $x\in M$.
To state the SMB theorem, we need to define the \new{entropy w.r.t. a partition} $P$,
 $$
    h(f,\mu,P):=\lim_{n\to\infty} \frac1n H_\mu(P^n).
 $$

\begin{remark}
The SMB does not require a topological setting. We also note that the Kolmogorov-Sinai entropy can be computed (and historically was defined) as: $h(f,\mu)=\sup_{P:H_\mu(P)<\infty} h(f,P)$. 

In particular, $h(f,P,\mu)\le h(f,\mu)$ with equality if $P$ is \new{generating under the map}: for all $x,y$ in some set of full measure, $x\ne y\implies\exists n\in\ZZ\; P(f^nx)\ne P(f^ny)$. 

However, we will not use these important facts.
\end{remark}

\begin{theorem}\label{thm-SMB}
Let $f:M\to M$ be a homeomorphism of a compact metric space and let $P$ be a partition of $M$ such that $H_\mu(P)<\infty$. Then the following limit exists $\mu$-a.e. and in $L^1(\mu)$:
 $$
    h(f,P,x) := \lim_{n\to\infty} -\frac1n\log\mu(P^n(x)).
 $$
Moreover, the limiting function is measurable and satisfies $h(f,P,f(x))=h(f,P,x)$ and 
 $$
   \int h(f,P,x)\, d\mu = h(f,P,\mu).
 $$

In particular for ergodic systems, $h(f,P,x)=h(f,P,\mu)$ for $\mu$-a.e. $x\in M$.
\end{theorem}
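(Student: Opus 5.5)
The Shannon--McMillan--Breiman theorem (Theorem~\ref{thm-SMB}) is a classical result, and I would follow the standard route via martingales. The two ingredients are the conditional information function and Breiman's maximal inequality. Write the information function as $I_Q(x):=-\log\mu(Q(x))$. Then telescope along the refinements: since $P^n=P\vee f^{-1}P^{n-1}$, the chain rule gives
$$
  I_{P^n}(x)=\sum_{k=0}^{n-1} g_{n-k}(f^kx),\qquad g_m:=I_{P\,|\,f^{-1}P^{m-1}},
$$
where $I_{P|\mathcal B}(x):=-\log\mu(P(x)\,|\,\mathcal B)(x)$ and $f^{-1}P^{0}$ is the trivial partition. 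This turns $-\frac1n\log\mu(P^n(x))$ into a nearly Birkhoff-type sum, with the integrand $g_m$ depending on the index.

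The first key step is to show that $g_m\to g:=I_{P|\mathcal B_\infty}$ almost everywhere and in $L^1$, where $\mathcal B_\infty=\bigvee_{j\ge1}f^{-j}\sigma(P)$. For each atom $A\in P$, the conditional probabilities $\mu(A|f^{-1}\sigma(P^{m-1}))$ form a bounded martingale in $m$, so they converge a.e. by the martingale convergence theorem. The second key step, which I expect to be the main obstacle, is the integrable domination $g^*:=\sup_m g_m\in L^1$. For this I would use Doob's maximal inequality on each atom:
$$
  \mu\{x\in A: g^*(x)>t\}\le\min(\mu(A),e^{-t}),
$$
and then sum over $A$ and integrate in $t$ to get
$$
  \int g^*\,d\mu\le\sum_A\left(-\mu(A)\log\mu(A)+\mu(A)\right)=H_\mu(P)+1<\infty .
$$
This is exactly where the hypothesis $H_\mu(P)<\infty$ enters. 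Dominated convergence then upgrades the a.e. convergence to $L^1$ convergence, and also gives $\int g\,d\mu=h(f,P,\mu)$, since $\frac1n H_\mu(P^n)=\frac1n\sum_m\int g_m\,d\mu$ is a Cesàro average of a convergent sequence.

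Next I would split
$$
  \frac1n I_{P^n}(x)=\frac1n\sum_{k<n} g(f^kx)+\frac1n\sum_{k<n}(g_{n-k}-g)(f^kx).
$$
The first term converges a.e. and in $L^1$ to $\mathbb E(g\,|\,\mathcal I)$ by Birkhoff's theorem, where $\mathcal I$ is the invariant $\sigma$-field. This limit is $f$-invariant and integrates to $h(f,P,\mu)$. The second term goes to $0$ in $L^1$ because $\|g_m-g\|_{L^1}\to0$ and the Cesàro means of these norms vanish. To get a.e. convergence I would use Maker's argument. Set $G_N:=\sup_{m\ge N}|g_m-g|$, which lies in $L^1$ and decreases to $0$ a.e. Then
$$
  \limsup_n\frac1n\sum_{k<n}|g_{n-k}-g|(f^kx)\le\limsup_n\frac1n\sum_{k<n}G_N(f^kx)+\limsup_n\frac1n\sum_{n-N<k<n}2g^*(f^kx).
$$
The last sum vanishes a.e., because $g^*\in L^1$ implies $g^*(f^kx)/k\to0$. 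The remaining term equals $\mathbb E(G_N\,|\,\mathcal I)$, which tends to $0$ as $N\to\infty$ by monotone convergence.

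Finally, measurability of $h(f,P,\cdot)$ and the identity $h(f,P,fx)=h(f,P,x)$ follow from its being a conditional expectation on $\mathcal I$. In the ergodic case $\mathcal I$ is trivial, so $h(f,P,x)$ is constant and equal to $h(f,P,\mu)$ almost everywhere.
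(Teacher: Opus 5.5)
Your proposal is correct. The paper does not prove this theorem itself; it only defers to \cite[(13.4)]{DGS}, and your argument is the standard martingale proof found in such references: the chain-rule telescoping, Chung's maximal inequality giving $\sup_m g_m\in L^1$ with $\int\sup_m g_m\,d\mu\le H_\mu(P)+1$, and Breiman's Maker-type argument for the a.e.\ limit.
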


\begin{exercise}
Show that if $P$ is additionally generating under $f$ and $\mu$ is ergodic, then, for $\mu$-a.e. $x\in M$,
 $$
    \lim_{n\to\infty} -\frac1n\log\mu(P^n(x)) = h(f,\mu).
 $$
\end{exercise}

We refer to \cite[(13.4)]{DGS} for the proof of the above statement (in particular, with no ergodicity assumption).

\begin{remark}
It can be shown that, $\mu$-a.e., $h(f,x)=h(f,\mu_x)$ where $\mu_x:=\lim_{n\to\infty}\frac1n\sum_{k=0}^{n-1}\delta_{f^kx}$, the empirical measure.
\end{remark}

\begin{remark}
Brin and Katok have proved a version of the SMB theorem where $P^n(x)$ is replaced by the dynamical ball of order $n$, center $x$, for a small radius (see \cite{KH-book}).
\end{remark}

\section{Entropy, Lyapunov exponents and unstable manifolds}

We turn to smooth dynamics and to geometric properties related to entropy.

\subsection{Bounding entropy by exponents}

The fundamental \NEW{Ruelle-Mar\-gu\-lis inequality} is due to Margulis for volume measures and to Ruelle in general. It provides a fundamental link between entropy and a weak form of hyperbolicity studied especially by Pesin (see below).

\begin{theorem}[Ruelle-Margulis]\label{thm-Ruelle-Margulis}
Let $f\in\Diff^1(M)$ with $M$ a compact manifold and let $\mu\in\Proberg(f)$. Then\footnote{Recall that the multiplicity of $\lambda^i(f,\mu)$ is $m^i(f,\mu)=\dim E^i_x$ a.e. for an ergodic $\mu$.}
 $$
    h(f,\mu) \leq \sum_{i=1}^d m^i(f,\mu)\cdot\max(\lambda^i(f,\mu),0).
 $$
\end{theorem}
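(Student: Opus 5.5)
The plan is the classical Ruelle argument through Katok's definition of entropy (Definition~\ref{def-hKS-Katok}), i.e.\ by counting dynamical balls. Fix an ergodic $\mu$ and a small $\eps>0$. The aim is to bound $\min\{r_f(Y,\eps,n):\mu(Y)>1/2\}$ by roughly $\exp\bigl(n\sum_i m^i\max(\lambda^i,0)+o(n)\bigr)$.

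\textbf{Reduction to a fixed time step.} Replace $f$ by an iterate $f^N$, with $N$ chosen so that $\frac1N\int\log\|\Lambda^j D_xf^N\|\,d\mu$ is close to the sum of the $j$ largest exponents, for each $j$. This uses Kingman's theorem (Theorem~\ref{thm-Kingman}) applied to exterior powers; the subadditive processes $\log\|\Lambda^j Df^n\|$ are handled exactly as the top exponent was. Since $h(f^N,\mu)=N h(f,\mu)$, dividing by $N$ at the end gives the bound for $f$.

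\textbf{Local volume count.} Choose $\rho$ small enough that $f^N$ is $C^1$-close to its differential on balls of radius $\rho$, uniformly by compactness. The image of a small ball $B(x,r)$ under $f^N$ then lies in a thin neighbourhood of an ellipsoid with semi-axes $r\,s_1\ge\dots\ge s_d$, where the $s_i$ are the singular values of $D_xf^N$. Such a neighbourhood is covered by at most
 $$
   C\prod_i\max(s_i,1)\le C'\max_j\|\Lambda^jD_xf^N\|
 $$
balls of radius $r$, with $C,C'$ depending only on $d$. Take a fine partition $P$ of $M$ into sets of diameter less than $r$. For each $n$-cylinder $A_0\cap\dots\cap f^{-N(n-1)}A_{n-1}$, iterating this count shows that the number of $(r,n)$-dynamical balls for $f^N$ needed to cover the cylinder is bounded by $\prod_k C'\sup_{A_k}\max_j\|\Lambda^jDf^N\|$. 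Equivalently, one shows that each element of $P^n$ meets few elements of $P^{n+1}$, which is Ruelle's original counting.

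\textbf{Averaging along typical orbits.} By the Birkhoff theorem, together with uniform continuity of $x\mapsto\log\max_j\|\Lambda^jD_xf^N\|$ on the small sets $A_k$, typical orbits have
 $$
   \frac1n\sum_k\log\max_j\|\Lambda^jD_{f^{Nk}x}f^N\|\approx N\sum_i m^i\max(\lambda^i,0),
 $$
since for ergodic $\mu$ the maximum over $j$ is attained at $j=$ the number of positive exponents. I would then choose $Y$ of measure greater than $1/2$ consisting of points whose orbits are good in this sense. The count of good cylinders meeting $Y$, times the number of balls per cylinder, has to be $e^{n(N\Sigma^++o(N))}$, where $\Sigma^+:=\sum_i m^i\max(\lambda^i,0)$.

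\textbf{Main obstacle.} The hard step is the number of cylinders. For this I would use the SMB theorem (Theorem~\ref{thm-SMB}) together with a partition whose entropy is small relative to $N$, such as a partition with boundary of measure zero and boundedly many pieces. The number of relevant cylinders then contributes only $e^{n\log\#P}$, which is negligible after dividing by $N$ because $\log\#P$ does not depend on $N$. Controlling this term and the $\log C'$ term uniformly, so that both vanish after dividing by $N\to\infty$, is the delicate bookkeeping.
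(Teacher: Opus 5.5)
The paper only quotes this inequality and gives no proof, so the comparison is with Ruelle's classical argument. Your overall frame is the right one: pass to $g=f^N$, bound local branching by singular values, use Kingman to get $\frac1N\int\log\max_j\|\Lambda^jDf^N\|\,d\mu\to\Sigma^+$, and let $N\to\infty$ using that the covering constant depends only on $d$. However, the counting step is misplaced, and the fix you propose for your ``main obstacle'' does not work, so there is a genuine gap.

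\textbf{Why your counting fails.} If the cells of $P$ have diameter $<r$, every $n$-cylinder $A_0\cap g^{-1}A_1\cap\dots\cap g^{-(n-1)}A_{n-1}$ lies in a single $(r,n)$-dynamical ball of $g$. So your factor ``balls per cylinder'' is $1$, and everything rests on the number of cylinders meeting $Y$.

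You bound that number by $e^{n\log\#P}$ and discard it after dividing by $N$, claiming $\#P$ does not depend on $N$. It does. The scale $r$ must shrink as $N$ grows, both so that $f^N$ is close to $D_xf^N$ on $r$-balls and so that $\log\max_j\|\Lambda^jDf^N\|$ is nearly constant on cells, which your averaging step needs. Hence $\#P\ge c\,r^{-d}$, and $\frac1N\log\#P$ does not tend to $0$ in general; this trivial bound is just a Kushnirenko-type estimate.

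Conversely, if you keep $\#P$ bounded independently of $N$, then $\log\max_j\|\Lambda^jDf^N\|$ can oscillate by order $N$ on a cell, and the $\sup_{A_k}$ destroys the estimate. Invoking SMB does not help either: it only says the number of relevant cylinders is about $e^{nh(g,P,\mu)}$, which is the very quantity to be bounded.

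\textbf{The missing idea.} It is already in your parenthetical remark that each element of $P^n$ meets few elements of $P^{n+1}$. That branching bounds the \emph{number of cylinders}, not the number of balls inside one cylinder.

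Take cells of diameter $<r$, each containing a ball of radius $c\,r$, and set $b(A):=\#\{B\in P:B\cap g(A)\neq\emptyset\}$. Your ellipsoid count plus bounded multiplicity gives $b(A)\le C_d\sup_A\max_j\|\Lambda^jDg\|$.

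Itineraries of points satisfy $A_{k+1}\cap g(A_k)\neq\emptyset$. Summing out the last symbol inductively shows that $\sum\prod_{k=0}^{n-2}b(A_k)^{-1}$ over all such itineraries is at most $\#P$. Hence at most $\#P\,e^{n\beta}$ itineraries have $\prod_{k=0}^{n-2}b(A_k)\le e^{n\beta}$.

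Now choose $Y$ to be the set of points whose Birkhoff sums of $\log b\circ P$ are at most $n(\int\log b\circ P\,d\mu+\delta)$. This gives $r_g(Y,r,n)\le\#P\,e^{n(\int\log b\circ P\,d\mu+\delta)}$, so $\#P$ enters only as a constant prefactor, not as an exponential rate. This is the same as Ruelle's bound $h(g,P,\mu)\le H_\mu(g^{-1}P\mid P)\le\int\log b\circ P\,d\mu$.

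For every small $r$ this yields $h(f,\mu)\le\frac1N\int\log\max_j\|\Lambda^jDf^N\|\,d\mu+\frac{\log C_d}{N}$, and letting $N\to\infty$ concludes.

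\textbf{Two smaller points.} First, $\mu$ need not be $f^N$-ergodic, so run the argument on its finitely many $f^N$-ergodic components. Second, the convergence $\frac1N\log\max_j\|\Lambda^jD_xf^N\|\to\Sigma^+$ follows from the pointwise limits of each $\frac1N\log\|\Lambda^jD_xf^N\|$ together with bounded convergence. It does not follow from the maximum being attained at a fixed $j$, which need not hold for finite $N$.
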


\begin{exercise}
A predecessor of Ruelle-Margulis inequality is the following  inequality due to Kushnirenko. Let $T:X\to X$ be a self-map with Lipschitz constant $L$ on a compact metric space $X$ with upper box dimension $B$. Then prove that $h_\top(T) \le B\cdot \log^+ L$.
\end{exercise}

In particular, positive entropy implies the existence of nonvanishing Lyapunov exponents.

\begin{corollary}\label{coro-Ruelle-Pesin}
Let $f\in\Diff^1(M)$ with $M$ a compact surface and let $\mu\in\Proberg(f)$.
If $h(f,\mu)>0$, then $\mu$ has both $\lambda^+(f,\mu)\ge h(f,\mu)$ and $\lambda^-(f,\mu)\le -h(f,\mu)$.
\end{corollary}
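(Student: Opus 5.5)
We apply the Ruelle--Margulis inequality (Theorem~\ref{thm-Ruelle-Margulis}) twice, once to $f$ and once to $f^{-1}$, and use the antisymmetry of exponents (Remark~\ref{rem-LE-inv}) together with the invariance of entropy under time reversal.

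\emph{Bound on $\lambda^+$.} Let $\mu\in\Proberg(f)$ with $h(f,\mu)>0$. Since $M$ is a surface, the exponents counted with multiplicity are $L^1=\lambda^+(f,\mu)\ge L^2=\lambda^-(f,\mu)$. Theorem~\ref{thm-Ruelle-Margulis} gives
$$0<h(f,\mu)\le \max(L^1,0)+\max(L^2,0).$$
So at least one exponent is positive, and hence $L^1>0$. We now rule out $L^2>0$. If both exponents were positive, then $-L^1$ and $-L^2$ would be the exponents of $f^{-1}$, all negative. Lemma~\ref{lem-all-negative-exp} would then make $\mu$ a periodic source, whose entropy is zero. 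This contradicts $h(f,\mu)>0$. Therefore $L^2\le 0$, and the inequality reduces to $h(f,\mu)\le L^1=\lambda^+(f,\mu)$.

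\emph{Bound on $\lambda^-$.} We apply the same argument to $f^{-1}$, which is again a $C^1$ diffeomorphism preserving the ergodic measure $\mu$. We need the identity $h(f^{-1},\mu)=h(f,\mu)$, which is standard. With the Bowen--Katok definition used here, it holds because $f^{n-1}$ maps $B_{f^{-1}}(f^{n-1}x,\eps,n)$ onto $B_f(x,\eps,n)$, and this correspondence preserves both covering numbers and measure. The first step applied to $f^{-1}$ then gives
$$h(f,\mu)\le \lambda^+(f^{-1},\mu)=-\lambda^-(f,\mu),$$
where the equality is the antisymmetry of Remark~\ref{rem-LE-inv}. This is exactly $\lambda^-(f,\mu)\le -h(f,\mu)$.

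The only delicate points are excluding the case where both exponents are positive, which is handled by the source argument above, and justifying $h(f^{-1},\mu)=h(f,\mu)$. I do not expect either to be a real obstacle.
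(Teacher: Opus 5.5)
Your proof is correct and is essentially the paper's argument: the paper gives no separate proof and presents the corollary as an immediate consequence of the Ruelle--Margulis inequality applied to both $f$ and $f^{-1}$, using $h(f^{-1},\mu)=h(f,\mu)$ and $\lambda^+(f^{-1},\mu)=-\lambda^-(f,\mu)$. You do not need Lemma~\ref{lem-all-negative-exp}: the inequality for $f^{-1}$, $h(f,\mu)\le\max(-L^1,0)+\max(-L^2,0)$, already forces $L^2<0$, just as the inequality for $f$ forces $L^1>0$. Also, it is $f^{n-1}$ that maps $B_f(x,\eps,n)$ onto $B_{f^{-1}}(f^{n-1}x,\eps,n)$, not the reverse; this slip is harmless.
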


\begin{exercise}
Show by example that for diffeomorphisms of $3$-dimensional manifolds, one can have an invariant ergodic probability measure with both positive entropy and some zero exponent.
\end{exercise}

\begin{remark}
Ledrappier-Young theory allows one to \emph{express exactly} entropy of a diffeomorphism in terms of dimension and Lyapunov exponents.
\end{remark}

\section{Hyperbolicity in the sense of Pesin}

In the 1970s, building on Oseledets theorem, Pesin \cite{Pesin-RMS} established a theory of hyperbolic-like dynamics in a much more general setting than the classical Anosov-Smale theory.

\begin{definition}
Given $\chi>0$, a measure $\mu\in\Prob(f)$ is said to be $\chi$-\NEW{hyperbolic (in the sense of Pesin)} if, for $\mu$-a.e. $x\in M$, 
 $$
     \forall 1\le i\le r(f,x) \quad  |\lambda^i(f,x)|>\chi.
 $$
We say that the measure $\mu$ is \new{hyperbolic (in the sense of Pesin)}  if it is $\chi$-hyperbolic for some $\chi>0$.
 The measure has \new{saddle type} if it has both positive and negative exponents almost everywhere, i.e., for $\mu$-a.e. $x\in M$, $\lambda^1(f,x)>0>\lambda^{r(x)}(f,x)$.
\end{definition}

\begin{example}
If $f$ is a surface diffeomorphism, we can deduce from Ruelle-Margulis inequality (more precisely, Corollary~\ref{coro-Ruelle-Pesin}) that any $\mu\in\Proberg(f)$ with positive entropy is $\chi$-hyperbolic of saddle-type for any $0<\chi<h(f,\mu)$.
\end{example}

Given $\mu\in\Prob(f)$, we may apply Oseledets theorem \ref{thm-Oseledets}. For $\mu$-a.e. $x\in M$, the \NEW{unstable space} and \NEW{stable space} are:
 $$
     E^u_x := \bigoplus_{\tiny\begin{array}{c} 1\leq i \leq r(f,x) \\ \lambda^i(f,x) >0\end{array}} E^i_x
     \quad\text{ and }\quad
     E^s_x := \bigoplus_{\tiny\begin{array}{c} 1\leq i \leq r(f,x) \\ \lambda^i(f,x) <0\end{array}} E^i_x \; .
 $$ 
Hyperbolicity is equivalent to $T_xM = E^s_x\oplus E^u_x$.

\begin{definition}\label{def-WuWs}
For any given $x\in M$, the \NEW{unstable Pesin manifold} and \NEW{stable Pesin manifold} are the following subsets:
 $$\begin{aligned}
     &W^u(f,x) := \{ y\in M \mid \limsup_{n\to\infty} \frac1n\log d(f^{-n}y,f^{-n}x) < 0\}
     \text{ and }\\
     &W^s(f,x) := \{ y\in M \mid \limsup_{n\to\infty} \frac1n\log d(f^ny,f^nx) < 0\}.
 \end{aligned}$$
\end{definition}

Note that $W^u(f,x) = W^s(f^{-1},x)$.

\medbreak
The following result of Pesin ensures that these sets are often  smooth.
 
\begin{theorem}[Pesin stable manifold]
Let $f\in\Diff^r(M)$ with $1<r\le\infty$ and $M$ a closed manifold.
Let $\mu\in\Prob(f)$ be hyperbolic in the sense of Pesin. 
 
Then, for $\mu$-a.e. $x\in M$, $W^s(f,x)$ is a $C^r$ immersion of some Euclidean space with $T_x W^s(f,x) = E^s_x$. 
Moreover, $x\mapsto W^s(f,x)$ is measurable in the compact-open $C^1$-topology.
\end{theorem}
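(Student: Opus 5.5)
The statement to prove is the Pesin stable manifold theorem. I would follow the classical Hadamard--Perron graph transform approach, carried out along orbits in Lyapunov charts. By Oseledets' Theorem~\ref{thm-Oseledets}, applied to $Df$ on $TM$ with $\mu$ hyperbolic, for $\mu$-a.e. $x$ we have $T_xM=E^s_x\oplus E^u_x$ with exponents bounded away from $0$ by some $\chi>0$. The angles and the convergence rates are controlled only by tempered functions, i.e., measurable $C(x)$ with $C(f^{\pm1}x)\le e^{\eps}C(x)$. This temperedness follows from the angle statement $\frac1n\log\angle\to0$ and from the standard tempering lemma applied to the ratio $\sup_n$ of the deviations.

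\textbf{Step 1: Lyapunov charts.} I would define adapted (Lyapunov) norms at $x$ by summing $\|Df^n v\|e^{(\chi-\eps)n}$ over $n\ge0$ for $v\in E^s_x$, and similarly for $E^u_x$ backward. In these norms $Df_x$ uniformly contracts $E^s$ by $e^{-\chi+\eps}$ and expands $E^u$ by $e^{\chi-\eps}$. The comparison with the Riemannian norm is tempered. Composing $\exp_x$ with this linear change of coordinates gives charts $\Phi_x$ on a box of radius $r(x)$, where $r$ is tempered and $r(fx)\ge e^{-\eps}r(x)$. In these charts $\tilde f_x=\Phi_{fx}^{-1}\circ f\circ\Phi_x$ equals $Df_x$ plus a map whose $C^1$ norm is $\le\eps$ on the box. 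The smallness of this nonlinear part uses $r>1$: $D\tilde f_x$ is H\"older, hence close to $D\tilde f_x(0)$ on a box of tempered-small radius.

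\textbf{Step 2: graph transform.} Consider the space of $C^1$ graphs $\{(\psi(v),v)\}$ of Lipschitz-$1$ maps $\psi:E^s(r)\to E^u(r)$ in the charts. The inverse graph transform $\Gamma_x$ maps graphs at $fx$ to graphs at $x$, namely $\mathrm{graph}\,\Gamma_x\psi=\tilde f_x^{-1}(\mathrm{graph}\,\psi)\cap$ box. It is a $C^0$ contraction with rate about $e^{-\chi+2\eps}$, by the cone condition. The local stable manifold $W^s_{loc}(x)$ is then the limit of $\Gamma_x\circ\dots\circ\Gamma_{f^{n-1}x}(0)$. Points on it are contracted at rate $e^{-(\chi-2\eps)n}$, so they lie in $W^s(f,x)$. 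Conversely, points in the box whose forward orbit stays in the shrinking boxes must lie on the graph, by the unstable cone expansion. Invariance of cone fields shows that the limit is $C^1$ with tangent $E^s_x$ at $x$.

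\textbf{Step 3: $C^r$ regularity.} I expect this to be the main obstacle. For $C^{k}$ with $k\le r$, I would use the fiber contraction theorem: the graph transform acts on $(\psi,D\psi,\dots,D^k\psi)$. The $j$-th derivative component contracts because the expansion/contraction ratio dominates. In the dominated direction this is the fact that $(e^{-\chi})^{j}e^{\chi}$-type bounds stay $<1$, and tempered constants are absorbed by shrinking $r(x)$ in a tempered way. For $r=\infty$ I would do this for every $k$ on a full-measure set, taking the intersection over $k$.

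\textbf{Step 4: globalization and measurability.} The global manifold is $W^s(f,x)=\bigcup_n f^{-n}W^s_{loc}(f^nx)$, an increasing union of embedded disks. It is therefore an injectively immersed $\RR^{\dim E^s}$. Measurability in the compact-open $C^1$ topology follows because each finite-step graph transform depends measurably on $x$ through the Oseledets data and the charts, and the limit is uniform.
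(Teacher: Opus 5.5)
The paper does not prove this theorem: it quotes it as a classical result of Pesin, so there is no internal argument to compare against. Your outline is the standard proof from the literature: Lyapunov charts, the Hadamard--Perron graph transform along the orbit, fiber contraction for higher derivatives, and globalization by pulling back local disks. It is sound in outline, but two steps do not yet give the statement as formulated.

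\textbf{Global identification (Step~4).} The paper defines $W^s(f,x)$ by $\limsup_n\frac1n\log d(f^ny,f^nx)<0$, i.e.\ with an \emph{arbitrary} negative rate. Your Step~2 only characterizes $W^s_{\loc}(x)$ as the points whose orbits stay in the chart boxes. In the Riemannian metric, these boxes are only guaranteed to contain balls of radius of order $e^{-2\eps n}$: a tempered radius divided by a tempered chart distortion. A point converging at a rate $c\le 2\eps$ therefore need never enter any $W^s_{\loc}(f^nx)$. So the inclusion $W^s(f,x)\subset\bigcup_n f^{-n}W^s_{\loc}(f^nx)$ is unjustified for the single $\eps$ fixed in Step~1.

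To fix this, run the construction for a sequence $\eps_m\to0$ (all $\eps_m\ll\chi$) and intersect the countably many full-measure sets. Then check that the global manifolds do not depend on $m$. Points of $W^s_{\loc,\eps_m}(x)$ converge at a rate close to $\chi$, so they eventually enter $W^s_{\loc,\eps_{m'}}$, and the two local disks coincide near $x$. Any $y$ converging at rate $c$ is then captured by an $m$ with $2\eps_m<c$. You should also add a sentence showing that the resulting increasing union of embedded disks, each in the interior of the next, is an immersed Euclidean space; this is routine.

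\textbf{Non-integer $r$ (Step~3).} You only treat integers $k\le r$, but the theorem allows non-integer $r$ and concludes $C^r$. In the central case $r=1+\alpha$, your argument gives nothing beyond the $C^1$ regularity of Step~2. You need to add the following: the set of graphs whose $\lfloor r\rfloor$-th derivative has $(r-\lfloor r\rfloor)$-H\"older seminorm below a suitable tempered bound is invariant under the graph transform and closed under $C^{\lfloor r\rfloor}$-convergence. In Lyapunov norms the relevant contraction factor is about $\|(Df|_{E^u})^{-1}\|\,\|Df|_{E^s}\|^{r}\le e^{-(1+r)(\chi-\eps)}$, plus nonlinear terms made small by a tempered shrinking of the charts.

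Relatedly, your factor ``$(e^{-\chi})^je^{\chi}$'' is the wrong bookkeeping. For the pull-back of graphs over $E^s$, the $j$-th derivative is multiplied by roughly $\|(Df|_{E^u})^{-1}\|\,\|Df|_{E^s}\|^{j}\le e^{-(j+1)(\chi-\eps)}$. This is already $<1$ for $j=1$ and needs no domination condition, whereas your expression equals $1$ at $j=1$.
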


Let us note that in this (nonuniform) hyperbolic setting these Pesin manifolds are as smooth as the map (even though $x\mapsto W^s(f,x)$ is not even continuous).

\medbreak

Obviously, symmetric statements hold for the unstable Pesin manifolds.

\begin{remark}
It is not difficult to see that the sets $W^s(f,x)$, $W^u(f,y)$, can fail to be immersed manifolds at some points $x\in M$. A more interesting fact is that
one cannot allow $r=1$ in the above theorem, see \cite{BoCrSh-2013}. 
\end{remark}

\section{Unstable disintegration}

We explain what the ``disintegration along the unstable direction'' of some hyperbolic measure of a smooth diffeomorphism is. This will allow us to reduce the computation of entropy to the analysis of the dynamics of good subsets of (smooth) curves (see Corollary~\ref{cor-entropy-from-unstable}).

\medbreak

We first introduce some terminology and concepts from \emph{Stone-Rokhlin disintegration} of measures on ``reasonable'' spaces.

\subsection{Countably generated partitions and disintegrations}
The proper setting is that of a standard probability space $(M,\cM,\mu)$, i.e., one that is isomorphic mod zero to a Polish space with a Borel probability measure such that $\cM$ is $\mu$-complete.

\smallbreak

Let $\xi$ be a partition of $M$. Recall that, given $x\in M$, we denote by $\xi(x)$ the element of $\xi$ containing $x$. We will need the following classical type of partitions, often simply called ``measurable partitions'' (see \cite{Rokhlin-Lectures}).

\begin{definition}
The partition $\xi$ is a \new{countably generated measurable partition} of $(M,\cM,\mu)$ if it is a partition of $M$ such that there are countably many measurable subsets $E_1,E_2,\dots$ and a zero measure subset $N$ satisfying
 $$
    \forall x\in M\; \exists \eps\in\{-1,+1\}^{\NN^*}\quad
       \xi(x)\setminus N = \bigcap_{n\ge1} E_n^{\eps_n}
       \qquad (E_n^{-1}:=M\setminus E_n,\; E_n^{+1}:=E_n).
 $$
\end{definition}

The motivation for this definition is the following result (see, e.g., \cite{Viana-Oliveira-book} and also \cite{Rokhlin-Lectures}).

\begin{theorem}[Rokhlin]
Let $(M,\cM,\mu)$ be a standard probability space with a countably generated partition $\xi$. Then there exists a family of probability measures $\mu^x$ on $(M,\cM)$ satisfying, for all $x,y$ belonging to some subset of full measure,
 \begin{itemize}
  \item[--] $\mu^x=\mu^y$ if $\xi(x)=\xi(y)$;
  \item[--] $\mu^x(\xi(x))=1$;
  \item[--] for any $\phi\in L^1(\mu)$, $x\mapsto \mu^x(\phi):=\int\phi\,d\mu^x$ is a.e. well-defined\footnote{More precisely, one picks one representative $\overline{\phi}$ mod zero of $\phi\in L^1(\mu)$ and then check that $x\mapsto\mu^x(\overline{\phi})$ is a.e. unique.} and integrable with
   $$
      \int \phi\, d\mu = \int \left(\int \phi\, d\mu^x\right) \, d\mu(x).
   $$
 \end{itemize}
Moreover, the family is unique: if $(\tmu^x)_{x\in M}$ is another family with the same properties, then for $\mu$-a.e. $x\in M$, $\tmu^x = \mu^x$.
\end{theorem}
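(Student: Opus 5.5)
The plan is to realize the disintegration as a regular version of the conditional expectation with respect to the $\sigma$-field generated by the partition. Riesz representation on a compact model will turn that conditional expectation into genuine measures.

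First I reduce to a compact metric setting. Since $(M,\cM,\mu)$ is standard, I may assume, up to a null set and completion, that $M$ is a Borel subset of a compact metric space $K$ and that $\mu$ is a Borel probability on $K$ with $\mu(K\setminus M)=0$; for instance, embed a Polish model into the Hilbert cube. Let $\cF$ be the $\sigma$-field generated by the sets $E_1,E_2,\dots$ of the definition together with the null sets. The key observation is that every $\cF$-measurable function is, off a null set, constant on each element $\xi(x)\setminus N$. Indeed, such a set is exactly the atom $\bigcap_n E_n^{\eps_n}$, and $\cF$-measurable functions are constant on atoms of a countably generated $\sigma$-field.

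Next I build the measures. I fix a countable $\QQ$-vector lattice $\cU\subset C(K)$ that is dense and contains $1$. For each $u\in\cU$ I choose a version of $E_\mu(u\mid\cF)$. Off a single null set, countably many relations then hold simultaneously: the map $u\mapsto E_\mu(u\mid\cF)(x)$ is $\QQ$-linear, positive, satisfies $|E_\mu(u\mid\cF)(x)|\le\sup|u|$, and sends $1$ to $1$. This functional therefore extends by continuity to a positive normalized linear functional on $C(K)$. Riesz representation then gives a Borel probability $\mu^x$ on $K$ with $\mu^x(u)=E_\mu(u\mid\cF)(x)$. A monotone class argument extends the identity $\mu^x(\phi)=E_\mu(\phi\mid\cF)(x)$ a.e. from $\cU$ to bounded Borel $\phi$, and then to $\phi\in L^1(\mu)$. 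Integrating gives $\int\phi\,d\mu=\int\mu^x(\phi)\,d\mu(x)$, and applying this to $1_{K\setminus M}$ shows that $\mu^x$ is a.e. carried by $M$. The first property, $\mu^x=\mu^y$ when $\xi(x)=\xi(y)$, follows from the key observation. Each $x\mapsto\mu^x(u)$ is $\cF$-measurable, hence a.e. constant on atoms, and the countable family $\cU$ determines the measure.

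Then I check that $\mu^x$ lives on $\xi(x)$. Since $E_n\in\cF$, we have $\mu^x(E_n)=E_\mu(1_{E_n}\mid\cF)(x)=1_{E_n}(x)$ for a.e. $x$, for every $n$. Intersecting these countably many full-measure sets, and using $\mu^x(N)=0$ a.e. (which holds because $\int\mu^x(N)\,d\mu=\mu(N)=0$), I get $\mu^x(\bigcap_n E_n^{\eps_n(x)})=1$, that is, $\mu^x(\xi(x))=1$.

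Finally, for uniqueness, take another family $\tmu^x$ with the same properties. The first two properties force $x\mapsto\tmu^x(\phi)$ to be $\cF$-measurable, being constant on atoms and measurable. The integral identity, applied to $\phi 1_B$ for $B\in\cF$, shows that it is a version of $E_\mu(\phi\mid\cF)$. Hence $\tmu^x(u)=\mu^x(u)$ a.e. for all $u\in\cU$ simultaneously, and so $\tmu^x=\mu^x$ a.e.

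I expect the main obstacle to be the measure-theoretic bookkeeping of the second and third steps. One must obtain genuinely countably additive measures on $M$ rather than merely finitely additive set functions, and arrange that all the a.e. identities hold on one common full-measure set. The compact model with Riesz representation and the countable family $\cU$ handle both issues. A subtler point is justifying that $x\mapsto\tmu^x(\phi)$ is measurable for the competing family; I would take this as implicit in the hypothesis that the map is a.e. well defined.
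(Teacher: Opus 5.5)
\textbf{Context.} The paper does not prove this theorem; it refers to \cite{Viana-Oliveira-book} and \cite{Rokhlin-Lectures}. Your existence part is the standard regular-conditional-measure construction, and it is sound.

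One small repair there. Once all null sets are adjoined, $\cF$ is in general no longer countably generated, and its atoms may be single points. So you should choose the versions of $E_\mu(u\mid\cF)$ to be measurable for $\cF_0:=\sigma(E_1,E_2,\dots)$, whose atoms are exactly the sets $\xi(x)\setminus N$. With that choice, the first property follows as you say.

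\textbf{The gap: uniqueness.} The problem is the sentence ``the first two properties force $x\mapsto\tmu^x(\phi)$ to be $\cF$-measurable, being constant on atoms and measurable''. Constancy on atoms together with measurability on $M$ does not formally give measurability with respect to $\cF_0$ (mod $\mu$).

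In a standard space the implication is true, but it is Blackwell's theorem: a Borel set that is a union of atoms of a countably generated sub-$\sigma$-field belongs to that sub-$\sigma$-field. Its proof goes through Lusin's separation theorem for analytic sets. Moreover, $\tmu^x=\tmu^y$ is only known off a null set, and $x\mapsto\tmu^x(\phi)$ is only $\cM$-measurable, so you would have to apply the theorem on a Borel subset of full measure.

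Formulations such as \cite{Viana-Oliveira-book} build measurability on the quotient into the definition of a disintegration, which makes uniqueness immediate. The statement here only gives measurability on $M$, so this is exactly where the content lies. The point you flag at the end, measurability on $M$, is the easy one.

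\textbf{An elementary fix.} Let $G$ be a measurable full-measure set on which the first two properties hold for both families. Applying the integral formula to $1_{M\setminus G}$ gives $\tmu^x(\xi(x)\cap G)=\mu^x(\xi(x)\cap G)=1$ for a.e.\ $x$.

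Fix a bounded Borel $\phi$ and put $h(x):=\tmu^x(\phi)$ and $h'(x):=\mu^x(\phi)$. Let $g$ be any bounded measurable function that is constant on each $\xi(x)\cap G$. Then $\tmu^x(\phi g)=g(x)h(x)$ and $\mu^x(\phi g)=g(x)h'(x)$ for a.e.\ $x$, so the integral formula gives
\[
\int g\,h\,d\mu=\int \phi\, g\,d\mu=\int g\,h'\,d\mu .
\]
The first property makes $g:=h-h'$ such a function, hence $\int(h-h')^2\,d\mu=0$.

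Taking $\phi\in\cU$ and using the density of $\cU$ in $C(K)$ gives $\tmu^x=\mu^x$ for a.e.\ $x$. This avoids descriptive set theory entirely.
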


The family $\{\mu^x\}_{x\in M}$ is called the \new{disintegration of the measure} $\mu$ w.r.t. to $\xi$.

\subsection{Unstable disintegration}
It is well-known that the Pesin laminations define measurable partitions  (e.g., $\{W^s(f,x):x\in M\}$) which are not countably generated in positive entropy. One therefore ``approximates'' them in the following sense.

\begin{definition}\label{def-unstable-disint}
Given $\mu\in\Prob(f)$, a measurable partition $\xi^u$ is \new{subordinate to the unstable Pesin lamination} (modulo $\mu$) if, for $\mu$-a.e. $x\in M$, $\xi^u(x)$ is a compact neighborhood of $x$ in the intrinsic topology of the Pesin manifold $W^u(x)$.
\end{definition}

This can always be done:

\begin{theorem}[Ledrappier-Young]
Let $f\in\Diff^r(M)$ with $r>1$ and $M$ a closed manifold.
For any $\mu\in\Proberg(f)$, there exists a countably generated measurable partition $\xi^u$ subordinate to the unstable Pesin lamination.
\end{theorem}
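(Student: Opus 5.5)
This is the Ledrappier--Young existence theorem, and I would build the partition by refining a local ``box'' partition along the dynamics. The plan has four steps: localize on a Pesin set, build a local unstable partition, refine it dynamically, and check that the refined atoms are compact neighborhoods in $W^u(x)$.

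\emph{Reduction.} If $\mu$ has no positive exponent, then $W^u(f,x)$ reduces to $\{x\}$ a.e. (or is not a manifold), and the point partition is trivially subordinate. So I assume $\mu$ is ergodic with positive exponents. I use Pesin theory: for each $\ell$ there is a compact Pesin block $\Lambda_\ell$ with $\mu(\Lambda_\ell)\to1$. On $\Lambda_\ell$ the local unstable manifolds $W^u_{\loc}(x)$ are embedded disks of uniform size $r_\ell$ that depend continuously on $x$ in the $C^1$ topology. Moreover, backward iterates contract along them exponentially, at rate $e^{-\chi n}$ up to a constant $C_\ell$.

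\emph{Local partition.} Fix $\ell$ with $\mu(\Lambda_\ell)>0$, a point $z\in\supp(\mu|\Lambda_\ell)$ and a small radius $\rho\ll r_\ell$. Let
$$S:=\bigcup_{x\in\Lambda_\ell\cap B(z,\rho)} W^u_{\loc}(x)\cap B(z,\rho).$$
By continuity of the local manifolds and the fact that two unstable manifolds either coincide or are disjoint, these plaques form a continuous lamination of a compact set. They therefore give a countably generated partition $P$ of $S$, completed by $M\setminus S$ as a single atom. I choose $\rho$ generically so that $\mu(\partial B(z,\rho))$ is small. More precisely, I want $\sum_n\mu(\{x: d(f^{-n}x,\partial B(z,\rho))<e^{-\chi n/2}\})<\infty$; this holds for Lebesgue-a.e. $\rho$ by the standard Ma\~n\'e-type averaging argument.

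\emph{Refinement.} Set $\xi^u:=\bigvee_{n\ge0} f^nP$. It is countably generated as a countable join of countably generated partitions, and it satisfies $f\xi^u\ge\xi^u$. By ergodicity, $\mu$-a.e. $x$ has $f^{-n}x\in S$ for some $n\ge0$. The atom of $P$ at $f^{-n}x$ lies in a single unstable plaque, so $\xi^u(x)\subset W^u(x)$. Atoms are compact because each is an intersection of images of compact plaques, after taking closures of the pieces in $S$ up to null sets.

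\emph{Main obstacle.} The hard step is showing that $\xi^u(x)$ contains an intrinsic neighborhood of $x$ in $W^u(x)$; the danger is that the infinitely many refinements might shrink the atom to nothing. My approach is a Borel--Cantelli argument using the summability from the choice of $\rho$. For a.e. $x$ there is $N(x)$ such that for $n\ge N(x)$ the point $f^{-n}x$ lies at distance at least $e^{-\chi n/2}$ from $\partial B(z,\rho)$. Meanwhile the backward image of the intrinsic ball $W^u_\delta(x)$ has diameter at most $C_\ell\delta e^{-\chi n}$ at the returns to $\Lambda_\ell$. For $n\ge N(x)$ the image therefore stays in one atom of $P$. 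The finitely many $n<N(x)$ are handled by shrinking $\delta$, using continuity of $f^{-n}$. Care is needed at non-return times, which I handle by controlling distances via tempered Pesin charts (the constants grow subexponentially, absorbed by $e^{-\chi n/2}$). This gives the compact intrinsic neighborhood, completing the construction.
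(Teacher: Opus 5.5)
The paper does not prove this statement; it is quoted from Ledrappier--Young. Your construction is the standard Ledrappier--Strelcyn one: take plaques of local unstable manifolds of a Pesin block in a ball whose radius is chosen by Ma\~n\'e's averaging, then set $\xi^u=\bigvee_{n\ge0}f^nP$. Your argument that $\xi^u(x)$ contains an intrinsic neighbourhood of $x$ is essentially right. Your justification that the atoms are compact fails, however, and compactness is part of the paper's definition of a subordinate partition.

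\textbf{The compactness step.} We have $\xi^u(x)=\bigcap_{n\ge0}f^n\bigl(P(f^{-n}x)\bigr)$. At every non-return time ($f^{-n}x\notin S$) the factor is $f^n(M\setminus S)$, which is open; it is not the image of a compact plaque. Inside $W^u(x)$ this factor removes the closed pieces $f^n(Q')$, where $Q'$ ranges over the plaques of $S$ contained in $W^u(f^{-n}x)$.
\begin{itemize}
\item On a surface, if such a piece meets the compact arc cut out by the return times, the atom is no longer closed. It has an open end at some $p\in f^n(\partial B(z,\rho))$.
\item That point $p$ is the closed end of the adjacent atom, whose points lie in $Q'$ at time $n$.
\item This happens on a set of positive measure, already for a linear Anosov automorphism of $\mathbb T^2$ with Lebesgue measure, where $S=\overline B(z,\rho)$.
\item Taking closures destroys disjointness. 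Modifying on a null set only transfers the open end to the neighbouring atom, which is just as typical.
\end{itemize}
What your construction does give is this: $\xi^u(x)\subset W^u(x)$ contains an intrinsic open neighbourhood of $x$, and it is relatively compact, since it lies in $f^{n_0}(Q_0)$ with $n_0$ the first return time. That is exactly Ledrappier--Young's own definition of a subordinate partition. Getting literally compact atoms, as the paper's definition asks, would need an additional idea; a closure does not do it.

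\textbf{Smaller points.}
\begin{itemize}
\item The estimate $d^u(f^{-n}y,f^{-n}x)\le C(x)e^{-\chi n}d^u(y,x)$ for $y\in W^u_{\mathrm{loc}}(x)$ holds for all $n\ge0$. Its constant is attached to the starting point $x$, not to returns to $\Lambda_\ell$.
\item What actually handles non-return times is the following. Because $\rho\ll r_\ell$, each plaque is a whole intrinsic connected component of $W^u\cap\overline B(z,\rho)$. So a short connected piece of $W^u(f^{-n}x)$ that avoids $\partial B(z,\rho)$ either lies in one plaque or misses $S$.
\item For the finitely many $n<N(x)$ you need $f^{-n}x\notin\partial B(z,\rho)$, i.e.\ $\mu(\partial B(z,\rho))=0$; your summability condition implies this. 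Use the closed ball, so that $S$ is compact and $M\setminus S$ is open.
\item The correct monotonicity is that $\xi^u$ is finer than $f\xi^u$, not the reverse. This is not needed for the statement.
\end{itemize}
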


Note that the subordinate partition above is not unique (e.g., it is not difficult to refine one).

\begin{definition}\label{def-u-disint}
In the above setting, we define the \new{unstable disintegration of the measure} $\mu$ as the Rokhlin disintegration $(\mu^u_x)_{x\in M}$ with respect to a  partition $\xi^u$ subordinate to the unstable Pesin lamination.
\end{definition}

Since $\xi^u$ is not unique, neither is the unstable disintegration. However, any two unstable disintegrations are closely related to one another:

\begin{theorem}[Ledrappier-Young]
Let $f\in\Diff^r(M)$ with $r>1$ on a closed manifold.
For any $\mu\in\Proberg(f)$, if $(\mu^u_x)_{x\in M}$ and $(\nu^u_x)_{x\in M}$ are two unstable disintegrations of $\mu$ defined by two partitions $\xi^u,\eta^u$ subordinate to the unstable lamination, then, for $\mu$-a.e. $x\in M$, $\mu^u_x$ and $\nu^u_x$ restricted to $\eta^u(x)\cap\xi^u(x)$ are proportional.
\end{theorem}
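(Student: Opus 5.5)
The plan is to compare each of the two disintegrations with the one defined by the common refinement $\zeta:=\xi^u\vee\eta^u$. Its atoms are $\zeta(x)=\xi^u(x)\cap\eta^u(x)$.

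\emph{Step 1: $\zeta$ is admissible.} Since $\xi^u$ and $\eta^u$ are countably generated, so is $\zeta$: take the union of the two countable generating families. For $\mu$-a.e.\ $x$, both $\xi^u(x)$ and $\eta^u(x)$ are compact neighborhoods of $x$ in the intrinsic topology of $W^u(x)$. Their intersection is therefore again a compact neighborhood of $x$ in $W^u(x)$. Hence $\zeta$ is itself subordinate to the unstable Pesin lamination.

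\emph{Step 2: transitivity of conditional measures.} Let $(\zeta^x)$ be the Rokhlin disintegration of $\mu$ with respect to $\zeta$. Since $\zeta$ refines $\xi^u$, I will restrict to the standard space $(\xi^u(x),\mu^u_x)$ and disintegrate $\mu^u_x$ along the partition induced by $\eta^u$ on it. By uniqueness in Rokhlin's theorem, the resulting conditionals must agree a.e.\ with $\zeta^y$. To see this, integrate over $x$: one recovers a disintegration of $\mu$ with respect to $\zeta$, and uniqueness then identifies it with $(\zeta^y)$.

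\emph{Step 3: restriction gives proportionality.} The key point, and the expected obstacle, is to show that for a.e.\ $x$ the atom $\zeta(x)$ has positive $\mu^u_x$-measure. Once this is known, the conditional measure of $\mu^u_x$ on that atom is just the normalized restriction, that is, $\zeta^x=\mu^u_x|_{\zeta(x)}/\mu^u_x(\zeta(x))$.

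For the positivity I would argue as follows. The set $\{y:\mu^{u}_{y}(\zeta(y))=0\}$ is measurable. Its $\mu$-measure equals $\int \mu^u_x\bigl(\{y\in\xi^u(x):\mu^u_x(\zeta(y))=0\}\bigr)\,d\mu(x)$. This vanishes if the atoms of $\zeta$ inside $\xi^u(x)$ are at most countable modulo $\mu^u_x$-null sets. If instead uncountably many atoms were involved, I would fall back on the standard fact that conditional measures charge the atom of $\mu$-a.e.\ point. This fact follows from the defining property $\mu^x(\xi(x))=1$ applied along the chain $\zeta\succ\xi^u$, together with the interior condition of Step 1. More precisely, I expect it to follow from Step 2 combined with the fact that $\mu^u_x$-a.e.\ $y$ lies in the interior of $\zeta(y)$ relative to $\xi^u(x)$, which makes the trace of $\zeta$ on the compact set $\xi^u(x)$ essentially countable.

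Applying Steps 2 and 3 symmetrically with $\eta^u$ gives $\zeta^x\propto\nu^u_x|_{\zeta(x)}$. Hence $\mu^u_x$ and $\nu^u_x$, restricted to $\xi^u(x)\cap\eta^u(x)$, are both proportional to $\zeta^x$, and therefore proportional to each other, for $\mu$-a.e.\ $x$.
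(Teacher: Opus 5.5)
The paper gives no proof of this statement; it only cites it from Ledrappier--Young. Your plan is the standard one: refine to $\zeta=\xi^u\vee\eta^u$, show $\mu^u_x(\zeta(x))>0$, identify the normalized restriction with the $\zeta$-conditional, and conclude by symmetry. Step~1 is fine. The gap is in Step~3, which you rightly call the crux but do not prove.

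\textbf{Why your justification fails.} The ``standard fact'' you fall back on is false. The property $\mu^x(\xi(x))=1$, applied to $\zeta$, only gives $\zeta^y(\zeta(y))=1$. It says nothing about the mass that the \emph{coarser} conditional $\mu^u_y$ puts on the finer atom $\zeta(y)$. For example, take Lebesgue measure on $[0,1]$, the trivial partition, and its (countably generated) refinement into points: every finer atom is null for the coarser conditional. So the ``uncountable case'' cannot be handled by a fallback; you must show it does not occur, and that uses the geometry of subordinate partitions. Your last sentence has the right ingredient (interior points), but compactness alone is not what makes the trace countable, and Step~2 plays no role in it.

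\textbf{How to close the gap.}
\begin{enumerate}
\item By Step~1, the set of $y$ for which $\zeta(y)$ is not a neighbourhood of $y$ in $W^u(y)$ is $\mu$-null. Integrating $\mu^u_x$ of this set against $\mu$ shows it is also $\mu^u_x$-null for $\mu$-a.e.\ $x$.
\item For $y\in\xi^u(x)$ one has $W^u(y)=W^u(x)$. Hence the $\zeta$-atoms meeting the remaining full-measure part of $\xi^u(x)$ are pairwise disjoint, and each contains a nonempty open set of the intrinsic topology of $W^u(x)$.
\item That topology is second countable, so there are at most countably many such atoms.
\item Therefore $\mu^u_x$ is carried by countably many $\zeta$-atoms, the null ones have null union, and $\mu^u_y(\zeta(y))>0$ for $\mu$-a.e.\ $y$.
\end{enumerate}
Compactness of $\xi^u(x)$ is what you need only for a uniform version. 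Let $G_\rho=\{y:\eta^u(y)\supset B^u(y,\rho)\}$. Covering $\xi^u(x)$ by finitely many intrinsic $\rho/2$-balls shows that $G_\rho\cap\xi^u(x)$ lies in finitely many $\zeta$-atoms.

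\textbf{Step~2 becomes unnecessary.} Once positivity is known, you can drop the gluing argument, whose measurability in $x$ you left implicit. Check directly that $y\mapsto\mu^u_y|_{\zeta(y)}/\mu^u_y(\zeta(y))$ is constant on $\zeta$-atoms, is carried by $\zeta(y)$, and integrates to $\mu$ (sum over the countably many charged atoms inside each $\xi^u(x)$). Rokhlin uniqueness then identifies it with $\zeta^y$.
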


\subsection{Unstable entropy}
Following Ledrappier-Young, we are going to compute the entropy in terms of the dynamics along the unstable lamination. 

We first define the \new{unstable dynamical balls} for all $x\in M$ such that $W^u(f,x)$ is an immersed manifold. Given $v,w\in W^u(f,x)$, $d^u(v,w)$ is the geodesic distance defined by the restriction to $W^u(f,x)$ of the  Riemannian structure of $M$. The unstable balls are:
 \begin{equation}\label{eqDefBun}
   B^u_f(x,\eps,n) := \{y\in W^u(f,x): \forall 0\le k<n\; d^u(f^ky,f^kx)<\eps\}
  \qquad (\eps>0,\; n\ge1).
  \end{equation}
Notice that the distance $d^u(f^ky,f^kx)$ is defined in $W^u(f,f^kx)$. This is fine since, for $\mu$-a.e. $x\in M$, the Pesin manifolds $W^u(f,f^kx)$ are all smooth.

We note that $B^u_f(x,\eps,n)\subset B_f(x,\eps,n)\cap W^u(f,x)$, but this inclusion may be strict as the unstable manifold usually accumulate on itself.

\begin{theorem}[Ledrappier-Young]\label{thm-LY-u-entropy}
Let $f\in\Diff^r(M)$ with $r>1$ and $M$ a compact manifold.
For any hyperbolic $\mu\in\Proberg(f)$ with an unstable disintegration $(\mu^u_x)_{x\in M}$, for $\mu$-a.e. $x\in M$, we have
 $$
    h(f,\mu) = \lim_{\eps\to0} \limsup_{n\to\infty} -\frac1n\log\mu^u_x(B^u_f(x,\eps,n))
    = \lim_{\eps\to0} \liminf_{n\to\infty} -\frac1n\log\mu^u_x(B^u_f(x,\eps,n)).
 $$
\end{theorem}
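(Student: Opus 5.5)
The plan follows the classical Ledrappier--Young strategy. I first replace unstable dynamical balls by the atoms of a dynamically refined partition subordinate to $W^u$. Next I prove a Shannon--McMillan--Breiman statement for the conditional measures on those atoms. Finally I compare atoms and balls in both directions. The entropy of $\mu$ itself enters through the conditional entropy $H_\mu(\xi\mid f\xi)$ of an increasing partition.

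\emph{Step 1: increasing subordinate partition.} Starting from a partition $\xi^u$ as in Definition~\ref{def-unstable-disint} (built from a finite partition $P$ of small diameter and local unstable manifolds of a Pesin block), I refine it into $\eta:=\bigvee_{k\ge0} f^k\xi^u$. This $\eta$ is still subordinate to $W^u$, satisfies $f^{-1}\eta\ge\eta$, and generates in the sense that $\bigvee_{n} f^{-n}\eta$ is the point partition mod $\mu$. Choosing $P$ with $\mu(\{x: d(x,\partial P)<r\})\le Cr$ (Mañé's lemma) gives a tempered lower bound $\rho(x)>0$ on the inner radius of $\eta(x)$ in $W^u(f,x)$. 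I then use the identity $h(f,\mu)=H_\mu(f^{-1}\eta\mid\eta)$ (Ledrappier--Strelcyn/Ledrappier--Young), derived from generating and increasing properties together with the Kolmogorov--Sinai formalism recalled in the remark after Theorem~\ref{thm-SMB}. By uniqueness in the Rokhlin theorem, together with the Ledrappier--Young theorem on comparison of subordinate disintegrations, I may take $\mu^u_x$ to be the disintegration along $\eta$. Changing the subordinate partition only multiplies the conditional measure on a neighborhood of $x$ by a constant, which is invisible at exponential scale.

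\emph{Step 2: conditional SMB.} Set $\eta_n:=f^{-n}\eta$. Then $\eta_n(x)=\bigcap_{k<n}f^{-k}\eta(f^kx)$, and by invariance of the family of conditional measures,
$$-\log\mu^u_x(\eta_n(x))=\sum_{k=0}^{n-1} I(f^kx),\qquad I:=-\log\mu^u_{\cdot}\bigl((f^{-1}\eta)(\cdot)\bigr).$$
I integrate $I$ with the usual $\sup_n$ maximal-function bound, and then apply Birkhoff's theorem together with ergodicity. This gives $-\frac1n\log\mu^u_x(\eta_n(x))\to H_\mu(f^{-1}\eta\mid\eta)=h(f,\mu)$ for $\mu$-a.e. $x$.

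\emph{Step 3: atoms versus unstable balls.} For the inequality $\limsup_n-\frac1n\log\mu^u_x(B^u_f(x,\eps,n))\le h(f,\mu)$, I take $\diam P<\eps$. Then every atom of $\eta$ has $d^u$-diameter below $\eps$, so $\eta_n(x)\subset B^u_f(x,\eps,n)$, and Step 2 gives the bound. The reverse inequality, for the $\liminf$ as $\eps\to0$, is the main obstacle, because $\eps$ is fixed while the inner radius $\rho(f^kx)$ can be tiny. I plan to count atoms: for $\mu$-typical $x$, the number of atoms of $\eta_n$ meeting $B^u_f(x,\eps,n)$ is at most $e^{n\gamma(\eps)}$ with $\gamma(\eps)\to0$. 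Within one step, $f$ is uniformly Lipschitz along $W^u$, so a piece of $W^u$ of size $\eps$ meets few atoms of $f\eta$. The times $k$ at which $f^kx$ is $\eps$-close to $\partial P$ have density $O(\eps)$ by Birkhoff's theorem and the choice of $P$. At all other times the $\eps$-ball lies in a single atom, and this yields the exponential bound $\gamma(\eps)=O(\eps\log(1/\eps))$.

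It then remains to show that most of the mass of $B^u_f(x,\eps,n)$ does not sit in atoms much heavier than $e^{-n(h-\delta)}$. For this I use the Borel--Cantelli argument. Call $G_n$ the set of points $y$ with $\mu^u_y(\eta_n(y))\le e^{-n(h-\delta)}$. By Step 2, $\mu(G_n^c)\to0$, and passing to a subsequence with summable defects controls $\mu^u_x$-mass on the bad atoms. Summing over the at most $e^{n\gamma(\eps)}$ atoms that meet the ball gives
$$\mu^u_x\bigl(B^u_f(x,\eps,n)\bigr)\le e^{n\gamma(\eps)}\,e^{-n(h-\delta)}$$
for large $n$, for $\mu$-a.e. $x$. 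Letting $\delta\to0$ and then $\eps\to0$ gives $\lim_\eps\liminf_n-\frac1n\log\mu^u_x(B^u_f(x,\eps,n))\ge h(f,\mu)$, and combined with the first inequality this proves both equalities.
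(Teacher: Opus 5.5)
The paper does not prove this statement; it refers to \cite[section (7.2)]{LY2}. Your outline (increasing subordinate partition, telescoping conditional SMB, comparison of $\eta_n$-atoms with unstable Bowen balls) follows the standard Ledrappier--Young scheme.

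Steps 1--2 and the inequality $\limsup_n -\frac1n\log\mu^u_x(B^u_f(x,\eps,n))\le h(f,\mu)$ are fine in outline, with one caveat. The identity $h(f,\mu)=H_\mu(f^{-1}\eta\mid\eta)$ must be quoted from Ledrappier--Strelcyn/Ledrappier--Young, not derived from ``increasing and generating''. The point partition is increasing and generating with zero conditional entropy; what carries the content is subordination to $W^u$.

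\textbf{The gap: the lower bound on the $\liminf$.} Your set $G_n$ only controls the $\eta_n$-atoms that contain a point of $G_n$. The other atoms meeting $B^u_f(x,\eps,n)$ may lie entirely in $G_n^c$ and be far heavier than $e^{-n(h-\delta)}$. Borel--Cantelli along a subsequence with $\sum_j\mu(G_{n_j}^c)<\infty$ only gives $x\in G_{n_j}$ eventually and $\mu^u_x(G_{n_j}^c)\to0$, with no rate. That says nothing at the scale $e^{-n(h-\delta)}$, so your displayed bound does not follow. Moreover, a bound along a possibly very sparse subsequence would not control $\liminf_n$ anyway.

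\textbf{The missing step: a Brin--Katok covering argument.} It makes the exceptional set exponentially small before Borel--Cantelli is applied along all $n$.
\begin{itemize}
\item Let $G=\{y:\mu^u_y(\eta_n(y))\le e^{-n(h-\delta)}\text{ for all }n\ge N\}$. For each $n\ge N$ this set is contained in a union of light $\eta_n$-atoms.
\item Let $F$ be the set where your counting bound holds for all $n\ge N$, and put $D_n=\{y\in G\cap F:\mu^u_y(B^u_f(y,\eps/2,n))\ge e^{-n(h-3\delta)}\}$.
\item Inside each $\eta(x)$, take a maximal $(\eps,n)$-separated subset of $D_n\cap\eta(x)$ for the Bowen metric built from $d^u$. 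It has at most $e^{n(h-3\delta)}$ points, because the $\eps/2$-balls around them are disjoint and heavy.
\item The $\eps$-balls around these points cover $D_n\cap\eta(x)$, and each carries $\mu^u_x$-mass at most $e^{n\gamma(\eps)}e^{-n(h-\delta)}$ on $G$.
\item Hence $\mu(D_n)=\int\mu^u_x(D_n)\,d\mu(x)\le e^{-n(2\delta-\gamma(\eps))}$, which is summable in $n$. Borel--Cantelli, together with $\mu(G\cap F)\to1$ as $N\to\infty$, gives the $\liminf$ bound.
\end{itemize}

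\textbf{The counting step also needs repair.} Atoms of $\eta$ have no uniform inner radius, so being $\eps$-far from $\partial P$ does not put the unstable $\eps$-ball inside one atom. The Lipschitz bound on $f$ is not what controls the count. Since $f^{-1}\eta=\eta\vee f^{-1}\xi^u$, the splitting at each time is by the one-step partition $\xi^u$. You therefore need a $\xi^u$ for which a short unstable arc meets a bounded number of atoms, for instance the Ledrappier--Strelcyn partition made of local unstable leaves of definite length plus one complementary atom. The bad times are then the $k$ at which the unstable $\eps$-ball around $f^kx$ is not contained in a single atom. By Birkhoff, their frequency is the measure of a set that tends to $0$ with $\eps$.
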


This is \cite[section (7.2)]{LY2}. Note that in the case where $\mu$ is not ergodic, the limits above may fail to be constant a.e. In fact, they are a.e. equal to the entropy $h(f,\mu_x)$, where $\mu_x$ is the empirical measure at $x$.

\medbreak

We define the \new{unstable covering numbers} for some\footnote{Recall that $\Prob(M)$ is the set of not necessarily invariant Borel probability measures on $M$.} $\nu\in\Prob(M)$ and $0<\gamma<1$ as:
 $$
   N^u_{f,\gamma}(\nu,\eps,n) := \min\left\{\#C:\nu\left(\bigcup_{x\in C} B_f^u(x,\eps,n)\right)>\gamma\right\}\; .
   $$

\begin{corollary}\label{cor-entropy-from-unstable}
Let $f\in\Diff^r(M)$ with $r>1$ and $M$ a compact manifold. Pick any $0<\gamma<1$.

For any hyperbolic $\mu\in\Proberg(f)$ with an unstable disintegration $(\mu^u_x)_{x\in M}$, for $\mu$-a.e. $x\in M$,
 $$
    h(f,\mu) = \lim_{\eps\to0} \liminf_{n\to\infty} \frac1n\log N^u_{f,\gamma}(\mu^u_x,\eps,n) \;.
 $$
\end{corollary}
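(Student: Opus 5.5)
The plan is to deduce the corollary from Theorem~\ref{thm-LY-u-entropy} by a standard comparison between the measure of dynamical balls and covering numbers, in the spirit of Katok's entropy formula. Fix $\mu$-typical $x$ such that the Ledrappier--Young limits hold at $x$, and such that they also hold for $\mu^u_x$-a.e. $y\in\xi^u(x)$. The second condition is available since the set of good points has full $\mu$-measure, so it has full $\mu^u_x$-measure for $\mu$-a.e. $x$. Since $\mu^u_y=\mu^u_x$ for $y\in\xi^u(x)$, each such $y$ satisfies
\[
\lim_{\eps\to0}\liminf_n -\tfrac1n\log\mu^u_x(B^u_f(y,\eps,n))=h(f,\mu),
\]
and the same with $\limsup$. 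Write $h:=h(f,\mu)$.

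\emph{Upper bound.} Fix $\eps>0$ small and $\delta>0$. I will use the triangle inequality for $d^u$ along each iterate: $B^u_f(z,\eps,n)\subset B^u_f(y,2\eps,n)$ whenever $y\in B^u_f(z,\eps,n)$. By Egorov's theorem, pick $G\subset\xi^u(x)$ with $\mu^u_x(G)>\gamma$ and $n_0$ such that for all $y\in G$ and $n\ge n_0$ we have $\mu^u_x(B^u_f(y,\eps,n))\ge e^{-n(h_\eps^{+}+\delta)}$. Here $h_\eps^+\le h+o(1)$ is the $\limsup$ quantity at scale $\eps$. This uniformity is available along a subsequence-free statement because we use the $\limsup$ version. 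Take a maximal $(n,2\eps)$-separated family $C\subset G$ for the unstable dynamical metric. The balls $B^u_f(y,\eps,n)$, $y\in C$, are disjoint, so $\#C\le e^{n(h_\eps^++\delta)}$. Maximality gives $G\subset\bigcup_{y\in C}B^u_f(y,2\eps,n)$. Hence $N^u_{f,\gamma}(\mu^u_x,2\eps,n)\le e^{n(h+o(1)+\delta)}$, and the $\liminf$ (even the $\limsup$) is at most $h$ as $\eps\to0$.

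\emph{Lower bound.} Fix $\eps$ and $\delta$. Using the $\liminf$ version at scale $2\eps$, Egorov gives a set $G'$ with $\mu^u_x(G')>1-\gamma/2$ and $n_0$ with $\mu^u_x(B^u_f(y,2\eps,n))\le e^{-n(h^-_{2\eps}-\delta)}$ for $y\in G'$ and $n\ge n_0$. Let $C$ be any set with $\mu^u_x\bigl(\bigcup_{z\in C}B^u_f(z,\eps,n)\bigr)>\gamma$. Then every ball $B^u_f(z,\eps,n)$ meeting $G'$ lies inside $B^u_f(y,2\eps,n)$ for some $y\in G'$. Therefore
\[
\gamma/2<\mu^u_x\Bigl(G'\cap\bigcup_{z\in C}B^u_f(z,\eps,n)\Bigr)\le \#C\cdot e^{-n(h^-_{2\eps}-\delta)}.
\]
This yields $\liminf_n\frac1n\log N^u_{f,\gamma}(\mu^u_x,\eps,n)\ge h^-_{2\eps}-\delta$, which tends to $h$ as $\eps\to0$.

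\emph{Technical points.} The main obstacle is measure-theoretic rather than dynamical. One issue is arranging the almost-everywhere statement simultaneously for $\mu^u_x$-a.e. point of a single leaf, which is handled by the disintegration identity and Fubini as above. Another is making the limits in Theorem~\ref{thm-LY-u-entropy} uniform via Egorov, which requires working at countably many scales $\eps=1/m$ and monotonicity in $\eps$ to pass from scale $\eps$ to $2\eps$. The centres $z\in C$ need not lie in $\xi^u(x)$. If the covering balls are taken in other leaves, only their intersection with $W^u(f,x)$ matters. If $z\notin W^u(f,x)$, the ball $B^u_f(z,\eps,n)$ is disjoint from $\xi^u(x)$ up to null sets, so such centres may be discarded without loss.
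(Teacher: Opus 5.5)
Your argument is correct and is exactly the deduction the paper intends; the paper gives no proof and leaves the corollary as an exercise from Theorem~\ref{thm-LY-u-entropy}. You use a separated-set argument for the upper bound and a doubling of balls for the lower bound, both resting on the triangle inequality for $d^u$ within one leaf and on discarding centres off $W^u(f,x)$.

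The only point to tidy is that at a fixed scale the quantities $h^\pm_\eps(y)$ may depend on $y$; only their limit as $\eps\to0$ is a.e. equal to $h(f,\mu)$. In the upper bound, use $h^+_\eps(y)\le h(f,\mu)$, which holds by monotonicity in $\eps$. In the lower bound, first take $\eps$ small enough that $h^-_{2\eps}(y)>h(f,\mu)-\delta$ on a set of $\mu^u_x$-measure greater than $1-\gamma/4$. Your remark about countably many scales and monotonicity already points to this fix.
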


\begin{exercise}
Deduce the above corollary from Theorem~\ref{thm-LY-u-entropy}.
\end{exercise}

\section{Entropy in the projective bundle}\label{secEntropyProj}

We now combine the projective extension $(\hM,\hf)$ from Section \ref{secProjDynLyap} with the entropy theory presented in this chapter. We will see that the the projection $\hpi:\hM\to M$ preserves the entropies. We will also introduce the notion of double scales so that we can work at different scales  on $M$ and $\hM$.

\subsection{Lifting measures and entropy}
Since $\hM$ is compact, every $\mu\in\Prob(f)$ admits an invariant lift $\hmu\in\Prob(\hf)$ (we already saw a detailed analysis in Proposition~\ref{prop-lift-projective}). Now, note that, for every $x\in M$, the restriction $\hf:\hpi^{-1}(x)\to\hpi^{-1}(fx)$ is a projective homeomorphism between projective spaces.

In particular, the fiber entropy vanishes:
 \begin{equation}\label{eqZeroFib}
     \forall x\in M\quad h_\top(\hf,\hpi^{-1}(x)) = 0.
  \end{equation}
To simplify things, we restrict ourselves to the case where $\dim M=2$. Then the fibers $\hpi^{-1}(x)$ are circles with lengths bounded by some $C>0$. But any composition of circle homeomorphisms has zero entropy. Indeed, fix $\eps>0$ and, for each $x\in M$, let $P_x$ be a partition of the circle $\hpi^{-1}(x)$ into at most $\lceil C/\epsilon\rceil$ arcs with diameter less than $\epsilon$. Note that
 $$
    \#\bigvee_{0\leq k<n} \hf^{-k}(P_{f^kx}) \leq n \cdot C/\epsilon+1.
 $$
It follows that $r_f(\epsilon,n,\hpi^{-1}(x))$ grows linearly, hence the claim \eqref{eqZeroFib}.

\begin{exercise}
Prove \eqref{eqZeroFib} in any dimension. Hint: $n$ hyperplanes can cut $\RR^d$ into at most $\mathcal O(n^d)$ regions.
\end{exercise}

\medbreak

Using formulas of Bowen and Ledrappier-Walters \cite{Ledrappier-Walters-RVP} we obtain

\begin{lemma}
Let $f$ be a $C^1$ diffeomorphism of a closed manifold $M$. Let $\hf:\hM\to\hM$ be its projective extension. We have $\hpi_*(\Prob(\hf))=\Prob(f)$ and
 \begin{equation}\label{eq-h-M-hm}
   \forall\hmu\in\Prob(\hf)\quad h(\hf,\hmu) = h(f,\hpi_*\hmu)
   \text{ and }
   h_\top(\hf) = h_\top(f).
 \end{equation}
\end{lemma}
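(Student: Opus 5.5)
The plan is to prove the three claims in order: surjectivity of $\hpi_*$, the equality of topological entropies, and the equality of measure-theoretic entropies. Each relies on the zero fiber entropy \eqref{eqZeroFib} and the fact that $\hpi:\hM\to M$ is a continuous semiconjugacy, $\hpi\circ\hf=f\circ\hpi$.

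\emph{Surjectivity.} Proposition~\ref{prop-lift-projective}(i) already shows that every $\mu\in\Prob(f)$ has an invariant lift. The inclusion $\hpi_*(\Prob(\hf))\subset\Prob(f)$ is immediate from the semiconjugacy.

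\emph{Topological entropy.} Since $\hpi$ is a continuous factor map, $h_\top(f)\le h_\top(\hf)$. For the reverse inequality I would invoke Bowen's inequality for factor maps:
$$
h_\top(\hf)\le h_\top(f)+\sup_{x\in M} h_\top(\hf,\hpi^{-1}(x)).
$$
By \eqref{eqZeroFib} the supremum vanishes, so $h_\top(\hf)=h_\top(f)$.

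\emph{Measure-theoretic entropy.} For $\hmu\in\Prob(\hf)$ with $\mu=\hpi_*\hmu$, the inequality $h(f,\mu)\le h(\hf,\hmu)$ holds because entropy does not increase under factors. For the reverse inequality I would use the Ledrappier–Walters relative variational principle:
$$
\sup\{h(\hf,\hnu):\hpi_*\hnu=\mu\}=h(f,\mu)+\int_M h_\top(\hf,\hpi^{-1}(x))\,d\mu(x).
$$
The integrand vanishes everywhere by \eqref{eqZeroFib}, so $h(\hf,\hmu)\le h(f,\mu)$.

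If one wants to avoid citing Ledrappier–Walters in full, a direct route is available. Fix $\eps>0$. Cover the fiber of each point of a dynamical ball $B_f(x,\eps,n)$ by a family of doubly scaled balls whose number grows only linearly in $n$, as in the circle-partition argument preceding the lemma. The delicate point is uniformity over the whole dynamical ball: nearby fibers must be compared through the continuity of $Df$ and local trivializations of $\hM$. One can use that $\hf$ restricted to $\hpi^{-1}(B_f(x,\eps,n))$ is close to a product of fiber homeomorphisms, so the count of $(\heps,n)$-balls is at most subexponential times $r_f(\cdot,\eps,n)$. This yields $h(\hf,\hmu,\heps)\le h(f,\mu,\eps')+o(1)$.

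For non-ergodic $\hmu$, the equality reduces to the ergodic case via the ergodic decomposition, combined with Proposition~\ref{prop-lift-projective}(v), which says ergodic components of a lift project to ergodic components of $\mu$.

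The main obstacle is the uniform fiber-counting estimate: obtaining subexponential growth of fiber covers not for a single fiber but uniformly across a dynamical ball. I would rely on the Bowen and Ledrappier–Walters theorems as black boxes there.
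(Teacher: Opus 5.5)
Your proposal is correct and matches the paper's argument, which obtains the lemma in one line by citing Bowen's fiber-entropy inequality and the Ledrappier--Walters relative variational principle, each combined with the vanishing fiber entropy \eqref{eqZeroFib}; surjectivity comes from Proposition~\ref{prop-lift-projective}(i), as you say. Your optional ``direct route'' and the reduction to ergodic components are not needed, since Ledrappier--Walters already applies to non-ergodic $\hmu$.
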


\subsection{Double scales}

We will need to choose different scales on the surface and on the projective fibers as follows.

\begin{definition}\label{defBdouble}
The \new{doubly scaled dynamical ball} with center $\hx\in\hM$, radiuses $\eps,\heps>0$, and order $n\ge1$, is
 \begin{equation}\label{eqDefBdouble}
      B_{\hf}(\hx,\eps,\heps,n) := \{ \hy\in\hM \mid \forall 0\le k<n\; d(f^k\hpi(\hy),f^k\hpi(\hx))<\eps,\; d(\hf^k\hy,\hf^k\hx)<\heps\}\;.
  \end{equation}
One can then define doubly scaled covering numbers:
 \begin{itemize}
  \item $r_{\hf}(K,\eps,\heps,n)$ is the minimum number of doubly scaled dynamical balls the union of which contains some set $K\subset \hM$;
  \item $r_{\hf,\gamma}(\nu,\eps,\heps,n)$ is the minimum number of doubly scaled dynamical balls the union of which has $\nu$-measure at least $\gamma$ where $\nu\in\Prob(\hM)$.
\end{itemize}

\end{definition}

One can use the above doubly scaled covering numbers to compute entropy. For instance,

\begin{lemma}\label{lem-h-double}
For any $\heps>0$, we have
 $$
      h_\top(\hf) = \lim_{\eps\to0} \limsup_{n\to\infty} \frac1n\log r_{\hf}(\hM,\eps,\heps,n) = \lim_{\eps\to0} \liminf_{n\to\infty} \frac1n\log r_{\hf}(\hM,\eps,\heps,n).
 $$
Moreover, for any $\hmu\in\Proberg(\hf)$, for any $0<\gamma<1$, it holds that
 $$
   h(\hf,\hmu) = \lim_{\eps\to0} \limsup_{n\to\infty} \frac1n\log r_{\hf,\gamma}(\hmu,\eps,\heps,n).
 $$
\end{lemma}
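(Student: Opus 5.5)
The plan is to compare doubly scaled balls with ordinary Bowen balls in $\hM$ and on $M$, and then invoke \eqref{eq-h-M-hm} together with the zero fiber entropy \eqref{eqZeroFib}. Fix $\heps>0$. Equip $\hM$ with a metric $\hat d$ such that $\hpi$ is $1$-Lipschitz and, for $\hx,\hy$ over nearby base points, $\hat d(\hx,\hy)$ is comparable to $d(\hpi\hx,\hpi\hy)$ plus the fiber distance after parallel transport. For $\eps\le\heps$ we then have the inclusions
$$B_{\hf}(\hx,\eps,n)\subset B_{\hf}(\hx,\eps,\heps,n)\subset B_f(\hpi\hx,\eps,n)\times_{\hpi}\hM .$$
The left inclusion gives $r_{\hf}(\hM,\eps,\heps,n)\le r_{\hf}(\hM,\eps,n)$. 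Hence the $\limsup$ in the statement, after letting $\eps\to0$, is at most $h_\top(\hf)=h_\top(f)$.

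For the lower bound, take a minimal cover of $\hM$ by doubly scaled balls. Pick one point $\hx_i$ per ball and project. The doubly scaled balls project into Bowen balls $B_f(\hpi\hx_i,\eps,n)$, and these cover $M$ because $\hpi$ is onto. So $r_f(M,\eps,n)\le r_{\hf}(\hM,\eps,\heps,n)$. Taking $\liminf$ and letting $\eps\to0$, the Bowen–Dinaburg definition gives a value at least $h_\top(f)$. This argument needs the $\liminf$ version of Bowen's definition, which is standard: it follows via the variational principle and a Katok-type lower bound for an ergodic measure, $h(f,\mu,\eps)$ with $\liminf$. Combining the two bounds gives the first formula, with $\limsup$ and $\liminf$ agreeing.

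For the measure statement, the same projection argument gives the lower bound. A family of doubly scaled balls of $\hmu$-measure at least $\gamma$ projects to Bowen balls covering a set of $\mu:=\hpi_*\hmu$-measure at least $\gamma$. Thus $r_{f,\gamma}(\mu,\eps,n)\le r_{\hf,\gamma}(\hmu,\eps,\heps,n)$. By Katok's formula (Definition~\ref{def-hKS-Katok}, with $\gamma$ in place of $1/2$, a standard independence-of-$\gamma$ fact), the limit is at least $h(f,\mu)=h(\hf,\hmu)$, using \eqref{eq-h-M-hm}.

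The upper bound is the main obstacle: we must show that the fiber scale $\heps$ costs no exponential factor. I would try first the following. Cover a set $Y\subset M$ with $\mu(Y)>\gamma'$ by $r_{f,\gamma'}(\mu,\eps/2,n)$ Bowen balls. Over each such ball $B$, the map $\hf^k$ restricted to $\hpi^{-1}(B)$ acts fiberwise by projective maps $Df^k_y$. For $y$ in the same $\eps$-Bowen ball, these maps are comparable to $Df^k_{x}$ up to the $C^0$ modulus of continuity of $Df$ at scale $\eps$. Concretely, the projective transport from the fiber at $f^ky$ to the fiber at $f^kx$ differs from a fixed circle homeomorphism by an error small in $\eps$, uniformly in $k$; here I would use that for $\eps$ small relative to $\heps$, one-step errors are smaller than $\heps/2$. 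Then the circle-partition argument from the proof of \eqref{eqZeroFib} applies: refine the fiber over the center into arcs of size $\heps/2$ pulled back along the orbit. This yields at most $Cn/\heps$ pieces, each contained in a doubly scaled ball. The delicate point is that errors could accumulate along the orbit. To avoid this, I would only require closeness at each time $k$ separately, comparing $\hf^k\hy$ with $\hf^k\hx$ through the chosen arc. Arcs are defined by the preimages under $\hf^k$ of arc endpoints at time $k$, together with a margin of $\heps/4$ absorbing the base-point discrepancy, which is controlled since $d(f^ky,f^kx)<\eps$ and $\hat d$ is continuous. This gives a multiplicative subexponential factor $O(n)$, hence the upper bound $h(f,\mu)$. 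Letting $\gamma'\to\gamma$ completes the proof.
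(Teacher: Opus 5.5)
The paper leaves this lemma as an exercise, so I am judging your argument on its own merits.

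Your topological part is correct. The upper bound follows from $B_{\hf}(\hx,\eps,n)\subset B_{\hf}(\hx,\eps,\heps,n)$ for $\eps\le\heps$. The lower bound follows from projecting to $M$ and using the $\liminf$ form of Bowen's formula together with \eqref{eq-h-M-hm}. Your lower bound in the measure statement is also correct.

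\textbf{The gap.} It is in your upper bound for $h(\hf,\hmu)$. That argument rests on a claim: for $y\in B_f(x,\eps/2,n)$, the fiber maps $\PP(Df^k_y)$ and $\PP(Df^k_x)$, compared via transport between nearby fibers, differ by an error small in $\eps$ uniformly in $k$. This claim is false, because one-step errors get amplified exponentially near the most contracted direction.

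\textbf{A counterexample.} Let $p$ be a saddle fixed point, with $\lambda_u>1>\lambda_s$ the moduli of the eigenvalues of $Df_p$. Take $y\in W^s_\loc(p)$ so close to $p$ that $y\in B_f(p,\eps/2,n)$ for every $n$. Assume, as is generic, that the transport $E\subset T_pM$ of $E^s_y:=T_yW^s_\loc(p)$ differs from $E^s_p$. Then $\hf^k(y,E^s_y)\to(p,E^s_p)$, while $\hf^k(p,E)\to(p,E^u_p)$. So after roughly $\log(1/\angle(E,E^s_p))/\log(\lambda_u/\lambda_s)$ iterates, these two points are at distance comparable to that between $(p,E^s_p)$ and $(p,E^u_p)$, however small $\eps$ is.

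\textbf{Why your construction does not survive this.} Your margin $\heps/4$ only absorbs the mismatch between the base points $f^ky$ and $f^kx$ at each single time. It does not absorb this accumulated fiber discrepancy. Moreover, the arcs pulled back along the central fiber have exponentially small length near $E^s_p$. They therefore do not induce a partition of $\hpi^{-1}(B_f(x,\eps/2,n))$ into subsets of doubly scaled balls. So the $O(n/\heps)$ count is unjustified.

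\textbf{The fix.} No fiber analysis is needed; the inclusion you already used for the topological bound settles this as well. Fix $\eps\le\heps$, and suppose a set $Y$ with $\hmu(Y)\ge\gamma$ is covered by Bowen balls $B_{\hf}(\hx_i,\eps,n)$ of $\hf$. Then the doubly scaled balls $B_{\hf}(\hx_i,\eps,\heps,n)$ have union of $\hmu$-measure at least $\gamma$. Hence $r_{\hf,\gamma}(\hmu,\eps,\heps,n)\le\min\{r_{\hf}(Y,\eps,n):\hmu(Y)\ge\gamma\}$.

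Now apply Katok's formula (Definition~\ref{def-hKS-Katok}) to the homeomorphism $\hf$ of the compact metric space $\hM$ and the ergodic measure $\hmu$. Use the same independence of $\gamma$ that you already invoke for the lower bound. The exponential growth rate of the right-hand side then tends to $h(\hf,\hmu)$ as $\eps\to0$, which is exactly the missing inequality.

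If you prefer to argue through $M$, the correct tool is Bowen's block argument rather than a single partition over the whole orbit segment:
\begin{itemize}
\item cover single fibers at a fixed time $m$, which by compactness works uniformly for nearby base points;
\item concatenate these covers along the orbit;
\item accept a factor $(Cm/\heps)^{n/m}$ instead of $O(n)$.
\end{itemize}
This factor is subexponential only in the limit $m\to\infty$, with the base radius $\eps$ chosen small depending on $m$.
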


\begin{exercise}
Prove the above lemma.
\end{exercise}

\chapter{Yomdin theory}\label{chap-yomdin}

In this chapter, we introduce the notions and techniques that we need from Yomdin's theory. These ideas are very relevant to our goals in two respects:

First, Yomdin theory implies that, thanks to the $C^\infty$ smoothness, the tail entropy vanishes (Theorem~\ref{thm-tail-zero}), so we can estimate the entropies of the measures $\nu_k$, $k\ge1$, at a \emph{fixed} scale, independent of $k$;

Second, the reparametrization results (Theorem~\ref{thm-Yomdin} and its corollary) allow controlling the splitting of $C^r$ curves under the dynamics. More precisely, they imply that the number of reparametrizations needed to control the smooth size of the iterations of a curve does not grow much faster than the number of dynamical balls needed to cover that curve.

\section{Motivation and key ideas of Yomdin theory}

The key insight of Yomdin theory may be the observation that high smoothness forces low local complexity. The first applications were:
 \begin{itemize}
  \item[--] the proof of the Shub entropy conjecture for $C^\infty$ self-maps of compact manifolds \cite{Yomdin-VolumeGrowth1};
  \item[--] the existence of measures maximizing the entropy \cite{Newhouse-Continuity}.
 \end{itemize}
The latter application can be deduced from the following result from \cite{Buzzi-SIM}. We write:
 $$
    \lambda(f) := \lim_{n\to\infty} \frac1n\log \| Df^n \|_{\sup} \leq \log\Lip(f).
 $$
Here $\|Df^n\|_{\sup}$ is the supremum over $x\in M$ of the operator norm $\|Df^n\|$ induced from the norms $\|\cdot\|_x$ and $\|\cdot\|_{fx}$ defined by the Riemannian structure of $M$. Notice that $\lambda(f)$ does not depend on the choice of the Riemannian structure on the compact manifold $M$.

\begin{theorem}[Buzzi]\label{thm-tail-zero}
Let $f$ be a $C^r$ self-map of a compact manifold with $1<r\le\infty$, then its tail entropy satisfies:
 $$
   h^*_\top(f) \leq \frac{\dim M}{r} \lambda(f).
 $$
In particular, $h^*_\top(f)=0$ if  $f$ is $C^\infty$.
\end{theorem}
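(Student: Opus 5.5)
The plan is to derive the bound on the tail entropy from a Yomdin-type reparametrization lemma, applied to the images of small pieces of $M$ under iteration. Fix $\eps>0$ small and $x\in M$. I aim to bound, for $\delta\ll\eps$, the number of $(\delta,n)$-dynamical balls needed to cover $B_f(x,\eps,n)$ by roughly $C(\delta)\,e^{n(\dim M/r)(\lambda(f)+\gamma)}$. This should hold once $\eps$ is small enough in terms of $\gamma>0$, with the constants independent of $x$ and $n$. Taking $\frac1n\log$, then $\delta\to0$, $\eps\to0$ and $\gamma\to0$ gives $h^*_\top(f)\le \frac{\dim M}{r}\lambda(f)$. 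For $r=\infty$ we let $r\to\infty$, since a $C^\infty$ map is $C^r$ for every $r$.

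First I pass to an iterate and rescale. Replace $f$ by $g=f^N$, with $N$ large enough that $\frac1N\log\|Dg\|_{\sup}\le\lambda(f)+\gamma$. This is harmless because $h^*_\top(f^N)=N\,h^*_\top(f)$ and $\lambda(f^N)=N\lambda(f)$. Work in exponential charts of radius $\eps$ and rescale by $\eps$. Since $\eps$ is small, the rescaled map $\tilde g$ on unit cubes has $\|D\tilde g\|\le e^{N(\lambda+\gamma)}$, and its derivatives of order $2,\dots,r$ are at most $O(\eps^{s-1})$, hence tiny compared to the first derivative. This is the classical reason $C^r$ smoothness kills small-scale complexity.

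The core step is the reparametrization lemma. Let $\sigma:[0,1]^d\to M$ be a $C^r$ map with $\|\sigma\|_{C^r}\le1$ at scale $\eps$, where $d=\dim M$. Then $\tilde g\circ\sigma$ restricted to $\tilde g^{-1}$ of the $\eps$-box around $g(\sigma(\text{center}))$ can be covered by at most $C_{r,d}\,\|D\tilde g\|^{d/r}$ maps $\sigma\circ\psi_i$. Each $\psi_i$ is affine-like with $\|\psi_i\|_{C^r}\le1$, and each $\tilde g\circ\sigma\circ\psi_i$ again has $C^r$ size at most $1$. I would prove this by the standard argument. Subdivide the cube into subcubes of side $\|D\tilde g\|^{-1/r}$, so that the $r$-th derivative of the composition becomes $\le1$ after rescaling. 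Then use Taylor polynomial estimates, with the Landau–Kolmogorov interpolation inequalities, to control the intermediate derivatives. The polynomial part is handled by a Yomdin–Gromov algebraic lemma, which costs only a constant depending on $r$ and $d$.

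Iterating the lemma $n$ times, starting from the chart parametrization of $B(x,\eps)$, covers $B_g(x,\eps,n)$ by at most $(C_{r,d})^n e^{nN(\lambda+\gamma)d/r}$ parametrizations of $C^r$ size at most $1$ at scale $\eps$. Each image at each time is then covered by a bounded number of $\delta$-balls. Since Lipschitz constants are bounded, this yields a bound on the number of $(\delta,n)$-balls of the stated form, with constant $C(\delta)$. Dividing by $nN$, the factor $C_{r,d}^{1/N}$ tends to $1$ as $N\to\infty$. The main obstacle is the reparametrization lemma itself: controlling all intermediate derivatives uniformly and obtaining a combinatorial constant independent of $\|D\tilde g\|$. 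I would take the one-dimensional Yomdin–Gromov algebraic lemma first, which is all that is needed for the curves used later, and extend it by induction on the dimension.
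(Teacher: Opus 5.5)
Your proposal is correct and is essentially the argument the text has in mind. The paper gives no proof here and defers to \cite{Buzzi-SIM}, whose proof is exactly this iterated Yomdin--Gromov reparametrization of the dynamical ball at scale $\eps$; Theorem~\ref{thm-Yomdin}, Corollary~\ref{cor-Yomdin} and Proposition~\ref{prop-param-to-ball} are the one-dimensional versions of your steps. The final count should be phrased more carefully: covering each image at each time separately is not enough, and what works is a single grid of mesh about $\delta/\eps$ in the parameter cube, valid for all times at once because the composed reparametrizations are contractions (this is what $\|\psi_i\|_{C^1}\le 1$ buys you), exactly as in Proposition~\ref{prop-param-to-ball}.
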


This gives a direct proof (i.e., not using Pesin theory) of Newhouse's Theorem~\ref{thm-Newhouse} \cite{Newhouse-Continuity}.

\begin{remark}
According to \cite{BoyleFiebigFiebig}, the vanishing of the tail entropy of $f:M\to M$ is  equivalent to the existence of a principal symbolic extension $\pi:\Sigma\to M$, i.e., $(\Sigma,\sigma)$ is a compact subshift, $\pi$ is continuous, onto and commutes ($\pi\circ\sigma=f\circ\pi$) and for every $\nu\in\Prob(\sigma)$, $h(f,\pi_*(\nu))=h(\sigma,\nu)$. See \cite{Boyle-Downarowicz}.
\end{remark}

A bit more technically, a key idea of Yomdin theory is that one can define the entropy by using \emph{reparametrizations}, i.e., maps with unit size from the differentiable point of view, instead of unit balls.

\section{Yomdin reparametrizations of curves in $\hM$}
\label{sec-Yomdin-reparam}

Throughout this section $r\ge1$ is some integer, $f:M\to M$ is a $C^r$ map, and $0<\eps,\heps<1$ are some numbers.

\subsection{Passing to the projective extension}
We note that $\hf$ is a $C^{r-1}$~diffeomorphism of the closed manifold $\hM$. The two are related by explicit estimates, such as the following.

\begin{exercise}
If $f:M\to M$ is a $C^1$ diffeomorphism of a closed manifold and $\hf:\hM\to\hM$ its projective extension, show that $\lambda(\hf) \le \lambda(f)+\lambda(f^{-1})$.
\end{exercise}

\subsection{Defining the $C^r$ size of disks or maps}
To define the $C^r$ size of a parametrized curve, we fix some finite $C^r$ atlas $\cA$ of $M$, i.e., a finite set of $C^r$~diffeomorphisms $\psi_i:U_i\to V_i$, $i=1,\dots,I$ where $U_i$ are open sets of $M$ with $\bigcup_i U_i=M$, and  $V_i$ are open sets of $\RR^{\dim M}$. 
We require additionally that each diffeomorphism $\psi_i$  can be extended to a $C^r$ diffeomorphism between larger open sets $\widetilde{U_i}\to\widetilde{V_i}$, where $\overline{U_i}\subset\widetilde{U_i}$ and $\overline{V_i}\subset\widetilde{V_i}$. This ensures that the following quantity $\|\sigma\|_{C^r}$ is always finite.

\begin{definition}\label{def-Cr-size}
For any integer $r\ge1$, the $C^r$ \new{size of a parametrized $k$-disk} $\sigma:[0,1]^k\to M$ is
 $$
   \|\sigma\|_{C^r} := \sup_{t\in[0,1]^k} \max_{\psi\in\cA}  \max_{1\le s\le r} \sup \left\| D_t^s(\psi\circ\sigma) \right\|
 $$
where $D^s$ denotes the differential of order $s$ and $\|\cdot\|$ is the max norm of $\RR^{\dim M}$.
\end{definition}

We note that this size is finite, defined only for finite $r$, and does depend on the choice of the finite atlas $\cA$.

\newcommand\cB{\mathcal B}
\begin{exercise}
Let $\|\cdot\|_{C^r,\cA}$, $\|\cdot\|_{C^r,\cB}$, be the $C^r$ sizes defined as above with respect to two finite atlases $\cA$, $\cB$. Show that for $A,B>1$ large enough,
 $$
  A^{-1} \cdot \|\cdot\|_{C^r,\cB} \le \|\cdot\|_{C^r,\cA} \leq B \cdot \|\cdot\|_{C^r,\cB}.
 $$
\end{exercise}

\medbreak

\begin{remark}
The above also applies to the projective bundle $\hM$ since it is also a $C^\infty$ smooth manifold.
\end{remark}

\subsection{Reparametrizations}

In light of Corollary~\ref{cor-entropy-from-unstable}, we only need to consider the complexity of \emph{curves} on $M$ and more precisely their lifts to the projective bundle $\hM$ in restriction to their typical points. This means that we only need the Yomdin theory in dimension 1, which is greatly simplified for the following reasons:
 \begin{enumerate}
  \item semi-algebraic subsets are just intervals,
  \item one can use only affine reparametrizations (instead of Nash functions, i.e., real-analytic semi-algebraic maps), see Definition~\ref{def-admissible-reparam},
    \item the join of two interval partitions is an interval partition with cardinality bounded by the sum of the cardinalities of the initial partitions.
\end{enumerate}  

\medbreak

Throughout this section, we fix some integer $r\ge2$, numbers $\eps,\heps>0$ and also some finite atlases on $M$ and $\hM$ as in Definition \ref{def-Cr-size}. For any $v\ne0$ in some vector space, we write $\RR.v$ for the one-dimensional subspace generated by $v$: $\RR.v:=\{tv:t\in\RR\}$.

\begin{definition}
Given $r\ge1$, a \new{parametrized curve} on $M$ is a $C^r$ map $\sigma:[0,1]\to M$. It is a \new{regular parametrized curve} if, additionally, the speed $\sigma'(t)$ never vanishes. In this case, its canonical lift (to $\hM$) is 
 $$
   \hsigma:[0,1]\to\hM\text{ with }\hsigma(t)=(\sigma(t),\RR.\hsigma'(t)) \text{ for }t\in[0,1].
 $$
\end{definition}

Throughout this section~\ref{sec-Yomdin-reparam}, $\sigma:[0,1]\to M$ is a regular $C^r$ curve. 
We note that if $\sigma$ is $C^r$, $\hsigma$ is $C^{r-1}$.

\begin{definition}
Let $\eps,\heps>0$.

A curve $\sigma$ has $C^r$ \NEW{double size} (at most) $(\eps,\heps)$ if:
 $$
    \| \sigma \|_{C^r} \leq \eps \text{ and } \| \hsigma \|_{C^{r-1}} \leq \heps.
 $$
 
A curve $\sigma$  has \NEW{double diameter} (at most) $(\eps,\heps)$ if:
 $$
    \diam(\sigma([0,1])) \le \eps \text{ and }
    \diam(\hsigma([0,1])) \le \heps.
 $$
We will often omit ``double'' when it is clear from the context.
\end{definition}

The following notion can be understood as a strong, parametrized version of a doubly scaled dynamical ball. 

\begin{definition}\label{def-admissible-reparam} 
Given $(C^r,f,N,\eps,\heps)$, an \NEW{admissible reparametrization} of a curve $\sigma:[0,1]\to M$  {\bf up to time} $n\in\NN^*$ is an affine map $\psi:[0,1]\to[0,1]$ such that, for some integers $n_0=0<n_1<\dots<n_\ell=n$ (the \new{admissible times}), for all $0<j\le \ell$, one has
 \begin{enumerate}
  \item $n_j-n_{j-1}\le N$;
  \item $f^{n_j}\circ\sigma\circ\psi$ has $C^r$ size at most $(\eps,\heps)$.
 \end{enumerate}
We will often write ``$(C^r,f,N,\eps,\heps)$-admissible reparametrization'', or ``$(C^r,f,N,\eps,\heps)$-reparametrization'',  and omit some information (such as $C^r,f,N$) whenever this does not lead to confusion.
\end{definition}

Let us stress the flexibility in the admissible times. It will allow us to concatenate iterates of varying order.

\begin{definition}
A family $\cR$ of \new{reparametrizations over} $T\subset[0,1]$ is a collection of reparametrizations such that
 $$
      T\subset \supp(\cR):=\bigcup_{\psi\in\cR} \psi([0,1]).
 $$
\end{definition}

Let us note that the admissible times will usually be different for each repara\-me\-tri\-zation in the given family.

\subsection{Application to dynamical coverings}
Families of reparametrizations yield dynamical coverings at \emph{any} double scale. Recall the doubly scaled covering numbers $r_{\hf}(\cdot,\eps,\heps,n)$ from Definition~\ref{defBdouble}.

\begin{proposition}\label{prop-param-to-ball}
Fix arbitrarily $\eps_0,\heps_0>0$ and $N\ge1$.
Let $\cR$ be a family of $(N,\eps_0,\heps_0)$-repa\-ra\-me\-tri\-za\-tions of a regular curve $\sigma$ over some $T\subset[0,1]$ up to some time $n$. Then we have the following upper bound for coverings by doubly scaled dynamical balls for any $\eps,\heps>0$:
 $$
    r_{\hf\,}(\hsigma(T),\eps,\heps,n)  \le \frac{2\heps_0\cdot\|D\hf\|^N_{\sup}}{\min(\eps,\heps)} \cdot \#\cR.
 $$
\end{proposition}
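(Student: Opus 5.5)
The proof will cover $\hsigma(T)$ one reparametrization at a time: for each $\psi\in\cR$ we cover $\hsigma(\psi([0,1]))$ by at most $\frac{2\heps_0\|D\hf\|^N_{\sup}}{\min(\eps,\heps)}$ doubly scaled dynamical balls of order $n$, and then take the union over $\psi\in\cR$. Since $T\subset\supp(\cR)$, this gives the claimed bound. So we fix $\psi$ with admissible times $0=n_0<n_1<\dots<n_\ell=n$, and set $\gamma:=\sigma\circ\psi$. Then $\hat\gamma=\hsigma\circ\psi$, because $\psi$ is affine and the direction of the velocity is unchanged by an affine reparametrization.

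The first step is to control the lifted iterates between admissible times. Because $f$ maps the curve with its tangent line to $f\circ\gamma$ with its tangent line, the canonical lift commutes with iteration: $\widehat{f^k\circ\gamma}=\hf^k\circ\hat\gamma$. The admissibility condition bounds the $C^{r-1}$ size of $\hf^{n_j}\circ\hat\gamma$ by $\heps_0$. In particular its first derivative has norm at most $\heps_0$ (up to the atlas constants, which I absorb into the constant 2 or into the choice of metric). For $n_{j-1}\le k<n_j$ we have $k-n_{j-1}<N$, so by the chain rule
 $$
   \|(\hf^k\circ\hat\gamma)'(t)\|\le \|D\hf\|_{\sup}^{N}\,\heps_0 ,
 $$
assuming, as we may, that $\|D\hf\|_{\sup}\ge1$. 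The $\hM$-distance dominates the $M$-distance of the projections (again after fixing a suitable metric on $\hM$). Hence, for every $0\le k<n$, both $\hf^k\circ\hat\gamma$ and $f^k\circ\gamma$ are Lipschitz on $[0,1]$ with constant at most $K:=\heps_0\|D\hf\|^N_{\sup}$.

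The second step is to subdivide $[0,1]$ uniformly. We cut it into $m:=\lceil K/\min(\eps,\heps)\rceil$ intervals $J$ of length $1/m$. Pick $t_J\in J$ and set $\hx_J:=\hat\gamma(t_J)$. For any $t\in J$ and $0\le k<n$, the previous step gives
 $$
   d(\hf^k\hat\gamma(t),\hf^k\hx_J)\le K/m\le\min(\eps,\heps),
 $$
and the same bound holds for the projections in $M$. To get the strict inequalities in the definition of $B_{\hf}$, we center at the midpoint $t_J$ of $J$. This halves the distances to $K/(2m)$, which is strictly less than $\min(\eps,\heps)$. Hence $\hat\gamma(J)\subset B_{\hf}(\hx_J,\eps,\heps,n)$, and we get $m\le 1+K/\min(\eps,\heps)$ balls per reparametrization. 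Bounding $m$ by $2K/\min(\eps,\heps)$ requires $K\ge\min(\eps,\heps)$. If that fails, a single ball suffices, and the inequality is meaningful in the regime of interest, where $\heps_0$ is of the order of $\heps$ and $\|D\hf\|\ge1$. Alternatively, the factor 2 simply absorbs the ceiling when $K/\min(\eps,\heps)\ge1$.

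The only delicate point is the normalization between the $C^r$ size, which is computed in charts with the max norm, and the Riemannian distances. I expect to handle it by choosing the atlas and the metrics compatibly, or by accepting a harmless multiplicative constant. I anticipate no genuine obstacle beyond this bookkeeping.
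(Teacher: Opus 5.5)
Your proof is correct and is essentially the paper's own argument, which is left there as an exercise. For each $\psi\in\cR$ one takes a set $C_r\subset[0,1]$ of mesh $r=\min(\eps,\heps)/(\heps_0\|D\hf\|^N_{\sup})$ with $\#C_r\le 2/r$; this is your midpoint subdivision, and the requirement $\#C_r\le 2/r$ carries the same tacit restriction, $r\le 2$, that you noticed. One then checks, through the same derivative bound on $\hf^k\circ\hsigma\circ\psi$ between consecutive admissible times, that the points $\hsigma\circ\psi(c)$ with $c\in C_r$ and $\psi\in\cR$ are centers of doubly scaled $(\eps,\heps,n)$-balls covering $\hsigma(T)$.

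One point is worth making explicit. For $j=1$ your chain-rule step uses a size bound at time $n_0=0$, but the definition of admissibility literally imposes it only for $0<j\le\ell$. The bound does hold in every application, because $\sigma$ itself has $C^r$ size at most $(\eps,\heps)$ and affine contractions do not increase it; the paper's constant $\|D\hf\|^N_{\sup}$ tacitly relies on this too. Without it you would have to iterate backward from $n_1$ and pay $\|D\hf^{-1}\|^N_{\sup}$ instead.
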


\begin{exercise}
Find some $C_r\subset[0,1]$ such that $\{t\in[0,1]:\exists c\in C_r\; |t-c|<r\}=[0,1]$ and $\# C_r\le 2/r$. Now let $r:=\frac{\min(\eps,\heps)}{\heps_0\cdot\|D\hf\|_{\sup}^N}$. 

Show that, in the setting of Proposition~\ref{prop-param-to-ball}, $\bigcup_{\psi\in\cR} \sigma\circ\psi(C_r)$ is a $(\eps,\heps,n)$-covering of $\sigma(T)$. 
Conclude the proof of the proposition.
\end{exercise}

This leads to the following version of a Ledrappier-Young entropy formula:

\begin{proposition}\label{prop-entropy-ureparam}
Let $f\in\Diff^2(M)$ with $M$ a closed surface and let $\mu\in\Proberg(f)$ with $h(f,\mu)>0$. Let $N$ be a positive integer and $\eps,\heps>0$. 
Then for $\mu$-a.e. $x\in M$, the following holds if $\sigma$ is a regular $C^r$ parametrization of the local unstable manifold $W^u_\loc(x)$:

If, for every $n\ge1$,  $\cR_n$ is a family of $(C^r,N,\eps,\heps)$-reparametrizations of $\sigma$ until time $n$ over $T\subset[0,1]$ with $\mu^u_x(\sigma(T))>0$, then
 $$
     h(f,\mu) \le \liminf_{n\to\infty} \frac1n\log\#\cR_n.
 $$ 
\end{proposition}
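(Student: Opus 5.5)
The plan is to combine the unstable-covering form of the Ledrappier--Young formula (Corollary~\ref{cor-entropy-from-unstable}) with the passage from reparametrizations to doubly scaled dynamical balls (Proposition~\ref{prop-param-to-ball}). Then we compare dynamical balls in $\hM$ with unstable dynamical balls along $W^u(x)$.

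Since $h(f,\mu)>0$, the example following Corollary~\ref{coro-Ruelle-Pesin} shows that $\mu$ is hyperbolic of saddle type. Hence the Pesin unstable manifolds are $C^r$ curves for $\mu$-a.e.\ $x$, and Corollary~\ref{cor-entropy-from-unstable} applies.

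\textbf{Step 1: converting $\cR_n$ into a covering.} Fix a typical $x$ and $\sigma$, and let $\delta,\hat\delta>0$ be small. By Proposition~\ref{prop-param-to-ball}, $\hsigma(T)$ is covered by at most $C\cdot\#\cR_n$ doubly scaled balls $B_{\hf}(\cdot,\delta,\hat\delta,n)$, where $C=2\heps\|D\hf\|^N_{\sup}/\min(\delta,\hat\delta)$ does not depend on $n$. Projecting to $M$, the set $\sigma(T)$ is covered by $C\#\cR_n$ sets of the form $\sigma(J)$.

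\textbf{Step 2: these sets are unstable dynamical balls.} We need each piece $\sigma(J)$ to lie in an unstable dynamical ball $B^u_f(y,\delta',n)$ with $\delta'$ comparable to $\delta$. The construction of Proposition~\ref{prop-param-to-ball} makes each piece a subarc $\sigma\circ\psi(I)$ of a reparametrized curve. At the admissible times the image curve has $C^r$ size at most $\eps$, so its length is $O(\eps)$. The subinterval $I$ is chosen so that its images stay $\delta$-short along the curve at every intermediate time $k<n$: between admissible times the derivative grows by at most $\|Df\|^N$, and $I$ was shrunk accordingly. Thus the piece is an arc of the curve $f^k\circ\sigma$ whose intrinsic length stays below $\delta$ for all $0\le k<n$. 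Since $f^k\sigma$ parametrizes a piece of $W^u(f^kx)$, this intrinsic length dominates $d^u$. Each piece is therefore contained in some $B^u_f(y,\delta,n)$.

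The centers $y$ need not be the typical points of Corollary~\ref{cor-entropy-from-unstable}, but a ball of radius $\delta$ centered anywhere is contained in a ball of radius $2\delta$ centered at any of its points. So $N^u_{f,\gamma}(\mu^u_x,2\delta,n)\le C\#\cR_n$ whenever $\mu^u_x(\sigma(T))>\gamma$.

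\textbf{Step 3: normalization.} The set $\sigma(T)$ has positive but possibly small $\mu^u_x$-measure, while Corollary~\ref{cor-entropy-from-unstable} is stated for any fixed $0<\gamma<1$. We choose $\gamma<\mu^u_x(\sigma(T))$; this is allowed because the corollary holds for every $\gamma$ and a.e.\ $x$, via a countable intersection over rational $\gamma$. Taking $\liminf_n\frac1n\log$ and using $\frac1n\log C\to0$ gives $\liminf_n\frac1n\log N^u_{f,\gamma}(\mu^u_x,2\delta,n)\le\liminf_n\frac1n\log\#\cR_n$. Letting $\delta\to0$ yields $h(f,\mu)\le\liminf_n\frac1n\log\#\cR_n$.

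\textbf{Main obstacle.} The delicate point is Step~2: we must check that intrinsic unstable distances, not merely ambient ones, are controlled at every intermediate time. This needs the curve-length bound coming from the $C^r$ size at admissible times together with the bounded gaps $n_j-n_{j-1}\le N$. A secondary issue is ensuring that $\sigma$, and the set $T$, lie in a single element $\xi^u(x)$ of the subordinate partition, so that $\mu^u_x$ actually sees them. I would first try to arrange this by choosing $W^u_\loc(x)$ inside $\xi^u(x)$.
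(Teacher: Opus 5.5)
Your proposal is correct and follows the paper's intended route. The paper leaves this statement as an exercise combining Corollary~\ref{cor-entropy-from-unstable} with Proposition~\ref{prop-param-to-ball}, and you supply the point it leaves implicit: the covering pieces produced in the proof of Proposition~\ref{prop-param-to-ball} are short arcs of $f^k\circ\sigma\subset W^u(f^kx)$, so they lie in intrinsic unstable balls and not merely in ambient dynamical balls. (Your ``secondary issue'' does not arise: the hypothesis $\mu^u_x(\sigma(T))>0$ already guarantees that $\mu^u_x$ sees $\sigma(T)$, so you need not choose $W^u_\loc(x)$ inside $\xi^u(x)$.)
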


\begin{exercise}
Prove the above proposition using Corollary~\ref{cor-entropy-from-unstable} and Proposition~\ref{prop-param-to-ball}.
\end{exercise}

\subsection{Iterative construction}
Families of admissible reparametrizations over a relevant set will be built inductively using the following simple fact:

\begin{lemma}[Concatenation of reparametrizations]\label{lem-concat}
Given $S\subset[0,1]$, suppose that for two positive integers $n<n'$, we are given:
 \begin{itemize}
  \item[--] a family $\cR$ of $(C^r,N,\eps,\heps)$-reparametrizations over $S$ up to time $n$;
  \item[--] for each $\psi\in\cR$, a family $\cR_\psi$ of $(C^r,N,\eps,\heps)$-repa\-ra\-me\-tri\-za\-tions over $\psi^{-1}(S)$ up to time $n'$.
 \end{itemize}
Then $\cR':=\bigcup_{\psi\in\cR} \{\psi\circ\phi:\phi\in\cR_\psi\}$ is a family of $(C^r,N,\eps,\heps)$-reparametrizations over $S$ up to time $n+n'$.
\end{lemma}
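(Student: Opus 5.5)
The plan is to build the admissible times for each composite $\psi\circ\phi$ by concatenating those of $\psi$ and of $\phi$. Then two things remain: the support condition, and that reparametrizing by an affine map never increases the $C^r$ double size.

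\textbf{Interpretation.} I read the hypothesis on $\cR_\psi$ as follows: each $\phi\in\cR_\psi$ is an admissible reparametrization of the iterated curve $f^n\circ\sigma\circ\psi$ over $\psi^{-1}(S)$, up to time $n'$. So its admissible times $0=m_0<\dots<m_p=n'$ are counted from time $n$. This is the only reading under which the conclusion, with total time $n+n'$, makes sense.

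\textbf{Step 1 (the composite is affine and the support condition holds).} First, $\psi\circ\phi$ is an affine map $[0,1]\to[0,1]$. It is nonconstant, since curves of the form $\sigma\circ\psi$ must remain regular; I will note this implicit convention. For the support condition, let $t\in S$. Since $S\subset\supp(\cR)$, there is $\psi\in\cR$ and $u\in[0,1]$ with $t=\psi(u)$, and then $u\in\psi^{-1}(S)$. Since $\psi^{-1}(S)\subset\supp(\cR_\psi)$, there is $\phi\in\cR_\psi$ with $u\in\phi([0,1])$. Hence $t\in\psi\circ\phi([0,1])$, so $S\subset\supp(\cR')$.

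\textbf{Step 2 (admissible times).} Let $0=n_0<\dots<n_\ell=n$ be the admissible times of $\psi$, and $0=m_0<\dots<m_p=n'$ those of $\phi$. For $\psi\circ\phi$ take the times $n_0,\dots,n_\ell,\,n+m_1,\dots,n+m_p$. Consecutive gaps are gaps of either list, so they are all at most $N$, and the last time is $n+n'$. At the times $n+m_i$, the bound on the $C^r$ double size of $f^{n+m_i}\circ\sigma\circ\psi\circ\phi=f^{m_i}\circ(f^n\circ\sigma\circ\psi)\circ\phi$ is exactly the admissibility of $\phi$.

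\textbf{Step 3 (main point: affine reparametrization does not increase double size).} At the times $n_j$ we only know that $\gamma:=f^{n_j}\circ\sigma\circ\psi$ has double size at most $(\eps,\heps)$, and we need the same for $\gamma\circ\phi$. Since $\phi$ is affine with $|\phi'|\le1$, the chain rule gives $D^s(\chi\circ\gamma\circ\phi)=(\phi')^s\,(D^s(\chi\circ\gamma))\circ\phi$ in each chart $\chi$ of the atlas. Hence every order-$s$ derivative has norm at most the corresponding one for $\gamma$, and $\|\gamma\circ\phi\|_{C^r}\le\|\gamma\|_{C^r}\le\eps$.

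For the lift, note that $(\gamma\circ\phi)'(t)=\phi'\cdot\gamma'(\phi(t))$ spans the same line as $\gamma'(\phi(t))$, because $\phi'\ne0$. Thus $\widehat{\gamma\circ\phi}=\hat\gamma\circ\phi$, and the same derivative estimate gives $\|\widehat{\gamma\circ\phi}\|_{C^{r-1}}\le\heps$. This identity $\widehat{\gamma\circ\phi}=\hat\gamma\circ\phi$, which needs $\phi$ to be nondegenerate, is the only point requiring care. Everything else is bookkeeping, and together Steps 1–3 show that $\cR'$ is a family of $(C^r,N,\eps,\heps)$-reparametrizations over $S$ up to time $n+n'$.
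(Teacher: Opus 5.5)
Your proposal is correct and is the argument the paper intends. The paper leaves this lemma as an exercise, with the hint that the reparametrizations are affine contracting maps. That hint is your Step 3, including the fact that taking canonical lifts commutes with nondegenerate affine reparametrization. You combine it with concatenating the admissible times. Your reading of $\cR_\psi$ as reparametrizations of $f^n\circ\sigma\circ\psi$ is also how the lemma is used in the inductive step of Theorem~\ref{thm-reparam-neutral}.
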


\begin{exercise}
Check the above using that our reparametrizations are affine contracting maps.
\end{exercise}

\section{Fundamental results of Yomdin theory}

After giving the basic definitions and notations, we state the core estimate of Yomdin theory (Theorem~\ref{thm-Yomdin}) for the image of a parametrized curve $\sigma$ by a smooth map $f$ and apply it to the images of $\sigma$ by iterates $f^n$, $n\ge1$ (Corollary~\ref{cor-Yomdin}).

\subsection{Local image of a small curve}
Yomdin theory shows that, for smooth dynamics, coverings by admissible reparametrizations and coverings by dynamical balls have comparable cardinalities. Thus we will be able to consider smooth curves and use the tangent dynamics in our entropy estimates.

Yomdin fundamental theorem considers the image by a given map $f$  of a small curve $\sigma$ and shows that, in restriction to a small ball, this image is included in a moderate numbers of small curves. This can then be iterated, see below.

\begin{theorem}[Yomdin]\label{thm-Yomdin}
Given an integer $r\ge2$ and a $C^r$ map $f:M\to M$, there are an integer $Y\ge1$  depending only on $r$, and a number $\eps_Y>0$ with the following property. Given
 \begin{itemize}
  \item[--] numbers $0<\eps,\heps<\eps_Y$;
  \item[--] a regular $C^r$ curve with $C^r$ size at most $(\eps,\heps)$;
  \item[--] $T:=\{t\in[0,1]:\hf(\hsigma(t))\in B(\hf(\hx),\eps,\heps)\}$ for some $\hx\in\hsigma([0,1])$,
 \end{itemize}
there exists a family $\cR$ of reparametrizations of $\sigma$ over $T$ such that:
 \begin{enumerate}
  \item for each $\psi\in\cR$, $f\circ\sigma\circ\psi$ is a regular $C^r$ curve with $C^r$ size at most $(\eps,\heps)$;
  \item $\#\cR \le Y \cdot \| D\hf\|_{\sup}^{1/(r-1)}$.
 \end{enumerate}
\end{theorem}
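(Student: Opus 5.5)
The plan is to apply the one-dimensional Yomdin lemma for $C^r$ maps of an interval to the lifted curve $\hf\circ\hsigma$, which is $C^{r-1}$ in $\hM$. The estimates for $\sigma$ in $M$ will follow from those for $\hsigma$, because the projection $\hpi$ is smooth. Accordingly, I work in charts of the fixed atlases on $M$ and $\hM$. Choosing $\eps_Y$ small ensures that $\sigma([0,1])$, $\hsigma([0,1])$ and their images under $f$ and $\hf$ each lie in a single chart. The curve then becomes a map $g=\psi\circ\hf\circ\hsigma:[0,1]\to\RR^3$ with $\|g\|_{C^{r-1}}\lesssim \|D\hf\|_{\sup}\heps$ plus higher-order terms. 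These terms are small, since the derivatives of order at least $2$ of $\hsigma$ are bounded by $\heps$ and the chain rule (Faà di Bruno) multiplies them by bounded derivatives of $\hf$. A similar statement holds for $f\circ\sigma$, which has one more derivative.

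\textbf{Step 1 (first-derivative splitting).} Set $A:=\|D\hf\|_{\sup}$. I cut $[0,1]$ into about $A^{1/(r-1)}$ intervals of equal length and compose with the affine maps onto them. The $s$-th derivative of $g$ composed with an affine map of ratio $\rho$ is multiplied by $\rho^s$. With $\rho\approx A^{-1/(r-1)}$, the $C^{r-1}$ top derivative becomes at most $\heps/C$, since $\rho^{r-1}A\le 1$.

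\textbf{Step 2 (lower-order derivatives).} The lower derivatives, $1\le s<r-1$, are controlled by the classical Yomdin argument. I apply a Landau--Kolmogorov-type interpolation to $g\circ\psi$ on subintervals. On the set where the first derivative is large, I use the polynomial approximation of degree $r-2$ given by the Taylor expansion. Where $|(g\circ\psi)'|$ exceeds a threshold, the preimage of the target ball $B(\hf(\hx),\eps,\heps)$ is covered by a bounded number of intervals. Outside it, a further affine rescaling by a constant factor brings all derivatives below $\heps$. Here the semi-algebraic complexity becomes a count of sign changes of polynomials of degree at most $r$, which gives a constant $Y$ depending only on $r$. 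I use that $T$ is contained in a small ball: this lets me discard intervals whose image misses the ball, so that only a bounded number of them survive per rescaling step. The same reparametrizations handle $f\circ\sigma$ in $M$, because $f\circ\sigma=\hpi\circ\hf\circ\hsigma$ up to a term involving one more derivative, which is controlled by $\hsigma$.

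\textbf{Step 3 (regularity).} Regularity of $f\circ\sigma\circ\psi$ is automatic, since $f$ is a diffeomorphism and $\psi$ is affine nonconstant.

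The main obstacle is Step 2: controlling all intermediate derivatives simultaneously while keeping the count independent of $\eps,\heps$ and of the curve. I would follow the standard proof, which uses the Taylor polynomial and a bounded number of monotonicity intervals, and track carefully that smallness of $\eps_Y$ absorbs the chart-change constants.
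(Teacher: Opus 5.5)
\textbf{There is a genuine gap: the surface half of the conclusion is never proved.}

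Item (1) asks for double size $(\eps,\heps)$. That means both $\|f\circ\sigma\circ\psi\|_{C^r}\le\eps$ and $\|\hf\circ\hsigma\circ\psi\|_{C^{r-1}}\le\heps$, where $\eps$ is arbitrary. In all the applications in these notes, $\eps\ll\heps$. Your Steps 1--2 only produce the second bound. The claim that the first bound follows ``because $\hpi$ is smooth'' is false: smoothness of $\hpi$ only turns $\|\hf\circ\hsigma\circ\psi\|_{C^{r-1}}\le\heps$ into $\|f\circ\sigma\circ\psi\|_{C^{r-1}}\lesssim\heps$. That is the wrong scale and one derivative short.

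\emph{The intermediate derivatives are not controlled at scale $\eps$.} After your splitting with ratio $\rho\approx\|D\hf\|_{\sup}^{-1/(r-1)}$, the speed of $f\circ\sigma\circ\psi$ is a priori only bounded by $\rho\|Df\|_{\sup}\eps$. For $r\ge3$ and $\|Df\|_{\sup}$ large, this is much bigger than $\eps$. To bring the speed and the other intermediate derivatives down to $\eps$, you must use the $\eps$-part of the target ball, namely $f(\sigma(t))\in B(f(x),\eps)$. Your Step~2 never does this; it only restricts to the $\heps$-ball in $\hM$.

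\emph{The top derivative is not determined by the lift.} $D^r(f\circ\sigma\circ\psi)$ cannot be read off the lift, which records only the direction of $(f\circ\sigma\circ\psi)'$, not its speed. For instance, the lift of $t\mapsto(u(t),0)$ does not see $u^{(r)}$. Your remark that $f\circ\sigma=\hpi\circ\hf\circ\hsigma$ ``up to a term involving one more derivative'' does not supply this control.

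\textbf{How to close the gap.} You need a second, independent run of the one-dimensional Yomdin lemma, this time for the $C^r$ map $f$ and the $C^r$ curve $\sigma$.
\begin{enumerate}
\item Work at order $r$ and scale $\eps$, over $\{t: f(\sigma(t))\in B(f(x),\eps)\}\supset T$, where $x=\hpi(\hx)$. The same zoom / split / polynomial-and-ball argument you describe gives at most $C_r\|Df\|_{\sup}^{1/r}$ affine reparametrizations $\psi$ with $\|f\circ\sigma\circ\psi\|_{C^r}\le\eps$. The top derivative is handled by $\rho^r\|Df\|_{\sup}\lesssim1$.
\item Your argument for the lift, at order $r-1$ and scale $\heps$, gives at most $C_r\|D\hf\|_{\sup}^{1/(r-1)}$ affine reparametrizations with $\|\hf\circ\hsigma\circ\psi\|_{C^{r-1}}\le\heps$.
\item Take the join of the two families. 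Choose each family with disjoint interiors; then the pairwise intersections of their image intervals number at most the sum of the two cardinalities.
\item Each intersection gives a valid reparametrization. Precomposing with an affine contraction does not increase any derivative. Also $\widehat{f\circ\sigma\circ\psi}=\hf\circ\hsigma\circ\psi$ for affine $\psi$. So every piece has double size $(\eps,\heps)$.
\item The total count is at most $Y\|D\hf\|_{\sup}^{1/(r-1)}$. This uses $\|Df\|_{\sup}\le\|D\hf\|_{\sup}$, which holds when $\hpi$ is a Riemannian submersion and up to a constant in general, together with $\|D\hf\|_{\sup}\ge1$.
\end{enumerate}
This is exactly the one-dimensional simplification the notes point to: the join of two interval partitions has cardinality at most the sum of the cardinalities. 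The notes themselves do not prove the theorem; they defer to \cite[Theorem 4.13]{BCS-2}.
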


We refer to \cite[theorem 4.13]{BCS-2} for a proof. The statement there is a bit more uniform. Note that the map $f$ above does not need to be “small”; Yomdin's proof starts by zooming in to make all higher differentials small. The first differential cannot be changed by a zoom or a clever choice of chart. However, its effects will be controlled by restricting the analysis to the part of the image inside a small ball.

\medbreak

By applying inductively Yomdin's result one gets that, on an exponential scale, the cardinality of admissible reparametrizations is not much larger than that of dynamical balls:

\begin{corollary}\label{cor-Yomdin}
Given an integer $r\ge2$ and a $C^r$ map $f:M\to M$, there are an integer $Y\ge1$ depending only on $r$, and a number $\eps_Y>0$ with the following property. Given
 \begin{itemize}
  \item[--] numbers $0<\eps,\heps<\eps_Y$;
  \item[--] integers $n,N$ and $T\subset[0,1]$;
  \item[--] a regular $C^r$ curve $\sigma$ with $C^r$ size at most $(\eps,\heps)$,
 \end{itemize}
there exists a family $\cR$ of  reparametrizations over $T$ which are $(C^r,N,\eps,\heps)$-admissible up to time $n$ and with cardinality:
 $$
   \#\cR \leq Y^{\lceil n/N\rceil} \left( \|D\hf^N\|_{\sup}^{\lfloor n/N\rfloor}
     \| D\hf^{n-N\lfloor n/N\rfloor} \|_{\sup}\right)^{\frac1{r-1}} \cdot
     r_{\hf}(\hf(\hsigma(T)),\eps,\heps,n).
 $$
\end{corollary}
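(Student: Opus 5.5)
We will prove Corollary~\ref{cor-Yomdin} by induction on blocks of $N$ iterates, combining Theorem~\ref{thm-Yomdin} (applied to the map $f^N$, resp.\ $f^{n-N\lfloor n/N\rfloor}$, which is $C^r$) with the concatenation Lemma~\ref{lem-concat}. Write $n=qN+s$ with $q=\lfloor n/N\rfloor$, $0\le s<N$, and set the admissible times to be $0,N,2N,\dots,qN$ and, if $s>0$, $n$. Then the number of blocks is exactly $\lceil n/N\rceil$, and every gap between admissible times is at most $N$, as Definition~\ref{def-admissible-reparam} requires.

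The key step is a one-block estimate: given a curve $\gamma=f^{jN}\circ\sigma\circ\psi$ of double $C^r$ size at most $(\eps,\heps)$ and a subset $S\subset[0,1]$, cover the relevant set by doubly scaled balls. Concretely, fix a minimal family $\mathcal C$ of centers realizing $r_{\hf}(\hf(\hsigma(T)),\eps,\heps,n)$. For each $\hx\in\mathcal C$, the set $T_{\hx}:=\{t\in T:\hf^{k}\hsigma(t)\in B(\hf^k\hx,\eps,\heps)\ \forall k\}$ is handled separately, and the families obtained for each $\hx$ are united at the end, giving the factor $r_{\hf}(\cdot)$. For a fixed $\hx$, apply Theorem~\ref{thm-Yomdin} to $g=f^N$ (with its own constant, but $Y$ depends only on $r$, and $\eps_Y$ is taken as the minimum over the finitely many maps $f^1,\dots,f^N$ needed) and the curve $\gamma$, with the target ball centered at $\hf^{(j+1)N}\hx$ (shifting indices by one to match the $\hf(\hsigma(T))$ normalization). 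This yields at most $Y\|D\hf^N\|_{\sup}^{1/(r-1)}$ affine reparametrizations $\phi$ such that $g\circ\gamma\circ\phi$ again has double size at most $(\eps,\heps)$ and whose images cover the parameters of $\gamma$ mapping into that ball. Lemma~\ref{lem-concat} then glues $\psi\circ\phi$, keeping the previously established admissible times. Iterating $q$ times and finishing with one application to $f^s$ gives, for each $\hx$, at most $Y^{\lceil n/N\rceil}(\|D\hf^N\|^{q}_{\sup}\|D\hf^s\|_{\sup})^{1/(r-1)}$ reparametrizations over $T_{\hx}$. Summing over $\hx\in\mathcal C$ gives the stated bound.

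The step that needs care, and that I expect to be the main obstacle, is justifying the hypothesis ``$T=\{t:\hf(\hsigma(t))\in B(\hf(\hx),\eps,\heps)\}$ for some $\hx\in\hsigma([0,1])$'' in Theorem~\ref{thm-Yomdin} at each stage. The center coming from the dynamical cover lies near, but not necessarily on, the current curve. I would handle this by replacing each center with a point of the curve inside its ball, at the cost of doubling radii; to avoid degrading scales, choose the dynamical cover at radii $(\eps/2,\heps/2)$, or absorb the factor into the covering number via the usual comparison $r(\cdot,\eps,\heps,n)\le r(\cdot,\eps/2,\heps/2,n)$. A further subtlety is that only the block endpoints $jN$ are constrained, whereas the dynamical ball controls all times. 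Since the ball condition at times $jN$ is implied by membership in the $n$-dynamical ball, restricting to $T_{\hx}$ is legitimate. If the centers must lie on the curve, I would instead apply the theorem with the ball only at the endpoint time, and the covering still suffices because $T_{\hx}$ sits inside that endpoint ball.
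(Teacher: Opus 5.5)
Your overall route is the one the paper intends. The paper leaves the corollary as an exercise: an induction on $q=\lfloor n/N\rfloor$ that applies Theorem~\ref{thm-Yomdin} to $\hf^N$ (and once to $\hf^{n-qN}$), combined with concatenation. Your bookkeeping of the admissible times $0,N,\dots,qN,n$, of the index shift, and of the splitting of $T$ along a minimal dynamical cover is fine, and only the times $jN$ and $n$ are actually used.

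The step that does not work as written is your repair of the off-curve center, which you correctly flagged as the delicate point. Taking the cover at radii $(\eps/2,\heps/2)$ proves the bound only with $r_{\hf}(\hf(\hsigma(T)),\eps/2,\heps/2,n)$. The comparison $r_{\hf}(\cdot,\eps,\heps,n)\le r_{\hf}(\cdot,\eps/2,\heps/2,n)$ goes in the wrong direction to recover the stated bound. In general the finer covering number can be exponentially larger in $n$ than the coarser one; this is the phenomenon that tail entropy measures. Your third alternative does not touch the problem. Theorem~\ref{thm-Yomdin} only ever involves one ball at the image time, and the issue is precisely where that ball is centered.

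The fix costs only a bounded factor per block, which is absorbed into $Y$.
\begin{itemize}
\item At a given block, let $g=f^N$ (or $f^{n-qN}$), let $\gamma$ be the current curve and $c$ the cover center. If $S:=\{t:\widehat{g}(\widehat{\gamma}(t))\in B(c,\eps,\heps)\}$ is nonempty, pick $t_0\in S$.
\item By the triangle inequality, $S\subset\{t:\widehat{g}(\widehat{\gamma}(t))\in B(\widehat{g}(\widehat{\gamma}(t_0)),2\eps,2\heps)\}$. So the theorem applies at scale $(2\eps,2\heps)$ with a center on the curve.
\item Precompose each resulting reparametrization with the two affine maps of slope $1/2$ onto $[0,1/2]$ and $[1/2,1]$. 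The $C^r$ size only involves derivatives of order $s\ge1$, and the canonical lift commutes with affine reparametrization. Hence this halves the double size back to $(\eps,\heps)$.
\end{itemize}
Each block then costs at most $2Y\|D\widehat{g}\|_{\sup}^{1/(r-1)}$. The corollary follows with $2Y$ in place of $Y$ and $\eps_Y$ taken as half the theorem's threshold. A packing argument, or the more uniform version of the theorem in \cite{BCS-2}, would work equally well.

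Finally, your $\eps_Y$ is the minimum of the thresholds for $f^N$ and $f^{n-qN}$, so it depends on $N$, while the statement fixes $\eps_Y$ before $N$. The paper's hint has exactly the same dependence. It is harmless in Theorem~\ref{thm-reparam-neutral}, where $\heps$ is chosen after $N$, but you should state it explicitly.
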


\begin{exercise}
Write $n=qN+p$ with $q:=\lfloor n/N\rfloor$ and $0\le p<N$.
Deduce the corollary from the theorem applied to the two maps $\hf^N$ and $\hf^{n-N\lfloor n/N\rfloor}$ using an induction on $q$.
\end{exercise}

\part{Nonlinear estimates}

\begin{center}
{\Large Introduction}
\end{center}
\bigbreak

The third and final part of these lectures is where everything comes together to prove our main result, Theorem~\ref{maintheorem-htop}. Recall that this theorem is an entropy bound for a sequence of measures in terms of the neutral decomposition of Chapter~\ref{chapter-neutral}. 
The proof combines two types of information:
 \begin{enumerate}
  \item the nonexpanding property of the differential along the unstable direction within neutral orbit segments given by Theorem~\ref{thm-neutral-decomposition-projective};
  \item the number of dynamical balls within the remaining, arbitrary orbit segments given by the topological entropy.
 \end{enumerate}

Converting the linear information (1) to an entropy bound is quite delicate. It is performed in Chapter~\ref{chap-reparam-neutral}, the  technical core  of our approach. It is this argument which requires the consideration of the projective dynamics, the study of unstable curves, and the use of double scales.

The information (2) is just the formula for the topological entropy, adapted to the projective bundle, see Section~\ref{secEntropyProj}. 

Chapter~\ref{chap-proof-main} concludes the proof of Theorem~\ref{maintheorem-htop} by applying Ledrappier-Young entropy formula (Theorem~\ref{thm-LY-u-entropy}) to an appropriate unstable curve. The estimates are obtained by subdividing the orbits of typical points into long neutral orbit segments and the rest, using the relevant estimate (1) or (2) above for each orbit segment. 

We stress that the various arguments mix dynamical balls and smooth reparame\-tri\-zations. The translation between these two languages is provided by Yomdin theory (especially Corollary~\ref{cor-Yomdin}).

Finally, Chapter~\ref{chap-beyond} outlines the extra difficulties entailed by the proof of the stronger Theorem~\ref{theoremWithoutErgodicity} of our original paper \cite{BCS-2} and how they are solved.

\chapter{Controlling neutral blocks}\label{chap-reparam-neutral}

This chapter is devoted to the main technical estimate of these lectures: Theorem~\ref{thm-reparam-neutral} which uses neutral blocks to obtain nontrivial entropy bounds, i.e., a bound on the number of admissible reparametrizations needed to cover the iterates of typical points inside an unstable curve and which belong to a given neutral orbit segment.

\medbreak

Let $f:M\to M$ be a $C^r$ diffeomorphism of a compact surface $M$. Recall that $(\hM,\hf,\hpi)$ is the projective extension. 

\section{Statement of the reparametrization lemma}

We consider $\hmu\in\Prob(\hf)$ (not necesarily ergodic).
We specialize Definition \ref{def-abstract-neutral-block} to segments of $\hf$-orbits:

\begin{definition}\label{def-neutral-orbit}
For $\alpha>0$, an $\alpha$-\new{neutral orbit segment} is $(\hx,\hf\hx,\dots,\hf^{\ell-1}\hx)$  where $\hx:=(x,E)\in\hM$, such that
 $$
   \forall 0\le k\le\ell \quad \| Df^k_x|E \| \le e^{\alpha k}.
 $$
\end{definition}

The following variants are convenient: 
 \begin{itemize}
  \item[-] An $\alpha$-\new{neutral block} at $\hx$ is an integer interval $\ii{n,n+\ell}$ such that $(\hf^n\hx,\dots,\hf^{\ell-1}\hx)$ is a $\alpha$-neutral orbit segment. 
  \item[-] An $\alpha,\ell$-\new{neutral vector} is $\hx\in\hM$ such that $\ii{0,\ell}$ is an $\alpha$-neutral block for $\hx$.
\end{itemize}  
Note that this is the same as Definition \ref{def-abstract-neutral-block} for the homeomorphism $\hf$ and the continuous function $\vf(x,E)=\log\|Df_x|E\|$.

\medbreak

We can now state the reparametrization lemma. The following will apply to a set of $(\alpha,\ell)$-neutral vectors tangent to an unstable manifold. We assume the following condition for $\mu:=\hpi_*(\hmu)$,
 \begin{equation}\label{eq-measure-hyp}
   \mu(\{x\in M:\lambda^+(x)\ne\lambda^-(x)\}) > 0.
 \end{equation}
It will be satisfied in our application as $h(f,\mu)>0$.

\begin{theorem}\label{thm-reparam-neutral}
Let $f$ be a $C^r$ surface diffeomorphism ($r\ge2$).
There are functions $\gamma_0:(0,\infty)\to(0,\infty)$ and $N_0:(0,\infty)^2\to\NN^*$ with the following properties. Given $\delta,\eta>0$, $0<\gamma\le\gamma_0(\eta)$, and $N\ge N_0(\eta,\gamma)$, for any $\hmu\in\Prob(\hf)$ satisfying \eqref{eq-measure-hyp}, there are:
 \begin{enumerate}
  \item[(a)] numbers $0<\eps,\heps\le\delta$;
  \item[(b)] an open set $\hU_0\subset\hM$ with $\hmu(\hU_0)>1-\gamma^2$ and $\hmu(\partial\hU_0)=0$;
  \item[(c)] a positive integer $n_0\ge1$;
\end{enumerate} 
such that, for any regular curve with $C^r$ size at most $(\eps,\heps)$ and any $n\ge n_0$, the following conditions hold:
 \begin{enumerate}
  \item there is a family $\cR_n$ of reparametrizations of $\sigma$ over $T$ defined as:
   \begin{equation}\label{eq-T-neutral}
      \begin{aligned}
       \hsigma^{-1}\biggl\{ \hx \mid (\hx,\hf\hx,\dots,\hf^{n-1}\hx)& \text{ is $\frac{\eta}{10}$-neutral and }\\
       & \#\{0\le k<n:\hf^k\hx\notin\hU_0\}<\gamma\cdot n\biggr\};
   \end{aligned}\end{equation}
  \item $\cR_n$ is $(C^r,N,\eps,\heps)$-admissible up to time $n$;
  \item $\#\cR_n \leq \exp\left(\frac{\lambda(\hf)}{r-1}+\eta\right)$.
 \end{enumerate}
\end{theorem}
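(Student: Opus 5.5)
The plan is to prove the estimate in the form $\#\cR_n\le\exp\bigl(n(\frac{\lambda(\hf)}{r-1}\gamma'+\eta)\bigr)$, i.e. exponentially small growth rate. I read the displayed bound in (3) as missing a factor $n$ in the exponent, and in the sharpest version the contribution $\frac{\lambda(\hf)}{r-1}$ is weighted only by the proportion of bad times. The construction is an induction in steps of length $N$ using Lemma~\ref{lem-concat}. At each step we apply Yomdin's Theorem~\ref{thm-Yomdin} to $\hf^N$ (or $\hf^{p}$ with $p<N$ for the last step). The key point is that on a typical neutral stretch, the image by $f^N$ of a curve of double size $(\eps,\heps)$, restricted to the relevant set, does not grow. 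So only $Y$ pieces (a constant independent of $N$) are needed instead of $Y\|D\hf^N\|^{1/(r-1)}$. Choosing $N\ge N_0(\eta,\gamma)$ with $(\log Y)/N<\eta/3$ then makes these good steps cost at most $e^{\eta N/3}$.

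\textbf{Step 1 (choice of $\hU_0$, $\eps$, $\heps$).} By \eqref{eq-measure-hyp} and Oseledets, on a set of positive $\mu$-measure there are two distinct exponents. On the remaining part both exponents vanish (this is where non-expansion is automatic up to subexponential factors). By Lusin's theorem we find a compact set $\hK$ with $\hmu(\hK)>1-\gamma^2/2$ on which the following hold:
\begin{itemize}
\item[--] the finite-time quantities $\frac1N\log\|Df^N|E\|$ are uniformly close to their limits;
\item[--] the Oseledets directions (when defined) vary continuously;
\item[--] the fibrewise action of $\hf^N$ contracts distances toward the $\hf^N$-image of points away from the stable direction.
\end{itemize}
We take $\hU_0$ to be a small open neighbourhood of $\hK$ whose boundary has zero $\hmu$-measure, choosing radii outside a countable set. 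We then take $\heps$ smaller than the Lusin modulus of continuity, and $\eps\ll\heps$ so that $\eps$-closeness in $M$ plus $\heps$-closeness in $\hM$ along $N$ iterates forces $\|Df^N|_{E'}\|\le e^{N\eta/10}\|Df^N|_E\|$ for directions $E'$ of the curve near $E$.

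\textbf{Step 2 (good blocks).} Fix $t\in T$ and a block $\ii{jN,(j+1)N}$ such that $\hf^{jN}\hsigma(t)\in\hU_0$. By induction the current reparametrized piece has double size $(\eps,\heps)$ and contains $\hf^{jN}\hsigma(t)$. Its tangent directions are therefore $\heps$-close to the neutral vector $\hf^{jN}\hsigma(t)$. By Step 1, $\|D(f^N\circ\sigma\circ\psi)\|$ is bounded by $e^{\eta N/10}$ times the neutral growth, so the image stays of size comparable to $\eps$. Theorem~\ref{thm-Yomdin} applied with $\|D\hf^N\|$ replaced by this effective bound on the restricted piece (after the usual zoom) then yields at most $Y\cdot e^{C\eta N/(r-1)}$ pieces of double size $(\eps,\heps)$.

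\textbf{Step 3 (bad blocks).} A block is bad if some iterate in it falls outside $\hU_0$; at most $\gamma n$ iterates are outside $\hU_0$, so there are at most $\gamma n$ bad blocks. For these we use the crude count $Y\|D\hf\|_{\sup}^{N/(r-1)}$ from Corollary~\ref{cor-Yomdin}. Concatenating with Lemma~\ref{lem-concat} gives
\[
\#\cR_n\le Y^{n/N+1}\,e^{C\eta n/(r-1)}\,\|D\hf\|_{\sup}^{\gamma n N/(r-1)}.
\]
The bad-block factor is made harmless by choosing $\gamma\le\gamma_0(\eta)$ after fixing $N$ through the order of quantifiers. More precisely, we handle the bad blocks at the granularity of single iterates, so their cost is $\|D\hf\|^{\gamma n/(r-1)}$. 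Taking $n_0$ large absorbs the additive constants.

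\textbf{Main obstacle.} The hard step is Step 2: showing that the projective coordinate stays $\heps$-controlled along the whole neutral block, not just at its start. A priori the curve's direction could drift toward the stable direction, or oscillate, with the drift being exponentially amplified. I would first try to use that on $\hU_0$ the projective map $\hf^N$ contracts a cone around the non-stable direction. Since the neutral vector itself lies in $\hU_0$, and the measure hypothesis \eqref{eq-measure-hyp} guarantees it is not the stable direction at Lusin-good points, $\heps$-closeness is preserved at the checkpoints $jN$. In the intermediate times we rely on uniform continuity of $\hf^k$, $k<N$, on the scale $\eps\ll\heps$.
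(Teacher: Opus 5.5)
\textbf{There are genuine gaps, mainly in how you control the projective coordinate.}

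The decisive step is to keep the relevant part of $\hf^N\circ\hsigma'$ inside one doubly scaled $(\eps,\heps)$-ball, and your mechanism for it fails. You assume that at Lusin-good points the neutral vector is away from the stable direction, so that $\hf^N$ contracts a cone around it. Nothing in \eqref{eq-measure-hyp} gives this, and in the application the opposite typically holds. The neutral part of a limit of unstable lifts charges the stable lift (neutral blocks are ``contraction plus cancellation''), so neutral vectors returning to $\hU_0$ often sit near $E^-$. There the fiber derivative of $\hf^N$, namely $|\det Df^N_x|/\|Df^N_x|_E\|^2$, is roughly $e^{(\lambda^+-\lambda^-)N}$: the projective map expands. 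Your fallback, uniform continuity of $\hf^k$ at surface scale $\eps$, does not help, since the points of the curve are only $\heps$-close in the fiber.

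The missing idea is Lemma~\ref{lem-W-neigh}, which uses no contraction at all. By Lusin, make \emph{both} sections $x\mapsto E^\pm_{f^kx}$, $0\le k\le N$, continuous on a compact $K_\#$. Take $\heps<d(\hK_+,\hK_-)/10$, and then $\eps$ from the modulus of continuity of these sections. Points of $\hsigma$ lying in a tiny neighbourhood $\hW_\#$ of $\hK_\pm$ are then close to the same graph, so their fiber coordinate is essentially a function of the base point. By equivariance, $\hf^k(x,E^\pm_x)=(f^kx,E^\pm_{f^kx})$, so their images stay $\heps$-close for $k\le N$ whatever the projective expansion.

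The part of $\hmu$ with $\lambda^+=\lambda^-=0$ is also left uncontrolled. It has no Oseledets splitting. The finite-time estimates at time $N$ in your Step~1 cannot be guaranteed on a set of measure $1-\gamma^2/2$, because $N$ is fixed before $\hmu$. The paper instead uses that this part has zero entropy (Lemma~\ref{lem-zero-entropy}). This gives a set $\hW_0$ covered by $e^{\eta n_*/10}$ doubly scaled balls up to a time $n_*$ chosen after $\hmu$. Blocks of length $n_*$ starting in $\hW_0$ are then handled by Corollary~\ref{cor-Yomdin}.

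\textbf{Other steps that are wrong as written.}

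Your reading of (3) as $\#\cR_n\le\exp\bigl(n(\frac{\lambda(\hf)}{r-1}+\eta)\bigr)$ is right. However, the term $\frac{\lambda(\hf)}{r-1}$ is paid at every time, not weighted by bad times. Your sharper per-block cost $Y e^{C\eta N/(r-1)}$ is unjustified. Theorem~\ref{thm-Yomdin} costs $\|D\hf\|_{\sup}^{1/(r-1)}$, which includes the fiber expansion described above. A bound on $\|D(f^N\circ\sigma\circ\psi)\|$ in $M$ controls neither that expansion nor the higher derivatives of $\hf^N\circ\hsigma$. You can simply pay $Y\|D\hf^N\|_{\sup}^{1/(r-1)}\le Ye^{N(\lambda(\hf)+\eta/10)/(r-1)}$ per block.

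Next, neutrality bounds $\|Df^k|_{\hx}\|\le e^{\eta k/10}$ cumulatively from time $0$. It does not bound the growth over each $\ii{jN,(j+1)N}$, which can be large after a contracting stretch. So ``the image stays of size comparable to $\eps$'' fails blockwise. You must carry the invariant \eqref{eq-reparam-contract}, $|f^{n_i}\circ\sigma\circ\psi|<\eps e^{-\eta n_i/10}\|Df^{n_i}|_{\hx}\|$. It is restored at each step by Lemma~\ref{lem-ctrl-len} together with $\lceil e^{\eta N/5}\rceil$ affine subdivisions.

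Finally, the quantifier order ($\gamma\le\gamma_0(\eta)$ first, then $N\ge N_0(\eta,\gamma)$) forces bad times to be handled one iterate at a time. The block decomposition then depends on $t$, so $T$ must be split by types, at a cost of $e^{H(4\gamma)n}$ (Lemma~\ref{lem-fewtypes}).
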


The rest of this chapter is devoted to the proof of this theorem. 

\section{Strategy of proof}

By Definition~\ref{def-neutral-orbit}, the iterates of a neutral tangent vector $\hx\in\hM$ stay small throughout the whole of the neutral block (for instance the iterates first become very small then grow back to unit size). The same will be true of the length of the iterates of a small unstable curve \emph{as long as the whole curve stays small and approximately straight}. This will give the required bound on the diameter of the iterates of the curve $\sigma$ on the surface $M$. 

However, this argument is incomplete: as we noted, it requires that the iterates of the curve stay approximately straight. We would like to see that $\sigma$ having a  small length implies that the diameter of $\hsigma$ does not become large. However, this implication does not hold in general, but because the iterates of $\sigma$ are unstable curves, we can make the following argument.

Since the map is not uniformly hyperbolic, it is not true that an unstable curve with small length is always approximately straight. However we are considering a measure $\nu_k$ which is weakly close to some limit measure $\mu$. This limit measure can be decomposed into a zero entropy part and a hyperbolic part in the sense of Pesin. By Lusin theorem the hyperbolic part will contain a uniform subset with large measure on which small length will imply small diameter in $\hM$.
By the general theory, zero entropy part will be controlled by a small number of dynamical balls.

This leads to using two scales $0<\eps\ll\heps$; their separation comes from the (arbitrarily bad) modulus of continuity of the Oseledets spaces produced by Lusin theorem. Here $\heps>0$ just needs to be small given the map $Df$, whereas $\eps>0$ is chosen so small that for some large $N\ge1$, with high probability with respect to the hyperbolic part of $\mu$,
  $$
     d(x,y)<\eps \implies d(E^+_{f^Nx},E^+_{f^Nx})<\heps.
  $$
\medbreak

Finally, the necessity to mix the control by dynamical balls (for the zero entropy part) and with the control by the tangent dynamics (for the hyperbolic part) will require Yomdin theory and high smoothness. 

\medbreak

We implement this strategy as follows:
 \begin{itemize}
  \item[--] we split the limiting measure $\hmu\in\Prob(\hf)$ into a hyperbolic and a non-hyperbolic part;
  \item[--] we prove by induction that, as long as the diameter of the lifted curve $\hf^k(\hsigma)$ stays small, the length of its trace on $M$ is given by the derivative at any point (Lemma~\ref{lem-ctrl-len});
  \item[--] we show that near the hyperbolic part, a very small diameter of $\sigma$ on the surface $M$ implies that the $\hM$-diameter of the image $\hf(\hsigma)$ stays small (Lemma~\ref{lem-W-neigh});
  \item[--] we observe that the non-hyperbolic part of $\hmu$ has only zero Lyapunov exponents and therefore zero entropy. It follows that the corresponding orbit segments can be covered by a small number of dynamical balls (Lemma~\ref{lem-zero-entropy});
  \item[--] we decompose the orbit into segments controlled by (1) the hyperbolic part, (2) the zero entropy part, (3) the rest;
  \item[--]  the number of such decomposition is not too large (Lemma~\ref{lem-type}) and therefore one can deal with typical orbits having a given decomposition;
  \item[--] we split by induction a parametrization of a unstable curve, restricting to typical points and keeping the $C^r$ size small. 
 \end{itemize}
\medbreak

\section{List of parameters}

Let $\hmu\in\Prob(\hf)$. We will need to fix a number of parameters $\eta,\delta,\gamma,N,\eps,\heps,n_*$, open subsets $\hU_\#$ and $\hW_0$, and pay attention to their mutual dependencies to avoid any contradiction. 

We proceed as follows (we omit the dependence on $f$):
 \begin{itemize}
  \item[--] we fix $\eta,\delta>0$ arbitrarily small;
  \item[--] we let $Y,\eps_Y\ge1$ be given by Yomdin Corollary~\ref{cor-Yomdin} (they only depend on $r$ and $f$);
 \item[--]  we fix $\gamma_0:=\gamma_0(\eta)>0$ so small that
   $$
     \exp(\eta/10\gamma_0)> \max(3Y,\|D\hf\|_{\sup})\text{ and } H(4\gamma_0)<\eta/10\;;
   $$
 \item[--] we choose $0<\gamma\le\gamma_0$;
 \item[--] we fix $N_0:=N_0(\eta,\gamma)>\max(1/\gamma,10/\eta)$ such that
  $$
     \forall N\ge N_0\quad \frac1N\log\|D\hf^N\|_{\sup}<\lambda(\hf) +\eta/10\;;
  $$
 \item[--] finally, we pick some $N\ge N_0$.
\end{itemize}
From now on, these parameters are fixed. In Lemma \ref{lem-W-neigh} we will fix
 \begin{itemize}
  \item[--] a small  $0<\heps<\delta$, depending on $N,\eta,\delta, \gamma$;
  \item[--] a small $0<\eps<\delta$, depending on $\heps,N,\eta,\delta,\gamma$;
  \item[--] an open set $\hU_0$ with $\hmu_\#(\hU_0)>1-\gamma^2$ and $\hmu(\partial\hU_0)=0$ depending on $\eps,\heps,N,\eta,\delta, \gamma$, on which the stable and unstable section will control the lift.
  \end{itemize}
  Finally we will also fix
\begin{itemize}
  \item[--]  a large multiple  $n_*\ge1$ of $N$ and an open set $\hW_0$ with $\hmu_0(\hW_0)>1-\gamma^2$ and $\hmu(\partial\hW_0)=0$, with small complexity, see \eqref{eq-hW0} below.
 \end{itemize}

\section{Length from derivative at a single point}

The images of a $C^r$-small regular curve $\sigma$ can be controlled by the $Df^N|_{T_x\sigma}$ at a single point $x\in\sigma$. We denote the length of a curve $\sigma:[0,1]\to M$ by:
 $$
    |\sigma| := \int_0^1 \|\sigma'(t)\|\, dt.
 $$ 

\begin{lemma}[Control of length]\label{lem-ctrl-len}
Given $f\in\Diff^r(M)$ ($1<r<\infty$), $\eta>0$ and $N\ge1$, there is a number $\eps_*:=\eps_*(\eta,N)>0$ satisfying:

For any $0<\eps,\heps\le\eps_*$, any $n\ge1$, if $\sigma$ is a regular curve with a reparametrization $\psi$ which is $(C^r,N,\eps,\heps)$-admissible up to time $n$, then
 $$
    \forall(x,E)\in\hsigma([0,1])\quad
    |f^n\circ\sigma\circ\psi| < e^{\eta n/10} \| Df^n_x|E \|\cdot  |\sigma\circ\psi|.
 $$
\end{lemma}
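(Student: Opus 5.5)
The plan is to reduce to one block of at most $N$ iterates between consecutive admissible times and then chain the blocks. Let $n_0=0<n_1<\dots<n_\ell=n$ be the admissible times of $\psi$. Set $\sigma_j:=f^{n_j}\circ\sigma\circ\psi$. Each $\sigma_j$ has $C^r$ double size at most $(\eps,\heps)$, and $n_j-n_{j-1}\le N$. Fix $(x,E)=\hsigma(t_0)$ with $t_0=\psi(s_0)$ (the general case of $t_0$ outside $\psi([0,1])$ is discussed below). Write $\hx_j:=\hf^{n_j}(x,E)=(x_j,E_j)$, the point of $\hsigma_j$ at parameter $s_0$. Then
\[
\|Df^n_x|E\|=\prod_{j=1}^{\ell}\|Df^{n_j-n_{j-1}}_{x_{j-1}}|E_{j-1}\|
\qquad\text{and}\qquad
\frac{|\sigma_\ell|}{|\sigma_0|}=\prod_{j=1}^{\ell}\frac{|\sigma_j|}{|\sigma_{j-1}|}.
\]
It therefore suffices to prove the one-block estimate
\[
|\sigma_j|\le e^{\eta(n_j-n_{j-1})/10}\,\|Df^{n_j-n_{j-1}}_{x_{j-1}}|E_{j-1}\|\cdot|\sigma_{j-1}|.
\]
Multiplying these estimates over $j$ gives the claim.

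For the one-block estimate, write $m:=n_j-n_{j-1}\in[1,N]$ and $g:=f^m$. We have
\[
|g\circ\sigma_{j-1}|=\int_0^1\|D_{\sigma_{j-1}(s)}g\,.\,\sigma_{j-1}'(s)\|\,ds
=\int_0^1\|D_{\sigma_{j-1}(s)}g|\RR.\sigma'_{j-1}(s)\|\cdot\|\sigma'_{j-1}(s)\|\,ds.
\]
The function $\Phi_m:\hM\to\RR$, $(y,F)\mapsto\log\|Dg_y|F\|$, is continuous on the compact $\hM$, hence uniformly continuous, for each of the finitely many $m\le N$. Choose $\eps_*$ so that $d(\hy,\hz)<2\eps_*$ (in the product sense of the double scale) implies $|\Phi_m(\hy)-\Phi_m(\hz)|<\eta/10$ for all $1\le m\le N$. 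Since $\hsigma_{j-1}$ has $C^{r-1}$ size at most $\heps\le\eps_*$ and $\sigma_{j-1}$ has $C^r$ size at most $\eps\le\eps_*$, their diameters are $O(\eps_*)$ (up to the fixed atlas constants, absorbed into $\eps_*$). So every $\hsigma_{j-1}(s)$ is within the continuity modulus of $\hx_{j-1}$, and
\[
\|Dg|_{\hsigma_{j-1}(s)}\|\le e^{\eta/10}\|Dg_{x_{j-1}}|E_{j-1}\|\le e^{\eta m/10}\|Dg_{x_{j-1}}|E_{j-1}\|.
\]
Integrating gives the one-block estimate. To get strict inequality, choose $\eps_*$ with a slightly smaller margin.

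The main subtlety is the base point. The statement allows any $(x,E)\in\hsigma([0,1])$, not only points of $\hsigma\circ\psi$. Only the first block sees $\sigma$ itself, and $\sigma$ is not assumed to be small. I expect to handle this by interpreting $\sigma$ as having double size at most $(\eps,\heps)$ at time $0$, which is implicit since $n_0=0$ is an admissible time. Indeed, condition (2) at $j=0$ is natural, and in the applications $\sigma$ is small. Then $\hsigma([0,1])$ has small diameter, and the first-block comparison may be done directly at $(x,E)$, since all points of $\hsigma$ are $\eps_*$-close. For later blocks, the base point is $\hf^{n_{j-1}}(x,E)$, which lies on $\hf^{n_{j-1}}\hsigma$, not necessarily on $\hsigma_{j-1}$. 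To control this, I would replace $x$ by the point $\hsigma(\psi(s_0))$ for the first block only. This costs one extra factor $e^{\eta/10}$ at the start, which is absorbed by shrinking $\eps_*$ so that the per-block loss is $e^{\eta/20}$ and $n\ge1$.
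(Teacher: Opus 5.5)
Your first two paragraphs are the paper's argument. It uses uniform continuity, on the compact $\hM$, of the dilation over at most $N$ iterates, compares with the base point over each block between consecutive admissible times, and takes the product over blocks. You also make explicit the integration of the length, which the paper leaves implicit. This proves the inequality for base points $(x,E)\in\hsigma\circ\psi([0,1])$, with one correction. Definition~\ref{def-admissible-reparam} imposes the size bound only at $n_1,\dots,n_\ell$, not at $n_0=0$, so smallness of $\hsigma\circ\psi$ at time $0$ is not implicit. You can derive it from the bound at time $n_1\le N$ and uniform continuity of $\hf^{-k}$, $0\le k\le N$, by shrinking $\eps_*$. Alternatively, compare derivatives at the end of each block instead of the start.

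The genuine error is in your last paragraph. Swapping $(x,E)$ for $\hsigma(\psi(s_0))$ ``for the first block only'' leaves the factors $\|Df^{n_j-n_{j-1}}|\hf^{n_{j-1}}(x,E)\|$, $j\ge2$, untouched. Admissibility of $\psi$ says nothing about the orbits of points of $\sigma$ outside $\psi([0,1])$, so nothing is controlled after the first block. Your remark that any point of $\hsigma$ will do is correct only for a single block, i.e. $n\le N$ with $\sigma$ itself small.

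No patch exists, because for such base points the inequality is false. Take a linear Anosov automorphism of $\mathbb{T}^2$ with eigenvalues $\lambda^{\pm1}$ and a small, slightly curved $\sigma$ tangent to $E^s$ at a point $x$. Let $\psi$ select a sub-arc around a fixed point where $\sigma$ is transverse to $E^s$, short enough to be admissible up to time $n$. Then $|f^n\circ\sigma\circ\psi|\ge c\,\lambda^n|\sigma\circ\psi|$ with $c>0$ independent of $n$, whereas $\|Df^n_x|E^s_x\|=\lambda^{-n}$. The claim therefore fails for large $n$ as soon as $\eta<20\log\lambda$.

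The lemma must be read with $(x,E)\in\hsigma\circ\psi([0,1])$. The paper's proof only works in that case, since its block-by-block comparison needs $\hf^{n_i}(\hx)$ to lie on $\hf^{n_i}\circ\hsigma\circ\psi([0,1])$. Likewise, \eqref{eq-reparam-contract} is only claimed for such points. Drop your last paragraph, state this restriction explicitly, and your proof is complete.
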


\begin{remark}
In order to control the length of a curve along a whole neutral block, we need to see that the contraction of a tangent vector implies a \emph{similar contraction} of the curve as long as the iterates of this curve stay small. One may think of the case when a neutral block comprises many contracting iterates followed by expanding ones: just establishing non-growth during the contracting iterates would not allow the necessary control of the rest of the neutral block.
\end{remark}

\begin{proof}
Since $\hM$ is compact, $D\hf$ is uniformly compact: there is $\eps_*>0$ such that
 $$
     \forall\hx,\hy\in\hM\; d(\hx,\hy)<\eps_*\implies\forall 0\le k\le N\;
         \log\|Df|\hf^k(\hx)\| < \log\|Df|\hf^k(\hy)\| + \frac{\eta}{10}.
  $$
Let $0<\eps,\heps\le\eps_*$ and let $\psi$ be a $(\eps,\heps,N)$-admissible reparametrization up to time $n$ of some regular $C^r$ curve $\sigma$. Let $n_0=0<n_1<\dots<n_\ell=n$ be the admissible times. Let $\hx=(x,E)\in\hsigma([0,1])$. We have, for each $0\le i<\ell$,
 $$
    \forall\hy\in \hf^{n_i}\circ\hsigma\circ\psi([0,1]) \quad
    \|Df^{n_{i+1}-n_i}|\hf^{n_i}(\hy)\| < e^{(n_{i+1}-n_i)\eta/10} \|Df^{n_{i+1}-n_i}|\hf^{n_i}(\hx)\|.
  $$
The claimed inequality follows by an induction on $i$.
\end{proof}

\section{Hyperbolic structures}

In this section, we exploit the hyperbolic part $\hmu_\#$ of the measure $\hmu$ using measurability of the stable and unstable sections.

\subsection{Hyperbolic decomposition}
We split the measure $\hmu$ into a hyperperbolic part $\hmu_\#$ and a non-hyperbolic part $\hmu_0$.

Recall the sets $M_\reg\subset M$ of Oseledets regular points and the subset $M_\#$ of those with simple Lyapunov spectrum
 $$
   M_\# := \{ x\in M \mid \lambda^+(x)>\lambda^-(x) \}.
 $$
We set $\hM_\#:=\hpi^{-1}(M_\#)$ and define the decomposition
 $$
   \hmu = a\hmu_\# + (1-a)\hmu_0\;,
 $$
where $a:=\hmu(\hM_\#)$, $\hmu_\#:=\hmu|_{\hM_\#}$, and $\hmu_0:=\hmu|_{\hM\setminus \hM_\#}$ (recall that $\hmu|_X:=\hmu(\cdot\cap X)/\hmu(X)$ if $\hmu(X)>0$ and $\hmu|_X=\hmu$ otherwise). By assumption \eqref{eq-measure-hyp}, $a>0$.

\smallbreak

We also denote by $\mu,\mu_\#,\mu_0$ the projections under $\hpi$ of $\hmu,\hmu_\#,\hmu_0$.

\smallbreak

Since the ergodic decomposition of $\mu$  contains no sink or source, there cannot be a set of positive measure on which the two exponents (repeated according to mulitiplicities) are both positive or both negative (see Lemma~\ref{lem-equal-zero}). In particular $\lambda^+(x)=\lambda^-(x)=0$ for a.e. $x\notin M_\#$.

\smallbreak

Note that, even though we call $\hmu_\#$ the hyperbolic part, we only require the existence a.e. of two distinct exponents, we do not exclude that one of them vanishes.

\subsection{Stable and unstable sections}

By Oseledets Theorem~\ref{thm-Oseledets}, there are two measurable (partial) sections $M_\#\to\hM$, $x\mapsto(x,E^+_x)$ and $x\mapsto(x,E^-_x)$. By Lusin theorem, the functions $x\mapsto E^*_{f^kx}$, $*=\pm$ and $1\le k\le N$, are all continuous on some compact set $K_\#\subset M_\#$ with $\mu_\#(K_\#)>1-\gamma^2$. Let 
 $$
    \hK_+:=\{(x,E^+_x):x\in K_\#\},\;
    \hK_-:=\{(x,E^-_x):x\in K_\#\}, \text{ and }
    \hK_\pm:=\hK_-\sqcup\hK_+.
 $$
These are compact subsets of $\hM$. By Proposition~\ref{prop-lift-projective}, $\hmu_\#(\hK_\pm)=1$.

\subsection{First $N$ iterates starting near $\hK_\pm$}

The set $\hK_\pm$ is the disjoint union of the graphs of two continuous functions. Hence, near $\hK_\pm$, a tight control for the distance on the surface (up to a very small $\eps$) will give some (less precise) control on the projective lift (up to a small $\heps$), as stated in the lemma below:

\begin{lemma}\label{lem-W-neigh}
Given $N\ge1$, $\delta>0$, let $K_\#,\hK_+,\hK_-$ be as above. Set
 $$
   \heps:=\frac{1}{10}\min(\delta,\eps_Y,\eps_*,d(\hK_+,\hK_-))>0,
 $$
where $\eps_Y$ and $\eps_*$ are given by Corollary~\ref{cor-Yomdin} and Lemma~\ref{lem-ctrl-len}.

Then, there is $0<\eps(N,\heps,K_\#)<\delta$ such that, for any small enough  open neighborhood $\hW_\#$ of $\hK_\pm$ with $\hmu(\partial W_\#)=0$, we have, 
 $$
    \forall 0\le k\le N\quad \diam(\hf^k(\hsigma\cap \hW_\#)) < \heps
 $$
where $\sigma$ is any regular curve $\sigma$ with $C^r$ size at most $(\eps,\heps)$.
\end{lemma}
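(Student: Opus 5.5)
The plan is to use that $\hK_\pm$ is the disjoint union of the graphs of the two maps $x\mapsto E^\pm_x$ over the compact set $K_\#$. By the choice of $K_\#$, for each $0\le k\le N$ the maps $x\mapsto (f^kx,E^\pm_{f^kx})=\hf^k(x,E^\pm_x)$ are continuous on $K_\#$, hence uniformly continuous there. Uniform continuity will let a very small distance on the surface control the projective coordinate along the first $N$ iterates. The definition of $\heps$ ensures that $\hK_+$ and $\hK_-$ are more than $10\heps$ apart, so a small piece of a curve cannot be close to both sheets at once.

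\textbf{Step 1 (choice of $\eps$).} By uniform continuity of $\hf^k\circ\Gamma^\pm$ on $K_\#$, $0\le k\le N$, and of $\hf^k$ on the compact $\hM$, I pick $\rho>0$ such that:
\begin{itemize}
\item[--] points $\hx,\hy\in\hM$ with $d(\hx,\hy)<\rho$ satisfy $d(\hf^k\hx,\hf^k\hy)<\heps/10$ for $0\le k\le N$;
\item[--] points $x,y\in K_\#$ with $d(x,y)<\rho$ satisfy $d(\hf^k(x,E^\pm_x),\hf^k(y,E^\pm_y))<\heps/10$.
\end{itemize}
I then choose $\eps<\min(\rho,\delta)/10$, also below the scale where the projection $\hpi$ restricted to a neighborhood of $\hK_\pm$ is well controlled.

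\textbf{Step 2 (choice of $\hW_\#$).} I take $\hW_\#$ inside the $\eps$-neighborhood of $\hK_\pm$, which is what ``small enough'' means here. The condition $\hmu(\partial\hW_\#)=0$ is achieved by using metric balls of radii chosen outside the countable set of radii with atoms.

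\textbf{Step 3 (sheet selection).} Fix $\sigma$ with $C^r$ size at most $(\eps,\heps)$. Then $\diam\sigma<\eps$, and since $\|\hsigma\|_{C^{r-1}}\le\heps$ I get $\diam\hsigma\le\heps$ up to a constant. Any $\hy\in\hsigma\cap\hW_\#$ is $\eps$-close to some $(x,E^\pm_x)$ with $x\in K_\#$. Because $\diam\hsigma\lesssim\heps$ while $d(\hK_+,\hK_-)>10\heps$, all such approximating points lie on the same sheet, say $\hK_+$, possibly after shrinking $\heps$'s constant by a fixed factor; this is the step I expect needs care. Concretely, if $\hy,\hy'$ approximate $(x,E^+_x)$ and $(x',E^-_{x'})$, the triangle inequality would force $d(\hK_+,\hK_-)\le 2\eps+\diam\hsigma$, a contradiction.

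\textbf{Step 4 (conclusion).} For $\hy,\hy'\in\hsigma\cap\hW_\#$, approximated by $(x,E^+_x)$ and $(x',E^+_{x'})$, I have $d(x,x')\le d(x,\hpi\hy)+\diam\sigma+d(\hpi\hy',x')<3\eps<\rho$. Hence each of the three distances
\[
d(\hf^k\hy,\hf^k(x,E^+_x)),\quad d(\hf^k(x,E^+_x),\hf^k(x',E^+_{x'})),\quad d(\hf^k(x',E^+_{x'}),\hf^k\hy')
\]
is less than $\heps/10$. Summing them gives $\diam\hf^k(\hsigma\cap\hW_\#)<\heps$ for $0\le k\le N$.

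The main obstacle is Step 3: the only available a priori bound on $\diam\hsigma$ is of order $\heps$, which is exactly the separation scale. If the constants fail, I would replace the separation condition with $d(\hK_+,\hK_-)>10\heps$ as stated and use $\diam\hsigma\le\heps$ directly in the triangle inequality.
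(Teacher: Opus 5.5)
Your proposal is correct and is essentially the paper's argument: uniform continuity of $x\mapsto\hf^k(x,E^\pm_x)$ on $K_\#$ for $0\le k\le N$ fixes $\eps$, the separation $10\heps\le d(\hK_+,\hK_-)$ forces points close to $\hK_\pm$ and close to each other onto the same sheet, and the estimate is then transferred to a small neighborhood of $\hK_\pm$. You carry out this last step explicitly, by approximation and uniform continuity of $\hf^k$ on $\hM$, where the paper simply invokes compactness of $\hM$. The constant you worry about in Step 3 (that $C^r$ size controls diameter) is equally implicit in the paper, which just uses $d(\hx,\hy)\le\heps$ and $d(x,y)\le\eps$ for points of $\hsigma$; the factor $\frac1{10}$ in the definition of $\heps$ provides the slack.
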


\begin{proof}
By the continuity of $x\mapsto E^\pm_x$ over $K_\#$, there is $0<\eps<\heps$ such that, for all $x,y\in K_\#$ with $d(x,y)\le\eps$, letting $\hx^+:=(x,E^+_x)$ and $\hx^-:=(x,E^-_x)$, one has
 $$
    \forall 0\le k\le N\quad
       d(\hf^k(\hx^+),\hf^k(\hy^+))<\heps/2 \text{ and }
       d(\hf^k(\hx^-),\hf^k(\hy^-))<\heps/2.
  $$
Since $\heps<d(\hK_+,\hK_-)$, for all $\hx,\hy\in\hK_\pm$ with $d(\hx,\hy)\le\heps$, it must be that
 $$
    (\hx=\hx^+ \text{ and } \hy=\hy^+) \text{ or }
    (\hx=\hx^- \text{ and } \hy=\hy^-).
 $$ 
Hence:
 $$
   \left.\begin{array}{c}
        \hx,\hy\in\hK_\pm\\
        d(\hx,\hy)\le\heps \text{ and }
        d(x,y)\le\eps 
        \end{array}\right\} \implies
    \forall 0\le k\le N\quad f(\hf^k\hx,\hf^k\hy)<\heps.
 $$
Using the compactness of $\hM$, we find $\delta_{*}>0$ such that
 $$
   \left.\begin{array}{c}
        \hx,\hy\in B(\hK_\pm,\delta_{*})\\
        d(\hx,\hy)\le\heps \text{ and }
        d(x,y)\le\eps 
        \end{array}\right\} \implies
    \forall 0\le k\le N\quad f(\hf^k\hx,\hf^k\hy)<\heps,
 $$
where $B(E,r):=\{x:d(E,x)<r\}$. We conclude by letting $\hW_\#:=B(\hK_\pm,\delta_{**})$ for some $0<\delta_{**}\le\delta_*$ such that $\hmu(\partial B(\hK,\delta_{**}))=0$.
\end{proof}

\section{Control of the first iterates near $\hmu_0$}

Since all Lyapunov exponents vanish $\mu_0$-a.e., Ruelle-Margulis inequality implies that the entropy of $\mu_0$ vanishes. Since $\hpi:\hM\to M$ preserves entropy, this is also the case for $\hmu_0$. The following is folklore (but the proof is instructive):

\begin{lemma}\label{lem-zero-entropy}
If $\hnu\in\Prob(\hf)$ satisfies $h(\hf,\hnu)=0$, then for any $\overline{\gamma},\overline{\epsilon},\heps>0$ and large enough integer $n\gg 1$, there is a compact set $\hM_0$ with $\hnu(\hM_0)>1-\overline{\gamma}$ such that
 $$
     \frac1{n}\log r_f(\hM_0,\overline{\eps},\heps/2,n) < \frac{\eta}{10}.
 $$
\end{lemma}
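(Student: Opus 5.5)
We plan to reduce the statement to the definition of Kolmogorov--Sinai entropy through dynamical balls (Definition~\ref{def-hKS-Katok}), applied to the ergodic components of $\hnu$, and then glue the components together by a compactness and counting argument. The ball $B_{\hf}(\hx,\overline{\eps},\heps/2,n)$ contains the ordinary $\hf$-dynamical ball of radius $\rho:=\min(\overline{\eps},\heps/2)$, after fixing the metric on $\hM$ so that $d(\hpi\hx,\hpi\hy)\le d(\hx,\hy)$. It therefore suffices to cover a set of measure $>1-\overline{\gamma}$ by at most $e^{n\eta/10}$ balls $B_{\hf}(\cdot,\rho,n)$.

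\emph{Step 1 (ergodic case).} Let $\hnu$ be ergodic with $h(\hf,\hnu)=0$. By Definition~\ref{def-hKS-Katok}, $h(\hf,\hnu,\rho)=0$. That definition only guarantees sets of measure $>1/2$, so we upgrade to measure $>1-\overline{\gamma}$ by Katok's standard argument. Take a finite partition $P$ with small boundary and diameter $<\rho$. Then $h(\hf,\hnu,P)\le h(\hf,\hnu)=0$. By the Shannon--McMillan--Breiman theorem (Theorem~\ref{thm-SMB}), for large $n$ the set of $\hx$ with $\hnu(P^n(\hx))\ge e^{-n\eta/20}$ has measure $>1-\overline{\gamma}/2$. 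This set is covered by at most $e^{n\eta/20}$ cylinders $P^n(\hx)$, and each cylinder lies in a single dynamical ball of radius $\rho$. Replacing each cylinder by a compact subset of nearly full measure (inner regularity) produces $\hM_0$.

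\emph{Step 2 (non-ergodic case).} The same SMB argument works without ergodicity. Theorem~\ref{thm-SMB} gives $h(\hf,P,\hx)$ a.e. with $\int h(\hf,P,\hx)\,d\hnu=h(\hf,\hnu,P)\le h(\hf,\hnu)=0$, where $h(\hf,\hnu)=\int h(\hf,\hnu_\xi)\,d\xi=0$. Since $h(\hf,P,\hx)\ge0$, it vanishes a.e. Convergence in measure then gives, for all $n\ge n(\overline{\gamma})$, a set of measure $>1-\overline{\gamma}/2$ on which $\hnu(P^n(\hx))\ge e^{-n\eta/20}$. Counting cylinders as in Step 1 gives at most $e^{n\eta/20}$ of them, and we again take a compact subset. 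The result is the bound $\frac1n\log r(\hM_0,\overline{\eps},\heps/2,n)\le\eta/20<\eta/10$ for all large $n$.

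\emph{Main obstacle.} The delicate point is that the statement asks for the estimate at \emph{every} large $n$, not along a subsequence, and at a fixed positive scale. Dynamical-ball entropy only provides a $\limsup$ at scale $\rho\to0$, which is exactly why we route through partitions and the SMB theorem: it gives a.e. convergence, hence convergence in measure for all large $n$ simultaneously. We also need $h(\hf,\hnu,P)\le h(\hf,\hnu)$ for our definition of entropy. This holds since $\hpi$ preserves entropy and $h(\hf,\hnu)=h(f,\hpi_*\hnu)=0$, combined with the classical identification of Katok's entropy with the partition entropy. We will assume that identification, which the paper mentions in the remark after Theorem~\ref{thm-SMB}.
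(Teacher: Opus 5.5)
Your proposal is correct and follows the paper's proof: a finite partition whose pieces are small at the double scale, the non-ergodic Shannon--McMillan--Breiman theorem forcing $h(\hf,P,\cdot)=0$ a.e. because its integral is at most $h(\hf,\hnu)=0$, and then counting the cylinders of $P^n$ of measure at least $e^{-n\eta/20}$. The differences are inessential. Your Step 1 is subsumed by Step 2, where the paper simply dismisses the ergodic case. You use convergence in measure for each fixed $n$ rather than a set uniform in $n$, and you add an inner-regularity step to make $\hM_0$ compact. Rather than changing the metric on $\hM$, uniform continuity of $\hpi$ already gives a $\rho>0$ for which ordinary $\rho$-balls in $\hM$ lie inside the doubly scaled balls.
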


\begin{proof}
The above is obvious from Bowen's formula if $\hnu$ is ergodic. The general case requires a bit of extra work.
Let $P$ a finite partition of $\hM$ into measurable sets,  each contained in doubly scaled balls of size $(\overline{\eps}/2,\heps/4)$.

The Shannon-McMillan-Breiman theorem and more precisely its non-ergodic version (Theorem~\ref{thm-SMB}), shows that, for $\hnu$-a.e. $x\in\hM$, we have
 $$
     \lim_{n\to\infty}  -\frac1n\log \hnu(P^n(x)) = h(x)\;,
 $$
for some $\hf$-invariant, integrable nonnegative function such that $\int h\,d\mu=0$. Obviously $h=0$ a.e., so this implies that there are a set $\hM_0\subset \hM$ and an integer $N_0$ such that $\nu(\hM_0)>1-\overline{\gamma}$ and for all $n\ge N_0$, the following holds:
 $$
     \forall x\in\hM_0 \quad  \hnu(P^n(x)) > e^{-n\eta/10}.
 $$
Hence, the set of elements of $P^n$ that intersect $\hM_0$ has cardinality less than $e^{n\eta/10}$ and joint measure at least that of $\hM_0$. But each element in this set is contained in an $(\overline{\eps},\heps/2,n)$-dynamical ball, so the minimal number of balls covering $\hM_0$ is less than $e^{n\eta/10}$.
\end{proof}

This lemma applied to $\hmu_0$ gives a large multiple integer $n$ of $N$ (in particular $n_*\ge2N$)  and a compact set $\hK_0$ such that 
 $$
    \hmu_0(\hK_0)>1-\gamma^2 \text{ and } \frac1{n}\log r_{\hf}(\hK_0,n,\eps/3,\heps/3) < \frac{\eta}{10}.
 $$
\newcommand\hC{\widehat{C}}
Denote by $\hC_0$ the cover implicit in the definition of $r_{\hf}(\hK_0,n,\eps/3,\heps/3)$. Pick $\eps/3<\rho\le\eps/2$ and $\heps/3<\widehat{\rho}\le\heps/2$ such that, 
 $$
   \hW_0:=\bigcup_{\hx\in\hC_0} B_{\hf}(\hx,n,\rho,\widehat{\rho}) \text{ satisfies } \hmu(\partial\hW_0)=0.
 $$
We set $n_*:=n$ and note, in conclusion, that the open set $\hW_0\subset\hM$ satisfies:
 \begin{equation}\label{eq-hW0}
    \hmu_0(\hW_0)>1-\gamma^2,\; \hmu(\partial\hW_0)=0,\;   \text{ and }
    \frac{1}{n_*}\log r_{\hf}(\hW_0,n_*,\eps/2,\heps/2) < \frac{\eta}{10}.
   \end{equation}

\section{Construction of the reparametrization families}

This section will conclude the proof of the main Theorem~\ref{thm-reparam-neutral} of this chapter. 

\smallbreak

For convenience, we start by summarizing what we have achieved so far. We were given $\delta,\eta>0$, $\gamma>0$ small enough, $N\ge1$ large enough, and $\hmu\in\Prob(\hf)$ satisfying the condition \eqref{eq-measure-hyp}. We have already determined numbers $0<\eps,\heps\le\delta$ as in Claim (a) of the theorem,  a large multiple $n_*$ of $N$ as in \eqref{eq-hW0}. We have also obtained open subsets $\hW_\pm,\hW_0\subset\hM$ with large measure on which  hyperbolicity or zero entropy gave some control. 

We set:
 $$
   \hU_0:=\hW_\#\cup\hW_0.
 $$
Note that $\hmu(\hU_0)\ge a\cdot\hmu_\#(\hW_\#)+(1-a)\cdot\hmu_0(\hW_0)>1-\gamma^2$ and $\hmu(\partial\hU_0)\le\hmu(\partial\hW_\#)+\hmu(\partial\hW_0)=0$, i.e., the set $\hU_0$ satisfies the Claim (b) of the theorem.

We now consider a regular curve $\sigma$ with $C^r$ size at most $(\eps,\heps)$ and some large integer $n\ge n_0$ ($n_0$ to be determined). To prove the theorem, we need to find an admissible reparametrization over the part defined in \eqref{eq-T-neutral}, that is,  tangent vectors which are neutral over the integer interval $\ii{0,n}$ and are $\hmu$-typical in the sense that they belong to some $\hmu$-large open set $\hU_0:=\hW_\#\cup\hW_0$ except for a small fraction of the integer interval.

\subsection{Partitionning orbits}
For each tangent vector $\hx$, we decompose $\ii{0,n}$ into segments starting by a visit to $\hW_\#$ (so the derivative controls the expansion) or a visit to $\hW_0$ (so the entropy vanishes). Recall that we are considering a \new{good orbit segment}, that is, $(\hx,\dots,\hf^{n-1}\hx)$  such that
 \begin{equation}\label{eq-good-orbit}
    \frac1n\#\{0\le k<n:\hf^k\hx\in\hU_0\}>1-\gamma.
  \end{equation}

\begin{lemma}\label{lem-type}
Any good orbit segment with length $n>\gamma^{-1}n_*$ can be decomposed into disjoint orbit sub-segments of the following three kinds:
 \begin{enumerate}
   \item of length $N$ with initial point in $\hW_\#$;
   \item of length $n_*$ with initial point in $\hW_0$;
   \item of length $1$.
 \end{enumerate}
such that the orbit segments of kind (3) altogether contain at most $2\gamma n$ elements.
\end{lemma}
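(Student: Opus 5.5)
A greedy left-to-right scan of $\ii{0,n}$ should do it. Set $k:=0$ and repeat while $k<n$. If $\hf^k\hx\in\hW_\#$ and $k+N\le n$, declare $\ii{k,k+N}$ a segment of kind (1) and set $k:=k+N$. Otherwise, if $\hf^k\hx\in\hW_0$ and $k+n_*\le n$, declare $\ii{k,k+n_*}$ a segment of kind (2) and set $k:=k+n_*$. Otherwise declare $\{k\}$ a segment of kind (3) and set $k:=k+1$. Since $\hU_0=\hW_\#\cup\hW_0$, every point of the orbit lies in $\hW_\#$, in $\hW_0$, or outside $\hU_0$. The segments are disjoint and cover $\ii{0,n}$, and each has the required length and initial point. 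It remains to count the kind (3) elements.

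A time $k$ becomes a kind (3) segment in only two ways. The first is $\hf^k\hx\notin\hU_0$; by the goodness condition \eqref{eq-good-orbit} there are fewer than $\gamma n$ such times. The second is $\hf^k\hx\in\hU_0$ but the segment it would start does not fit, i.e. $k>n-\max(N,n_*)=n-n_*$, using $n_*\ge 2N\ge N$. In that case $k$ lies in the terminal window $\ii{n-n_*,n}$, which contains at most $n_*$ integers. Because $n>\gamma^{-1}n_*$, we get $n_*<\gamma n$.

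Altogether the kind (3) elements number fewer than $\gamma n+\gamma n=2\gamma n$. There is no real obstacle here. The only thing to handle is the end effect at the right edge of $\ii{0,n}$, and the hypothesis $n>\gamma^{-1}n_*$ is exactly what absorbs it. The other point to note is that the greedy choice never skips a time: it is always at a current position $k$ and advances by exactly the length of the segment it declares, so nothing is double-counted.
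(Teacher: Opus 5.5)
Your proposal is correct and is essentially the paper's proof. The paper uses the same greedy inductive choice of splitting times, which is kind (1) if $\hf^{k}\hx\in\hW_\#$ and $k+N\le n$, kind (2) if $\hf^{k}\hx\in\hW_0\setminus\hW_\#$ and $k+n_*\le n$ (equivalent to your ``otherwise'' clause, since $N\le n_*$), and kind (3) otherwise; it bounds the kind (3) times the same way you do, as either outside $\hU_0$ (fewer than $\gamma n$) or in the terminal window $\ii{n-n_*,n}$ (at most $n_*<\gamma n$).
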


The \emph{kind} of each sub-segment defined above will determine how it is controlled. Below we will define the \emph{type} of a segment which will describe how the segment is split into subsegments and the kinds of each.

\begin{proof}
We define inductively times $n_0=0<\dots<n_\ell=n$, setting $n_0:=0$ and then
  \begin{enumerate}
   \item $n_{i+1}:=n_i+N$ if $\hf^{n_i}\hx\in\hW_\#$ and $n_i+N\le n$;
   \item $n_{i+1}:=n_i+n_*$ if $\hf^{n_i}\hx\in\hW_0\setminus\hW_\#$ and $n_i+n_*\le n$;
   \item $n_{i+1}:=n_i+1$ otherwise.
 \end{enumerate}
Note that this defines a decomposition into the three announced kinds.
We stop when $i+1=\ell$. 
Case (3) only occurs outside $\hU_0:=\hW_0\cup\hW_\#$ or within the integer interval $\ii{n-n_*,n}$. By \eqref{eq-good-orbit}, the first possibility occurs less than $\gamma\cdot n$ times. Thus, case (3) occurs at most $\gamma\cdot n+n_*<2\gamma\cdot n$ times.
\end{proof}

The finite sequence of splitting spots in Lemma~\ref{lem-type}, $(n_i)_{i=0}^\ell$ (in the notations of the above proof) is called the \new{type of the orbit segment}. Since the three lengths $1<N<n_*$ are pairwise distinct, the type uniquely defines which kind (1), (2), or (3) occur and at which positions.
We define
 $$
    \Theta(n) := \{(n_i)_{i=0}^\ell:\forall 0\le i<\ell\; n_{i+1}-n_i\in\{N,n_*,1\},\; n_0=0, \text{ and }n_\ell=n\}.
 $$
Since the sub-segments $\ii{n_i,n_{i+1}}$ are either very long or few (depending on their kind), an easy combinatorial estimate yields:

\begin{lemma}\label{lem-fewtypes}
There is some $N_1\ge1$ such that, for all integers $n\ge N_1$, the number of possible types of good orbit segments of length $n$ is bounded by $\exp (H(4\gamma)n)$.
\end{lemma}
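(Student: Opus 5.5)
The plan is to count types by encoding each one through a small amount of data and then applying the standard binomial bound $\binom{n}{k}\le e^{H(k/n)n}$ for $k\le n/2$. Here $H(t)=-t\log t-(1-t)\log(1-t)$ is the entropy function already used in the choice of $\gamma_0$. By Lemma~\ref{lem-type}, it suffices to consider types $(n_i)_{i=0}^\ell\in\Theta(n)$ whose kind-(3) steps, i.e.\ increments equal to $1$, number at most $2\gamma n$. Such a type is determined by the set $S:=\{n_0,\dots,n_{\ell-1}\}\subset\ii{0,n}$ of starting times of the sub-segments, together with the kind of each sub-segment.

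\textbf{Step 1: bounding the data.} Fix an admissible type. The number of starting times of kind (1) is at most $n/N$, those of kind (2) number at most $n/n_*\le n/N$, and those of kind (3) number at most $2\gamma n$. Hence $\#S\le 2n/N+2\gamma n\le 4\gamma n$, using $N\ge N_0>1/\gamma$. Since the three lengths $1<N<n_*$ are distinct, the kinds are recovered from the gaps between consecutive elements of $S$ and the endpoint $n$. So $S$ alone determines the type, and the map from types to subsets of $\ii{0,n}$ of cardinality at most $4\gamma n$ is injective.

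\textbf{Step 2: counting.} The number of types is therefore at most $\sum_{k\le 4\gamma n}\binom{n}{k}$. Since $4\gamma\le 4\gamma_0<1/2$ (which follows from $H(4\gamma_0)<\eta/10$ for small $\eta$, or else I impose it explicitly), the classical estimate $\sum_{k\le tn}\binom nk\le e^{H(t)n}$ for $t\le1/2$ yields the bound $\exp(H(4\gamma)n)$. This estimate follows from $1=(t+(1-t))^n\ge\sum_{k\le tn}\binom nk t^k(1-t)^{n-k}\ge \sum_{k\le tn}\binom nk t^{tn}(1-t)^{(1-t)n}$. Finally, I take $N_1>\gamma^{-1}n_*$, so that Lemma~\ref{lem-type} applies for all $n\ge N_1$.

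\textbf{Main obstacle.} The only delicate point is the margin. If the estimate of $\#S$ were sloppy (for instance $2\gamma n+2n/N$ with $N$ not large compared to $1/\gamma$), the resulting exponent might not match $H(4\gamma)$. This is handled by the parameter choice $N_0>1/\gamma$, which gives $2n/N\le 2\gamma n$. Should a factor of $n$ appear (say from also choosing $\ell$), it can be absorbed by using the sharper sum bound above, or by slightly increasing $N_1$.
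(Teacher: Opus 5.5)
Your proof is correct and follows the approach the paper points to (the proof of Lemma~\ref{lem-combi}): you encode a type by its set of splitting times, which has at most $2n/N+2\gamma n\le 4\gamma n$ elements, and then count these subsets with a binomial estimate. The paper's route through Lemma~\ref{lem-combi} uses the $(en/k)^k$ bound; your tail bound $\sum_{k\le tn}\binom nk\le e^{H(t)n}$ for $t\le 1/2$ yields the stated exponent $H(4\gamma)$ more directly. The hypothesis $t\le1/2$ holds because the constraint $\exp(\eta/10\gamma_0)>3Y\ge3$ forces $\gamma_0<\eta/10$. Your stated reason, $H(4\gamma_0)<\eta/10$, would not suffice on its own, since $H$ is also small near $1$.
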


The curious reader may take a look to the proof of the similar Lemma~\ref{lem-combi}.

\subsection{Inductive scheme}
Combining the lower bounds $n_*$ from Lemmas \ref{lem-type}~and~\ref{lem-fewtypes}, we set:
 $$
    \overline{n}_0 := \lceil\max(n_*/\gamma,N_1)\rceil.
 $$
 
 Let $\sigma$ be a regular $C^r$ curve with $C^r$ size at mot $(\eps,\heps)$ and choose $n\ge \overline{n}_0$.  To prove the theorem we need to build a reasonably sized family of admissible reparametrizations up to time $n$ over the set $T\subset[0,1]$ defined in \eqref{eq-T-neutral}. 
 
By lemma~\ref{lem-type}, orbit segments of length $n$ can be split according to their types $\theta=(n_0,\dots,n_\ell)\in\Theta(n)$. Lemma~\ref{lem-fewtypes} and the choice of $\gamma$ give:
 $$
   \forall n\ge N_1\; \#\Theta(n)\le \exp[H(4\gamma)n] < e^{(\eta/10)n}.
 $$
We split the parameters from \eqref{eq-T-neutral} accordingly:  $T=\bigsqcup_{\theta\in \Theta(n)} T_\theta$, where $T_\theta$ is the subset of $t\in T$ with type $\theta$.

We fix some $\theta\in\Theta(n)$. We are going to build, by induction on $i=0,\dots,\ell$, families $\cR^\theta_i$ of $(C^r,\eps,\heps,N)$-reparametrization over $T_\theta$ up to time $n_i$ satisfying
 \begin{equation}\label{eq-reparam-block}
    \# \cR^\theta_i \leq \#\cR^\theta_{i-1} \times \alter{
      \exp\left(\left(\frac{\lambda(\hf)}{r-1}+\frac{7\eta}{10}\right)\Delta_i\right) & \text{ if }\Delta_i:=n_i-n_{i-1}=N \text{ or }n_*,\\
      \exp(\eta/\gamma) & \text{ if }\Delta_i = 1
      }
  \end{equation}
and with lengths of their iterates bounded as follows
 \begin{equation}\label{eq-reparam-contract}
   \forall 0\le i\le\ell\quad |f^{n_i}\circ\sigma\circ\psi| < \eps e^{-\frac{\eta}{10}n_i} \|Df^{n_i}|\hx\|,
  \end{equation}
for any $\hx\in\hsigma\circ\psi([0,1])$.

\medbreak

We say ``admissible reparametrization'' or ``$(C^r,\eps,\heps,N)$-reparametrization'' to mean ``$(C^r,\eps,\heps,N)$-admissi\-ble re\-pa\-ra\-me\-trization''.

\subsection{Inductive step}
Let  $1\le i\le\ell$.
Assume that there is some family $\cR_{i-1}$ of admissible reparametrizations over $T_\theta$ up to time $n_{i-1}$ satisfying \eqref{eq-reparam-block}-\eqref{eq-reparam-contract}. We want to refine this family to make it admissible up to time $n_i$. We are going to apply the concatenation lemma~\ref{lem-concat} to each $\psi\in\cR_{i-1}$ and a (yet to be determined) family $\cR_\psi$ of admissible reparametrizations of the curve $\sigma':=f^{n_i}\circ\sigma\circ\psi$ over $T':=\psi^{-1}(T_\theta)$. The construction of each family will depend on the kind of the subsegment $\ii{n_{i-1},n_i}$.

\medbreak

Let us check that in each of the three cases, $\hf^N\circ \hsigma'(T')$ is contained in the union of a controlled number of doubly scaled $(\eps,\heps)$-balls.

\medbreak

\subsubsection*{Case $n_i-n_{i-1}=N$} 
In this case, $\hf^{n_{i-1}}(\sigma'(T'))\subset\hW_\#$: the subsegment is of kind (1). Using \eqref{eq-reparam-contract} for $i-1$ and Lemma~\ref{lem-ctrl-len} applied to $f^N$, $\sigma'$, and $\psi$, we obtain, for any $\hx'\in\hsigma'([0,1])$,
 $$\begin{aligned}
   |f^N\circ\sigma'\circ\psi| &< e^{\eta N/10} \| Df^N (\hx') \| \cdot |\sigma'\circ\psi| = e^{\eta N/10} \| Df^N (\hx') \| \cdot |f^n\circ\sigma\circ\psi| \\ 
     &< \eps \cdot e^{\eta N/10} e^{-\frac{\eta}{10}n_{i-1}} \cdot \|Df^{n_{i-1}}(\hx)\|\cdot \| Df^N (\hx') \| \\
     & = \eps \cdot e^{\eta N/5} e^{-\frac{\eta}{10}n_i} \cdot \|Df^{n_{i}}(\hx)\|\;,
 \end{aligned}$$
where $\hx:=\hf^{-n_{i-1}}(\hx')$. We have used the equality $\| Df^N (\hx')\|\cdot \|Df^{n_{i-1}}(\hx)\| = \|Df^{n_{i}}(\hx)\|$.

As usually in Yomdin theory, we cancel the factor $e^{\eta N/5}$ by an \new{affine subdivision}: introducing $\lceil e^{\eta N/5} \rceil\le e^{3\eta N/10}$ affine maps $\phi_j:[0,1]\to I_j$, each $e^{-\eta N/5}$ being contracting and such that the union of their images covers $T'$. For every such subdivision, selecting $\hx_j\in \psi(I_j\cap T_\theta)$, we get
 \begin{equation}\label{eq-contract2}
    |f^{n_i}\circ\sigma\circ\psi\circ\phi_j| < \eps e^{-\frac{\eta}{10}n_i} \cdot \|Df^{n_{i}}(\hx_j)\| \le \eps\;,
  \end{equation}
since $(\hx_j,\dots,\hf^{n-1}\hx_j)$ is a $\frac{\eta}{10}$-neutral orbit segment ($\hx_j\in \psi(T_\theta)$). In particular the projection on $M$ of $f^{n_i}\circ\sigma\circ\psi\circ\phi_j([0,1])$ is contained in an $\eps$-ball.

By the inductive assumption, the curve $\sigma'$ is regular with $C^r$-size at most $(\eps,\heps)$, thus Lemma~\ref{lem-W-neigh} gives
 $$
     \diam(\hf^N(\hsigma'(T'_\theta\cap I_j)))\le \diam(\hf^N(\hsigma'(T'_\theta\cap I_j) \cap \hW_\#)) < \heps.
 $$
In particular, $\hf^N\circ\hsigma'(T'\cap I_j)$ is contained in a doubly scaled $(\eps,\heps)$-ball, hence  $\hf^{n_i}\circ\hsigma\circ\psi(T'_\theta)$ is contained in $\lceil e^{\eta N/5}\rceil$ doubly scaled $(\eps,\heps)$-balls, and the claim is proved.

\medbreak

Now, Yomdin Theorem~\ref{thm-Yomdin} applies and yields a family $\cR_{\psi,j}$ of  reparametrizations $\phi$ of $\sigma'$ over $T'_\theta\cap I_j$ such that
 $$
     \#\cR_{\psi,j} \le Y\|D\hf^N\|_{\sup}^{1/(r-1)}\;,
 $$
and each curve $f^N\circ\sigma'\circ\phi$ has $C^r$ size at most $(\eps,\heps)$, so that each reparametrization $\phi$ is $(C^r,N,\eps,\heps)$-admissible.

\medbreak

The concatenation Lemma~\ref{lem-concat} gives a family $\cR_{i}$ of $(N,\eps,\heps)$-admissible repara\-me\-trizations up to time $n_i$ with cardinality
 $$\begin{aligned}
    \#\cR_i &\leq \#\cR_{i-1} \cdot Y \|D\hf^N\|_{\sup}^{1/(r-1)} \times e^{3\eta N/10} \\
     &\leq \#\cR_{i-1} \cdot \exp\left((\lambda(\hf)/(r-1)+7\eta/10)\Delta_i\right).
 \end{aligned}$$

\medbreak
 
Since the reparametrizations are contractions, \eqref{eq-reparam-contract} follows from \eqref{eq-contract2}. The inductive step is complete in this case.

\medbreak

\subsubsection*{Case $n_i-n_{i-1}=n_*$}
As before, fix one $\psi\in\cR_{i-1}$. In this case, $\hsigma'(T'_\theta)\subset\hW_0$, hence the subsegment is of kind (2) and we have
 $$
    r_{\hf}(\hsigma'(T'_\theta),\eps,\heps,n_*) \le e^{\eta n_*/10}.
 $$
We apply Corollary~\ref{cor-Yomdin} of Yomdin theory and obtain a family $\cR_\psi$ of admissible reparametrizations with
 $$\begin{aligned}
   \#\cR_\psi &\le Y^{\lceil n_*/N\rceil} \cdot \| D\hf^N\|_{\sup}^{\lfloor n_*/N\rfloor}/(r-1) \cdot \max_{0\le k<N} \|D\hf^k\|_{\sup}^{1/(r-1)} \\
   &\qquad\qquad\qquad\qquad \times r_{\hf}(\hsigma'(T'_\theta),\eps,\heps,n_*) \\
   &\le \max_{0\le k<N} \|D\hf^k\|_{\sup}^{1/(r-1)} \times \left(2\cdot Y^{1/N} \cdot e^{\lambda(\hf)/(r-1)} \cdot e^{\eta/5}\right)^{n_*} \\
   &\le e^{(\lambda(\hf)/(r-1) +4\eta/10) n_*}.
  \end{aligned}$$
 \medbreak

To ensure \eqref{eq-reparam-contract}, we proceed as in the previous case by applying Lemma~\ref{lem-ctrl-len} and using a bounded affine subdivision. This concludes the inductive step for this second case.

\medbreak
 
\subsubsection*{Case $n_i-n_{i-1}=1$}
The subsegment is of kind (3).
By our inductive assumption, we have the following control on lengths:
 $$
   \diam(\hsigma')<\heps \text{ and }|\sigma'|<\eps \cdot e^{-\eta n_{i-1}/10}\|D_xf^{n_{i-1}}|_E\|\;,
 $$
for any $(x,E)\in\sigma$. To bound the derivatives $D\hf^{n_{i-1}+1}$ and $Df^{n_{i-1}+1}$, we use an affine subdivision into $m:=\lfloor\|D\hf\|_{\sup}\rceil+\lfloor e^{\eta/10}\|Df\|_{\sup}\rceil$ subintervals $I_1,\dots,I_m$ so that
 \begin{itemize}
  \item[$\circ$] $|f\circ \sigma'(I_j)| < \eps \cdot e^{-(\eta/10)n_{i}} \|Df^{n_i}(\hy)\|$, for any $\hy\in\sigma$ with $\hf^{n_i}(\hy)\in\hsigma'(I_j)$;
  \item[$\circ$] $|\hf\circ\hsigma'(I_j)| < \heps$.
 \end{itemize}
We see that each image $\hf\circ\hsigma'(I_k)$ is contained in a doubly scaled $(\eps,\heps)$-ball. Thus we can apply Yomdin theorem \ref{thm-Yomdin} to obtain a family $\cR_j$ of $(C^r,1,\eps,\heps)$-reparame\-tri\-za\-tions of $\sigma'$ with cardinality at most $Y\|D\hf\|_{\sup}^{1/(r-1)}$. We  conclude as before with 
 $$
   \#\left(\bigcup_{j=1}^m \cR_j\right) \leq 3Ye^{\eta/10}\|D\hf\|_{\sup}^{1+1/(r-1)}
     \le e^{\eta/10\gamma}.
 $$

\subsection{Conclusion of the induction}
We start with the trivial reparametrization family: $\cR_0:=\{\Id\}$ which satisfies \eqref{eq-reparam-contract} since $\sigma$ has $C^r$ size at most $(\eps,\heps)$.
The induction yields for $i=\ell$ a family $\cR^\theta_n$ of admissible reparametrizations up to time $n$ such that
 $$\begin{aligned}
   \#\cR^\theta_n &\le \exp\left(\frac{\lambda(\hf)}{r-1}n +\frac{7\eta}{10}n + \frac\eta\gamma\cdot 2\gamma n\right) 
   \le \exp \left(\frac{\lambda(\hf)}{r-1}n +\frac{7\eta}{10}n +  2\eta n\right)\\
   &\le \exp \left(\frac{\lambda(\hf)}{r-1}n + \eta n\right)\,
  \end{aligned}$$
using the bound $2\gamma n$ on the number of orbit subsegments of kind (3) in any given type (see Lemma~\ref{lem-type}).

\medbreak

This proves Theorem~\ref{thm-reparam-neutral}.\qed

\chapter{Proof of the main theorem}\label{chap-proof-main}

This chapter is devoted to the proof of a slightly more precise form of Theorem~\ref{maintheorem-htop} (page \pageref{maintheorem-htop}) in terms of the neutral decomposition in the projective extension. 

\section{Statement}

We can now give a slightly more precise statement, using the projective extension.

\begin{theorem}\label{thm-weak-dec}
Let $f\in\Diff^\infty(M)$ with $M$  a compact surface. Let $\nu_k\in\Proberg(f)$ such that the following limits exist:
 \begin{enumerate}
     \item[(a)] $\lim_{n\to\infty} h(f,\nu_k)>0$;
     \item[(b)] $\hmu:=\lim_{n\to\infty} \hnu_k^+$ in the weak (star) topology of $\Prob(\hf)$;\footnote{Since $h(f,\nu_k)>0$ and $f$ is a surface diffeomorphism, $\nu_k$ is hyperbolic and $\hnu_k^+$ is well-defined.}
  \end{enumerate}
Then there are $\hmu_0,\hmu_1\in\Prob(\hf)$ and $0<\beta\le 1$ such that, writting $\mu_i:=\hpi_*(\hmu_i)$ for $i=0,1$, we have
 \begin{enumerate}
  \item $\hmu=(1-\beta)\hmu_0+\beta\cdot\hmu_1$;
  \item $\lim_{n\to\infty} \lambda^+(f,\nu_k) = \beta \cdot\lambda^+(f,\mu_1)$;
  \item $\lim_{n\to\infty} h(f,\nu_k) \le \beta \cdot h_\top(f)$.
  \end{enumerate}
Moreover, writing $\hvf(x,E):=\log\|Df_x|E\|$, we also have
 \begin{enumerate}
  \item[(4)] $\hmu_1(\hvf) = \lambda^+(\mu_1)$ and $\lambda^+(f,x)>0$ for $\mu_1$-a.e. $x\in M$;
  \item[(5)] if $\beta<1$, then $\hmu_0(\hvf)=0$.
  \end{enumerate}
\end{theorem}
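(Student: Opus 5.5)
The decomposition and items (1), (2), (4), (5) come directly from Theorem~\ref{thm-neutral-decomposition-projective}. Its hypotheses hold because $h(f,\nu_k)\ge h>0$ for large $k$, which forces three things. First, $\nu_k$ is aperiodic, since positive entropy excludes periodic orbits. Second, Ruelle--Margulis gives $\lambda^+(\nu_k)\ge h(f,\nu_k)$, so $\lambda^+(\nu_k)$ is bounded below by some $\chi>0$ along the sequence. Third, $\hnu_k^+$ is defined. I also pass to the admissible subsequence given by Lemma~\ref{lem-adm-seq}. The real work is therefore item (3): I must show that $\lim_k h(f,\nu_k)\le\beta\,h_\top(f)$, where $1-\beta=m_0(\hM)=\lim_{\alpha,L}\lim_k\hnu_k^+(\Neutral(\alpha,L))$.

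I use a Ledrappier--Young counting argument along unstable curves. Fix $\eta>0$, choose $\gamma\le\gamma_0(\eta)$ and $N\ge N_0$, and apply Theorem~\ref{thm-reparam-neutral} to the limit $\hmu$. Hypothesis \eqref{eq-measure-hyp} holds by item (4), since $\beta>0$. This yields $\eps,\heps$, the open set $\hU_0$ with $\hmu(\hU_0)>1-\gamma^2$ and $\hmu(\partial\hU_0)=0$, and $n_0$. Next choose $(\alpha,L)$ with $\alpha<\eta/10$ and $L\ge n_0$ so that $\lim_k\hnu_k^+(\Neutral(\alpha,L))>1-\beta-\eta$. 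Because $\hmu(\partial\hU_0)=0$, weak convergence gives $\hnu_k^+(\hU_0)>1-\gamma^2$ for large $k$. For a $\hnu_k^+$-typical $\hx$, Birkhoff's theorem then shows that on $\ii{0,n}$ the neutral blocks occupy density about $1-\beta\pm\eta$, and that the time outside $\hU_0$ has density at most $\gamma^2$. By Markov's inequality, the neutral blocks on which the fraction of time outside $\hU_0$ exceeds $\gamma$ have total density at most $\gamma$; I discard those blocks and treat them as ``free'' time.

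Now take a $\nu_k$-typical $x$ and a local unstable curve $\sigma$ through $x$ whose $C^r$ double size is at most $(\eps,\heps)$ (rescale if needed). Let $T\subset[0,1]$ be the set of parameters whose orbit follows a fixed pattern on $\ii{0,n}$, meaning a prescribed sequence of good neutral blocks and free segments. The $\mu^u_x$-mass of such a $T$ is positive for some pattern. The number of patterns is subexponential in $n$, by the argument of Lemma~\ref{lem-fewtypes}, because neutral blocks have length at least $L$ and there are therefore few of them. I build reparametrizations inductively using Lemma~\ref{lem-concat}. On a good neutral block of length $\ell\ge n_0$, Theorem~\ref{thm-reparam-neutral} multiplies the count by at most $\exp((\lambda(\hf)/(r-1)+\eta)\ell)$. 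On free segments, Corollary~\ref{cor-Yomdin} together with Lemma~\ref{lem-h-double} bounds the count by $\exp((h_\top(f)+\lambda(\hf)/(r-1)+\eta)\ell)$, using $h_\top(\hf)=h_\top(f)$.

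Proposition~\ref{prop-entropy-ureparam} then gives
$$h(f,\nu_k)\le(\beta+2\eta+\gamma)\,h_\top(f)+\frac{\lambda(\hf)}{r-1}+3\eta.$$
Since $f$ is $C^\infty$, I let $r\to\infty$ and then $\eta,\gamma\to0$. This requires $\eps,\heps$ to be admissible uniformly in $k$, which holds because they depend only on $\hmu$.

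I expect the main obstacle to be the interface between blocks. Theorem~\ref{thm-reparam-neutral} requires the curve entering a neutral block to have double size at most $(\eps,\heps)$, and the reparametrizations output on free segments have exactly this property. The delicate part is that neutrality in Theorem~\ref{thm-reparam-neutral} is stated from the start of the block at threshold $\eta/10$. I would handle this by re-basing each block at its own start, which Definition~\ref{def-abstract-neutral-block} permits, and by absorbing the block endpoints, whose density is at most $2/L$, into the free time.
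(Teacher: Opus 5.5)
Your proposal is correct and follows essentially the same route as the paper. The neutral decomposition (Theorem~\ref{thm-neutral-decomposition-projective}) gives items (1), (2), (4), (5). Item (3) comes from a Ledrappier--Young count of reparametrizations along a typical unstable curve, summed over block patterns as in Lemma~\ref{lem-combi}. This count alternates Theorem~\ref{thm-reparam-neutral} on clean maximal neutral blocks (your Markov step is the paper's clean-block lemma) with the Yomdin/topological-entropy bound of Proposition~\ref{prop-reparam-htop} on the remaining time.

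Two small remarks. For fixed $L$ the number of patterns is not subexponential, but it is at most $e^{\eta n/10}$ once $L$ is large, so you should take $L$ large and not merely $L\ge n_0$. Your explicit passage $r\to\infty$ correctly disposes of the $\lambda(\hf)/(r-1)$ term, which the paper's final line leaves implicit.
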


\begin{exercise}
Deduce Theorem~\ref{maintheorem-htop} from the above theorem.
\end{exercise}

\begin{remark}
In comparison with the general theorem \ref{theoremWithoutErgodicity}, the main simplification above is in the key item (3) where the bound is $\beta\cdot h_\top(f)$ instead of the more natural (and powerful) $\beta\cdot h(f,\mu_1)$ obtained in \cite{BCS-2}.
\end{remark}

\section{Strategy and preparation}

Let $\nu_k\in\Proberg(f)$ with $\lim_k h(f,\nu_k)>0$ and let $\hmu$ be the weak limit of the well-defined unstable lifts $\hnu_k^+$. 

To begin with, note that the Ruelle-Margulis inequality $h(f,\nu_k)\le \lambda^+(f,\nu_k)$ implies $\lim_n \lambda^+(f,\nu_k)>0$, hence we can apply the neutral decomposition as in Theorem~\ref{thm-neutral-decomposition-projective}. We obtain
 $$
    \hmu = (1-\beta)\mu_0 + \beta \mu_1 \text{ where } 0\le\beta\le 1
 $$
together with the above items (1), (2), (4), and (5) except for the claim that $\beta\ne0$. But this one follows from the equality
$\lim_k \lambda^+(\nu_k)=\beta\lambda^+(\mu_1)$  and assumption (a). The rest of this chapter is devoted to the proof of item (3), the entropy bound.

To estimate the entropy of the ergodic measures $\nu_k$ for $k$ large, we use the formula of Ledrappier-Young from Proposition~\ref{prop-entropy-ureparam} which reduces this question to the number of admissible reparametrizations  (Definition~\ref{def-admissible-reparam}) needed to cover a subset of positive measure along a typical unstable manifold (the measure being given by the unstable disintegration).

We stress that relevant estimates (such as the choice of the double scale) must hold for \emph{all} large $k$. This uniformity will come from two sources: the vanishing of the tail entropy from Theorem~\ref{thm-tail-zero} (and more generally Yomdin theory in the $C^\infty$ setting) and the convergence of the measures  together with the neutral decomposition of the limit.

We will inductively iterate the diffeomorphism (or rather some fixed, large iterate, to absorb constants), dividing the reparametrizations as needed to keep them admissible.
Our main tools will be:
 \begin{itemize}
  \item[--] the key Theorem~\ref{thm-reparam-neutral} that needs very little splitting within a neutral block;
  \item[--] the cover of $\hM$ by dynamical balls given by the Bowen-Dinaburg entropy formula (or rather some variant adapted to our double scale);
  \item[--] Yomdin theory to convert control by dynamical balls into control by reparametrization at an arbitrarily small price since the map is $C^\infty$ smooth.
 \end{itemize}
More precisely, the neutral decomposition induces long neutral blocks in $\nu_k$-typical orbits, leaving out a proportion $\beta$. Since these blocks are long, their combinatorics does not change much the entropy and one can assume them to be fixed as usual. 
Then we can inductively refine the family of reparametrizations by applying either Theorem~\ref{thm-reparam-neutral} for each neutral block with almost no splitting, 
or a simpler version, Proposition~\ref{prop-reparam-htop} below, with a the splitting just controlled by the topological entropy. We get an entropy bound of the type $\beta\cdot h_\top(f)$, as announced.

\section{Preparations}

We note that for all large $k\ge1$, by the Ruelle-Margulis inequality, we have
 $$
   \nu_k^+(\hvf) = \lambda^+(\nu_k)>\frac12\lim_k h(f,\nu_k)>0.
 $$
Hence, we can apply Theorem~\ref{thm-neutral-decomposition-projective} so as to obtain
 $$
    \hmu = (1-\beta) \cdot \hmu_0 + \beta\cdot\hmu_1 \qquad (0<\beta\le 1,\; \hmu_0,\hmu_1\in\Prob(\hf))\;.
 $$
We need to show that $\lim_k h(f,\nu_k) \le\beta\cdot h_\top(f)$.
We proceed by contradiction. We can assume that there is $\beta'>\beta$ such that, after passing to a subsequence,
 $$
   \forall k\ge1\quad h(f,\nu_k) > \beta' \cdot h_\top(f).
 $$ 

We fix $\delta,\eta>0$ sufficiently small (in particular smaller than the number $\eps_Y$ from Yomdin theorem~\ref{thm-Yomdin}).
Given  $0<\gamma\ll\beta'-\beta$ small enough and $N$ large enough, Theorem~\ref{thm-reparam-neutral} defines the numbers $0<\eps,\heps\le\delta$, the open set $\hU_0$, and a positive integer $n_0$. 

\begin{remark}
A significant simplification in comparison with \cite{BCS-2} is that we can choose the parameters depending only on Theorem~\ref{thm-reparam-neutral}. This is in contrast to  \cite[p.829]{BCS-2} where, in Step 2 of the proof of Theorem D, one needs to simultaneously take into account the constraints coming from the two reparametrization statements, Proposition 5.1 for non neutral blocs and 5.2 for neutral blocks.
\end{remark}

\section{Reparametrizations for arbitrary orbit segments}

To treat the not necessarily neutral orbit segments (between two consecutive neutral orbit segments), we need some analogue of Proposition 5.1 in \cite{BCS-2}.  Since we only want a bound by the topological entropy, rather than by the entropy of a limiting measure ($\mu_1$ from the neutral decomposition), our estimate  is much simpler.
\medbreak

Recall the number $\eps_Y>0$ defined in Yomdin Theorem~\ref{thm-Yomdin}.
\medbreak

\begin{proposition}\label{prop-reparam-htop}
Let $f$ be a $C^r$ surface diffeomorphism (for some $r\ge 2$). Fix $\eta>0$. Then, for any numbers $0<\eps,\heps\le\eps_Y$, there is an integer $N_1\ge1$ such that the following holds for all $N\ge N_1$.

For any regular curve $\sigma$ with $C^r$ size at most $(\eps,\heps)$ and any integer $n\ge N_1$, there is a family $\cR_n$ of reparametrizations of $\sigma$ such that
 \begin{enumerate}
  \item $\cR_n$ is $(C^r,N,\eps,\heps)$-admissible up to time $n$;
  \item $\#\cR_n \leq \exp\left(h_\top(f)+\frac{\lambda(\hf)}{r-1}+\eta\right)$.
 \end{enumerate}
\end{proposition}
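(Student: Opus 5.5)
The plan is to combine Yomdin's Corollary~\ref{cor-Yomdin} with the doubly scaled entropy formula of Lemma~\ref{lem-h-double}. (The bound in item (2) is to be read, as in Theorem~\ref{thm-reparam-neutral}, as $\exp\left((h_\top(f)+\frac{\lambda(\hf)}{r-1}+\eta)n\right)$.)

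\textbf{Choice of $N_1$.} Fix $0<\eps,\heps\le\eps_Y$. Lemma~\ref{lem-h-double} and $h_\top(\hf)=h_\top(f)$ (from \eqref{eq-h-M-hm}) give
 $$
   \limsup_{m\to\infty}\frac1m\log r_{\hf}(\hM,\eps,\heps,m)\le h_\top(f).
 $$
This holds at the fixed scale $(\eps,\heps)$, since the covering numbers are monotone in the scale and the limit as $\eps\to0$ equals $h_\top(\hf)$. We then take $N_1$ so large that the following hold for all $N\ge N_1$ and all $m\ge N_1$:
 \begin{itemize}
  \item[--] $r_{\hf}(\hM,\eps,\heps,m)\le e^{(h_\top(f)+\eta/4)m}$;
  \item[--] $\frac1N\log\|D\hf^N\|_{\sup}\le\lambda(\hf)+\eta/4$;
  \item[--] $\frac{\log Y}{N}<\eta/4$.
 \end{itemize}

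\textbf{Applying Yomdin.} Given a regular curve $\sigma$ with $C^r$ size at most $(\eps,\heps)$ and $n\ge N_1$, apply Corollary~\ref{cor-Yomdin} with $T=[0,1]$ and the chosen $N$. This gives a family $\cR_n$ of $(C^r,N,\eps,\heps)$-admissible reparametrizations up to time $n$, which is item (1), with
 $$
   \#\cR_n\le Y^{\lceil n/N\rceil}\left(\|D\hf^N\|_{\sup}^{\lfloor n/N\rfloor}\|D\hf^{p}\|_{\sup}\right)^{\frac1{r-1}} r_{\hf}(\hM,\eps,\heps,n),
 $$
where $p=n-N\lfloor n/N\rfloor$. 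We have bounded $r_{\hf}(\hf(\hsigma(T)),\eps,\heps,n)$ by the covering number of all of $\hM$.

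\textbf{Bounding the factors.}
 \begin{itemize}
  \item[--] $Y^{\lceil n/N\rceil}\le Y\cdot e^{\eta n/4}$.
  \item[--] $\|D\hf^N\|_{\sup}^{\lfloor n/N\rfloor/(r-1)}\le e^{(\lambda(\hf)+\eta/4)n/(r-1)}$.
  \item[--] The remainder factor $\|D\hf^{p}\|_{\sup}^{1/(r-1)}\le\|D\hf\|_{\sup}^{N/(r-1)}$ and the constant $Y$ are independent of $n$. They could matter when $n$ is close to $N$, so either $n$ should be required to be large compared with $N$, or the remainder step should be handled directly.
 \end{itemize}

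\textbf{Main obstacle.} The delicate point is the order of quantifiers: $N_1$ must work for all $N\ge N_1$ and all $n\ge N_1$, including $n<N$, where no full $N$-block occurs. For $n\le N$ a single application of Theorem~\ref{thm-Yomdin} to $\hf^n$ gives the bound $Y\|D\hf^n\|_{\sup}^{1/(r-1)}r_{\hf}(\hM,\eps,\heps,n)$. Since $\frac1m\log\|D\hf^m\|_{\sup}\to\lambda(\hf)$, the term $\|D\hf^n\|_{\sup}^{1/(r-1)}$ is at most $e^{(\lambda(\hf)+\eta/4)n/(r-1)}$ for $n\ge N_1$. The constant $Y$ is absorbed by requiring $\log Y\le\eta N_1/4$.

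For $n>N$, the cleaner route is to control the remainder step by submultiplicativity, $\|D\hf^{N+p}\|_{\sup}\le e^{(\lambda(\hf)+\eta/4)(N+p)}$, merging the last full block with the remainder. This replaces the factor $\|D\hf^p\|_{\sup}$ by a bound linear in the exponent. Combining everything gives $\#\cR_n\le\exp\left((h_\top(f)+\lambda(\hf)/(r-1)+\eta)n\right)$, which is item (2).
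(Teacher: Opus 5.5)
Your overall route is the paper's: a fixed-scale bound on $r_{\hf}(\hM,\eps,\heps,m)$ from Lemma~\ref{lem-h-double} and \eqref{eq-h-M-hm}, then Corollary~\ref{cor-Yomdin} applied with the cover of all of $\hM$, then absorption of the subexponential factors. You are right that the order of quantifiers is the delicate point. The paper's own proof glosses over it: it puts the remainder into a constant $C_1$ that depends on $N$, so its $m_2$, and hence $N_1:=\max(N_0,m_2)$, secretly depends on $N$.

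However, your repair for $n>N$ fails for two reasons.
\begin{itemize}
\item Merging the last full block with the remainder creates two consecutive admissible times at distance $N+p>N$. This violates condition (1) of Definition~\ref{def-admissible-reparam}, so the result is neither an application of Corollary~\ref{cor-Yomdin} nor a $(C^r,N,\eps,\heps)$-admissible family.
\item Submultiplicativity runs the wrong way. It gives $\|D\hf^{N+p}\|_{\sup}\le\|D\hf^N\|_{\sup}\|D\hf^p\|_{\sup}$, which cannot bound the product $\|D\hf^N\|_{\sup}\|D\hf^p\|_{\sup}$ appearing in the Corollary from above.
\end{itemize}
So with your single threshold, the case $n>N\ge N_1$ with $0<p<N_1$ is still uncovered. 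For example, take $N=N_1$ and $n=2N_1-1$: the factor $\|D\hf^{p}\|_{\sup}^{1/(r-1)}$ can then be of order $\|D\hf\|_{\sup}^{(N_1-1)/(r-1)}$ while $n<2N_1$.

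The fix stays inside the Corollary and uses two thresholds.
\begin{itemize}
\item Choose $N_1'$ with $\|D\hf^m\|_{\sup}\le e^{(\lambda(\hf)+\eta/8)m}$ for all $m\ge N_1'$. Set $C':=\max_{0\le p<N_1'}\|D\hf^p\|_{\sup}\ge 1$. Then $\|D\hf^p\|_{\sup}\le C'e^{(\lambda(\hf)+\eta/8)p}$ for every $p\ge0$, and $C'$ does not depend on $N$ or $n$.
\item Take $N_1\ge N_1'$ with $\log C'\le \eta N_1/8$ and $2\log Y\le\eta N_1/8$. Also require $r_{\hf}(\hM,\eps,\heps,m)\le e^{(h_\top(f)+\eta/4)m}$ for $m\ge N_1$.
\end{itemize}
Now let $N\ge N_1$ and $n=qN+p\ge N_1$. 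Then $Y^{\lceil n/N\rceil}\le Y^{2n/N_1}\le e^{\eta n/8}$, and
 $$
   \left(\|D\hf^N\|_{\sup}^{q}\,\|D\hf^p\|_{\sup}\right)^{\frac1{r-1}}\le C'e^{\left(\frac{\lambda(\hf)}{r-1}+\frac{\eta}{8}\right)n}\le e^{\left(\frac{\lambda(\hf)}{r-1}+\frac{\eta}{4}\right)n}.
 $$
The Corollary then gives $\#\cR_n\le e^{(h_\top(f)+\lambda(\hf)/(r-1)+\eta)n}$. This covers $n\le N$ and $n>N$ together, so no separate case analysis or re-blocking is needed.
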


\begin{exercise}\label{exoLambda}
Let $g$ be a $C^1$ diffeomorphism of a compact manifold. Let $\lambda(g):=\limsup_{n\to\infty}\frac1n\log\|Dg^n\|_{\sup}$.
Show that for every $\gamma>0$, there is $m_0>0$ such that for all $m\ge m_0$,  $\|Dg^m\|_{\sup}^{1/m} \le  e^{\lambda(g)+\gamma}$.
\end{exercise}

\begin{proof}
Given $\eta>0$, we use Exercise~\ref{exoLambda} to get $N_0$ such that 
 $$
 \forall N\ge N_0 \quad \|D\hf^N\|^{1/N}\le \exp (\lambda(\hf)+\eta/4).
 $$

The Bowen formula for the topological entropy (Lemma~\ref{lem-h-double}) gives $m_1\ge1$ such that
 $$
   \forall n\ge 1\quad r_{\hf}(\hM,\eps,\heps,n) \leq e^{m_1\eta/4}\cdot e^{(h_\top(\hf)+\eta/4)n}.
 $$
We recall that $h_\top(\hf)=h_\top(f)$ by \eqref{eq-h-M-hm}.

Since $0<\eps,\heps<\eps_Y$ we can apply Corollary~\ref{cor-Yomdin} for any $n\ge1$. We obtain that any regular curve $\sigma$ with $C^r$ size $(\eps,\heps)$ admits families $(\mathcal R_n)_{n\ge1}$ of $(C^r,N,\eps,\heps)$-reparametrizations such that, for some $m_2\ge1$, for all $n\ge1$,
  $$\begin{aligned}
     \#\mathcal R_n &\le C_1 \|D\hf^N\|_{\sup}^{(n/N)/(r-1)} \cdot e^{m_1\eta/4} e^{(h_\top(f)+\eta/4)n}\\
     &\le C_1 \cdot e^{n\cdot \lambda(\hf)/(r-1)} \cdot e^{m_1\eta/4} e^{(h_\top(f)+\eta/4)n}\\
     &\le e^{m_2 \eta/2} \exp\left(n\cdot \left(\frac{\lambda(\hf)}{r-1} + h_\top(f) + \eta/2\right)\right).
  \end{aligned}$$
In particular, for all $n\ge m_2$, $\#\cR_n \leq \exp\left(h_\top(f)+\frac{\lambda(\hf)}{r-1}+\eta\right)$. It suffices to take $N_1:=\max(N_0,m_2)$.
\end{proof}

\section{Finding clean neutral blocks}

In view of Definition~\ref{def-neutral-dec} of a neutral decomposition,
we have the weak convergence (perhaps after passing to a subsequence):
 $$
    \hmu_0 = \lim_{(\alpha,L)\to(0,\infty)} m_0^{\alpha,L}\;.
 $$
In particular, there are  numbers $0<\alpha_0\le\eta/10$ and $L_0\gg n_0$ such that, for all $0<\alpha\le\alpha_0$ and $L\ge L_0$, the following holds:
 $$
    \forall k\ge k_0(\alpha,L)\quad 
       \hnu_k^+(\Neutral(\alpha,L)) = (1_{\Neutral(\alpha,L)}\cdot\hnu_k^+)(\hM)
          > m_0(\hM)-\gamma = 1-\beta-\gamma > 1-\beta'.
 $$
We fix $k_3\ge k_0$.

The ergodic theorem gives a set $\hM_3$ with $\hmu(\hM_3)>1/2$ and an integer $N_3$ such that
 $$
  \forall\hx\in\hM_3\; \forall n\ge N_3\quad  \#(\Neutral(\eta/10,L_0)\cap\ii{0,n}) \ge (1-\beta-\gamma)n.
  $$
Say that a maximal $(\alpha_0,L_0)$-neutral block $\ii{a,b}$ of $\hx$ is \emph{clean} if it does not contain too many iterations out of the set $\hU_0$ defined previously, more precisely,
 $$
    \#\{a\le j<b:\hf^j\hx\notin\hU_0\}< \gamma\cdot (b-a).
 $$
Notice that the union $D(\hx,n)$ of unclean maximal $(\alpha_0,L_0)$-neutral blocks in $\ii{0,n}$ satisfies
 $$
       \#D(\hx,n)\cdot\gamma \le \#\{0\le j<n: \hf^j\hx\notin\hU_0\} \le \gamma^2\cdot n.
 $$
Thus, for all $\hx\in\hM_3$ and $n\ge N_3$,  we have $\#D(\hx,n)\le\gamma\cdot n$. We have just proved

\begin{lemma}
Given $\hx\in\hM_3$ and $n\ge N_3$, the union of the clean $(\alpha_0,L_0)$-neutral blocks in $\ii{0,n}$ occupy a fraction at least $1-\beta-2\gamma$.
\end{lemma}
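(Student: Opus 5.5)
The plan is to subtract the unclean blocks from the neutral part of $\ii{0,n}$ and bound them by counting visits outside $\hU_0$. Fix $\hx\in\hM_3$ and $n\ge N_3$. The maximal $(\alpha_0,L_0)$-neutral blocks of $\hx$ are pairwise disjoint. By the proposition following Lemma~\ref{lem-neutral-nested}, $N_{\hx}(\alpha_0,L_0)$ is their disjoint union. So $N_{\hx}(\alpha_0,L_0)\cap\ii{0,n}$ splits into the clean blocks, the unclean blocks (their union inside $\ii{0,n}$ being $D(\hx,n)$), and at most two blocks cut by the endpoints $0$ and $n$. The conclusion will follow from
$$\#\bigl(\text{clean blocks}\cap\ii{0,n}\bigr)\ge \#\bigl(N_{\hx}(\alpha_0,L_0)\cap\ii{0,n}\bigr)-\#D(\hx,n)-(\text{boundary terms}).$$

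First, the neutral density. The defining property of $\hM_3$ and $N_3$ gives $\#\bigl(N_{\hx}(\alpha_0,L_0)\cap\ii{0,n}\bigr)\ge(1-\beta-\gamma)n$. I will make sure the neutral set in this property is indeed $\Neutral(\alpha_0,L_0)$, the one used to define cleanliness; the displayed definition of $\hM_3$ writes $\Neutral(\eta/10,L_0)$. If it is not the same set, I would redefine $\hM_3$ using $\Neutral(\alpha_0,L_0)$, whose $\hnu_{k_3}^+$-measure exceeds $1-\beta-\gamma$ by the choice of $\alpha_0,L_0,k_0$.

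Second, the unclean blocks. Each unclean maximal block $\ii{a,b}$ has at least $\gamma(b-a)$ indices $j$ with $\hf^j\hx\notin\hU_0$, and these sets are disjoint across blocks. Summing gives $\gamma\cdot\#D(\hx,n)\le\#\{0\le j<n:\hf^j\hx\notin\hU_0\}$. The main obstacle is bounding the right-hand side by roughly $\gamma^2 n$, which then gives $\#D(\hx,n)\le\gamma n$.

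For that bound I will impose a second condition when choosing $\hM_3$ and $N_3$. Theorem~\ref{thm-reparam-neutral} gives $\hmu(\hU_0)>1-\gamma^2$ and $\hmu(\partial\hU_0)=0$. Since $\hnu_k^+\to\hmu$ weakly, the portmanteau theorem gives $\hnu_k^+(\hU_0)>1-\gamma^2$ for all large $k$; I take $k_3$ that large. Birkhoff's theorem for the ergodic measure $\hnu_{k_3}^+$ then makes the visit frequency to $\hM\setminus\hU_0$ less than $\gamma^2$ for all $n\ge N_3$ and all $\hx$ in a set of measure above $1/2$. This set, with respect to $\hnu_{k_3}^+$ rather than the limit $\hmu$ as written, is $\hM_3$.

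Finally, the boundary terms. At most two maximal blocks meet $\ii{0,n}$ without being contained in it. I would either absorb them by enlarging $N_3$ so that their contribution is at most $o(n)$, or count truncated blocks directly; in either case they cost at most a negligible fraction of $n$. Combining the three bounds gives a fraction at least $1-\beta-\gamma-\gamma=1-\beta-2\gamma$ for the clean blocks.
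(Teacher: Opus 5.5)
Your proof is correct and follows the paper's argument. You start from a neutral density of at least $(1-\beta-\gamma)n$ and discard the unclean blocks through $\gamma\cdot\#D(\hx,n)\le\#\{0\le j<n:\hf^j\hx\notin\hU_0\}\le\gamma^2 n$. Your additions make explicit points the paper leaves implicit or states loosely: taking $\hM_3$ with respect to $\hnu_{k_3}^+$ and $\Neutral(\alpha_0,L_0)$ rather than $\Neutral(\eta/10,L_0)$, getting $\hnu_k^+(\hU_0)>1-\gamma^2$ for large $k$ from weak convergence on the open set $\hU_0$ and then applying Birkhoff, and noting that the strict inequalities leave slack for the boundary blocks.
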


\section{Selecting a good unstable curve}

Recall that $k\ge k_3$ has been fixed together with a good set $\hM_3$ with $\hnu_k^+(\hM_3)>0$. From now on, we omit the index $k$. Because $\hnu^+$ is the image of $\hnu$ by the section $\hGamma^+$, we have
 $$
    \hnu^+(\hM_3) = \hnu^+(\hM_3\cap\hGamma^+) = \hnu(M_3), \text{ where } M_3:=\hpi(\hM_3\cap\hGamma^+).
 $$

Consider the unstable disintegration $(\nu^u_x)_x$ of $\nu$ (see Definition~\ref{def-unstable-disint}). By definition, we have
 $$
    \int_{\hM} \nu^u_x(M_3)\, d\nu(x) = \nu(M_3) > 0.
 $$
Recall that $\nu^u_x(W^u_\loc(x))=1$ for a.e. $x\in M$. Hence, there is a set of positive $\nu$-measure of points $x\in M$ such that $M_3\cap W^u_\loc(x)$ has positive $\nu^u_x$-measure. The Proposition~\ref{prop-entropy-ureparam} yields the following bound on the entropy of $\nu$:
 $$
   h(f,\nu) \le \liminf_{n\to\infty}\frac1n\log\#\cR_n\;,
 $$
if $\cR_n$ is a family of $(C^r,N,\eps,\heps)$-reparametrizations of $\sigma$ up to time $n$ over $T$, where
 \begin{itemize}
  \item[--] $\sigma$ is a regular curve with $C^r$ size at most $(\eps,\heps)$ such that $\nu^u_x(\sigma([0,1])\cap M_3)>0$;
  \item[--] $T=\sigma^{-1}(M_3)$.
 \end{itemize}
 
\bigbreak
 
The rest of the proof is quite similar to the construction of the reparametrizations inside a neutral block. Let us explain the main remaining points.

\section{Decomposition of orbits of typical points}

To each pair $x\in M_3$ and $n\ge n_3$ we associate the sequence $\Sigma(n,\hx):=(a_i,b_i)_{i=1}^m$ of successive maximal $(\alpha_0,L_0)$-neutral blocks in $\ii{0,n}$ for $\hx:=(x,E_x^+)$. 

Note that these neutral blocks correspond to the intersection with $\ii{0,n}$ of all the neutral blocks that are maximal in $\ZZ$ up to two possible exception: (i) an arbitrary but finite initial neutral block containing $0$ (independent of $n\ge1$); (ii) a terminal neutral block containing $n-1$ but intersecting $\ii{0,n}$ over a length smaller than $L_0$. The contribution of both neutral blocks can be safely ignored.

 Since each of neutral blocks has length at least $L_0$ and is contained in $\ii{0,n}$, their number is bounded by $n/L_0$. Moreover, the set of such sequences, realized by some orbit segment or not,
 $$
    \Sigma(n) := \{(a_i,b_i)_{i=1}^m: 0\le a_1\le b_1\le a_2\le\dots b_m\le n, \forall 1\le i\le m\; b_i-a_i\ge L_0, m\ge1\}\;,
 $$
has  cardinality bounded by a small exponential, as stated next.

\begin{lemma}\label{lem-combi}
Given $\eta>0$, for $L_0$ large enough, we have $\#\Sigma(n) \le e^{\eta n/10}$ for all $n\ge1$.
\end{lemma}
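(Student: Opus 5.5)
The plan is to encode each element of $\Sigma(n)$ by a subset of $\ii{0,n+1}$ of small cardinality and then bound the number of such subsets with a standard binomial estimate. Given $(a_i,b_i)_{i=1}^m\in\Sigma(n)$, the sequence is recovered from the finite set $\{a_1,b_1,\dots,a_m,b_m\}\subset\{0,\dots,n\}$ together with the pattern telling which points are left and which are right endpoints. Since the $a_i,b_i$ are listed in nondecreasing order and alternate, the pattern is determined by the ordered list once we allow for coincidences $b_i=a_{i+1}$. It is therefore cleaner to encode the sequence by the two sets $A=\{a_i\}$ and $B=\{b_i\}$. Each of these is strictly increasing, because $b_i-a_i\ge L_0\ge1$ gives $a_{i+1}\ge b_i>a_i$ and similarly for the $b_i$. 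So the map $(a_i,b_i)_i\mapsto(A,B)$ is injective, and both sets have cardinality $m\le n/L_0$. The bound $m\le n/L_0$ holds because the intervals $\ii{a_i,b_i}$ are disjoint in $\ii{0,n}$ and each has length at least $L_0$.

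It follows that
$$
\#\Sigma(n)\le\Big(\sum_{m\le n/L_0}\binom{n+1}{m}\Big)^2 .
$$
Next I would invoke the classical estimate $\sum_{m\le \rho N}\binom{N}{m}\le e^{H(\rho)N}$ for $\rho\le1/2$, where $H(\rho)=-\rho\log\rho-(1-\rho)\log(1-\rho)$. This is the same function $H$ already used in the choice of $\gamma_0$, and it is proved by the usual trick of multiplying by $\rho^m(1-\rho)^{N-m}$ and summing. With $N=n+1$ and $\rho=1/L_0$ (slightly adjusted, since $n/L_0\le (n+1)/L_0$), this gives $\#\Sigma(n)\le e^{2H(1/L_0)(n+1)}$.

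Finally, because $H(\rho)\to0$ as $\rho\to0$, we can choose $L_0$ so large that $2H(1/L_0)\le\eta/40$. Then $\#\Sigma(n)\le e^{\eta(n+1)/40}\le e^{\eta n/10}$ for all $n\ge1$, since $(n+1)/40\le n/10$ once $n\ge1$.

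The only delicate point is boundary bookkeeping: the off-by-one between $\ii{0,n}$ and $\{0,\dots,n\}$, and making sure the bound holds for every $n\ge1$ rather than only for large $n$. For small $n<L_0$ the condition $m\ge1$ with $b_1-a_1\ge L_0$ forces $\Sigma(n)=\emptyset$, so there the bound is trivial. The generous slack in the exponent $\eta/10$ versus $\eta/40$ absorbs the remaining $+1$. Apart from this, the argument is routine counting.
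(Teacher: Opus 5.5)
Your proof is correct and takes essentially the same route as the paper: both count the possible endpoint configurations with binomial coefficients and use that at most $n/L_0$ blocks fit in $\ii{0,n}$, so the exponential growth rate of the count tends to $0$ as $L_0\to\infty$. The differences are cosmetic. You encode a configuration by the two strictly increasing sets $\{a_i\}$ and $\{b_i\}$, which handles coincidences $b_i=a_{i+1}$ more carefully than the paper's single count $\binom{n}{2m}$. You also use the bound $\sum_{m\le\rho N}\binom{N}{m}\le e^{H(\rho)N}$ instead of the paper's estimate $\binom{n}{2p}\le(en/2p)^{2p}$ multiplied by the factor $n/L_0+1$ for the choice of $m$.
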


\begin{proof}
The integer intervals $\ii{a_i,b_i}$ are pairwise disjoint with lengths $\ge L_0$, hence their number $m$ satisfies $m\le p:=\lfloor n/L_0\rfloor$. Once $m$ is fixed, the number of choices for their positions is bounded by $\binom{n}{2m}\le \binom{n}{2p}$. We have the elementary bound
 $$\begin{aligned}
   \binom{n}{2p}&\le \left(\frac{en}{2p}\right)^{2p}\le \exp \left(2p(\log(n/2p)+1)\right) 
   = \exp \left(\frac{2p}{n}\left(-\log\frac{2p}{n}+1\right)n\right)\\
   &\le \exp \left(\ \frac{n\eta }{20}\right).
 \end{aligned}$$
Indeed, $2p/n\leq 1/L_0\to0$, hence $\frac{2p}{n}\left(-\log\frac{2p}{n}+1\right)\le \eta/20$ for $L_0$ large enough. Finally, using $1+t\le e^t$ for all $t$, we obtain $\#\Sigma(n)\le (n/L_0+1)\cdot e^{n\eta/20} \le e^{n\eta/10}$ for all large $L_0$.
\end{proof}

We do the usual trick: splitting the set of points according to those sequences, i.e., writing $T=\bigsqcup_{S\in\Sigma(n)} T_S$ ,  and consider one subset $T_S:=\{t\in T:\Sigma(n,\hsigma(t))=S\}$ at a time.

\section{Construction of the reparametrizations}

Recall that $\sigma$ is a regular parametrization of a well-chosen unstable manifold and $\sigma(T_S)$, $S\in\Sigma(n)$, is some set of positive measure for some unstable disintegration measure $\mu^u_x$. 

We start with the trivial family $\cR_0:=\{\Id\}$ and proceed by induction.
We write $S=(a_i,b_i)_{i=1}^m$ and set $b_0:=0$ for convenience. Now let $1\le i\le m$ and assume that we have a family of reparametrizations admissible up to time $b_{i-1}$.

Proposition~\ref{prop-reparam-htop} tells us that each reparametrization admissible up to time $b_{i-1}$ needs to be subdivided into at most
  $$
    N_i^1:=e^{\eta n_2/2} \exp\left(h_\top(f)+\frac{\lambda(\hf)}{r-1}+\eta\right)(a_i-b_{i-1})
  $$
reparametrizations to stay admissible up to time $a_i$.
For the neutral block $\ii{a_i,b_i}$ we apply Theorem~\ref{thm-reparam-neutral} and subdivide the reparametrizations up to time $a_i$ into at most
 $$
    N^0_i:=\exp\left(\frac{\lambda(\hf)}{r-1}+\eta\right)(b_i-a_i)
 $$
reparametrizations to keep the admissibility property up to time $b_i$ over $T_\theta$.

Finally we observe that, concatenating the subdivisions as in Lemma~\ref{lem-concat}, we end up subdividing the curve $\sigma$ into the following number of reparametrizations over $T_\theta$:
 $$
    \#\cR_n \le N^1_1\cdot N^0_1\cdot N^1_2\cdot\dots\cdot N^0_m\cdot N^1_{m+1}
      \le \exp\left(\beta\cdot h_\top(f)+\frac{\lambda(\hf)}{r-1}+3\eta\right)n.
 $$
The alternating non-neutral and neutral blocks are assumed to end with a (possibly empty) non-neutral block $\ii{b_m,a_{m+1}}$ where $a_{m+1}:=n$.

Thus,
 $$
   \limsup_k h(f,\nu_k) \le (\beta+3\eta)\cdot h_\top(f)
 $$
Since $\beta+3\eta<\beta'$, we obtain the desired contradiction and Theorem~\ref{thm-weak-dec} is proved. \qed

\chapter{Additional steps for Theorem B}\label{chap-beyond}

In this short chapter, we indicate how to strengthen Theorem~\ref{thm-weak-dec} into Theorem~\ref{theoremWithoutErgodicity}. This is only intended as a preparation to the reading of our paper \cite{BCS-2}, and we do not go into any details.

\section{The goal}
Both these theorems yields a decomposition
 $$
   \mu = (1-\beta)\mu_0 + \beta\cdot\mu_1\qquad (0<\beta\le 1, \mu_0,\mu_1\in\Prob(f))
 $$
We want to strengthen the entropy bound
 $$
   \limsup_{k\to\infty} h(f,\nu_k) \le \beta\cdot h_\top(f)
 $$
into
 $$
   \limsup_{k\to\infty} h(f,\nu_k) \le \beta\cdot h(f,\mu_1).
 $$
There are two stages to obtain this.

\section{Bounding by the entropy of an ergodic measure}

In the first stage, let us assume that the measure $\mu_1$ is ergodic (for instance if $\mu$ itself is). In this rather simple case, it is enough to use Katok's entropy formula: for all large $n\ge1$, there is a union of at most $\exp(h(f,\mu_1)n)$ dynamical balls with $\mu_1$-measure arbitrarily close to $1$. Since these balls are open sets, this is still true for the measures $(1-1_{\Neutral(\alpha,L)})\cdot \nu_k$ for $k$ large. 
Note here that it is necessary to first fix the iterate $n$ before deciding how large $k$ must be. Hence one has to split long non-neutral blocks into a concatenation of blocks with that fixed length $n$. One can adapt in this way the argument explained in the previous chapter as Proposition~\ref{prop-reparam-htop}.

\section{Bounding by the entropy of an arbitrary measure}

In the second stage, we remove the artificial assumption that the measure $\mu_1$ is ergodic (there is no reason why it should be!). Katok's entropy formula may then fail. It is replaced by the following: for all large $n\ge1$, there is a union of at most $\exp(\overline{h}(f,\mu_1)n)$ dynamical balls with $\mu_1$-measure arbitrarily close to $1$.

The new quantity that appears here:  $\overline{h}(f,\mu_1)$, is the \emph{essential supremum} of the entropies of the ergodic components of $\mu_1$. This is unavoidable. In fact, this quantity is what Katok's entropy formula yields when applied to a non-ergodic measure. Of course, it is usually much larger than the usual entropy, which is the \emph{average} of these entropies.

To solve this problem, we split the ergodic decomposition $\mu=\int \mu_\xi\, d\xi$ according to the value of $h(f,\mu_\xi)$. Thus,
 $$
   \mu = a_1\mu_1+\dots+a_N\mu_N \text{ with } 0\le \overline{h}(f,\mu_n)-h(f,\mu_n)\leq\eta.
 $$
The weak convergence $\nu_k\to\mu$ implies that $\nu_k$-typical orbits can be split into intervals, each of which is close to being $\mu_n$-typical for some $1\le n\le N$. Moreover the fraction of the time spent close to $\mu_n$ is close to $a_n$ (in particular, the leftover is a small fraction). 

One can apply Katok's formula for each $\mu_n$ in order to control the corresponding intervals. Since the measures $\mu_n$ are invariant, the previous intervals are very long and therefore do not create significant entropy.

\part{Conclusion}

\chapter{Some further results and questions}\label{chap-further}

In these notes we have tried to present the main ideas of \cite{BCS-2} in a way that avoids as many difficulties as possible, but still capturing the main ideas and perhaps the most important case, when the entropies of the measures converge to the maximum, i.e., to the topological entropy.

The interested reader is invited to read the original paper \cite{BCS-2} to understand the additional twists involved in other cases, already when the limiting measure is ergodic, and finally when it is not, as we believe that the added difficulties are somewhat transverse to the ideas presented here. 

We would like to draw the reader's attention to the related but different approach of Burguet \cite{Burguet-SRB}. Instead of focusing on neutral orbit segment, Burguet defines ``hyperbolic'' orbit segments. He obtains a similar but different decomposition with more control. In subsequent works he has been able to obtain precise, often sharp, results in finite smoothness, at least near the measures of maximal entropy. 

In our opinion, it is an interesting open problem to understand if there are other relevant variants of the neutral decomposition, especially decomposition that would satisfy additional properties, such as invariance under time reversal. Not only would this potentially improve the entropy bound, but it might open the door to attacking the problem of finding ergodic measures that maximize the Hausdorff dimension, for which our approach only gives partial results.

Several natural extensions of \cite{BCS-2} have been already obtained by now:
\begin{itemize}
 \item Hengyi Li has dealt with the situation where critical points are allowed, generalizing our results in the one-dimensional case (see \cite{Li-nonflat} for non-flat critical points; further results are being written in collaboration with Alexandre Delplanque \cite{Delplanque-Li-2025}). This later work relies on the Burguet's decomposition and reparametrization lemma mentioned above.
 \item As noticed by Yuntao Zang \cite{Zang-flow} and Liu Chuyi and Dawei Yang in \cite{Chiyi-Yang-2024}, one can use Burguet's approach as long as the entropy is still be given by curves: for three dimensional flows or diffeomorphisms by using some symmetries.
 \item Karina Marin, Mauricio Poletti, and Filiphe Veiga \cite{MPV} have considered certain skew products which can be considered as ``random composition of surface diffeomorphisms''.
\end{itemize}

To finish, we wish to point out how much remains to be done, especially in settings where the entropy can no longer be computed from the dynamics on curves.

\printindex

\bibliographystyle{plain}
\bibliography{IMPAN-Lectures}

\end{document}